\newcommand\myshade{85}
\colorlet{mylinkcolor}{NavyBlue}
\colorlet{mycitecolor}{NavyBlue}
\colorlet{myurlcolor}{NavyBlue}
\newtheorem{prop}{Proposition}[section]
\newtheorem{theorem}[prop]{Theorem}
\newtheorem{lemma}[prop]{Lemma}
\newtheorem{cor}[prop]{Corollary}
\theoremstyle{definition}
\newtheorem{definition}[prop]{Definition}
\newtheorem{ex}[prop]{Example}
\newtheorem{rmk}[prop]{Remark}
\newcommand{\ra}[1]{\renewcommand{\arraystretch}{#1}}
\renewcommand{\phi}{\varphi}
\newcommand{\Z}{\mathbb{Z}}
\newcommand{\supp}{\operatorname{supp}}
\newcommand{\braket}[2]{\left\langle #1 , #2 \right\rangle} 
\newcommand{\abs}[1]{\lvert #1 \rvert}
\author[W. Ballinger]{William Ballinger}
\address {Department of Mathematics, California Institute of Technology, Pasadena, CA 91125}
\email{wballing@caltech.edu \\ yini@caltech.edu \\ tochse@caltech.edu \\ vafaee@caltech.edu}
\author[Y. Ni]{Yi Ni}
\author[T. Ochse]{Tynan Ochse}
\author[F. Vafaee]{Faramarz Vafaee}
\begin{document}
\title{The prism manifold realization problem II}

\date{}

\maketitle

\begin{abstract}
We continue our study of the realization problem for prism manifolds. Every prism manifold can be parametrized by a pair of relatively prime integers $p>1$ and $q$. We determine a complete list of prism manifolds $P(p, q)$ that can be realized by positive integral surgeries on knots in $S^3$ when $q>p$. The methodology undertaken to obtain the classification is similar to that of the case $q<0$ in an earlier paper.
\end{abstract}
\section{Introduction}\label{sec:Introduction}

This paper is a continuation of \cite{Prism2016}, where the authors studied the Dehn surgery realization problem of prism manifolds. Recall that prism manifolds are spherical three--manifolds with dihedral type fundamental groups.
Alternatively, an oriented prism manifold $P(p,q)$ has Seifert invariants 
\[
(-1; (2,1), (2,1), (p,q)),
\]
where $q$ and $p>1$ are relatively prime integers. A surgery diagram of $P(p,q)$ is depicted in Figure~\ref{links}A. 
When $q<0$, the realization problem for prism manifolds was solved in \cite{Prism2016}. More precisely, a complete list of $P(p,q)$, with $q < 0$, that can be obtained by positive Dehn surgery on knots in $S^3$ is tabulated in~\cite[Table~1]{Prism2016}. Indeed, every manifold in the table can be obtained by surgery on a {\it Berge--Kang knot}~\cite{BergeKang}.
 Our main result, Theorem~\ref{thm:Realization} below, provides the solution for those $P(p,q)$ with $q>p$: see Table~\ref{table:Types}.

\begin{theorem}\label{thm:Realization}
Given a pair of relatively prime integers $p>1$ and $q>p$, the prism manifold $P(p,q)$ can be obtained by $4q$--surgery on a knot $K\subset S^3$ if and only if $P(p,q)$ belongs to one of the six families in Table~\ref{table:Types}. Moreover, in this case, there exists a Berge--Kang knot $K_0$ such that $P(p,q)\cong S^3_{4q}(K_0)$, and that $K$ and $K_0$ have isomorphic knot Floer homology groups.
\end{theorem}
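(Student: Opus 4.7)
The plan is to follow the strategy of the $q<0$ case from \cite{Prism2016}, splitting the theorem into a realization direction (``if''), a classification direction (``only if''), and the knot Floer homology comparison. The realization direction is the easier half: for each of the six families in Table~\ref{table:Types}, I would write down the corresponding Berge--Kang knot $K_0$ from \cite{BergeKang} and verify, by computing the Seifert invariants of the surgered manifold, that $S^3_{4q}(K_0)\cong P(p,q)$. This step is essentially bookkeeping once the Berge--Kang parametrization is in hand.

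The real work lies in the reverse direction. Assume $S^3_{4q}(K)\cong P(p,q)$. Since $P(p,q)$ has finite fundamental group it is an L-space, so $K$ is an L-space knot, and its knot Floer homology is determined by its Alexander polynomial. Via the Ozsv\'ath--Szab\'o mapping cone formula, the correction terms $d(S^3_{4q}(K),\mathfrak{s})$ across the $4q$ $\Spin^c$ structures are encoded by the torsion coefficients of $K$. Independently, I would compute $d(P(p,q),\mathfrak{s})$ directly from the Seifert presentation via the standard negative-definite plumbing algorithm. Equating these two descriptions yields a system of relations which, by Greene's method, is equivalent to the existence of a \emph{changemaker} embedding of a specific integer lattice $\Lambda(p,q)$, built from the linking form and plumbing data of $P(p,q)$, into the standard diagonal lattice $\Z^{4q}$.

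The heart of the argument is then the combinatorial classification of such embeddings in the regime $q>p$. I would first write down the short characteristic covectors and the distinguished vertex vectors of $\Lambda(p,q)$, then translate the embedding condition into numerical constraints on the changemaker vector $\sigma\in\Z^{4q}$. A case analysis, parametrized by how the vertex vectors of $\Lambda(p,q)$ decompose in the standard basis and by the parities of the relevant entries, should cut the space of embeddings down to exactly six infinite families; these will then match the rows of Table~\ref{table:Types}. The arithmetic in the regime $q>p$ is genuinely different from the $q<0$ case treated in \cite{Prism2016} --- the continued fraction expansion of $4q/(\cdot)$ and the sign of several intersection numbers change --- and I expect the principal obstacle to be the delicate elimination of spurious subfamilies via parity, divisibility, and inequality estimates.

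Finally, the knot Floer homology statement follows almost automatically once the classification is complete: the changemaker embedding determines the torsion coefficients, hence the symmetrized Alexander polynomial, of $K$; the Berge--Kang knot $K_0$ produced in the realization direction has, by direct computation, the same Alexander polynomial; and since $K$ and $K_0$ are both L-space knots, their hat knot Floer homology groups are forced to be isomorphic.
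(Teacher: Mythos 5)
Your outline follows the same Greene-style obstruction-and-classification strategy as the paper, but there is a genuine error in the setup that would derail the combinatorics. You write, twice, that the changemaker embedding is into the diagonal lattice $\Z^{4q}$ and that the changemaker vector $\sigma$ lives in $\Z^{4q}$. It does not. The paper forms $Z = X(p,q)\cup_{P(p,q)}(-W_{4q})$, where $X(p,q)$ is the negative definite plumbing on the linear graph arising from the continued fraction expansion $(2q-p)/(q-p)=[a_1,\dots,a_n]^-$; one has $b_2(X)=n+1$, so $b_2(Z)=n+2$, and Donaldson's theorem forces $Q_Z\cong -\Z^{n+2}$. The C-type lattice $C(p,q)$ then embeds as a codimension-one sublattice of $\Z^{n+2}$, and it is precisely the codimension-one condition — the orthogonal complement is spanned by a single vector $\sigma$ — that, combined with the sharpness of $X(p,q)$ and Greene's correction-term argument, makes $\sigma$ a changemaker. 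The integer $4q$ is the number of $\Spin^c$ structures on the surgered manifold and is the surgery slope, not the rank of the ambient lattice; conflating these would render the case analysis of Sections~\ref{sec:Case2}--\ref{sec:Case1} unrecognizable and intractable, since $n+2$ is typically far smaller than $4q$. You also never invoke Donaldson's theorem, which is the input that produces the integral lattice embedding in the first place; equating correction terms alone does not give it. A smaller point: the relevant continued fraction is that of $(2q-p)/(q-p)$, not of $4q$. Once these are corrected, the rest of your sketch — the realization via Berge--Kang knots, the combinatorial case analysis on how the norm-$4$ vertex $x_0$ decomposes, the observation that $a_1\ge 3$ distinguishes $q>p$ from the D-type lattices of the $q<0$ case, and the deduction of the $\widehat{HFK}$ isomorphism from matching Alexander polynomials of L-space knots — does agree with the paper's argument.
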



The methodology used to obtain Table~\ref{table:Types} is similar to that of~\cite{greene:LSRP,Prism2016}.  When $q>p$, the prism manifold $P(p,q)$ bounds a negative definite four--manifold $X=X(p,q)$ with a Kirby diagram as in Figure~\ref{links}D: see Section~\ref{sec:Preliminaries}. Let $P(p,q)$ arise from surgery on a knot $K\subset S^3$. Let also $W_{4q}=W_{4q}(K)$ be the corresponding two--handle cobordism obtained by attaching a two--handle to the four--ball along the knot $K$ with framing $4q$. Form the four--manifold $Z:=X\cup_{P(p,q)}(-W_{4q})$. It follows that $Z$ is a smooth, closed, negative definite four--manifold with $b_2(Z)=n+2$ for some $n\ge 1$: see Figure~\ref{links}D. Now, the celebrated theorem of Donaldson (``Theorem~A") implies that the intersection pairing on $H_2(Z)$ is isomorphic to $-\mathbb Z^{n+2}$~\cite{Donaldson1983}, the Euclidean integer lattice with the negation of its usual dot product. This provides a necessary condition for $P(p,q)$ to be positive integer surgery on a knot; namely, the lattice $C(p,q)$, specified by the negative of the intersection pairing on $H_2(X)$, must embed as a codimension one sublattice of $\mathbb Z^{n+2}$. The key idea we use to sharpen this into a necessary and sufficient condition is the work of Greene~\cite{greene:LSRP}, which is built mainly on the use of the {\it correction terms} in Heegaard Floer homology in tandem with Donaldson's theorem. In order to state his theorem, we first require a combinatorial definition.  
\begin{definition}\label{defn:changemaker}
A vector $\sigma=(\sigma_0,\sigma_1,\dots,\sigma_{n+1})\in\mathbb Z^{n+2}$ that satisfies $0\le\sigma_0\le\sigma_1\le\cdots\le\sigma_{n+1}$ is a {\it changemaker vector} if for every $k$, with $0\le k\le\sigma_0+\sigma_1+\cdots+\sigma_{n+1}$, there exists a subset $S\subset\{0,1,\dots,n+1\}$
such that $k=\sum_{i\in S}\sigma_i$.
\end{definition}

Using Lemma~\ref{XSharp}, the following is immediate from~\cite[Theorem~3.3]{Greene2015}.

\begin{theorem}\label{changemakerlatticeembedding}
Suppose $P(p, q)$ with $q>p$ arises from positive integer surgery on a knot in $S^3$. The lattice $C(p, q)$ is isomorphic to the orthogonal complement $(\sigma)^\perp$ of some changemaker vector $\sigma \in \Z^{n+2}$.
\end{theorem}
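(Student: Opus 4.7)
The plan is to combine the geometric setup preceding the theorem with two external inputs---Donaldson's Theorem~A and \cite[Theorem~3.3]{Greene2015}---and to feed in the sharpness of $X(p,q)$ supplied by Lemma~\ref{XSharp}. I would carry this out in three steps.

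First, I would form the closed four-manifold $Z := X(p,q)\cup_{P(p,q)}(-W_{4q})$ exactly as described in the paragraph before the theorem. Since $X(p,q)$ is negative definite with $b_2 = n+1$ and $-W_{4q}$ is negative definite of rank one (because the surgery coefficient $4q>0$), the glued manifold $Z$ is smooth, closed, oriented, and negative definite with $b_2(Z) = n+2$. Donaldson's theorem then supplies an isometry $H_2(Z;\Z) \cong -\Z^{n+2}$. Under the inclusions $X(p,q),\, -W_{4q}\hookrightarrow Z$, the sublattice $C(p,q)\cong H_2(X(p,q);\Z)$ embeds into this diagonal lattice as the orthogonal complement of a single primitive class $\sigma$, namely the image of a generator of $H_2(-W_{4q};\Z)\cong\Z$.

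Second, I would upgrade this lattice-theoretic embedding to the changemaker conclusion by invoking \cite[Theorem~3.3]{Greene2015}. That theorem states that whenever a rational homology sphere $Y$ arising from positive integer surgery on a knot in $S^3$ bounds a negative definite four-manifold $X$ that is \emph{sharp} in the sense of Ozsv\'ath--Szab\'o---i.e.\ every Heegaard Floer correction term of $Y$ is realized as the optimal characteristic-vector bound over the $\Spin^c$ extensions to $X$---then, for the Donaldson embedding above, the orthogonal complement vector $\sigma$ is forced to be a changemaker. Sharpness is the nontrivial analytic input that upgrades the purely algebraic Donaldson obstruction to the finer changemaker condition via the correction-term inequalities of Ozsv\'ath--Szab\'o.

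The third step is then to verify that $X(p,q)$ satisfies this sharpness hypothesis, which is precisely the statement of Lemma~\ref{XSharp}. Granting the lemma, the theorem follows at once. The genuine obstacle therefore does not lie in Theorem~\ref{changemakerlatticeembedding} itself but in Lemma~\ref{XSharp}: establishing sharpness for the particular plumbing of Figure~\ref{links}D requires matching the correction terms of $P(p,q)$---computable directly from the plumbing graph via the Ozsv\'ath--Szab\'o formula for negative definite plumbed graph manifolds---against the characteristic-vector bound coming from $H_2(X(p,q);\Z)$, one $\Spin^c$ structure on $P(p,q)$ at a time.
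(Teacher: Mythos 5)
Your proposal is correct and follows exactly the paper's argument: the paper proves Theorem~\ref{changemakerlatticeembedding} by citing \cite[Theorem~3.3]{Greene2015}, which upgrades the Donaldson embedding to the changemaker condition provided the bounding four--manifold is sharp, and supplies sharpness via Lemma~\ref{XSharp}. You have merely unpacked the two external inputs in slightly more detail than the paper's one-line proof.
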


By determining the pairs $(p,q)$ which pass the embedding restriction of Theorem~\ref{changemakerlatticeembedding}, we get the list of all prism manifolds $P(p,q)$ with $q>p$ that can possibly be realized by integer surgery on a knot in $S^3$: again, see Table~\ref{table:Types}. We still need to verify that every manifold in our list is indeed realized by a knot surgery. In fact, this is the case.
\begin{theorem}\label{thm:lattice}
Given a pair of relatively prime integers $p>1$ and $q>p$,
$C(p,q)\cong(\sigma)^{\perp}$ for a changemaker vector $\sigma\in\mathbb Z^{n+2}$ if and only if $P(p,q)$ belongs to one of the six families in Table~\ref{table:Types}. Moreover, in this case, there exist a knot $K\subset S^3$ with $S^3_{4q}(K) \cong P(p,q)$ and an isomorphism of lattices
\[\phi: (\mathbb Z^{n+2},I)\to (H_2(Z),-Q_Z),\]
such that $\phi(\sigma)$ is a generator of $H_2(-W_{4q})$. Here $I$ denotes the standard inner product on $\mathbb Z^{n+2}$ and $Q_Z$ is the intersection form of $Z= X(p,q) \cup (-W_{4q})$.
\end{theorem}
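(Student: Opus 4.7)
The plan is to prove the two directions of the equivalence separately and then extract the knot $K$ and the lattice isomorphism $\phi$ from the constructive ``if'' direction.

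\emph{Setup.} The Kirby diagram in Figure~\ref{links}D gives a preferred basis of $H_2(X(p,q))$ whose negative intersection form presents $C(p,q)$ as a linear plumbing lattice, with weights determined by $p$ and $q$ via a continued fraction expansion arising from the Seifert invariants; two of the generators, corresponding to the $(2,1)$ Seifert fibers, play a distinguished role. We work throughout in $(\Z^{n+2}, I)$ with standard basis $e_0, \ldots, e_{n+1}$.

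\emph{The ``only if'' direction.} Suppose $C(p,q) \cong (\sigma)^\perp$ via an embedding $\iota:C(p,q)\hookrightarrow \Z^{n+2}$, and write $\iota(v_i) = \sum_j a_{ij} e_j$ for the image of each preferred generator $v_i$. The self-pairings and off-diagonal intersection numbers $v_i\cdot v_j$ strongly restrict the supports and signs of the $a_{ij}$'s, and the orthogonality condition $\iota(v_i) \perp \sigma$ combined with the changemaker property of $\sigma$ produces a bootstrapping constraint on the entries of $\sigma$: each consecutive $\sigma_j$ is forced to be a signed partial sum of earlier entries in only finitely many ``shapes''. Following the template of~\cite{greene:LSRP,Prism2016}, a careful case analysis on these shapes pins down the continued fraction encoded by $C(p,q)$, and hence $(p,q)$, to one of the six families in Table~\ref{table:Types}. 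This enumeration is the main obstacle: the changemaker condition alone admits many formal solutions, so one must exploit the explicit plumbing structure of $C(p,q)$, especially the two distinguished $(2,1)$-generators, to rule out spurious candidates, and one must carefully treat the low-rank boundary cases separately.

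\emph{The ``if'' direction and realization.} For each of the six families in Table~\ref{table:Types} we exhibit an explicit Berge-Kang knot $K_0$ with $S^3_{4q}(K_0) \cong P(p,q)$, chosen from~\cite{BergeKang} by matching the prescribed surgery slope $4q$ and the homology of the resulting prism manifold. Gluing $X(p,q)$ to $-W_{4q}(K_0)$ along $P(p,q)$ yields a closed negative definite smooth $4$-manifold $Z$, so Donaldson's theorem supplies an isomorphism $\phi:(\Z^{n+2}, I) \to (H_2(Z), -Q_Z)$. The image $\phi^{-1}([F])$ of a generator $[F]$ of $H_2(-W_{4q})\subset H_2(Z)$ is a vector whose orthogonal complement in $\Z^{n+2}$ is exactly $\phi^{-1}(H_2(X(p,q))) \cong C(p,q)$, and the surgery description together with a linking-form calculation parallel to the proof of Theorem~\ref{changemakerlatticeembedding} shows that this vector is a changemaker. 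Setting $K = K_0$ and $\sigma = \phi^{-1}([F])$ gives the desired data, completing the proof.
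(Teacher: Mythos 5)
Your outline tracks the paper's strategy at a high level — classify the changemaker C-type lattices by a case analysis on how $x_0$ and the standard basis sit inside $\mathbb{Z}^{n+2}$, then invoke Berge--Kang knots and Donaldson's theorem for the realization — but two steps that the paper treats as essential are glossed over in a way that leaves genuine gaps.

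First, the ``only if'' direction needs a rigidity statement that you never invoke. Your case analysis shows that a changemaker lattice $L\cong C(p,q)$ is also isomorphic to some $C(p',q')$ with $(p',q')$ belonging to a family in Table~\ref{table:Types}. To conclude that the original $(p,q)$ lies in the table you must know that the isomorphism type of a C-type lattice determines $(p,q)$; this is exactly Proposition~\ref{pp}, whose proof is not trivial (it reconstructs the vertex norm sequence from lattice-theoretic data such as irreducible unbreakable elements and the parity condition singling out $x_0$). Without this step the argument only shows $C(p,q)\cong C(p',q')$ for some $(p',q')$ in the table, not that $P(p,q)$ itself is tabulated.

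Second, and more seriously, the ``if'' direction as written does not produce the data the theorem promises for the \emph{given} changemaker $\sigma$. You fix a Berge--Kang knot $K_0$ for each family, glue, apply Donaldson, and observe that $\phi^{-1}([F])$ is a changemaker vector $\sigma'$ whose orthogonal complement is $C(p,q)$. But the theorem asserts the existence of a knot and isomorphism adapted to the $\sigma$ from the hypothesis, and — as Table~\ref{Overlap} illustrates — a single $C(p,q)$ can be $(\sigma)^\perp$ for several pairwise inequivalent changemaker vectors (e.g. $P(8s+13,16s+18)$ carries both $(1,2,3,3,7,8^{[s]})$ and $(1,1,3,5,6,8^{[s]})$). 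These are not related by any automorphism of $\mathbb{Z}^{n+2}$, since an automorphism permutes coordinate absolute values. So ``pick a knot per family'' chooses a $\sigma'$ that need not equal $\sigma$, and you have no mechanism to force it to. The paper bridges this by assigning a knot to each \emph{changemaker vector} (not each family) and certifying the match via the Alexander polynomial computed from $\sigma$ through Lemma~\ref{lem:AlexanderComputation}; without something of that kind the stated conclusion about $\phi(\sigma)$ generating $H_2(-W_{4q})$ is not established. A minor terminological point: the Kirby diagram of Figure~\ref{links}D is not literally a linear plumbing, since the $-4$-framed component and the $-a_1$-framed component link twice, which is precisely why $\braket{x_0}{x_1}=-2$ rather than $-1$; this distinction matters throughout the classification, so it should not be smoothed over.
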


{\rmk Theorem~\ref{thm:lattice}, in particular, highlights that the families in Table~\ref{table:Types} are divided so that each changemaker vector corresponds to a unique family. However, a prism manifold $P(p, q)$ may belong to more than one family in Table~\ref{table:Types}. 
We will address the overlaps between the families of Table~\ref{table:Types} in Section~\ref{realizable}: see Table~\ref{Overlap}.


Table~2 in \cite{Prism2016} gives a conjecturally complete list of prism manifolds $P(p,q)$ with $q>0$ that can be obtained by performing surgery on a knot in $S^3$. Every manifold in \cite[Table~2]{Prism2016} is obtained by integral surgery on a {Berge--Kang knot} (see~\cite[Table~4]{Prism2016} and~\cite{BergeKang}). Theorem~\ref{thm:Realization} proves~\cite[Conjecture~1.6]{Prism2016} for the case $q>p$ since the manifolds in Table~\ref{table:Types} coincide with those in~\cite[Table~2]{Prism2016} with $q>p$. We leave open the realization problem for prism manifolds $P(p,q)$ with $0<q<p$. We plan to address this case in a future paper.

\subsection{Organization} Section~\ref{sec:Preliminaries} collects the topological background on prism manifolds, and also reviews the essentials needed to prove our main results. In Section~\ref{sec:CLattices}, we study C--type lattices $C(p,q)$ that are central in the present work. To prove Theorem~\ref{thm:lattice}, we begin with a study of the changemaker lattices (Section~\ref{sec:ChangemakerLattices}), i.e. lattices of the form $(\sigma)^{\perp}\subset \mathbb Z^{n+2}$ for some changemaker vector $\sigma\in \mathbb Z^{n+2}$. We then study when a changemaker lattice, with a {\it standard basis}, is isomorphic to a C--type lattice, with its distinguished {\it vertex basis}. The key to answering this combinatorial question is detecting the {\it irreducible elements} in either of the lattices. Indeed, the standard basis elements of a changemaker lattice are irreducible (Lemma~\ref{lem:irred}), as are the vertex basis elements of a C--type lattice. Furthermore, the classification of the irreducible elements of C--type lattices is given in Proposition~\ref{prop:IntervalsIrreducible}. We collect many structural results about these lattices in Sections~\ref{sec:CLattices}~and~\ref{sec:ChangemakerLattices}.

We classify the changemaker C--type lattices based on how $x_0$, the first element in the ordered basis of a C--type lattice, is written in terms of the standard orthonormal basis elements of $\mathbb Z^{n+2}$. Accordingly, Sections~\ref{sec:Case2},~\ref{sec:k1k21},~and~\ref{sec:Case1} will enumerate the possible changemaker vectors whose orthogonal complements are C--type lattices. Section~\ref{pq} tabulates the corresponding prism manifolds. 

Finally, in Section~\ref{realizable}, we address the overlaps between the families in Table~\ref{table:Types}. More precisely, we provide distinct knots corresponding to distinct changemakers that result in the same prism manifold. We then proceed with proving Theorems~\ref{thm:Realization} and~\ref{thm:lattice}.

\subsection*{Acknowledgements} The heart of this project was done during the Caltech's Summer Undergraduate Research Fellowships (SURF) program in the summer of 2017. 
Y.~N. was partially supported by NSF grant number DMS-1252992
and an Alfred P. Sloan Research Fellowship. F.~V. was partially supported by an AMS-Simons Travel Grant.
W.~B. would like to thank William H. and Helen Lang, as well as Samuel P. and Frances Krown, for their generous support through the SURF program. T.~O. would like to thank Joanna Wall Muir and Mr. James Muir for their generous support through the SURF program.

\section{Preliminaries}\label{sec:Preliminaries}

For a pair of relatively prime integers $p>1$ and $q$, the prism manifold $P(p,q)$ is a Seifert fibered space with a surgery description depicted in Figure~\ref{links}A. It is shown in~\cite{Prism2016} that if $P(p,q)$ is obtained by surgery on a knot in $S^3$, $p$ must be odd. 

An equivalent surgery description for $P(p,q)$ is depicted in Figure~\ref{links}D. To get the coefficients $a_i$, write $\frac{2q-p}{q-p}$ in a Hirzebruch--Jung continued fraction
\begin{equation}\label{eq:ContFrac}
\displaystyle \frac{2q-p}{q-p} = a_1 - \frac{1}{a_2 - \displaystyle\frac{1}{\ddots - \displaystyle\frac{1}{a_n}}} = [a_1,a_2,\dots,a_n]^-.
\end{equation}
From this point on in the paper, we assume that $q>p$. As a result, we have $a_1\ge 3$ in Equation~\eqref{eq:ContFrac}. Moreover, each $a_i\ge2$.

\begin{definition}\label{def:CType}
The \emph{C-type lattice} $C(p,q)$ has a basis 
\begin{equation}\label{VertexBasis}
\{x_0,\dots,x_n\},
\end{equation}
and inner product given by
\begin{equation*}
\braket{x_i}{x_j} = \begin{cases}
4 & i = j = 0 \\
a_i & i = j > 0 \\
-2 & \{i,j\} =\{ 0,1\} \\
-1 & |i-j| = 1, i > 0, j > 0\\
0 &|i-j|>1,
\end{cases}
\end{equation*}
where the coefficients $a_i$, for $i\in \{1,\cdots,n\}$, are defined by the continued fraction~\eqref{eq:ContFrac}. We call~\eqref{VertexBasis} the \emph{vertex basis} of $C(p,q)$.
\end{definition}

\begin{figure}
\centering
\def\svgwidth{\textwidth}
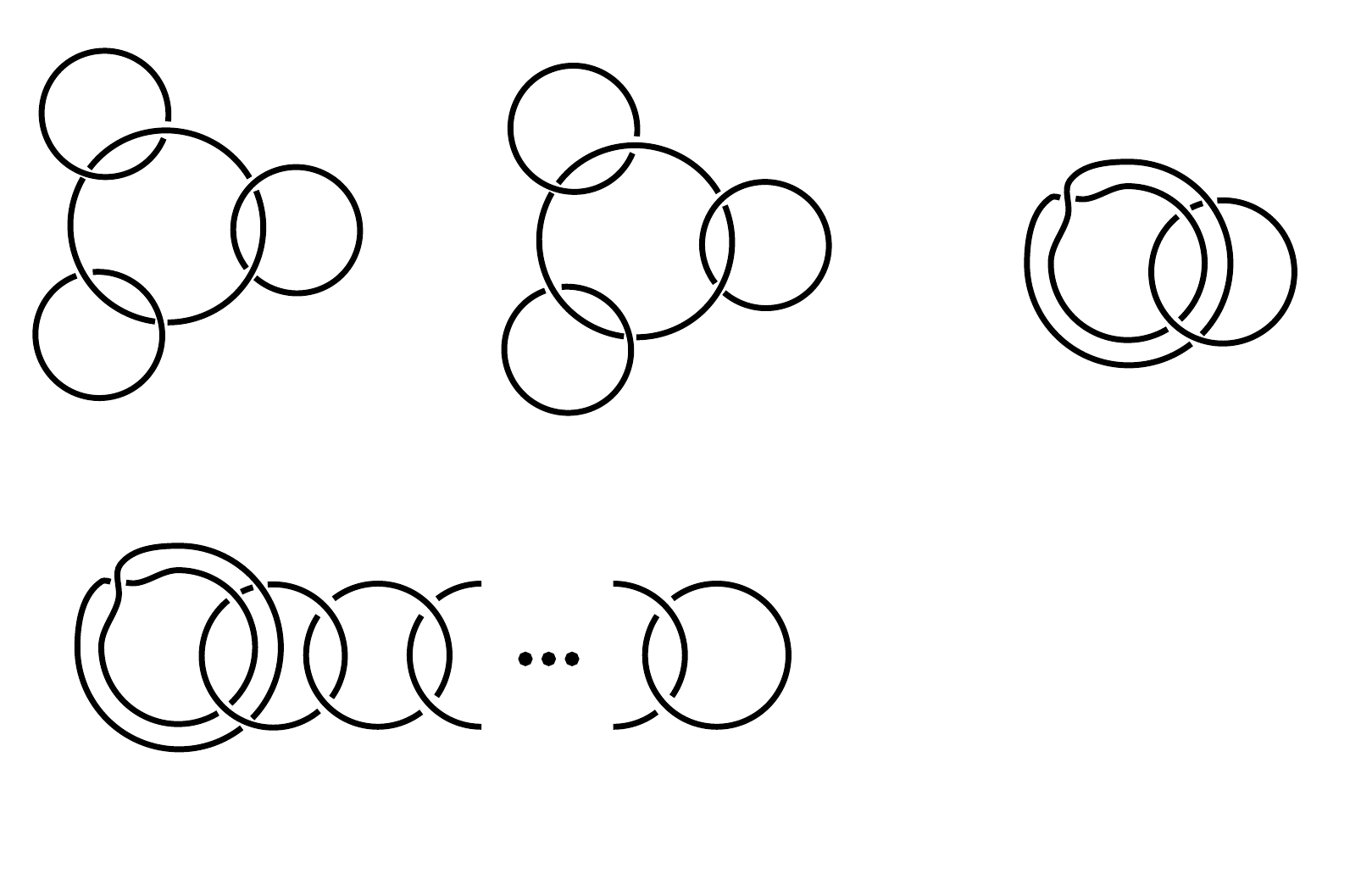
\caption{Surgery presentations of $P(p,q)$. A and B correspond to the two equivalent choices of Seifert invariants $(-1;(2,1),(2,1),(p,q))$ and $(1; (2,1),(2,-1),(p,q-p))$. To go from B to C, blow down two $1$-framed unknots in sequence: first blow down the middle unknot, changing the framing on the upper left unknot to $1$, and then blow down the upper left unknot. Finally, to get to D, use slam-dunk moves to expand $\frac{2q-p}{q-p}$ in a continued fraction. The last link gives a negative-definite four--manifold if $q < 0$ or $q > p$.}
\label{links}
\end{figure}

\begin{figure}
\def\svgwidth{\textwidth}
\includegraphics[scale=.7]{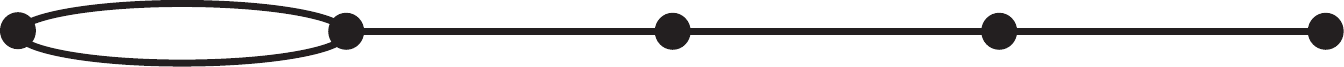}
\put(-272,-7.5){$4$}
\put(-207,-7.5){$a_1$}
\put(-142,-7.5){$a_2$}
\put(-75,-7.5){$\cdots$}
\put(-9,-7.5){$a_n$}
\caption{A C--type lattice $C(p,q)$ with $\frac{2q-p}{q-p} = [a_1,a_2,\cdots,a_n]^-$. Note that $a_1\ge 3$ when $q>p$.}
\label{CType}
\end{figure}
Let $X=X(p,q)$ be the four--manifold, bounded by $P(p,q)$, with a Kirby diagram as depicted in Figure~\ref{links}D. The inner product space $(H_2(X), -Q_X)$ equals $C(p,q)$, where $Q_X$ denotes the intersection pairing of $X$: see Figure~\ref{CType}. Note that $b_2(X)=n+1$, where $n$ is defined in~\eqref{eq:ContFrac}.
{\rmk When $q<0$ in Equation~\eqref{eq:ContFrac}, it follows that $a_1=2$ and $C(p,q)$ is indeed isomorphic to a D--type lattice~\cite[Definition~2.8]{Prism2016}. The prism manifold realization problem is solved in this case~\cite{Prism2016}.
}

\begin{figure}

\centering
\def\svgwidth{.8\textwidth}
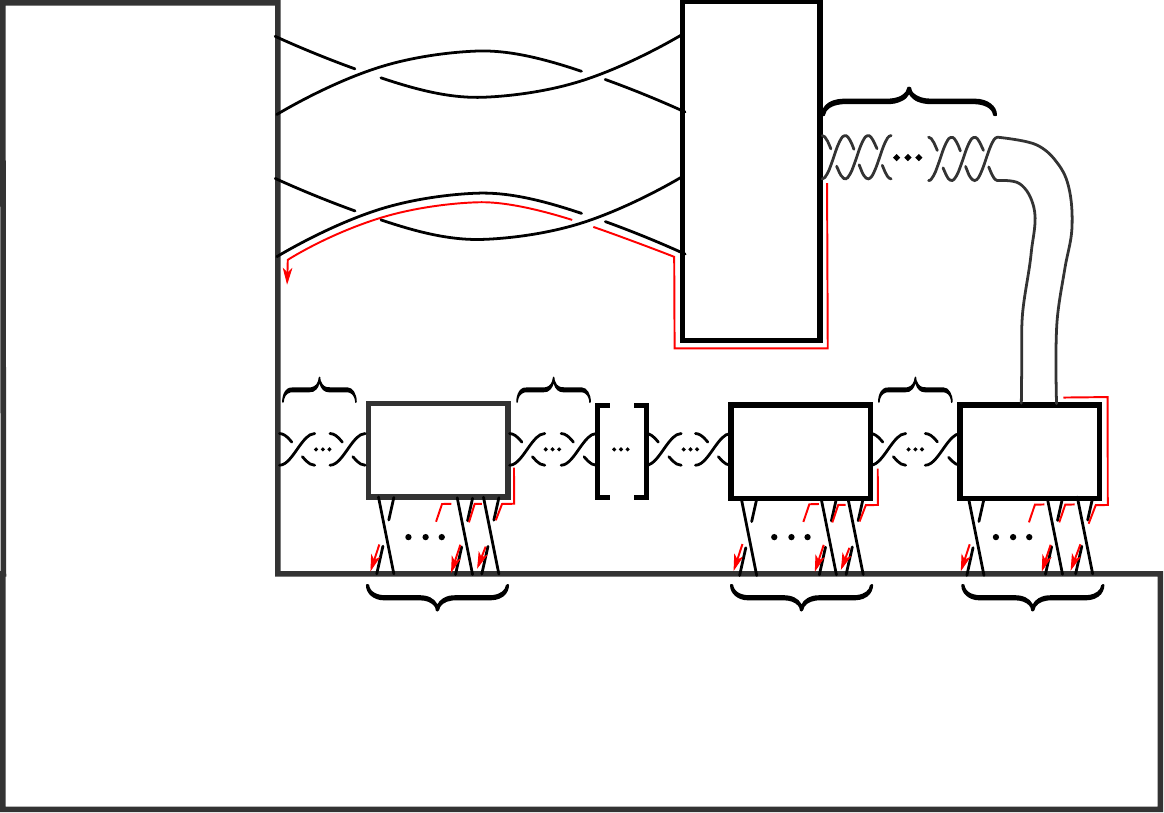
\caption{A handle decomposition of a surface embedded in $S^3$. The boundary of this surface is an alternating Montesinos link whose branched double cover is $P(p,q)$, and the branched double cover of $B^4$ over this surface with its interior pushed into the interior of $B^4$ is $X(p,q)$. Sliding the 1--handles in this picture along the red arrows and then cancelling all but one of the 0--handles gives Figure~\ref{SlidMontesinos}. This surface depends on parameters $b_1,\dots,b_m$ where $m$ is either $2k+1$ or $2k$; if $m = 2k$ omit the band labelled $b_{2k+1}$.}\label{Montesinos}

\end{figure}
\subsection{The four--manifold $X(p,q)$ revisited}

In this subsection, we present a different construction of the four--manifold $X(p,q)$ as the branched double cover of $B^4$ over a particular surface: see Figure~\ref{Montesinos}. As a Seifert fibered rational homology sphere, the prism manifold $P(p,q)$ is the branched double cover of $S^3$ branched along a Montesinos link~\cite{Montesinos1973}: choose $b_1,\dots,b_n$ so that
\begin{equation}
\frac{p}{q-p} = b_1 + \frac{1}{b_2 + \displaystyle \frac{1}{ \ddots + \displaystyle \frac{1}{b_m}}} = [b_1,b_2,\dots,b_m]^+.
\end{equation}
Since $q > p$, $\frac{p}{q-p} > 0$ and we can choose the $b_i$ so that $b_1 \ge 0$ and $b_i > 0$ for $i > 1$. The boundary of the surface $\Sigma$ drawn in Figure~\ref{Montesinos} is an alternating Montesinos link $L$, and $\Sigma$ itself is the surface formed by the black regions in a checkerboard coloring of the alternating diagram. We point out that we are using the coloring convention as in Figure~\ref{convention}. The branched double cover of $S^3$ branched along $L$ is $P(p,q)$. Let $X_{\Sigma}$ be the branched double cover of $B^4$ over the surface $\Sigma$ with its interior pushed into the interior of $B^4$. With this notation in place:

\begin{figure}[t]
\includegraphics[scale=.6]{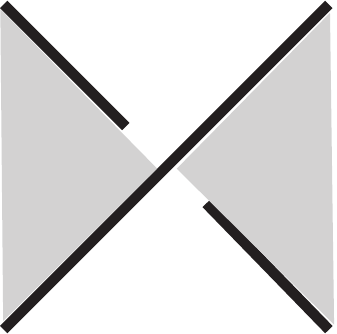}
\caption{The coloring convention}
\label{convention}
\end{figure}

\begin{figure}

\centering
\def\svgwidth{.8\textwidth}
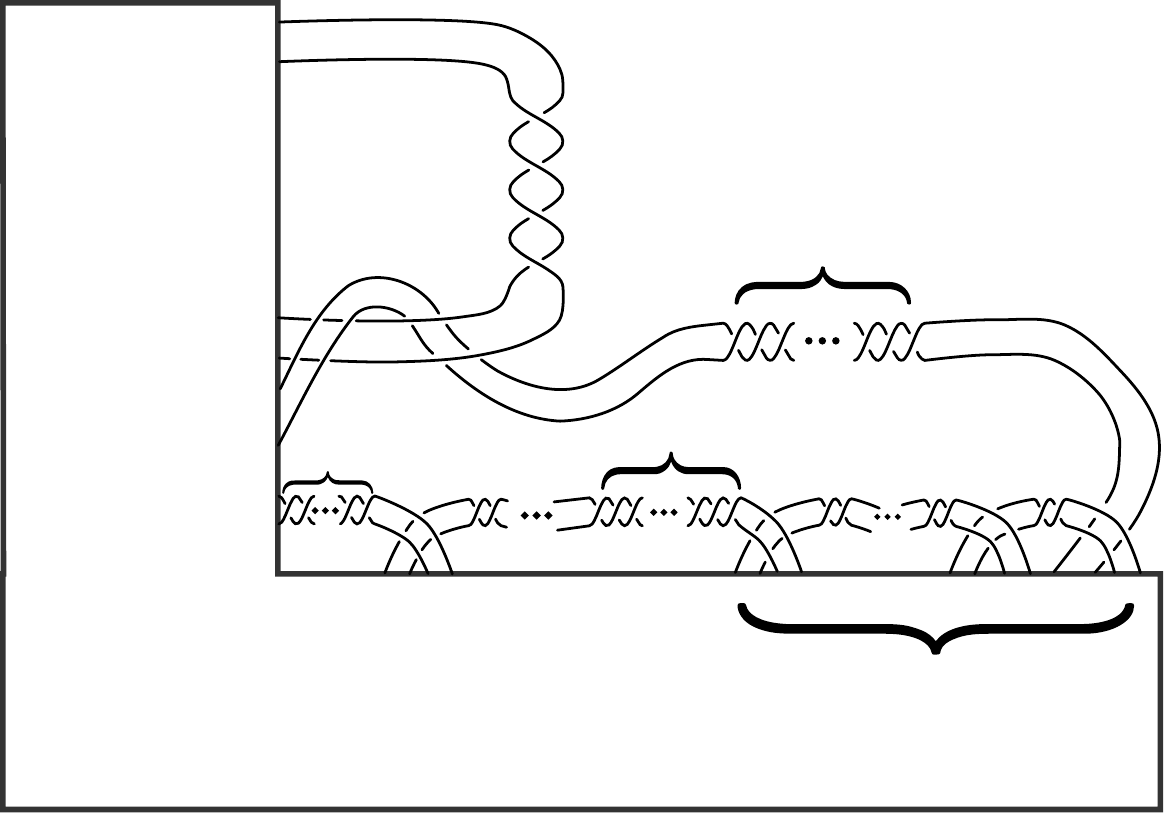
\caption{Another view of the surface shown in Figure~\ref{Montesinos}. From this picture a Kirby diagram representing the branched double cover of $B^4$ over this surface (shown in Figure~\ref{CtypeFromMontesinos}) can be read off using the methods of Figure 4 in \cite{AkbulutKirby1980}. As before, if $m$ is even omit the band labelled $b_{2k+1}$.}\label{SlidMontesinos}

\end{figure}

\begin{prop}\label{BDC}
$X(p,q)\cong X_\Sigma$.
\end{prop}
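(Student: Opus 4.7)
The plan is to prove $X(p,q)\cong X_\Sigma$ by exhibiting a common Kirby diagram, namely the linear plumbing shown in Figure~\ref{links}D. I would carry this out by direct Kirby calculus in three stages.

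First, I would simplify the handle decomposition of the surface $\Sigma$ itself. Starting from Figure~\ref{Montesinos}, I perform the $1$--handle slides indicated by the red arrows and then cancel all but one of the $0$--handles of $\Sigma$. These moves are isotopies of $\Sigma$ rel boundary, so $X_\Sigma$ is unchanged, and the resulting handle decomposition is exactly the one displayed in Figure~\ref{SlidMontesinos}: a single disk together with bands of twist parameters $b_1+3,\,b_2-1,\,b_3+2,\,\ldots,\,b_{2k+1}+1$ (with the last band omitted when $m=2k$).

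Second, I would apply the procedure of \cite[Figure~4]{AkbulutKirby1980} to translate this handle decomposition of $\Sigma$ into a Kirby diagram for the branched double cover $X_\Sigma$. Each band contributes a single $2$--handle; because the bands in Figure~\ref{SlidMontesinos} are arranged in a chain, the attaching circles of these $2$--handles form a chain of unknots, i.e.\ a linear plumbing whose framings are read off explicitly from the twist parameters of the bands.

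Third, I would verify that this plumbing matches the one in Figure~\ref{links}D. The leftmost framing should come out to $-4$, matching the distinguished basis vector $x_0$ of $C(p,q)$. For the remaining framings, I would use the classical identity converting positive to negative continued fractions: starting from $p/(q-p) = [b_1,\ldots,b_m]^+$ and incorporating the shifts $(+3,-1,+2,-1,\ldots,+1)$ introduced when simplifying $\Sigma$, one obtains $(2q-p)/(q-p) = [a_1,\ldots,a_n]^-$, so that the remaining framings are precisely $-a_1,\ldots,-a_n$. I expect the main obstacle to be this bookkeeping step: tracking framings carefully through the Akbulut--Kirby recipe, keeping sign conventions consistent, and confirming that the shifted positive continued fraction collapses exactly onto \eqref{eq:ContFrac}. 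Once this verification is complete, the two Kirby diagrams coincide on the nose and the proposition follows.
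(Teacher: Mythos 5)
Your outline follows the paper's proof essentially step for step: simplify the handle decomposition of $\Sigma$ via $1$--handle slides and $0$--handle cancellations to reach Figure~\ref{SlidMontesinos}, apply the Akbulut--Kirby recipe to obtain a linear plumbing, and then match framings against Figure~\ref{links}D by a continued fraction identity. The main strategic ingredients (including the invocation of \cite[Figure~4]{AkbulutKirby1980} and the use of a positive--to--negative continued fraction conversion, which is exactly Lemma~\ref{greene9.5}) coincide with the paper's.

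One detail in your first stage is misstated, and it would derail the bookkeeping in stage three if carried forward literally. You describe Figure~\ref{SlidMontesinos} as ``a single disk together with bands of twist parameters $b_1+3,\,b_2-1,\,b_3+2,\,\ldots$'', i.e.\ one band per label. In fact the label $b_{2i}-1$ on the even positions denotes a \emph{group of $b_{2i}-1$ bands, each with $2$ half-twists}, not a single band with $b_{2i}-1$ half-twists. This is why the resulting framing sequence is
\[
(a_1,\dots,a_n)=(b_1+3,\ 2^{[b_2-1]},\ b_3+2,\ 2^{[b_4-1]},\dots),
\]
of length $n$ determined by the negative continued fraction expansion, rather than a sequence of length $m$. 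The role of Lemma~\ref{greene9.5} is precisely to collapse these runs of $2$'s, yielding $[b_1+2,-b_2,b_3,-b_4,\dots]^-=[b_1+2,b_2,\dots,b_m]^+=\frac{p}{q-p}+2=\frac{2q-p}{q-p}$. Under your reading the number of $2$--handles, and hence $b_2(X_\Sigma)$, would be wrong, and no shift of the $b_i$'s alone would produce the identity. Once you correct the interpretation of the bands, your argument becomes the paper's proof.
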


We first recall the following lemma that will be used in the proof of Proposition~\ref{BDC} and also in Section~\ref{pq}. 
\begin{lemma}[Lemma~9.5~(1)~and~(3) of~\cite{greene:LSRP}]\label{greene9.5}
For integers $r,s,t \ge 0$,
\begin{itemize}
    \item[1.] $[\dots,r,2^{[s]},t,\dots]^- = [\dots, r-1, -(s+1), t-1,\dots]^-$, and
    \item[2.] $[\dots, s, 2^{[t]}]^- = [\dots, s-1, -(t+1)]^-$,
\end{itemize}
where $2^{[a]}$ means that the entry $2$ appears $a$ times.
\end{lemma}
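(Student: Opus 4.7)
The plan is to reduce both identities to elementary arithmetic via the matrix interpretation of Hirzebruch--Jung continued fractions. For each integer $a$, set
\[
M(a) = \begin{pmatrix} a & -1 \\ 1 & 0 \end{pmatrix},
\]
so that $M(a_1)M(a_2)\cdots M(a_n)\binom{1}{0} = \binom{p}{q}$ gives $[a_1,\dots,a_n]^- = p/q$. In particular, if two integer sequences $(a_1,\dots,a_n)$ and $(a_1',\dots,a_m')$ yield matrix products that agree up to a nonzero scalar on the column $\binom{1}{0}$, their continued fractions coincide. This is the only machinery the proof will need.

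The first step will be a one-line induction showing $M(2)^s = \bigl(\begin{smallmatrix} s+1 & -s \\ s & -(s-1)\end{smallmatrix}\bigr)$. With this, for part (1) I will expand both sides of the substitution as products sandwiching the altered block:
\begin{align*}
M(r)\,M(2)^s\,M(t) &= \begin{pmatrix} (r(s{+}1)-s)t - (rs-s+1) & -(r(s{+}1)-s) \\ (s{+}1)t - s & -(s{+}1) \end{pmatrix},\\
M(r{-}1)\,M(-(s{+}1))\,M(t{-}1) &= \begin{pmatrix} -\bigl((r(s{+}1)-s)t - (rs-s+1)\bigr) & r(s{+}1)-s \\ -\bigl((s{+}1)t - s\bigr) & s{+}1 \end{pmatrix}.
\end{align*}
A direct comparison entry-by-entry shows that the second matrix equals $-1$ times the first. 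Because the unchanged entries in the continued fraction multiply on the left and on the right of both matrix expressions, the effect of the substitution on the total product $M(a_1)\cdots M(a_N)$ is multiplication by $-1$. Since scaling both numerator and denominator by $-1$ does not change the rational value $p/q$, the two continued fractions are equal.

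For part (2), the same strategy applies and is in fact easier because there is no tail past the $2^{[t]}$. I will compute
\[
M(s)M(2)^t\binom{1}{0} = \binom{s(t+1)-t}{t+1}, \qquad M(s-1)M(-(t+1))\binom{1}{0} = \binom{-(s-1)(t+1)-1}{-(t+1)},
\]
and verify by inspection that these two column vectors are negatives of each other. The same prefix matrix $M(a_1)\cdots M(a_{k})$ then produces identical continued fractions.

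I do not expect a genuine obstacle here, since the argument is essentially $2\times 2$ matrix bookkeeping. The only point requiring vigilance is the edge cases: $s=0$ (where $2^{[0]}$ means an empty block and the identity reduces to $[\dots,r,t,\dots]^- = [\dots, r-1,-1,t-1,\dots]^-$), and the boundary case of part (2) when $t=0$ (which collapses to a tautology). Both are subsumed by the matrix computation because the formula for $M(2)^s$ remains correct at $s=0$ (yielding the identity matrix), so no separate base-case verification is needed.
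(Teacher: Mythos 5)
The paper does not give its own proof of this lemma — it cites it directly from Greene's paper — so there is nothing internal to compare against. Your proof is correct and self-contained. I verified the formula $M(2)^s = \bigl(\begin{smallmatrix} s+1 & -s \\ s & -(s-1)\end{smallmatrix}\bigr)$, the entry-by-entry comparison in part (1) showing $M(r{-}1)M(-(s{+}1))M(t{-}1) = -\,M(r)M(2)^s M(t)$, and the column-vector computation in part (2). The matrix $M(a) = \bigl(\begin{smallmatrix} a & -1 \\ 1 & 0 \end{smallmatrix}\bigr)$ convention correctly encodes $[a_1,\dots,a_n]^-$ via $M(a_1)\cdots M(a_n)\binom{1}{0}$, and the observation that a global $\pm 1$ on the matrix product leaves the ratio $p/q$ unchanged closes the argument. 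One small point of care that you handle correctly: in part (2) the two matrix products are \emph{not} global negatives of each other (their second columns differ), so it is essential that you only compare their action on $\binom{1}{0}$, which is exactly what the continued-fraction value sees. The edge-case observation that $M(2)^0 = I$ subsumes $s=0$ and $t=0$ is also correct. This is a clean, legitimate alternative to a direct induction on the length of the $2$-block.
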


We now proceed to prove Proposition~\ref{BDC}. In order to obtain a Kirby diagram of branched double covers, we closely follow the treatment of~\cite{AkbulutKirby1980}; in particular, see \cite[Figure~4]{AkbulutKirby1980}.

\begin{proof}[Proof of Proposition~\ref{BDC}]
Figure~\ref{Montesinos} depicts a handle decomposition of the surface $\Sigma$ whose branched double cover is $X_\Sigma$. By sliding the 1--handles along the red arrows in Figure~\ref{Montesinos} and then canceling all but only one of the 0--handles, we obtain the surface in Figure~\ref{SlidMontesinos}: a disc with several bands attached. The odd-numbered $b_{2i+1}$ with $0<i<\frac{m-1}2$ contribute bands with $b_{2i+1} + 2$ half-twists, $b_1$ contributes a band with $b_1 + 3$ half-twists, and $b_m$ contributes a band with $b_m + 1$ half-twists when $m$ is odd. The even-numbered $b_{2i}$ contribute $b_{2i} - 1$ bands each, each with $2$ half-twists. Therefore, the coefficients $a_1,\dots, a_n$ of Figure~\ref{CtypeFromMontesinos} are
\begin{equation}\label{equivalent}
(a_1,\dots,a_n) = \begin{cases}
(b_1 + 3, 2^{[b_2 - 1]}, b_3 + 2, 2^{[b_4 - 1]},\dots,2^{[b_{m-1} - 1]},b_m + 1) & m\text{ odd}, \\
(b_1 + 3, 2^{[b_2 - 1]}, b_3 + 2, 2^{[b_4 - 1]},\dots,b_{m-1} + 2,2^{[b_{m} - 1]}) & m\text{ even}.
\end{cases}
\end{equation}
 Using Lemma~\ref{greene9.5}, 
\begin{eqnarray*}
[a_1,\dots,a_n]^-& =& [b_1 + 2, -b_2, b_3, -b_4, \dots, \pm b_m]^-\\ &=& [b_1 + 2, b_2,\dots,b_m]^+ \\ & =& \frac{p}{q-p} + 2 \\ & =& \frac{2q-p}{q-p}. 
\end{eqnarray*} 
That is, the $a_i$ in Equation~\eqref{equivalent} are the same as those of Equation~\eqref{eq:ContFrac}. The branched double cover of $B^4$ branched over the surface in Figure~\ref{SlidMontesinos} is depicted in Figure~\ref{CtypeFromMontesinos}; comparing it with Figure~\ref{links}D, the result follows.
\end{proof}

\subsection{Input from Heegaard Floer homology}

We assume familiarity with Floer homology and only review the essential input here for completeness. See, for instance,~\cite{Ozsvath2004, Ozsvath2004a}. In~\cite{OSzAbGr}, Ozsv\'ath and Szab\'o defined the correction term $d(Y, \mathfrak t)$ that associates a rational number to an oriented rational homology sphere $Y$ equipped with a Spin$^c$ structure $\mathfrak t$. 
If $Y$ is boundary of a negative definite four--manifold $X$, then
\begin{equation}\label{eq:CorrBound}
 c_1(\mathfrak s)^2 + b_2(X)\le 4d(Y, \mathfrak t),
\end{equation}
for any $\mathfrak s \in \text{Spin}^c(X)$ that extends $\mathfrak t \in \text{Spin}^c(Y)$.

\begin{definition}
A smooth, compact, negative definite four--manifold $X$ is {\it sharp} if for every $\mathfrak t \in \text{Spin}^c(Y)$, there exists some $\mathfrak s\in \text{Spin}^c(X)$ extending $\mathfrak t$ such that the equality is realized in Equation (\ref{eq:CorrBound}).
\end{definition}

\begin{figure}

\centering
\def\svgwidth{.8\textwidth}
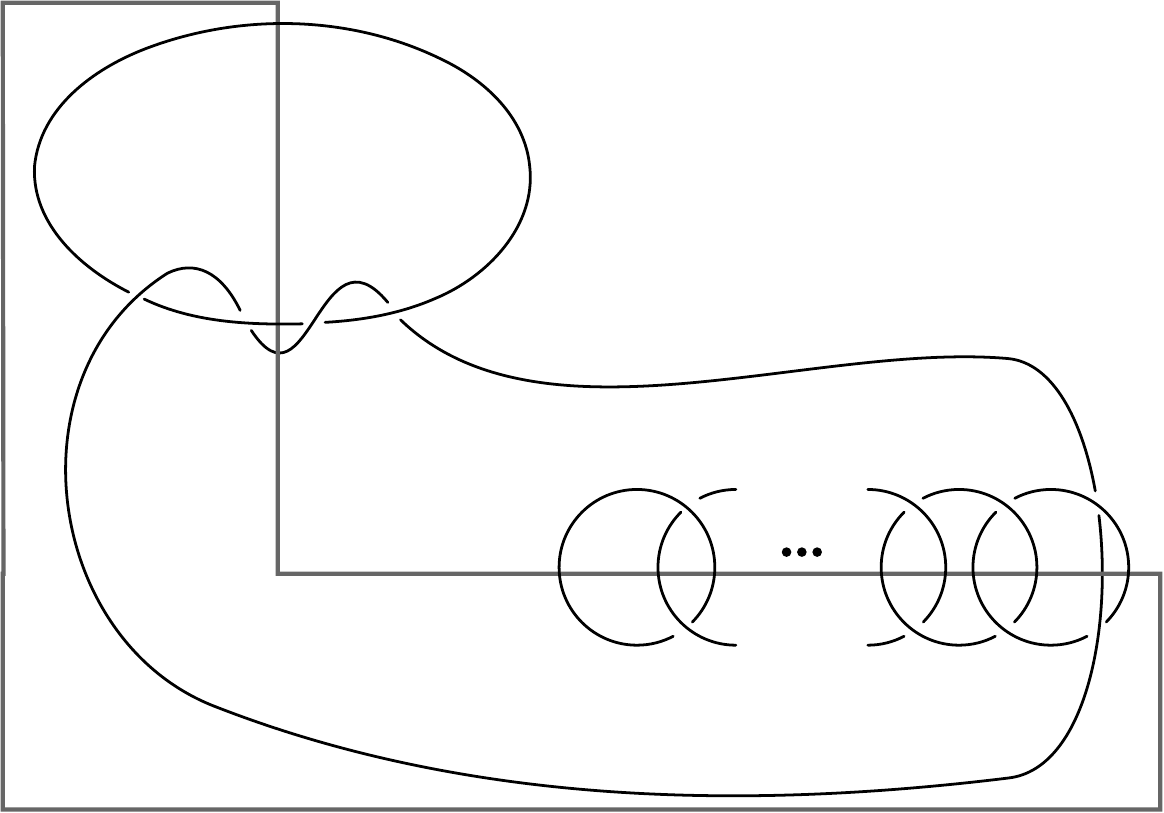
\caption{A Kirby diagram representing the branched double cover of the surface in Figure~\ref{Montesinos}. This is the same as the diagram defining $X(p,q)$. The grey box is not part of the link, but is included only to show the relationship with Figure~\ref{SlidMontesinos}.}\label{CtypeFromMontesinos}

\end{figure}

Using Proposition~\ref{BDC}, the following is immediate from~\cite[Theorem~3.4]{OSzBrDoub}.

{\lemma \label{XSharp} $X(p,q)$ is a sharp four--manifold.
}

\subsection{Alexander polynomials of knots on which surgery yield $P(p,q)$ with $q>p$}
Using techniques that will be developed in the next sections in tandem with Theorem~\ref{changemakerlatticeembedding}, we will find the classification of all C-type lattices $C(p,q)$ that are isomorphic to $(\sigma)^{\perp}$ for some changemaker vector $\sigma$ in $\mathbb Z^{n+2}$. If the corresponding prism manifold $P(p,q)$ is indeed arising from surgery on a knot $K \subset S^3$, we are able to compute the Alexander polynomial of $K$ from the values of the components of $\sigma$: let $S$ be the closed surface obtained by capping off a Seifert surface for $K$ in $W_{4q}$. It is straightforward to check that the class $[S]$ generates $H_2(W_{4q})$. It follows from Theorem~\ref{changemakerlatticeembedding} that, under the embedding $H_2(X)\oplus H_2(-W_{4q})\hookrightarrow H_2(Z)$, the homology class $[S]$ gets mapped to a changemaker vector $\sigma$. Let $\{e_0, e_1, \cdots, e_{n+1}\}$ be the standard orthonormal basis  for $\mathbb Z^{n+2}$, and write
\[
\sigma = \sum_{i=0}^{n+1} \sigma_i e_i.
\] 
Also, define the \textit{characteristic covectors} of $\mathbb{Z}^{n+2}$ to be 
\[
\text{Char}(\mathbb Z^{n+2})=\left \{ \left.\sum_{i=0}^{n+1}\mathfrak c_i e_i \right| \mathfrak c_i\text{ odd for all } i\right \}.
\]
We remind the reader that, writing the Alexander polynomial of $K$ as 
\begin{equation}\label{eq:AlexanderPolynomial}
\Delta_K(T)= b_0 + \sum_{i>0}b_i(T^i+T^{-i}),
\end{equation}
the $k$-th {\it torsion coefficient} of $K$ is
\[
t_k(K)= \sum_{j\ge 1}jb_{k+j},
\]
where $k\ge 0$. The following lemma is immediate from~\cite[Lemma~2.5]{Greene2015}.
\begin{lemma}\label{lem:AlexanderComputation}
The torsion coefficients satisfy
\[
t_i(K)=
\left\{
\begin{array}{cl}
\displaystyle\min_{\mathfrak c}\frac{\mathfrak c^2-n-2}8, &\text{for each $i\in\{0,1,\dots,2q\}$,}\\
&\\
0,&\text{for $i>2q$.}
\end{array}
\right.
\]
where $\mathfrak c$ is subject to 
\[
\mathfrak c\in\mathrm{Char}(\mathbb Z^{n+2}),\quad\langle\mathfrak c,\sigma\rangle+4q\equiv2i\pmod{8q}.
\]
And for $i>0$, 
\[
b_i=t_{i-1}-2t_i+t_{i+1},\quad\text{for }i>0,
\]
and
\[b_0=1-2\sum_{i>0}b_i,\]
where the $b_i$ are as in~\eqref{eq:AlexanderPolynomial}.
\end{lemma}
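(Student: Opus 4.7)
The plan is to reduce directly to \cite[Lemma~2.5]{Greene2015}, so the bulk of the work is to verify that the geometric setup here matches the hypotheses of that lemma. The essential inputs will be: (i) Lemma~\ref{XSharp}, which asserts that $X(p,q)$ is a sharp negative-definite four-manifold bounding $P(p,q)$ with $b_2(X) = n+1$; (ii) Theorem~\ref{changemakerlatticeembedding}, which under the hypothesis $P(p,q) = S^3_{4q}(K)$ furnishes an embedding $(H_2(X), -Q_X) = C(p,q) \hookrightarrow \mathbb Z^{n+2}$ as $(\sigma)^\perp$, with the generator of $H_2(-W_{4q})$ identified with $\sigma$; and (iii) the fact that prism manifolds are spherical, hence L-spaces, so that $K$ is an L-space knot and the Ozsv\'ath--Szab\'o $V$-invariants of $K$ coincide with its torsion coefficients.

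Next I would pin down the bijection between $\operatorname{Spin}^c$-structures on $P(p,q) = S^3_{4q}(K)$ and characteristic covectors of $\mathbb Z^{n+2}$. Every $\mathfrak c \in \operatorname{Char}(\mathbb Z^{n+2})$ determines a $\operatorname{Spin}^c$-structure on the closed four-manifold $Z$, which restricts to $\operatorname{Spin}^c$-structures on $X$, on $-W_{4q}$, and hence on $P(p,q)$. Using the standard convention that labels $\mathfrak t_i$ on $S^3_{4q}(K)$ by the extension to $W_{4q}$ whose first Chern class evaluates to $2i - 4q \pmod{8q}$ on the capped-off Seifert surface $[\hat F] = [S]$, and the identification $\phi([S]) = \sigma$, one sees that the $\operatorname{Spin}^c$-structure induced by $\mathfrak c$ is exactly $\mathfrak t_i$ when $\langle \mathfrak c, \sigma\rangle + 4q \equiv 2i \pmod{8q}$.

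With the setup in place, sharpness of $X$ gives
\[
4d(P(p,q), \mathfrak t_i) = \max_{\mathfrak c}\bigl(c_1(\mathfrak s_X(\mathfrak c))^2 + b_2(X)\bigr),
\]
where the maximum is taken over all characteristic covectors $\mathfrak c$ of $\mathbb Z^{n+2}$ restricting to $\mathfrak t_i$. Decomposing $\mathfrak c$ into its components along $\sigma$ and $(\sigma)^\perp$ in the (negative-definite) form yields
\[
c_1(\mathfrak s_X)^2 = \frac{\langle \mathfrak c, \sigma\rangle^2}{4q} - \mathfrak c\cdot\mathfrak c.
\]
Combining this with the surgery formula $d(S^3_{4q}(K), \mathfrak t_i) = d(L(4q,1), \mathfrak t_i) - 2V_i(K)$ for L-space knots, the identification $V_i = t_i$, and the explicit $d$-invariants of $L(4q,1)$, the minimum over $\mathfrak c$ of $\mathfrak c\cdot\mathfrak c$ subject to the congruence is achieved at the representative whose $\langle \mathfrak c, \sigma\rangle$ has smallest absolute value; the $\langle \mathfrak c, \sigma\rangle^2/(4q)$ and $(2i-4q)^2/(4q)$ contributions then cancel, producing the stated formula $t_i(K) = \min_{\mathfrak c}(\mathfrak c^2 - n - 2)/8$. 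For $i > 2q$ the torsion coefficients vanish since $g(K) \le 2q$, and the recursive relations $b_i = t_{i-1} - 2t_i + t_{i+1}$ and $b_0 = 1 - 2\sum_{i>0} b_i$ are immediate by inverting $t_k(K) = \sum_{j\ge 1} j\,b_{k+j}$ together with $\Delta_K(1) = 1$.

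The main obstacle is bookkeeping rather than mathematical content: fixing the Spin$^c$-labeling convention so that the congruence $\langle \mathfrak c, \sigma\rangle + 4q \equiv 2i \pmod{8q}$ comes out exactly as stated, and tracking the signs introduced by the simultaneous use of the positive-definite Euclidean form $I$ on $\mathbb Z^{n+2}$ and its negation $-I \cong Q_Z$ on $H_2(Z)$. Once these are in place, the lemma is an immediate translation of Greene's result into our setting.
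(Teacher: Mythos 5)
Your proposal is correct and takes essentially the same route as the paper: both reduce the statement to \cite[Lemma~2.5]{Greene2015} by invoking the sharpness of $X(p,q)$ (Lemma~\ref{XSharp}) together with the changemaker embedding (Theorem~\ref{changemakerlatticeembedding}). The paper simply cites Greene's lemma as immediate, whereas you additionally spell out the hypothesis verification (L-space knot, $V_i = t_i$, the $\operatorname{Spin}^c$-bookkeeping on $Z = X \cup (-W_{4q})$) and the resulting $d$-invariant computation, which matches what Greene's proof does in the general setting.
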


\section{C-Type Lattices}\label{sec:CLattices}

This section assembles facts about C-type lattices that will be used in the classification. We mainly use the notation of~\cite{greene:LSRP, Prism2016}. Recall that we always assume $q > p$, so $a_1 \ge 3$: see Figure~\ref{CType}.

Let $L$ be a lattice. Given $v \in L$, let $|v| = \langle v, v \rangle$ be the norm of $v$. An element $\ell \in L$ is {\it reducible} if $\ell = x+y$ for some nonzero $x, y \in L$, with $\langle x, y \rangle \ge 0$, and {\it irreducible} otherwise. An element $\ell \in L$ is {\it breakable} if $\ell = x+y$ with $|x|, |y| \ge 3$ and $\langle x, y \rangle =-1$, and {\it unbreakable} otherwise.



Among the irreducible elements of a lattice, intervals are the most convenient for us:
\begin{definition}
In a C-type lattice, if $I$ is any subset of $\{x_0,x_1,\dots,x_n\}$ then write $[I] = \sum_{x \in A} x$. An {\it interval} is an element of the form $[I]$ with $I = \{x_a,x_{a+1},\dots,x_b\}$ for $0 \le a \le b \le n$. We say that $a$ is the left endpoint of the interval, and $b$ is the right endpoint of the interval. Say that $[I]$ contains $x_i$ if $I$ does. 
\end{definition}


Given the fact that $a_1 \ge 3$, the following is immediate from~\cite[Proposition~3.3]{greene:LSRP}.

\begin{prop}\label{prop:IntervalsIrreducible}
If $v \in C(p,q)$ is irreducible, $v = \epsilon[I]$ for some $\epsilon = \pm 1$ and $[I]$ an interval.
\end{prop}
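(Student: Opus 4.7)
Writing $v=\sum_{i=0}^n c_i x_i$ in the vertex basis, the plan is to adapt Greene's argument for linear plumbing lattices to the $C$-type setting, where the only new feature is the edge weight $\langle x_0,x_1\rangle=-2$. I would proceed in three steps. For sign consistency, suppose the $c_i$ do not all share a sign and set $v^+=\sum_{c_i>0}c_ix_i$, $v^-=\sum_{c_i<0}c_ix_i$. Every off-diagonal pairing $\langle x_i,x_j\rangle$ is nonpositive and every cross-product $c_ic_j$ with $c_i>0$, $c_j<0$ is negative, so each term of $\langle v^+,v^-\rangle$ is nonnegative. Thus $v=v^++v^-$ reduces $v$, and after possibly negating we may assume $c_i\ge 0$ for all $i$.

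Next, if the support $S=\{i:c_i>0\}$ is not contiguous, split $S=S_1\sqcup S_2$ at a gap of at least one omitted index. Since $\langle x_i,x_j\rangle=0$ whenever $|i-j|\ge 2$, the corresponding decomposition $v=v_1+v_2$ satisfies $\langle v_1,v_2\rangle=0$, and $v$ is reducible. So the support is an interval $\{a,a+1,\dots,b\}$ with each $c_i\ge 1$. For the final step, set $w=x_a+x_{a+1}+\cdots+x_b$ and consider $v=w+(v-w)$; the residue $v-w=\sum_k d_kx_k$ has coefficients $d_k=c_k-1\ge 0$ supported in $[a,b]$. A direct vertex-by-vertex computation expands $\langle w,v-w\rangle$ as a sum over $k\in[a,b]$ whose term equals $d_k(a_k-1)$ when $k\in\{a,b\}$ and $d_k(a_k-2)$ when $a<k<b$, except that the anomalous $-2$ edge modifies the contributions touching $x_0$ and $x_1$ when $a=0$: the $k=0$ contribution becomes $2d_0$, the $k=1$ interior contribution becomes $d_1(a_1-3)$, and the $k=1=b$ endpoint contribution becomes $d_1(a_1-2)$. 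Using $a_k\ge 2$ for $k\ge 2$ and, crucially, $a_1\ge 3$, every such term is nonnegative. Therefore $\langle w,v-w\rangle\ge 0$; if any $c_j\ge 2$ then $v-w\neq 0$ and $v=w+(v-w)$ reduces $v$, a contradiction. Hence every $c_i=1$ and $v=[I]$.

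The main obstacle is the last step, specifically absorbing the $-2$ edge at the $x_1$ vertex. The hypothesis $a_1\ge 3$ is exactly what keeps the coefficient $a_1-3$ of $d_1$ nonnegative in the interior case; without it one produces genuine irreducible elements such as $x_0+2x_1+x_2$ when $a_1=2$. This $a_1=2$ regime is precisely where $C(p,q)$ becomes a D-type lattice (the $q<0$ case handled in \cite{Prism2016}), and so the hypothesis is indispensable for the conclusion we want here.
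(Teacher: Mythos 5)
Your proof is correct and follows the same route the paper compresses into a citation of Greene's Proposition~3.3 together with the remark ``Given the fact that $a_1\ge 3$'': you carry out Greene's three-step peeling argument (sign uniformity, contiguous support, subtract the interval $[I]$) and explicitly track the two places where the anomalous edge $\langle x_0,x_1\rangle=-2$ alters the bookkeeping. The term $d_1(a_1-3)$ at the interior vertex $k=1$ is exactly where the hypothesis $a_1\ge 3$ is needed, which is the content the paper leaves implicit.
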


\begin{definition}\label{defn:pairinggraph}
Given a lattice $L$ and a subset $V\subset L$, the {\it pairing graph} is $\hat{G}(V) = (V, E)$, where $e = (v_i, v_j) \in E$ if $\langle v_i, v_j \rangle \neq 0$.
\end{definition}

\begin{cor}\label{indecomposable}
The lattice $C(p,q)$ is indecomposable; that is, $C(p,q)$ is not the direct sum of two nontrivial lattices.
\end{cor}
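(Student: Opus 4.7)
The plan is to exploit Proposition~\ref{prop:IntervalsIrreducible} together with the fact that the pairing graph of the vertex basis $\{x_0,x_1,\dots,x_n\}$ is a connected path. Suppose for contradiction that $C(p,q) = L_1 \oplus L_2$ as an orthogonal direct sum with both $L_1$ and $L_2$ nontrivial, and write every $v \in C(p,q)$ uniquely as $v = v^{(1)} + v^{(2)}$ with $v^{(i)} \in L_i$.

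First I would note that any irreducible element of $C(p,q)$ must lie entirely in $L_1$ or entirely in $L_2$: if an irreducible $v$ had both components nonzero, then $v = v^{(1)} + v^{(2)}$ would express $v$ as a sum of two nonzero lattice elements with $\langle v^{(1)}, v^{(2)}\rangle = 0 \ge 0$, contradicting irreducibility. Next, each singleton $\{x_i\}$ is an interval, so by Proposition~\ref{prop:IntervalsIrreducible} (applied in the trivial direction that intervals are among the irreducibles — indeed every $x_i$ has the form $[I]$ for $I=\{x_i\}$, and from Definition~\ref{def:CType} together with the fact that any nontrivial decomposition in $C(p,q)$ would force a norm computation exceeding $\langle x_i,x_i\rangle$), each basis vector $x_i$ is irreducible. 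Hence each $x_i$ lies in one of $L_1$ or $L_2$.

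I would then use the inner products in Definition~\ref{def:CType}: for $0 \le i \le n-1$ we have $\langle x_i, x_{i+1}\rangle \in \{-1,-2\}$, in particular nonzero. Orthogonality of $L_1$ and $L_2$ therefore forces $x_i$ and $x_{i+1}$ to lie in the same summand. Since this holds for every $i$, induction along the path $x_0, x_1, \dots, x_n$ shows that all $x_i$ lie in the same $L_j$. But $\{x_0,\dots,x_n\}$ spans $C(p,q)$, so the other summand is $\{0\}$, contradicting nontriviality.

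The only mild obstacle is justifying that the vertex basis elements themselves are irreducible; this is a direct corollary of Proposition~\ref{prop:IntervalsIrreducible} once one checks that single-vertex intervals cannot equal a nontrivial sum $x+y$ with $\langle x,y\rangle \ge 0$, which follows from a short norm estimate using $\langle x_i,x_i\rangle \in \{4, a_1, a_2, \dots, a_n\}$ and the definition of the pairing. Once this input is in hand, the connectivity argument above finishes the proof in one line.
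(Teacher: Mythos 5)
Your argument is essentially the paper's: each $x_i$ is irreducible and hence must lie in a single summand of any orthogonal decomposition, and since $\langle x_i, x_{i+1}\rangle \neq 0$ the pairing graph $\hat{G}(\{x_0,\dots,x_n\})$ is connected, forcing all $x_i$ into the same summand and making the other trivial. One caution: Proposition~\ref{prop:IntervalsIrreducible} only gives the implication ``irreducible $\Rightarrow$ $\pm$interval,'' not the converse, so citing it does not establish that the $x_i$ are irreducible; the paper, for its part, also asserts irreducibility of the $x_i$ without detailed justification, and your closing remark that a short norm estimate (ruling out norm-$1$ vectors, using $a_1 \ge 3$) is what actually fills this in is consistent with the paper's level of detail, so the gap is shared rather than introduced.
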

\begin{proof}
Suppose that $C(p,q) \cong L_1 \oplus L_2$. Then each $x_i$, being irreducible, must be in either $L_1$ or $L_2$. However, any element of $L_1$ has zero pairing with any element of $L_2$. Since $\braket{x_i}{x_{i+1}}\ne0$, $\hat{G}(\{x_0, \dots, x_n\})$ is connected. This means that all of the $x_i$ are in the same part of the decomposition, and the other is trivial.
\end{proof}

In a C-type lattice, we have that $|\langle x_0, x_1\rangle |=2$. It turns out that the inner product of $x_0$ with any other element in the $C$-type lattice lives in $2\mathbb Z$. The following lemma is straightforward to prove.

\begin{lemma}\label{Lem:X0Odd}
For any $v \in C(p,q)$, $\braket{x_0}{v}$ is even. In particular, the reflection $r_{x_0}: v \mapsto v - 2\frac{\braket{x_0}{v}}{\braket{x_0}{x_0}}x_0$ about $x_0^{\perp}$ is an involution of $C(p,q)$.
\end{lemma}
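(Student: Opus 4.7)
The plan is to unwind the definitions and exploit the fact that $x_0$ has nonzero inner product only with itself and with $x_1$ in the vertex basis.

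First I would write an arbitrary $v \in C(p,q)$ in the vertex basis as $v = \sum_{i=0}^{n} c_i x_i$ with $c_i \in \mathbb{Z}$. Then I would compute $\braket{x_0}{v}$ by linearity, noting that by Definition~\ref{def:CType} the only basis vectors with nonzero pairing against $x_0$ are $x_0$ itself (with $\braket{x_0}{x_0}=4$) and $x_1$ (with $\braket{x_0}{x_1}=-2$). Thus
\[
\braket{x_0}{v} = 4 c_0 - 2 c_1,
\]
which is manifestly in $2\mathbb{Z}$. This establishes the first claim.

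For the second claim, I would substitute $\braket{x_0}{x_0}=4$ into the reflection formula to rewrite
\[
r_{x_0}(v) = v - \tfrac{\braket{x_0}{v}}{2}\, x_0.
\]
By the parity statement just proved, $\tfrac{1}{2}\braket{x_0}{v} = 2c_0 - c_1 \in \mathbb{Z}$, so $r_{x_0}(v)$ is an integer combination of the vertex basis and hence lies in $C(p,q)$. This shows that $r_{x_0}$ restricts to a well-defined map $C(p,q) \to C(p,q)$. It is an involution and preserves the bilinear form for the general formal reason that $r_{x_0}$ is the orthogonal reflection through the hyperplane $x_0^\perp$ in the ambient real inner product space, so there is nothing further to verify beyond integrality.

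There is no real obstacle here; the content of the lemma is just the observation that $\langle x_0,x_0\rangle$ and $\langle x_0,x_1\rangle$ are both even, which forces the coefficient appearing in the reflection to be an integer. The usefulness of stating it as a lemma is that the involution $r_{x_0}$ will presumably be invoked later in the classification arguments.
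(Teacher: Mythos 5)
Your proof is correct, and the paper itself omits the argument, stating only that the lemma is ``straightforward to prove''; what you have written is exactly the computation the authors have in mind. Expanding $v$ in the vertex basis, using that $\braket{x_0}{x_0}=4$ and $\braket{x_0}{x_1}=-2$ are the only nonzero pairings, and then noting the reflection coefficient is an integer is the intended elementary argument.
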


\begin{definition}\label{Def:HighNorm}
A vertex $x_i$ has {\it high weight} if $i > 0$ and $|x_i| = a_i > 2$.
\end{definition}

\begin{prop}
An element $\epsilon[I] \in C(p,q)$ with $\epsilon \in \{\pm 1 \}$ is unbreakable if and only if $[I]$ contains at most one element of high weight.
\end{prop}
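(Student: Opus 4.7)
I address the two directions of the biconditional separately.

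For the direction \emph{two high-weight vertices imply breakability}, assume $[I] = x_a + \cdots + x_b$ contains high-weight vertices $x_j$ and $x_k$ with $a \le j < k \le b$ (so in particular $j, k \ge 1$ and $a_j, a_k \ge 3$). Define
\[
x = \epsilon(x_a + \cdots + x_j), \qquad y = \epsilon(x_{j+1} + \cdots + x_b),
\]
so that $x + y = \epsilon[I]$. The only non-zero cross-pairing between the supports of $x$ and $y$ is $\langle x_j, x_{j+1}\rangle = -1$, where the value is $-1$ rather than $-2$ precisely because $j \ge 1$; hence $\langle x, y\rangle = -1$. A direct telescoping computation using $\langle x_i, x_{i+1}\rangle = -1$ for $i \ge 1$, $\langle x_0, x_1\rangle = -2$, and $|x_0| = 4$ yields $|x| \ge a_j \ge 3$ (the high-weight entry $x_j$ contributing the needed excess above a baseline of $2$), and similarly $|y| \ge a_k \ge 3$. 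This exhibits the breaking.

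For the converse direction, suppose $\epsilon[I] = x + y$ with $|x|, |y| \ge 3$ and $\langle x, y\rangle = -1$. Writing $x = \sum c_i x_i$ and $y = \sum d_i x_i$ with $c_i + d_i = \epsilon\,\chi_I(i)$, one rearranges $\langle x, y\rangle = -1$ into $|x| = \epsilon\langle x, [I]\rangle + 1$ (symmetrically for $y$), so that $|[I]| = |x| + |y| - 2 \ge 4$. The crux is to reduce to the case in which $x$ and $y$ are themselves signed intervals. Invoking Proposition~\ref{prop:IntervalsIrreducible} (every irreducible lattice element is a signed interval) and the linear structure of the pairing graph of the vertex basis, one successively replaces any reducible decomposition of $x$ (a splitting $x = x' + x''$ with $\langle x', x''\rangle \ge 0$) by a modified breaking with smaller joint complexity on the supports, using Lemma~\ref{Lem:X0Odd} to respect the parity constraint $\langle x_0, v\rangle \in 2\mathbb{Z}$. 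This process terminates with $x = \delta_1[A]$ and $y = \delta_2[B]$ for intervals $A, B \subseteq \{0,1,\ldots,n\}$ and signs $\delta_i \in \{\pm 1\}$.

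Once in this canonical form, the equation $\delta_1[A] + \delta_2[B] = \epsilon[I]$, together with the requirement that $A, B, I$ are all intervals, leaves only a short list of configurations. In the principal one, $\delta_1 = \delta_2 = \epsilon$ and the index sets partition $I$ as $A = \{x_a, \ldots, x_r\}$ and $B = \{x_{r+1}, \ldots, x_b\}$; the pairing condition $\langle[A], [B]\rangle = \langle x_r, x_{r+1}\rangle = -1$ forces $r \ge 1$, since $\langle x_0, x_1\rangle = -2$. The norm bounds $|[A]|, |[B]| \ge 3$ then each force a high-weight vertex in their respective side (an interval with no high-weight vertex in the vertex basis has norm $2$), yielding two distinct high-weight vertices in $[I]$. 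The remaining sign configurations reduce to this one by symmetry or are excluded by the pairing constraint. The main obstacle is the interval-reduction step itself: one must carefully verify that the simplification preserves both $\langle x, y\rangle = -1$ and the lower norm bounds $|x|, |y| \ge 3$ while strictly decreasing a suitable complexity on the joint support, ensuring termination.
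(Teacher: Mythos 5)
The forward direction of your proof (two high-weight vertices imply breakability) is correct: splitting $[I]$ at the first high-weight vertex $x_j$ yields pieces $x$ and $y$ with $\langle x,y\rangle = \langle x_j, x_{j+1}\rangle = -1$ (using $j\ge 1$), and Lemma~\ref{Lem:IntervalNorm} gives $|x|\ge a_j\ge 3$, $|y|\ge a_k\ge 3$. This matches what the paper records elsewhere as Lemma~\ref{lem:TwoIndBr}.

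The reverse direction, however, has a genuine gap that you acknowledge but do not fill. The crucial step is your claim that any breaking $\epsilon[I] = x + y$ with $|x|,|y|\ge 3$ and $\langle x,y\rangle = -1$ can be reduced to one in which $x$ and $y$ are signed intervals. Proposition~\ref{prop:IntervalsIrreducible} applies only to \emph{irreducible} elements, and neither $x$ nor $y$ in a breaking need be irreducible; the "successive replacement" argument you sketch (splitting a reducible $x=x'+x''$ and regrouping) is not given, and it is far from obvious that it preserves both $\langle x,y\rangle=-1$ and the norm bounds while terminating. In Greene's treatment of linear lattices this reduction is precisely the content of~\cite[Lemma~3.4 and Corollary~3.5(2)]{greene:LSRP}, a substantive structural result about standard form; your sketch essentially asks the reader to reprove it, plus an extension to account for the doubled edge at $x_0$, and that extension introduces new difficulties (e.g.\ $\delta$ can be $3$, $\langle x_0, x_1\rangle = -2$ breaks the purely "adjacent pairs pair to $-1$" pattern). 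The paper avoids this entirely: it cites~\cite[Corollary~3.5(4)]{greene:LSRP} for intervals not containing $x_0$, and then observes that the reflection $r_{x_0}$ (an isometry of $C(p,q)$ by Lemma~\ref{Lem:X0Odd}) carries an interval with left endpoint $0$ to the interval with left endpoint $1$ and the same right endpoint, preserving both unbreakability and the count of high-weight vertices; this handles the $x_0$-containing case in one line. You should either carry out the interval-reduction step in full or adopt the reflection trick; as written, the second half of the proposition is not proved.

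Your treatment of the remaining sign configurations (that $\delta_1 = -\delta_2$ forces $[A]\supset[B]$, and then $|\langle[A],[B]\rangle| \ge |[B]| - \delta \ge 2$ contradicts $\langle x,y\rangle = -1$) is in the right spirit but would also need the $x_0$ edge cases spelled out, since $\delta$ can reach $3$ when $x_0$ is an endpoint of both intervals.
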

\begin{proof}
The conclusion is obvious when $I=\{x_0\}$. Now we assume $I\ne\{x_0\}$.
If $[I]$ does not contain $x_0$, this reduces to the analogous fact about linear lattices~\cite[Corollary~3.5 (4)]{greene:LSRP}. The reflection $r_{x_0}$ exchanges intervals with left endpoint $0$ and intervals with left endpoint $1$, which reduces the case of intervals containing $x_0$ to the case of intervals not containing $x_0$.
\end{proof}

\begin{definition}
Consider the graph $C$ on vertex set $\{x_0,\dots,x_n\}$ that has two edges between $x_0$ and $x_1$ and one edge between $x_i$ and $x_{i+1}$ for $0 < i < n$. Given two intervals $[I]$ and $[J]$, say that an edge of $C$ is {\it dangling} if one of its ends is in $I$, the other is in $J$, and at least one of the ends is not in $I \cap J$. Write $\delta([I],[J])$ for the number of dangling edges.
\end{definition}

\begin{lemma}\label{intervalproduct}
For two intervals $[I], [J]$, $\braket{[I]}{[J]} = |[I \cap J]| - \delta([I],[J])$. 
\end{lemma}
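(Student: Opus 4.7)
The plan is to expand both sides by bilinearity and match them via the combinatorial structure of the graph $C$. First I would write
\[
\braket{[I]}{[J]} \;=\; \sum_{x_i \in I,\, x_j \in J} \braket{x_i}{x_j}
\]
and split into diagonal ($i = j$, i.e.\ $x_i \in I \cap J$) and off-diagonal ($i \ne j$) contributions. The diagonal part is exactly $\sum_{x_i \in I \cap J} \braket{x_i}{x_i}$.

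For the off-diagonal part, the key observation is that, by Definition~\ref{def:CType}, the pairing $\braket{x_i}{x_j}$ for $i \ne j$ is the negative of the multiplicity $m_{ij}$ of the edge between $x_i$ and $x_j$ in the graph $C$. Regrouping the sum by unordered edges of $C$ and doing a case check based on whether each endpoint lies in $I\cap J$, $I\setminus J$, $J\setminus I$, or neither, one finds: an edge with both endpoints in $I \cap J$ contributes $-2m_{ij}$ (both orderings of the pair lie in $I \times J$); a dangling edge contributes $-m_{ij}$ (exactly one ordering does); every other edge contributes $0$. Hence the off-diagonal sum equals $-2E_{I \cap J} - \delta([I],[J])$, where $E_{I \cap J}$ is the multiplicity-weighted number of edges with both endpoints in $I \cap J$.

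Applying exactly the same expansion to $|[I \cap J]| = \braket{[I \cap J]}{[I \cap J]}$ gives $\sum_{x_i \in I \cap J} \braket{x_i}{x_i} - 2E_{I \cap J}$, since in this case no edge can be dangling. Subtracting this from the previous expression for $\braket{[I]}{[J]}$ yields the desired identity. The only subtlety worth flagging is the double edge between $x_0$ and $x_1$, but this is automatically handled once $\delta$ and $E_{I \cap J}$ are read as multiplicity-weighted counts, which is how they are defined. There is no genuine obstacle; the statement reduces to a short bookkeeping exercise made transparent by the graph reformulation of the pairing on basis elements.
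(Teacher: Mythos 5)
Your proof is correct and follows essentially the same route as the paper's: expand $\braket{[I]}{[J]}$ bilinearly, identify the terms with both indices in $I\cap J$ as contributing $|[I\cap J]|$, and identify the remaining nonzero cross terms as contributing $-\delta([I],[J])$. The only difference is cosmetic — you split diagonal from off-diagonal and regroup by edges, while the paper groups directly by whether both factors lie in $I\cap J$ — but the bookkeeping, including the doubled edge between $x_0$ and $x_1$, is identical in substance.
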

\begin{proof}
Suppose $I = \{x_a,\dots,x_b\}$ and $J = \{x_c,\dots,x_d\}$. Then we can express
\begin{equation*}
\braket{[I]}{[J]} = \sum_{i = a}^b \sum_{j = c}^d \braket{x_i}{x_j}
\end{equation*}
Terms in this sum with $|i - j| > 1$ vanish. The remaining terms either have $x_i$ and $x_j$ in $I \cap J$, so occur as terms in the expansion of $|[I \cap J]|$, or have at least one of $x_i$ or $x_j$ not in $I \cap J$, so contribute to $\delta([I],[J])$. 
\end{proof}

We frequently use the following lemma, which is stated without proof.

\begin{lemma}\label{Lem:IntervalNorm}
Let $I\ne\{x_0\}$ be an interval.
Then 
\[|[I]|=2+\sum_{x_i\in I\setminus\{x_0\}}(|x_i|-2).\]
\end{lemma}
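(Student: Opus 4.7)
The plan is a direct computation: expand $|[I]| = \langle [I],[I]\rangle$ via the bilinearity of the inner product and use the explicit values from Definition~\ref{def:CType}. Writing $I = \{x_a, x_{a+1},\dots, x_b\}$, the pairings $\langle x_i, x_j\rangle$ vanish unless $|i-j|\le 1$, so
\[
|[I]| \;=\; \sum_{i=a}^{b}|x_i| \;+\; 2\sum_{i=a}^{b-1}\langle x_i, x_{i+1}\rangle.
\]
All cross-terms equal $-1$ except $\langle x_0,x_1\rangle=-2$, so the contribution of the cross-terms depends only on whether the edge between $x_0$ and $x_1$ lies in $I$. This naturally splits the argument into two cases.

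In the first case, $x_0\notin I$, so $a\ge 1$, and every cross-term contributes $-1$. Thus $|[I]| = \sum_{i=a}^{b} a_i - 2(b-a)$, which after adding and subtracting $2(b-a+1)$ rewrites as $2 + \sum_{i=a}^{b}(a_i-2)$, matching the formula since in this case $I\setminus\{x_0\}=I$ and $|x_i|=a_i$ for $i\ge 1$. In the second case, $x_0\in I$, so $a=0$ and $b\ge 1$ (using the hypothesis $I\ne\{x_0\}$); here the edge between $x_0$ and $x_1$ contributes $-4$ instead of $-2$. The calculation becomes
\[
|[I]| \;=\; 4 + \sum_{i=1}^{b} a_i \;-\; 4 \;-\; 2(b-1) \;=\; \sum_{i=1}^{b}(a_i-2) + 2,
\]
again agreeing with the stated identity since $I\setminus\{x_0\}=\{x_1,\dots,x_b\}$.

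The two cases can be remembered by the heuristic that including $x_0$ adds $|x_0|=4$ to the diagonal but simultaneously upgrades one cross-term from $-1$ to $-2$, costing an extra $-2$ twice; the net effect is that $x_0$ contributes nothing to the sum on the right, which is precisely why the formula ranges only over $I\setminus\{x_0\}$. There is no real obstacle in this argument: it is a routine two-line computation, and the only mild subtlety is to correctly account for the doubled coupling $\langle x_0,x_1\rangle = -2$ versus the unit couplings among the high-indexed basis vectors. One could alternatively invoke Lemma~\ref{intervalproduct} with $J=I$, but that specialization is tautological for computing $|[I]|$ itself, so the direct expansion is cleaner.
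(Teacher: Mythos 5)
The paper states this lemma without proof, so there is no official argument to compare against; your direct expansion is exactly the routine computation the authors had in mind and omitted. Both cases check out: the bookkeeping for the doubled edge $\langle x_0,x_1\rangle=-2$ versus the unit edges is handled correctly, and the hypothesis $I\ne\{x_0\}$ is used precisely where needed (to ensure $b\ge 1$ so the $-4$ from that edge is actually present to cancel the $|x_0|=4$).
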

Given the structure of a C-type lattice, the following is immediate.

\begin{lemma}\label{lem:delta}
For any intervals $I,J$, $\delta([I],[J])$ is $0,1,2,$ or $3$. If $\delta([I],[J]) = 3$, then $\braket{x_0}{[I]}=-\braket{x_0}{[J]}=\pm 2$. 
\end{lemma}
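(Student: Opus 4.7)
The plan is to reduce the statement to a finite case analysis governed by the relative positions of the two intervals. Writing $I = \{x_a, \dots, x_b\}$ and $J = \{x_c, \dots, x_d\}$, I will assume without loss of generality that $a \le c$ and organize the cases by whether $I$ and $J$ are disjoint ($b < c$), share a left endpoint ($a = c$), or overlap with $a < c \le b$.

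First I would observe that an edge $(x_i, x_{i+1})$ of $C$ is dangling only when the ``membership type'' of its endpoints (lying in $I\setminus J$, $J\setminus I$, $I\cap J$, or outside $I\cup J$) transitions in a way that places one end in $I$ and the other in $J$ without both being in $I\cap J$. Since each of $I$ and $J$ is a single interval, such a transition between consecutive vertices can occur only at the two endpoints of $I$ or at the two endpoints of $J$. A straightforward check through the cases above shows that at most two adjacent pairs $(x_i, x_{i+1})$ can be dangling in total. Every such pair with $i \ge 1$ contributes exactly $1$ to $\delta$, while $(x_0, x_1)$ is doubled in $C$ and thus contributes $2$ whenever it is dangling; combining these gives $\delta([I], [J]) \le 3$.

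To identify the equality case, I would argue that $\delta = 3$ forces $(x_0, x_1)$ itself to be dangling (contributing $2$) together with a second, single-edge pair further along the path that is dangling (contributing $1$). Tracing through the case split, this pins down $a = 0$ and $c = 1$, so $I = \{x_0, x_1, \dots, x_b\}$ and $J = \{x_1, \dots, x_d\}$ (up to swapping $I$ and $J$), with the additional requirement $b \ne d$ that supplies the second dangling pair at the far endpoint $x_{\min(b,d)}$. In this configuration, a direct calculation from Definition~\ref{def:CType} gives
\[
\braket{x_0}{[I]} = \braket{x_0}{x_0} + \braket{x_0}{x_1} = 4 - 2 = 2,
\qquad
\braket{x_0}{[J]} = \braket{x_0}{x_1} = -2,
\]
so $\braket{x_0}{[I]} = -\braket{x_0}{[J]} = 2$; swapping the roles of $I$ and $J$ produces the other sign, giving $\pm 2$ as claimed.

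The only delicate point is keeping track of the doubled edge $(x_0, x_1)$ separately from the rest of $C$, since it is responsible both for preventing $\delta \ge 4$ (which would require at least three adjacent pairs to be dangling, impossible for two intervals on a single path) and for producing the specific value $\pm 2$ rather than $\pm 1$ in the conclusion. No input beyond Definition~\ref{def:CType} and the interval structure of $I$ and $J$ is needed.
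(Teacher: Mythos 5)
Your argument is correct, and it is worth noting that the paper offers no proof at all for this lemma (it simply states ``Given the structure of a C-type lattice, the following is immediate''), so there is no authorial argument to compare against; your proposal supplies the missing verification by the natural route. The core observations are exactly right: (i) a dangling edge of $C$ can only occur at a block boundary between consecutive vertices whose membership types lie in $\{I\setminus J,\, I\cap J,\, J\setminus I\}$, (ii) because $I$ and $J$ are single intervals the block sequence along the path has at most three such blocks and hence at most two such boundaries, and (iii) only the doubled edge $(x_0,x_1)$ contributes $2$ rather than $1$, so $\delta\le 3$ with equality forcing the left endpoints to be $0$ and $1$ (in some order), whence the inner products with $x_0$ are $2$ and $-2$.

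Two small points of hygiene, neither fatal. First, when you say ``the additional requirement $b\ne d$ that supplies the second dangling pair,'' you should also be using $I\cap J\neq\emptyset$, i.e.\ $b\ge 1$; otherwise $I=\{x_0\}$, $J=\{x_1,\dots,x_d\}$ satisfies $b\ne d$ but has only one (doubled) dangling edge, giving $\delta=2$ and $\braket{x_0}{[I]}=4$. In your framing this is already handled because the case ``$a<c\le b$'' you are tracing through has $b\ge c=1$ built in, but it deserves a sentence. Second, your computation $\braket{x_0}{[I]}=\braket{x_0}{x_0}+\braket{x_0}{x_1}$ tacitly uses that no $x_j$ with $j\ge 2$ pairs with $x_0$, which is indeed immediate from Definition~\ref{def:CType}, but it is the step where the specific shape of a C-type lattice (as opposed to a general graph lattice) enters, so it is worth flagging.
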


To more precisely describe the value $\delta([I],[J])$, it will be convenient to use some terminology from \cite{greene:LSRP}:

\begin{definition}
For two intervals $[I]$ and $[J]$ with left endpoints $i_0,j_0$ and right endpoints $i_1,j_1$, say that $[I]$ and $[J]$ are {\it distant} if either $i_1 + 1 < j_0$ or $j_1 + 1 < i_0$, that $[I]$ and $[J]$ {\it share a common end} if $i_0 = j_0$ or $i_1 = j_1$, and that $[I]$ and $[J]$ are {\it consecutive} if $i_1 + 1 = j_0$ or $j_1 + 1 = i_0$. Write $[I] \prec [J]$ if $I \subset J$ and $[I]$ and $[J]$ share a common end, and $[I] \dagger [J]$ if they are consecutive. If $[I]$ and $[J]$ are either consecutive or share a common end, say that they {\it abut}. If $I \cap J$ is nonempty and $[I]$ and $[J]$ do not share a common end, write $[I] \pitchfork [J]$. 
\end{definition}

\begin{rmk}
If $\braket{[I]}{x_0} = \braket{[J]}{x_0}$ or if either $\braket{[I]}{x_0}$ or $\braket{[J]}{x_0}$ is zero, then $\delta([I],[J])$ is $0$ if $[I]$ and $[J]$ are distant, $1$ if $[I]$ and $[J]$ abut, and $2$ if $[I] \pitchfork [J]$. If $\braket{[I]}{x_0} \neq \braket{[J]}{x_0}$ and both are nonzero, $\delta([I],[J])$ is $2$ if $[I]$ and $[J]$ abut, and $3$ if $[I] \pitchfork [J]$. In the latter case, $[I]$ and $[J]$ are never distant. 
\end{rmk}

We will also need to know which irreducible elements of $C(p,q)$ are breakable. In light of Proposition~\ref{prop:IntervalsIrreducible}, we only need to study that for intervals.
\begin{lemma}[Lemma~3.10 of \cite{Prism2016}]\label{lem:TwoIndBr}
    An interval $[A]$ is breakable if there are at least two high weight vertices.
\end{lemma}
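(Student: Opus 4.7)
The plan is to produce an explicit splitting of $[A]$ into two intervals $[A_1]$ and $[A_2]$ witnessing breakability. Write $A=\{x_a,x_{a+1},\ldots,x_b\}$ and let $x_j,x_k$ with $a\le j<k\le b$ be two high-weight vertices; note that $j,k\ge 1$ and $a_j,a_k\ge 3$. I would set $A_1=\{x_a,\ldots,x_j\}$ and $A_2=\{x_{j+1},\ldots,x_b\}$, so that these are nonempty intervals with $[A]=[A_1]+[A_2]$ and $A_1\cap A_2=\emptyset$. Since $j\ge 1$, neither $A_1$ nor $A_2$ equals $\{x_0\}$: $A_1$ either avoids $x_0$ entirely (if $a\ge 1$) or contains $x_0$ together with $x_j$ (if $a=0$), while $A_2$ is contained in $\{x_{j+1},\ldots,x_b\}$ with $j+1\ge 2$.

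Next, I would bound the norms using Lemma~\ref{Lem:IntervalNorm}. Since $x_j\in A_1$ and $x_k\in A_2$ are both of high weight, the lemma gives
\[
|[A_1]|=2+\sum_{x_i\in A_1\setminus\{x_0\}}(|x_i|-2)\ge 2+(a_j-2)=a_j\ge 3,
\]
and similarly $|[A_2]|\ge a_k\ge 3$. To compute the pairing, I would apply Lemma~\ref{intervalproduct}: because $A_1\cap A_2=\emptyset$, $\langle[A_1],[A_2]\rangle=-\delta([A_1],[A_2])$. The only possible dangling edge of the graph $C$ runs between $x_j\in A_1$ and $x_{j+1}\in A_2$, and since $j\ge 1$ this pair contributes a single edge (the doubled edge in $C$ only appears between $x_0$ and $x_1$). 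Hence $\delta([A_1],[A_2])=1$ and $\langle[A_1],[A_2]\rangle=-1$, exactly the condition required for $[A]$ to be breakable.

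There is no serious obstacle here; the only point requiring attention is to ensure we never split at the edge $\{x_0,x_1\}$, where the doubled edge would make $\delta=2$ and spoil the inner-product computation. That is precisely why I choose the split index $m=j\ge 1$ rather than an arbitrary index between $j$ and $k$. With that choice, the degenerate cases (such as $A_1$ or $A_2$ consisting of a single high-weight vertex) still satisfy the norm and inner-product conditions, so the argument is uniform across all subcases.
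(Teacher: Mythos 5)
Your proof is correct and self-contained. Since the paper does not reproduce the argument (it simply cites Lemma~3.10 of \cite{Prism2016}), there is nothing in this manuscript to compare against directly. Your construction—splitting immediately after the leftmost high-weight vertex $x_j$, so that the cut edge of $C$ lies between $x_j$ and $x_{j+1}$ with $j\ge 1$—is exactly the right choice, and it handles all the subtleties cleanly: you correctly note that the doubled edge of $C$ only occurs between $x_0$ and $x_1$ (so $\delta=1$, not $2$), and that Lemma~\ref{Lem:IntervalNorm} applies to $[A_1]$ and $[A_2]$ because neither is $\{x_0\}$. The norm bounds $|[A_i]|\ge 3$ follow since each piece contains at least one high-weight vertex. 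One could also imagine an argument that first reflects across $x_0^{\perp}$ to reduce to intervals contained in the linear sublattice (the strategy used for the nearby Proposition in this paper), but your direct constructive approach avoids that machinery and is arguably simpler for this one-directional statement.
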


\begin{definition}\label{Def:Zj} 
For an unbreakable interval $[I_j] \in C(p,q)$ with $|[I_j]|\ge 3$, let $x_{z_j}$ be the unique element with $|x_{z_j}|\ge 3$.
\end{definition}
We end this section by determining when two C-type lattices are isomorphic.

\begin{prop}\label{pp}
If $C(p, q) \cong C(p', q')$, then $p = p'$ and $q = q'$. 
\end{prop}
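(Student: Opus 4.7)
My plan is to recover $(p,q)$ from intrinsic lattice data of $C(p,q)$ by extracting two independent invariants.

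First, I would show that the discriminant of $C(p,q)$ equals $4q$. Writing $D_{i,j}$ for the determinant of the tridiagonal principal submatrix on $\{x_i,\dots,x_j\}$, expanding the Gram matrix in the vertex basis along the row of $x_0$ yields
\[
\det C(p,q) \;=\; 4\,D_{1,n} - 4\,D_{2,n}.
\]
The standard correspondence between tridiagonal determinants and Hirzebruch--Jung continued fractions gives $D_{1,n}/D_{2,n} = [a_1,\dots,a_n]^- = \tfrac{2q-p}{q-p}$; moreover consecutive convergents are coprime, so $D_{1,n}=2q-p$ and $D_{2,n}=q-p$. Thus $\det C(p,q) = 4q$. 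Since both the discriminant and the rank $n+1$ are lattice invariants, any isomorphism $C(p,q)\cong C(p',q')$ forces $q=q'$ and $n=n'$.

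Second, I would recover the continued fraction $[a_1,\dots,a_n]^-$, equivalently $\tfrac{2q-p}{q-p}$. The plan is to characterize the vertex basis $\{x_0,x_1,\dots,x_n\}$ up to a global sign. By Lemma~\ref{Lem:X0Odd} and Proposition~\ref{prop:IntervalsIrreducible}, the element $x_0$ is a primitive, irreducible, norm-$4$ vector all of whose inner products with lattice elements lie in $2\mathbb{Z}$. Starting from any such $v_0$, I would build a C-type basis $\{v_0,v_1,\dots,v_n\}$ inductively, choosing each $v_{i+1}$ as an appropriate irreducible element using the interval/irreducibility machinery from Section~\ref{sec:CLattices}. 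Several candidates for $v_0$ may exist (for instance, in the case $(a_1,a_2)=(3,3)$ the interval $x_1+x_2$ also has norm $4$ with all even pairings), so the argument must verify that every admissible extension to a C-type basis recovers the same tuple $(4;a_1,\dots,a_n)$. This follows from uniqueness of the Hirzebruch--Jung continued fraction $[a_1,\dots,a_n]^-$ (with $a_i\ge 2$ and $a_1\ge 3$) representing a given rational.

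Combining the two steps, the pair $\bigl(q,\tfrac{2q-p}{q-p}\bigr)$ is a complete lattice invariant: writing $\tfrac{2q-p}{q-p}=\tfrac{r}{s}$ in lowest terms recovers $q=r-s$ and $p=r-2s$. Hence $C(p,q)\cong C(p',q')$ implies $(p,q)=(p',q')$. The main obstacle is the second step, namely justifying that every admissible starting vector $v_0$ leads to the same continued fraction data; I expect this to follow from careful bookkeeping with the interval and pairing-graph structure developed in Section~\ref{sec:CLattices}, combined with the uniqueness of Hirzebruch--Jung expansions.
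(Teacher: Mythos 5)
Your Step 1 is a correct and attractive computation that the paper does not carry out: expanding the Gram matrix along the row of $x_0$ gives $\det C(p,q) = 4D_{1,n}-4D_{2,n}$, the continued-fraction identity gives $D_{1,n}/D_{2,n}=\frac{2q-p}{q-p}$ in lowest terms, and coprimality of consecutive convergents pins down $D_{1,n}=2q-p$, $D_{2,n}=q-p$, so $\det C(p,q)=4q$. This cleanly recovers $q$ as a lattice invariant.

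However, Step 2 is where the proposition actually lives, and you only sketch it while explicitly conceding that "the main obstacle is the second step." The appeal to uniqueness of the Hirzebruch--Jung expansion does not close the gap: that uniqueness says a fixed rational has only one HJ expansion, but the problem here is to show that two C-type \emph{bases} of the same lattice yield the same sequence of vertex norms (equivalently, the same rational) in the first place. Nothing in your sketch rules out, a priori, that some isomorphism could carry one C-type basis to another with a different norm sequence. Note also that your proposed example of ambiguity, $x_1+x_2$ when $(a_1,a_2)=(3,3)$, is in fact \emph{breakable} ($x_1+x_2$ with $\langle x_1,x_2\rangle=-1$ and $|x_1|=|x_2|=3$), so it is excluded by the paper's restriction to unbreakable irreducibles; the genuine borderline case is $(p,q)=(2,3)$, where $n=1$, $a_1=4$, and $x_1$ itself is a second unbreakable irreducible of norm $4$ with all even pairings. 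The paper isolates that case and otherwise shows $\pm x_0$ is the \emph{unique} unbreakable irreducible of norm $4$ with all even pairings, then reconstructs the ordered norm sequence by examining the sublattice $R$ generated by $x_0$ and the norm-$2$ vectors, the irreducible unbreakable elements of norm $\ge 3$ modulo a natural equivalence, and a bipartite graph on these two sets that turns out to be a path. That bookkeeping is precisely what your Step 2 defers, so the proposal as written has a genuine gap; if you carry Step 2 out, Step 1 becomes redundant, since recovering the norm sequence already gives both $p$ and $q$.
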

\begin{proof}
If $L$ is a lattice isomorphic to $C(p,q)$, then to recover $p$ and $q$ from $L$ it suffices to recover the ordered sequence of norms $(|x_1|,|x_2|,\cdots,|x_n|)$. To do this, we will first identify the elements of this sequence that are at least $3$, and then fill in the $2$'s. 

We claim that unless $(p,q)=(2,3)$, there is a unique (up to sign) unbreakable irreducible element $y$ such that $|y|=4$ and $\braket{y}{v}$ is even for all $v$ in $L$, and $y=\pm x_0$. Let $I\ne\{x_0\}$ be any interval representing an unbreakable irreducible element with norm 4. Suppose $I=\{x_a,x_{a+1},\dots,x_b\}$.
If $a>1$, then $\braket{[I]}{x_{a-1}}=-1$ is odd. If $b<n$, then $\braket{[I]}{x_{b+1}}=-1$ is odd. So we assume $a=0$ or $1$, and $b=n$. If $I$ contains at least two high weight vertices, then $I$ is breakable. So $x_1$ is the only high weight vertex, and $4=|[I]|=|x_1|$. If $n>1$, then $\braket{[I]}{x_{b}}=1$ is odd. So $n=1$, $|x_1|=4$. From (\ref{eq:ContFrac}) we get $(p,q)=(2,3)$.

From now on, we assume $(p,q)\ne(2,3)$.
 Let $R$ be the sublattice of $L$ generated by $x_0$ and all vectors of norm $2$. 
Since $L$ contains no vectors of norm $1$, any vector of norm $2$ in $L$ is irreducible. By Lemma~\ref{Lem:IntervalNorm}, then, $R$ is generated by $x_0$ and the $x_i$ with $|x_i| = 2$.

Now, let $V_0$ be the set of irreducible, unbreakable elements of $L\setminus\{\pm x_0\}$ with norm at least $3$, and let $V$ be the quotient of $V_0$ by the relation $v \sim u$ whenever either $v-u \in R$ or $v+u \in R$. Every element of $V_0$ corresponds to an interval containing a unique high-weight vertex, and $v \sim u$ if and only if these high-weight vertices are the same. Therefore, $V$ consists of precisely the equivalence classes of the $x_i$ with $|x_i| \ge 3$, $i>0$, and if $v \in V_0$ with $v \sim x_i$ we have $|v| = |x_i|$.

Finally, let $W$ be the set of indecomposible components of $R$, so each element of $W$ corresponds to either $x_0$ or a run of $2$'s in the sequence of norms $(|x_1|,|x_2|,\dots,|x_n|)$. Let $\mathcal{B}$ be the bipartite graph with  vertex set $V \cup W$, and an edge between $v \in V$ and $w \in W$ if there is a representative $\tilde{v} \in L$ of $v$ and an element $\tilde{w} \in W$ such that $\braket{\tilde{v}}{\tilde{w}} = -1$, or $w$ corresponds to $x_0$ and $\braket{\tilde{v}}{x_0} = -2$. Then $v$ and $w$ neighbor in $\mathcal B$ if and only if the element $x_i$ representing $v$ is adjacent to $x_0$ or the run of $2$'s corresponding to $w$, so $\mathcal{B}$ is in fact a path. Furthermore, there is a unique element $w_0 \in W$ that contains $x_0$, and $w_0$ must be one of the ends of the path $\mathcal{B}$. We can now recover $(|x_1|,|x_2|,\dots,|x_n|)$ as follows:  The vertex $w_0$ neighbors a unique element $v \in V$ in $\mathcal{B}$. The rest of the sequence is completed in the following way - as we travel down the path $\mathcal{B}$, when we encounter an element $w \in W$ we add $\text{rk } w$-many $2$'s to the sequence, and when we encounter an element $v \in V$ we add $|\tilde{v}|$ to the sequence for $\tilde{v}$ a representative of $v$. 
\end{proof}

\section{Changemaker Lattices}\label{sec:ChangemakerLattices}


A lattice is called a {\it changemaker lattice} if it is isomorphic to the orthogonal complement of a changemaker vector. Whenever $P(p, q)$, with $q>p$, comes from positive integer surgery on a knot, $C(p,q)$ is isomorphic to a changemaker lattice $(\sigma)^\perp\subset \Z^{n+2}$. 
In this section, we will assemble some basic structural results about C-type lattices that are isomorphic to changemaker lattices.

Write $(e_0,e_1, \dots, e_{n+1})$ for the orthonormal basis of $\Z^{n+2}$, and write $\sigma = \sum_i \sigma_i e_i$. 
Since $C(p,q)$ is indecomposable (Corollary~\ref{indecomposable}), $\sigma_0\ne0$, otherwise $(\sigma)^\perp$ would have a direct summand $\mathbb Z$. So $\sigma_0=1$.

We will need several results from \cite[Section~3]{greene:LSRP} about changermaker lattices:

\begin{definition}\label{stbasis}
The {\it standard basis} of $(\sigma)^\perp$ is the collection $S = \{v_1, \dots, v_{n}\}$, where
\begin{equation*}
    v_j = \left(2e_0 + \sum_{i = 1}^{j - 1} e_i\right) - e_j
\end{equation*}
whenever $\sigma_j = 1 + \sigma_0 + \cdots + \sigma_{j-1}$, and
\begin{equation*}
    v_j = \left(\sum_{i \in A} e_i\right) - e_j
\end{equation*}
whenever $\sigma_j = \sum_{i \in A} \sigma_i$, with $A \subset \{0, \dots, j-1\}$ chosen to maximize the quantity $\sum_{i \in A} 2^i$.  A vector $v_j \in S$ is called \emph{tight} in the first case, \emph{just right} in the second case as long as $i < j-1$ and $i \in A$ implies that $i+1\in A$, and \emph{gappy} if there is some index $i$ with $i \in A$, $i < j-1$, and $i+1 \not \in A$. Such an index, $i$, is a \emph{gappy~index} for $v_j$.
\end{definition}

The standard basis $S$ is in fact a basis of $C(p,q)$.

\begin{definition}
For $v \in \Z^{n+2}$, $\supp v = \{i | \braket{e_i}{v} \neq 0\}$ and $\supp^+ v = \{i | \braket{e_i}{v} > 0\}$.
\end{definition}

\begin{lemma}[Lemma~3.12~(3) in \cite{greene:LSRP}] \label{gappy3}
If $|v_{k+1}|=2$, then $k$ is not a gappy index for any $v_j$ with $j \in \{1, \cdots, n+1 \}$.
\end{lemma}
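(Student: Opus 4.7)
The plan is to first translate the norm hypothesis $|v_{k+1}|=2$ into the arithmetic equality $\sigma_{k+1}=\sigma_{k}$, and then derive a contradiction from the assumption that $k$ is a gappy index for some $v_j$ by the standard ``promote $k$ to $k+1$'' swap in the subset-sum representation defining $v_j$.

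For the first step, observe that $v_{k+1}$ cannot be tight, since a tight vector $2e_0+e_1+\dots+e_k-e_{k+1}$ has norm $k+5\ge 5$. Hence $v_{k+1}=\sum_{i\in A'}e_i-e_{k+1}$ with $A'\subset\{0,\dots,k\}$, and $|v_{k+1}|=|A'|+1=2$ forces $A'=\{a\}$ for a single $a\le k$. By Definition~\ref{stbasis}, $A'$ maximizes $\sum_{i\in A'}2^i$ among all subsets of $\{0,\dots,k\}$ whose $\sigma$-sum equals $\sigma_{k+1}$. Because $\sigma$ is nondecreasing, $\sigma_a\le\sigma_k\le\sigma_{k+1}=\sigma_a$ yields $\sigma_k=\sigma_{k+1}$, so $\{k\}$ is itself a valid singleton representation, and the maximality of $2^a$ then forces $a=k$. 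In particular $\sigma_{k+1}=\sigma_k$.

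For the second step, suppose for contradiction that $k$ is a gappy index for some $v_j$ with $1\le j\le n+1$. The definition of a gappy index requires $k\in A$, $k<j-1$, and $k+1\notin A$, where $v_j=\sum_{i\in A}e_i-e_j$ and $A\subset\{0,\dots,j-1\}$ is chosen to maximize $\sum_{i\in A}2^i$ among representations of $\sigma_j$. In particular $k+1\le j-1$. Set $B=(A\setminus\{k\})\cup\{k+1\}\subset\{0,\dots,j-1\}$; using $\sigma_{k+1}=\sigma_k$,
\[\sum_{i\in B}\sigma_i=\sum_{i\in A}\sigma_i=\sigma_j\quad\text{and}\quad\sum_{i\in B}2^i=\sum_{i\in A}2^i+2^{k+1}-2^k=\sum_{i\in A}2^i+2^k>\sum_{i\in A}2^i,\]
contradicting the maximality of $A$. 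The only delicate point is extracting $\sigma_{k+1}=\sigma_k$ from the hypothesis in the first step, since it requires combining the norm constraint, the fact that no tight representation is possible, and the monotonicity of $\sigma$; once that identity is secured, the bit-carry swap closes the argument immediately.
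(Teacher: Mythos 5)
Your proof is correct, and it follows the standard argument: the paper itself does not reprove this lemma but cites it directly from Greene's work (\cite[Lemma~3.12~(3)]{greene:LSRP}), and Greene's argument is exactly the ``bit-carry swap'' you describe — replace $k$ by $k+1$ in the maximizing subset $A$ and observe that the digit sum increases while the $\sigma$-sum is preserved, contradicting maximality. Your preliminary reduction of $|v_{k+1}|=2$ to $\sigma_k=\sigma_{k+1}$ (by ruling out tightness on norm grounds and using monotonicity of $\sigma$) is the right way to connect the norm hypothesis to the arithmetic identity that drives the swap.
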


\begin{lemma}[Lemma~3.13 in \cite{greene:LSRP}] \label{lem:irred}
Each $v_j \in S$ is irreducible.
\end{lemma}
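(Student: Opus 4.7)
The plan is to argue by contradiction. Assume $v_j=x+y$ with $x,y\in(\sigma)^\perp$ nonzero and $\langle x,y\rangle\ge 0$, so $|v_j|^2\ge|x|^2+|y|^2$, and since no single $e_i$ lies in $(\sigma)^\perp$ every nonzero element of $(\sigma)^\perp$ has norm at least $2$. Already this dispatches the small-norm cases (e.g.\ just right $v_j$ with $|A|\le 2$, where $|v_j|^2\le 3<4\le|x|^2+|y|^2$). For the rest, I decompose each summand by support on $\{0,\dots,j-1\}$, $\{j\}$, and $\{j+1,\dots,n+1\}$: write $x=x^\flat+x_je_j+x^\sharp$ and likewise for $y$. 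Since $v_j$ has no coordinates above $j$, we get $y^\sharp=-x^\sharp$, $x^\flat+y^\flat=w$ (where $w$ denotes the positive part of $v_j$: namely $2e_0+\sum_{1\le i<j}e_i$ in the tight case and $\sum_{i\in A}e_i$ otherwise), and $x_j+y_j=-1$. Consequently
\[\langle x,y\rangle=\langle x^\flat,y^\flat\rangle+x_jy_j-|x^\sharp|^2.\]

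First I would reduce to $x^\sharp=0$. For the largest $k>j$ with $x_k\ne 0$, the changemaker axiom supplies a vector $v_k'\in(\sigma)^\perp$ whose $e_k$-coefficient is $\pm 1$ and whose other nonzero coordinates lie strictly below $k$; a suitable swap $(x,y)\leadsto(x+cv_k',\,y-cv_k')$ cancels $x_k$ while preserving both $(\sigma)^\perp$-membership and $x+y=v_j$. A monovariant on the tuple of upper-tail coordinates (in reverse lexicographic order), together with careful tracking of the sign of $\langle x,y\rangle$, ensures the iteration terminates at $x^\sharp=0$.

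Once $x^\sharp=0$, both summands are supported in $\{0,\dots,j\}$; from $x_j+y_j=-1$ we obtain $x_jy_j\le 0$, which forces $\langle x^\flat,y^\flat\rangle\ge 0$. After swapping $x$ and $y$ if needed, reduce to $x_j=0,\ y_j=-1$, so $x=x^\flat$ is a nonzero element of $(\sigma)^\perp$ whose coordinates are dominated by those of $w$. The relation $\langle x,\sigma\rangle=0$ then rewrites as $\sum_{i\in A'}\sigma_i=\sigma_j$ for the proper subset $A':=\supp(w)\setminus\supp(x^\flat)\subsetneq\supp(w)$ (with the obvious correction at $i=0$ in the tight case). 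This violates the extremality built into Definition~\ref{stbasis}: the tight identity $\sigma_j=1+\sigma_0+\cdots+\sigma_{j-1}$ admits no proper-subset representation at all; the just-right case maximizes $\sum_{i\in A}2^i$ and $A'$ strictly decreases this quantity; and the gappy case is pinned down by Lemma~\ref{gappy3}, which rules out precisely the indices where $A'$ could sneak in.

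The main obstacle I expect is the upper-tail reduction. Engineering a swap that cancels $x_k$, keeps both pieces in $(\sigma)^\perp$, and does not destroy $\langle x,y\rangle\ge 0$ is delicate, especially in the gappy branch where the auxiliary vector $v_k'$ may touch indices below $j$ that are already loaded with positive coordinates from $w$, so one must track how the swap interacts with $x^\flat$ and $y^\flat$. The final combinatorial contradiction, by contrast, is essentially bookkeeping against the three branches of Definition~\ref{stbasis}.
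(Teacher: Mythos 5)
The paper does not reprove this statement; it cites Greene's Lemma~3.13 directly. Your proposal sets up the right decomposition, $\langle x,y\rangle=\langle x^\flat,y^\flat\rangle+x_jy_j-\lvert x^\sharp\rvert$, but the mechanism you choose to exploit it does not work, and the gap you flag is fatal rather than merely ``delicate.'' The swap $(x,y)\leadsto(x+cv'_k,\,y-cv'_k)$ changes the pairing by $c\,\langle v'_k,\,y-x\rangle-c^2\lvert v'_k\rvert$; for the $c$ needed to cancel $x_k$ (namely $c=x_k$, which can be large), the $-c^2\lvert v'_k\rvert$ term dominates and typically makes $\langle x,y\rangle<0$, at which point you no longer hold a witness of reducibility. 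No monovariant that survives this is exhibited. Two further steps are also unjustified: ``after swapping $x$ and $y$ reduce to $x_j=0,\,y_j=-1$'' does not follow from $x_j+y_j=-1$ (swapping only exchanges the two values — it cannot turn $(3,-4)$ into $(0,-1)$), and ``coordinates dominated by those of $w$'' is asserted rather than derived from $x^\flat+y^\flat=w$ and $\langle x^\flat,y^\flat\rangle\ge0$.

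The irony is that your decomposition already contains a one-step finish that makes the entire swap iteration unnecessary. Write $\langle x^\flat,y^\flat\rangle=\sum_{i<j}x_i\,(w_i-x_i)$. In the just right and gappy cases every $w_i\in\{0,1\}$, so each integer term $x_i(w_i-x_i)\le0$ (equality iff $x_i\in\{0,w_i\}$); also $x_jy_j=-x_j(x_j+1)\le0$ and $-\lvert x^\sharp\rvert\le0$. Thus $\langle x,y\rangle$ is a sum of nonpositive terms, and $\langle x,y\rangle\ge0$ forces all of them to vanish at once: $x^\sharp=0$, $x_j\in\{0,-1\}$, and $x^\flat=\sum_{i\in B}e_i$ with $B\subset\operatorname{supp}(w)$. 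Then $0=\langle x,\sigma\rangle$ reads $\sum_{i\in B}\sigma_i+x_j\sigma_j=0$; since $\sigma_j=\sum_{i\in\operatorname{supp}(w)}\sigma_i$ and every $\sigma_i\ge1$, either $B=\emptyset$ (so $x=0$) or $x_j=-1$ and $B=\operatorname{supp}(w)$ (so $x=v_j$, $y=0$), a contradiction in both cases. The tight case needs a small extra check because $w_0=2$ allows $x_0(2-x_0)=1$, hence $\langle x^\flat,y^\flat\rangle\le1$ and $\lvert x^\sharp\rvert\le1$; but the identity $\sigma_j=1+\sigma_0+\cdots+\sigma_{j-1}$ combined with $\sigma_k\ge\sigma_j$ for $k>j$ still yields a contradiction. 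You should replace the swap argument and the ad hoc normal-form claims with this direct sign analysis.
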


\begin{lemma}[Lemma~3.15 in \cite{greene:LSRP}]\label{lem:BrIsTight}
If $v_j \in S$ is breakable, then it is tight.
\end{lemma}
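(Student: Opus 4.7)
My plan is to prove the contrapositive: if $v_j \in S$ is not tight, then $v_j$ is not breakable. By Definition~\ref{stbasis}, ``not tight'' means $v_j = \sum_{i \in A} e_i - e_j$ for some $A \subseteq \{0, 1, \ldots, j-1\}$ chosen to maximize $\sum_{i \in A} 2^i$ subject to $\sum_{i \in A} \sigma_i = \sigma_j$. In particular, every entry of $v_j$ lies in $\{-1, 0, 1\}$. I will assume for contradiction that $v_j = x + y$ with $|x|, |y| \ge 3$ and $\braket{x}{y} = -1$, then use a coordinate-wise analysis in tandem with the orthogonality relation $\braket{x}{\sigma} = 0$ to pin down the shape of $x$, and finally exploit the maximality built into the definition of $A$.

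First I expand $\braket{x}{y} = \sum_i x_i y_i$ and substitute $y_i = (v_j)_i - x_i$. The contribution $x_i y_i$ equals $-x_i^2$, $-x_i(x_i-1)$, or $-x_i(x_i+1)$ according to whether $(v_j)_i$ is $0$, $1$, or $-1$, and all three expressions are $\le 0$ for integer $x_i$. For the total to equal $-1$, exactly one index $i_0$ must contribute $-1$ and every other contribution must vanish. A direct check shows the $-1$ contribution can only occur at an index with $(v_j)_{i_0} = 0$ and $x_{i_0} = \pm 1$, while the vanishing contributions force $x_i \in \{0,1\}$ on $A$, $x_j \in \{0,-1\}$, and $x_i = 0$ off $\supp v_j$. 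Therefore $x = \pm e_{i_0} + \sum_{i \in B} e_i - \epsilon e_j$ for some $B \subseteq A$, some $\epsilon \in \{0,1\}$, and some $i_0 \notin A \cup \{j\}$. The norm conditions $|x|, |y| \ge 3$ translate to $|B| + \epsilon \ge 2$ and $|A| - |B| - \epsilon \ge 1$.

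Next, $\braket{x}{\sigma} = 0$ reads $\pm \sigma_{i_0} + \sum_{i \in B} \sigma_i - \epsilon \sigma_j = 0$. Positivity of the $\sigma_i$ pins down the sign and gives $\sigma_{i_0} = \sum_{i \in C} \sigma_i$, where $C = B$ if $\epsilon = 0$ and $C = A \setminus B$ if $\epsilon = 1$; in either case $C \subseteq A$, $|C| \ge 2$, and the norm bounds above imply $|C| < |A|$. I close the argument by cases on the position of $i_0$. If $i_0 > j$, then $\sigma_{i_0} \ge \sigma_j = \sum_{i \in A} \sigma_i \ge \sum_{i \in C} \sigma_i = \sigma_{i_0}$ forces $C = A$, contradicting $|C| < |A|$. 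If $i_0 < j$, set $A' := (A \setminus C) \cup \{i_0\} \subseteq \{0, \ldots, j-1\}$; then $\sum_{i \in A'} \sigma_i = \sigma_j$, so maximality of $A$ yields $\sum_{i \in C} 2^i \ge 2^{i_0}$. If $\max C \ge i_0$, then $|C| \ge 2$ forces $\sum_{i \in C} \sigma_i \ge \sigma_{\max C} + 1 > \sigma_{i_0}$, a contradiction; and if $\max C < i_0$, then $\sum_{i \in C} 2^i \le 2^{i_0}-1 < 2^{i_0}$, again a contradiction.

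The step I expect to demand the most care is the coordinate-wise analysis that collapses a general decomposition $v_j = x + y$ into the rigid normal form $x = \pm e_{i_0} + \sum_{i \in B} e_i - \epsilon e_j$; once that rigidity is in hand, the interplay between $\braket{x}{\sigma} = 0$ and the maximality clause in the definition of a non-tight standard basis vector finishes the job efficiently.
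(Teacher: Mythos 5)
Your proof is correct, and it takes the same route as Greene's proof of Lemma~3.15 (which the present paper cites rather than reproduces): exploit the fact that a non-tight standard basis vector has all entries in $\{-1,0,1\}$ to perform a coordinate-wise analysis of the equation $\braket{x}{y}=-1$, rigidify the breakable decomposition, and then play the orthogonality $\braket{x}{\sigma}=0$ against the $\sum 2^i$-maximality in the choice of $A$. The case analysis on the position of $i_0$ relative to $j$ and on $\max C$ relative to $i_0$ is carried out carefully and closes all the required sub-cases.
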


\begin{lemma}[Lemma~3.14~(2)~(3) in \cite{greene:LSRP}]\label{lem:SumIrr}
Suppose that $v_t \in S$ is tight.
\newline(1) If $v_j = e_t + e_{j-1} - e_j$, $j > t$, then $v_t + v_j$ is irreducible.
\newline(2) If $v_{t+1}=e_0+e_1+\cdots+e_t-e_{t+1}$, then $v_{t+1}- v_t$ is irreducible.
\end{lemma}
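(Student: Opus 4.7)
The plan is to follow the standard changemaker-lattice technique, as in the proof of Lemma~3.14 of \cite{greene:LSRP}: in each part I compute the candidate vector, bound the total squared norm available to any decomposition, and use the two relevant changemaker identities to rule out every nontrivial decomposition $x+y$ with $\langle x,y\rangle\ge 0$.

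For part (1), first compute
\[
v_t+v_j=\begin{cases}2e_0+e_1+\cdots+e_{t-1}+e_{j-1}-e_j, & j>t+1,\\[2pt] 2e_0+e_1+\cdots+e_t-e_{t+1}, & j=t+1,\end{cases}
\]
both of squared norm $t+5$. The two identities at hand are $\sigma_t=1+\sigma_0+\cdots+\sigma_{t-1}$ (tightness of $v_t$) and $\sigma_j=\sigma_t+\sigma_{j-1}$ (forced by the given form of $v_j$). Suppose $v_t+v_j=x+y$ with $x,y\in(\sigma)^\perp$ nonzero and $\langle x,y\rangle\ge 0$, so $|x|^2+|y|^2\le t+5$ and $x_0+y_0=2$. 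I would case-split on how the coordinates at $e_0,\dots,e_{t-1},e_{j-1},e_j$ distribute between $x$ and $y$. For each permissible split, the two changemaker identities together with $\langle x,\sigma\rangle=\langle y,\sigma\rangle=0$ produce a linear system whose only solutions are (up to swap) $\{x,y\}=\{v_t,v_j\}$. But $\langle v_t,v_j\rangle\in\{-1,-2\}<0$, contradicting $\langle x,y\rangle\ge 0$.

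For part (2), compute $v_{t+1}-v_t=-e_0+2e_t-e_{t+1}$, of squared norm $6$, and combine the two hypotheses $\sigma_t=1+\sigma_0+\cdots+\sigma_{t-1}$ and $\sigma_{t+1}=\sigma_0+\cdots+\sigma_t$ to deduce $2\sigma_t=\sigma_{t+1}+1$. Any decomposition $-e_0+2e_t-e_{t+1}=x+y$ with $\langle x,y\rangle\ge 0$ satisfies $|x|^2+|y|^2\le 6$, leaving only very small-entry splittings. The identity $2\sigma_t-\sigma_{t+1}=1$ combined with $\langle x,\sigma\rangle=\langle y,\sigma\rangle=0$ rules out each distribution of the coordinates at $e_0,e_t,e_{t+1}$, giving $\{x,y\}=\{-v_t,v_{t+1}\}$, which again has $\langle x,y\rangle=-\langle v_t,v_{t+1}\rangle<0$, a contradiction.

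The main obstacle in both parts is controlling the \emph{hidden} coordinates of $x$ and $y$ at indices outside the support of the vector being decomposed: cancellations there are not precluded by the norm bound alone. The key is to exploit the monotonicity $1=\sigma_0\le\sigma_1\le\cdots\le\sigma_{n+1}$ of the changemaker (together with the severely restricted values of $x_0,y_0$) to show that any nonzero hidden coefficient would force $|\langle x,\sigma\rangle|$ or $|\langle y,\sigma\rangle|$ to exceed what the remaining coordinates can cancel, which contradicts $x,y\in(\sigma)^\perp$.
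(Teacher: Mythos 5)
The paper states this lemma without proof, attributing it verbatim to Lemma~3.14(2)(3) of \cite{greene:LSRP}, so there is no in-paper argument to compare against. Your preliminary computations are correct: $|v_t+v_j|^2=t+5$, $v_{t+1}-v_t=-e_0+2e_t-e_{t+1}$ with squared norm~$6$, and the identities $\sigma_j=\sigma_t+\sigma_{j-1}$ and $2\sigma_t=\sigma_{t+1}+1$. The overall strategy---bound $|x|^2+|y|^2$ via $\langle x,y\rangle\ge 0$, then combine orthogonality to $\sigma$ with the changemaker identities in a case analysis over coordinate distributions---is the right one and is indeed Greene's technique.

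There are two problems. First, the case analysis on which everything rests is never performed; you write ``I would case-split\ldots'' and simply assert its outcome. Second, that asserted outcome is wrong: you claim the analysis forces $\{x,y\}=\{v_t,v_j\}$ (resp.\ $\{-v_t,v_{t+1}\}$) and then derive a contradiction from $\langle v_t,v_j\rangle<0$, but these pairs already violate your own norm bound---for example $|v_t|^2+|v_j|^2=(t+4)+3=t+7>t+5$, and $|{-v_t}|^2+|v_{t+1}|^2=(t+4)+(t+2)=2t+6>6$ for $t\ge1$---so they can never appear in a norm-bounded case split. A correct case analysis finds \emph{no} nontrivial decomposition with $\langle x,y\rangle\ge 0$, and the content of the proof is precisely the hidden-coordinate cases you flag at the end: the norm bound confines any hidden index $k$ to $|x_k|=|y_k|=1$, and then the orthogonality system forces $\sigma_k$ to equal a value such as $\sigma_t$ or $\sigma_t-1$, both of which are impossible because the tightness identity $\sigma_t=1+\sigma_0+\cdots+\sigma_{t-1}$ together with monotonicity makes $\sigma$ jump strictly past every such value at index~$t$. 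Filling in that finite verification is the whole lemma; as written, the proposal is a plan, not a proof.
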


\begin{lemma}[Lemma~4.9 in \cite{Prism2016}]\label{lem:j-1}
For any $v_j\in S$, we have $j-1\in\supp v_j$.
\end{lemma}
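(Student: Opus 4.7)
The plan is to split on whether $v_j$ is tight.

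If $v_j$ is tight, then by Definition~\ref{stbasis} we have $v_j = 2e_0 + \sum_{i=1}^{j-1}e_i - e_j$. When $j \ge 2$ the coefficient of $e_{j-1}$ is $1$, and when $j=1$ the index $j-1 = 0$ appears with coefficient $2$; either way $j-1 \in \supp v_j$.

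So assume $v_j$ is just right or gappy, and write $v_j = \sum_{i \in A} e_i - e_j$ with $A \subset \{0,1,\dots,j-1\}$ chosen to maximize $\sum_{i \in A} 2^i$. Suppose for contradiction that $j-1 \notin A$. The first step is to record two consequences of $v_j$ not being tight: since $\sigma_j \le 1 + \sigma_0 + \cdots + \sigma_{j-1}$ always holds (otherwise $1+\sigma_0+\cdots+\sigma_{j-1}$ could not be expressed as a subset sum), the non-tight assumption sharpens this to $\sigma_j \le \sigma_0 + \cdots + \sigma_{j-1}$, whence
\[
0 \;\le\; \sigma_j - \sigma_{j-1} \;\le\; \sigma_0 + \cdots + \sigma_{j-2},
\]
where the lower bound uses that changemakers are nondecreasing.

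The second step is the standard mini-claim that for every $m$ and every integer $k$ with $0 \le k \le \sigma_0 + \cdots + \sigma_m$, the integer $k$ can be realized as a subset sum $\sum_{i \in B}\sigma_i$ with $B \subset \{0,\dots,m\}$. This follows by induction on $m$: if $k \le \sigma_0 + \cdots + \sigma_{m-1}$ apply the inductive hypothesis; otherwise, since $\sigma_m \le 1 + \sigma_0 + \cdots + \sigma_{m-1} \le k+1$ forces $k \ge \sigma_m$, the inductive hypothesis applies to $k - \sigma_m$.

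Applying this mini-claim to $k = \sigma_j - \sigma_{j-1}$ and $m = j-2$ yields a subset $A' \subset \{0,\dots,j-2\}$ with $\sigma_j - \sigma_{j-1} = \sum_{i \in A'} \sigma_i$. Then $A'' := A' \cup \{j-1\}$ satisfies $\sigma_j = \sum_{i \in A''}\sigma_i$ and
\[
\sum_{i \in A''} 2^i \;\ge\; 2^{j-1} \;>\; \sum_{i=0}^{j-2} 2^i \;\ge\; \sum_{i \in A} 2^i,
\]
contradicting the maximality defining $A$. Hence $j-1 \in A = \supp v_j$. The only technical step is the mini-claim of step two; everything else is bookkeeping with the changemaker inequality $\sigma_j \le 1 + \sigma_0 + \cdots + \sigma_{j-1}$.
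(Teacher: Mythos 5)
Your proof is correct and complete.  Since the paper only cites this lemma (Lemma~4.9 of \cite{Prism2016}) without reproducing its proof, there is no in-text argument for direct comparison, but your route is the natural one and it does give a full self-contained derivation.  A few checkpoints worth recording: the tight case is immediate from the explicit formula for a tight $v_j$ (and when $j=1$ the sum $\sum_{i=1}^{j-1}$ is empty but $e_0$ still appears with coefficient $2$, so $0\in\supp v_1$); the bound $\sigma_j \le 1 + \sigma_0 + \cdots + \sigma_{j-1}$ indeed holds for any changemaker vector by the subset-sum argument you give, and sharpens to $\sigma_j \le \sigma_0 + \cdots + \sigma_{j-1}$ in the non-tight case since all quantities are integers; your ``mini-claim'' is the standard bounded-coin-problem induction and its inductive step correctly uses the same inequality $\sigma_m \le 1 + \sigma_0 + \cdots + \sigma_{m-1}$ to force $k \ge \sigma_m$ when $k > \sigma_0 + \cdots + \sigma_{m-1}$; and the final contradiction is exactly $2^{j-1} > \sum_{i=0}^{j-2}2^i$, which also covers the edge case $j=1$ (where the right side is the empty sum $0$ and a set $A\subset\{0,\ldots,j-2\}=\emptyset$ would force $\sigma_1=0<\sigma_0$, impossible).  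In short, the lexicographic-maximality clause in Definition~\ref{stbasis} is exactly what your argument exploits, and the proof is sound.
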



For the rest of this section, suppose $\sigma = (\sigma_0,\sigma_1,\dots,\sigma_{n+1}) \in \Z^{n+2}$ is a changemaker vector such that $(\sigma)^\perp$ is isomorphic to a C-type lattice $C(p,q)$ with $q > p$. Also, let $x_0,\dots,x_n$ be the vertex basis of $C(p,q)$, and let $S = (v_1,\dots,v_{n+1})$ be the standard basis of $(\sigma)^\perp$. Each $v_i$ is an irreducible element in a C-type lattice (Lemma~\ref{lem:irred}), so corresponds to some interval (Proposition~\ref{prop:IntervalsIrreducible}). By a slight abuse of notation, denote $[v_i]$ for the interval corresponding to $v_i$. Let $\epsilon_i\in\{\pm1\}$ satisfy $v_i = \epsilon_i [v_i]$. 

The C-type lattice $C(p,q)$ contains an element $x_0$ with $|x_0| = 4$, and any vector of norm $4$ in $\Z^{n+2}$ is of the form either $ \pm 2e_k$ or $\pm e_{k_0}\pm e_{k_1}\pm e_{k_2}\pm e_{k_3}$ for distinct indices $k_i$. Vectors of the first form cannot be in $(\sigma)^\perp$ since $\sigma_0 \neq 0$, so $x_0$ must be of the second form. In fact, we can say a little bit more about how $x_0$ can be written in terms of the $e_i$. We start by the following lemma.

\begin{lemma}\label{Lem:NoV2}
There is no element $v\in C(p,q)$ with $\braket{v}{x_0} \neq 0$ and $|v| = 2$. 
\end{lemma}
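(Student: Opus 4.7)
The plan is to argue by contradiction, combining the no-norm-one observation for $(\sigma)^\perp$ with Proposition~\ref{prop:IntervalsIrreducible} to reduce to a short case analysis on intervals.

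Suppose $v\in C(p,q)$ satisfies $|v|=2$ and $\braket{v}{x_0}\neq 0$. First I would observe that $(\sigma)^\perp$ contains no vector of norm $1$: a norm-one vector of $\mathbb Z^{n+2}$ is of the form $\pm e_i$, and $\pm e_i\in(\sigma)^\perp$ would force $\sigma_i=0$; but $\sigma_0=1$ (as recalled in the paragraph preceding Definition~\ref{stbasis}) together with the monotonicity in the changemaker condition $0\le\sigma_0\le\sigma_1\le\cdots$ gives $\sigma_i\ge 1$ for every $i$. Consequently, if $v=a+b$ with $a,b\ne 0$ and $\braket{a}{b}\ge 0$, then $|a|,|b|\ge 2$, forcing $|v|\ge 4$. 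Hence $v$ is irreducible, and Proposition~\ref{prop:IntervalsIrreducible} lets me write $v=\epsilon[I]$ for an interval $I\subseteq\{x_0,\dots,x_n\}$ and $\epsilon\in\{\pm 1\}$.

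Next I would use the structure of the vertex basis to locate $I$. Since $x_0$ pairs nontrivially only with itself and with $x_1$, the condition $\braket{[I]}{x_0}\neq 0$ forces $I$ to contain $x_0$ or $x_1$. I split into cases:
\begin{itemize}
\item If $I=\{x_0\}$, then $|[I]|=|x_0|=4\neq 2$.
\item If $I$ contains $x_0$ together with at least one other basis vector, then $x_1\in I$ (intervals are consecutive) and Lemma~\ref{Lem:IntervalNorm} yields
\[
|[I]|=2+\sum_{x_i\in I\setminus\{x_0\}}(|x_i|-2)\ \ge\ 2+(a_1-2)\ \ge\ 3,
\]
using $a_1\ge 3$.
\item If $I$ does not contain $x_0$ but contains $x_1$, then again $I\neq\{x_0\}$ and Lemma~\ref{Lem:IntervalNorm} gives $|[I]|\ge 2+(a_1-2)\ge 3$.
\end{itemize}
In every case $|v|=|[I]|\neq 2$, contradicting the hypothesis, so no such $v$ exists.

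The heart of the argument is the interval case analysis, which is short once the two supporting facts are in place: the irreducibility of $v$ (which follows from $\sigma_0=1$) and the inequality $a_1\ge 3$ (which is exactly the hypothesis $q>p$). The one point worth watching is verifying that every norm-two element really is irreducible; beyond that the proof is essentially automatic, and I expect no serious obstacle.
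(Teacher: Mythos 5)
Your proof is correct and takes essentially the same route as the paper's: both first establish that $C(p,q)$ contains no vectors of norm one, deduce that a norm-two vector must be irreducible and hence (by Proposition~\ref{prop:IntervalsIrreducible}) an interval, and then use Lemma~\ref{Lem:IntervalNorm} together with $a_1\ge 3$ to rule out the interval containing $x_0$ or $x_1$. The one small difference is in how the no-norm-one fact is obtained: you read it off the changemaker embedding (since $\sigma_i\ge\sigma_0=1$, no $\pm e_i$ lies in $(\sigma)^\perp$), whereas the paper invokes the indecomposability of $C(p,q)$ (Corollary~\ref{indecomposable}) to note that a norm-one vector would split off a $\Z$-summand --- both are valid.
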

\begin{proof}
Since $C(p,q)$ is indecomposible, it contains no $x$ with $|x| = 1$ (such an $x$ would generate a $\Z$-summand of $C(p,q)$). Therefore, if $v \in C(p,q)$ with $|v| = 2$, it must be irreducible, so $v = \pm[I]$ for $[I]$ an interval. By Lemma~\ref{Lem:IntervalNorm}, $[I]$ contains only $x_0$ or elements of norm $2$. In particular, $[I]$ does not contain $x_1$, since $a_1 \ge 3$. This means that $[I]$ also cannot contain $x_0$, since then $[I] = x_0$ and $|v| = 4$. Therefore, $\braket{[I]}{x_0} = 0$, and so $\braket{v}{x_0} = 0$. 
\end{proof}

\begin{prop}\label{x0}
For some indices $k_1 < k_2 < k_3$, $x_0$ is equal to one of ${e_0 + e_{k_1} + e_{k_2} - e_{k_3}}$ or ${e_0 - e_{k_1} - e_{k_2} + e_{k_3}}$, possibly after a global sign change in the isomorphism between $(\sigma)^\perp$ and $C(p,q)$. 
\end{prop}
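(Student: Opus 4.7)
The plan is to write $x_0=\sum_{i=0}^{n+1}c_i e_i$ and exploit the preparatory observation in the paragraph before the statement: since $|x_0|=4$ and $x_0\ne\pm 2e_k$, each $c_i\in\{0,\pm 1\}$ with exactly four of them nonzero. The argument then breaks into two main steps: show that $c_0\ne 0$, and then determine the possible sign patterns of the three remaining nonzero entries.

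For the first step, I would induct on $j$ to show that the assumption $c_0=0$ would force $c_j=0$ for all $j$, contradicting $|x_0|=4$. By Definition~\ref{stbasis}, each standard basis vector has the shape $v_j=\sum_{i\in A_j}\alpha_i e_i-e_j$ with $A_j\subset\{0,\dots,j-1\}$ and $\alpha_i\in\{1,2\}$. Under the inductive hypothesis $c_0=\cdots=c_{j-1}=0$, this gives $\braket{x_0}{v_j}=-c_j$. Lemma~\ref{Lem:X0Odd} says this pairing is even, and because $c_j\in\{0,\pm1\}$ this forces $c_j=0$. Consequently $c_0\ne 0$, so $c_0=\pm 1$, and after a global sign change in the isomorphism with $C(p,q)$ we may assume $c_0=+1$.

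For the second step, write $x_0=e_0+\eta_1 e_{k_1}+\eta_2 e_{k_2}+\eta_3 e_{k_3}$ with $0<k_1<k_2<k_3$ and $\eta_j\in\{\pm1\}$, and translate the condition $\braket{x_0}{\sigma}=0$ into
\[
\eta_1\sigma_{k_1}+\eta_2\sigma_{k_2}+\eta_3\sigma_{k_3}=-1.
\]
Using the monotonicity $\sigma_{k_1}\le\sigma_{k_2}\le\sigma_{k_3}$ together with $\sigma_{k_j}\ge 1$, a short case check on the eight sign tuples rules out $(+,+,+)$, $(-,-,-)$, $(-,+,+)$, and $(+,-,+)$ as numerically impossible, while $(+,+,-)$ and $(-,-,+)$ give exactly the two forms claimed in the proposition.

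The crux is the remaining tuples $(+,-,-)$ and $(-,+,-)$. In each case, the numerical constraint together with the monotonicity of $\sigma$ forces an equality $\sigma_{k_i}=\sigma_{k_j}$ between two coordinates whose corresponding $\eta$'s have opposite signs (for instance, $(+,-,-)$ forces $\sigma_{k_1}=\sigma_{k_2}=\sigma_{k_3}=1$). I would then invoke Lemma~\ref{Lem:NoV2}: the vector $e_{k_i}-e_{k_j}$ lies in $(\sigma)^\perp$ and has norm $2$, so its pairing with $x_0$ must vanish, forcing $c_{k_i}=c_{k_j}$ and contradicting the opposing signs. This interplay between the changemaker ordering and the absence of norm-$2$ vectors pairing nontrivially with $x_0$ is where the delicate step lies; the rest is straightforward bookkeeping.
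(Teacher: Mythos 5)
Your proposal is correct and follows essentially the same route as the paper: establish $c_0\neq 0$ via Lemma~\ref{Lem:X0Odd} (the paper does this in one step by pairing $x_0$ with $v_{k_0}$ for $k_0=\min\supp x_0$, while you iterate the same computation), then pin down the signs using the orthogonality $\langle x_0,\sigma\rangle=0$ together with Lemma~\ref{Lem:NoV2}, which rules out equal-$\sigma$ coordinates carrying opposite signs. The paper phrases the sign analysis as a claim (``$\sigma_{k_i}=\sigma_{k_j}\Rightarrow\delta_i=\delta_j$'') followed by a two-branch case split on $\delta_1$, whereas you enumerate all eight sign tuples, but the content is identical.
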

\begin{proof}
Since $|x_0| = 4$ and $x_0 \in (\sigma)^\perp$,
\begin{equation*}
x_0 = \delta_0 e_{k_0} + \delta_1 e_{k_1} + \delta_2 e_{k_2} + \delta_2 e_{k_3}
\end{equation*}
for indices $k_0 < k_1 < k_2 < k_3$ and signs $\delta_i$ such that $\sum_i \delta_i \sigma_i = 0$. By a global sign change, we might as well assume that $\delta_0 = 1$. If $k_0 > 0$, $\braket{x_0}{v_{k_0}} = -1$ is odd, violating Lemma~\ref{Lem:X0Odd}. So $k_0 = 0$.

We claim that if $\sigma_{k_i} = \sigma_{k_j}$, then $\delta_i = \delta_j$. Otherwise $v = \delta_i e_{k_i} + \delta_j e_{k_j}$ would be in $(\sigma)^\perp$ with $|v| = 2$ and $\braket{v}{x_0} = 2$, which contradicts Lemma~\ref{Lem:NoV2}. Therefore, if $\delta_1 = -1$ then $\sigma_1 > \sigma_0$, and so $\delta_0 \sigma_0 + \delta_1 \sigma_1 < 0$. Therefore, $\delta_2 \sigma_2 + \delta_3 \sigma_3 > 0$. Since $\sigma_2 \le \sigma_3$, this means that $\delta_3 = 1$, and then $\delta_2 = -1$ since $\sigma_1 < \sigma_0 + \sigma_2 + \sigma_3$. In the other case, if $\delta_1 = 1$ then $\delta_0 \sigma_0 + \delta_1 \sigma_1 > 0$, so $\delta_2 \sigma_2 + \delta_3 \sigma_3 < 0$ and $\delta_3 = -1$. If also $\delta_2 = -1$, then
\begin{equation*}
\sigma_0 + \sigma_1 = \sigma_2 + \sigma_3.
\end{equation*}
Since $\sigma_0 \le \sigma_1 \le \sigma_2 \le \sigma_3$, this can only happen if all of the $\sigma_i$ are equal, again contradicting the fact that if $\sigma_i = \sigma_j$ we must have $\delta_i = \delta_j$.
\end{proof}

\begin{cor}\label{v1}
The vector $v_1$ is equal to $2e_0 - e_1$ if $k_1 > 1$, and $e_0 - e_1$ otherwise. If ${x_0 = e_0 - e_{k_1} - e_{k_2} + e_{k_3}}$, the first of these occurs. 
\end{cor}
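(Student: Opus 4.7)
My plan is to leverage Definition~\ref{stbasis} to enumerate the possible forms of $v_1$, and then use the parity constraint from Lemma~\ref{Lem:X0Odd} to pin down which occurs in terms of $k_1$.

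First, I would observe that since $\sigma_0=1$, the changemaker condition forces $\sigma_1\le 1+\sigma_0=2$ and the monotonicity gives $\sigma_1\ge\sigma_0=1$, so $\sigma_1\in\{1,2\}$. Applying Definition~\ref{stbasis} with $j=1$: if $\sigma_1=2$ then $v_1$ is tight and equals $2e_0-e_1$, while if $\sigma_1=1$ then $v_1$ is just right with $A=\{0\}$ and equals $e_0-e_1$. Thus the corollary reduces to identifying $\sigma_1$ from $k_1$.

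Next, I would compute $\braket{x_0}{v_1}$ using Proposition~\ref{x0}. The coefficient of $e_0$ in $x_0$ is always $+1$, while the coefficient of $e_1$ is $0$ if $k_1>1$ and $\pm 1$ if $k_1=1$. When $k_1>1$, a short computation gives $\braket{x_0}{v_1}=2$ (even) for $v_1=2e_0-e_1$ and $\braket{x_0}{v_1}=1$ (odd) for $v_1=e_0-e_1$; since Lemma~\ref{Lem:X0Odd} requires the pairing to be even, $k_1>1$ forces $v_1=2e_0-e_1$. When $k_1=1$, the analogous computation gives $\braket{x_0}{v_1}=2\mp 1\in\{1,3\}$ (odd) if $v_1=2e_0-e_1$ and $\braket{x_0}{v_1}=1\pm 1\in\{0,2\}$ (even) if $v_1=e_0-e_1$, so $k_1=1$ forces $v_1=e_0-e_1$.

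For the final assertion, I would revisit the argument inside the proof of Proposition~\ref{x0}: in the case $x_0=e_0-e_{k_1}-e_{k_2}+e_{k_3}$ (i.e., $\delta_1=-1$), the proof shows $\sigma_{k_1}>\sigma_0=1$. If one had $k_1=1$, this would force $\sigma_1\ge 2$, hence $\sigma_1=2$, contradicting the conclusion of the previous paragraph that $k_1=1$ implies $\sigma_1=1$. Therefore $k_1>1$ in this case, and the first alternative $v_1=2e_0-e_1$ occurs. The only real obstacle here is keeping the sign bookkeeping straight in the parity calculation; everything else is a direct appeal to the lemmas already assembled.
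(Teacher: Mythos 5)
Your proof is correct, and for the first statement it follows the paper's argument exactly: enumerate the two possible forms of $v_1$ and apply the parity constraint from Lemma~\ref{Lem:X0Odd}. For the second statement the paper is slightly more direct: if $k_1=1$ and $x_0=e_0-e_{k_1}-e_{k_2}+e_{k_3}$, then $v_1=e_0-e_1$ (already known from the first part) gives $\braket{v_1}{x_0}=2$ with $|v_1|=2$, which immediately contradicts Lemma~\ref{Lem:NoV2}. You instead extract the inequality $\sigma_{k_1}>\sigma_0$ from inside the proof of Proposition~\ref{x0} and derive $\sigma_1=2$, contradicting the $\sigma_1=1$ conclusion of your first paragraph. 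Since that inequality in Proposition~\ref{x0} is itself obtained via Lemma~\ref{Lem:NoV2}, your route is a mild repackaging of the same underlying fact rather than a genuinely different idea, but it is logically sound and complete.
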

\begin{proof}
Note that $v_1$ is always either $e_0 - e_1$ or $2e_0 - e_1$. Using Lemma~\ref{Lem:X0Odd}, the first statement of the lemma follows. For the second statement, if $k_1 = 1$ and $v_1 = e_1 - e_0$, then if ${x_0 = e_0 - e_{k_1} - e_{k_2} + e_{k_3}}$ we have that $\braket{v_1}{x_0} = 2$ and $|v_1| = 2$, contradicting Lemma~\ref{Lem:NoV2}. 
\end{proof}

\begin{lemma}\label{lem:tightvector}
If $k_1 > 1$, $v_1$ is the only tight vector. If $k_1 = 1$, $v_{k_2}$ can be tight but there is no other tight vector.
\end{lemma}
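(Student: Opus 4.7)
The plan is to handle $v_1$ separately and then treat tight $v_t$ for $t\ge 2$ by reduction to interval geometry. By Definition~\ref{stbasis}, $v_1$ is tight exactly when $v_1=2e_0-e_1$, which by Corollary~\ref{v1} is equivalent to $k_1>1$. This immediately yields the claims about $v_1$: it is tight when $k_1>1$, and equals the non-tight vector $e_0-e_1$ when $k_1=1$.

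Now assume $v_t$ is tight with $t\ge 2$, so $v_t=2e_0+e_1+\cdots+e_{t-1}-e_t$ and $|v_t|=t+4\ge 6$. The first step is to restrict $\braket{v_t}{x_0}$ to $\{-2,0,2\}$. By Lemma~\ref{lem:irred} and Proposition~\ref{prop:IntervalsIrreducible}, $v_t=\epsilon_t[v_t]$ for an interval with $|[v_t]|>4$, so $[v_t]\neq\{x_0\}$; inspecting $\braket{x_0}{[I]}$ for an arbitrary interval $I$ shows this pairing lies in $\{-2,0,2,4\}$, attaining $4$ only for $I=\{x_0\}$. Combined with Lemma~\ref{Lem:X0Odd}, we obtain $\braket{v_t}{x_0}\in\{-2,0,2\}$.

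Next I would write $x_0=e_0+\delta_1e_{k_1}+\delta_2e_{k_2}+\delta_3e_{k_3}$ as in Proposition~\ref{x0} and compute $\braket{v_t}{x_0}$ directly, using that the $e_i$-coefficient of $v_t$ equals $2,1,-1,0$ at $i=0$, $1\le i\le t-1$, $i=t$, $i>t$, respectively. Enumerating over the sign patterns $(\delta_1,\delta_2,\delta_3)\in\{(1,1,-1),(-1,-1,1)\}$ and the positions of $k_1,k_2,k_3$ relative to $t$, the parity/magnitude restrictions leave exactly five configurations (all with $k_3>t$):
\begin{itemize}
\item[(a)] $(1,1,-1)$, $k_1\le t-1$, $k_2=t$ (pairing $2$);
\item[(b)] $(1,1,-1)$, $k_1>t$ (pairing $2$);
\item[(c)] $(-1,-1,1)$, $k_1,k_2\le t-1$ (pairing $0$);
\item[(d)] $(-1,-1,1)$, $k_1\le t-1$, $k_2=t$ (pairing $2$);
\item[(e)] $(-1,-1,1)$, $k_1>t$ (pairing $2$).
\end{itemize}
In Case B of Proposition~\ref{x0}, Corollary~\ref{v1} forces $k_1>1$, so (c), (d), (e) all satisfy $k_1>1$; and in (b), $k_1>t\ge 2$. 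Hence every surviving case except (a) with $k_1=1$ has $k_1>1$.

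The decisive step is to eliminate every configuration with $k_1>1$ by comparing to $v_1=2e_0-e_1$, which is then tight. A direct computation gives $\braket{v_1}{v_t}=3$ and $\braket{v_1}{x_0}=2$, so $\braket{[v_1]}{[v_t]}=3\epsilon_1\epsilon_t$, and the values $\braket{x_0}{[v_i]}=\pm 2$ determine whether each interval starts at $x_0$ (containing $x_1$) or at $x_1$ (omitting $x_0$); in case (c), $[v_t]$ avoids $\{x_0,x_1\}$ entirely. Using Lemma~\ref{intervalproduct} together with Lemma~\ref{lem:delta} and Lemma~\ref{Lem:IntervalNorm}, the geometries of $[v_1],[v_t]$ split into three scenarios: both intervals starting at the same vertex, where containment yields $|[v_1]|-\delta=4\neq 3$; starting at opposite ends, where $\epsilon_1\epsilon_t=-1$ forces $[v_1]\cap[v_t]=\emptyset$ via $\delta\le 3$, yet $x_1$ lies in both intervals; and the pairing-$0$ case (c), where $|[v_1]\cap[v_t]|\le 7-a_1\le 4$ (using $a_1\ge 3$) and $\delta\le 2$ are incompatible with $|[v_1]\cap[v_t]|-\delta=3$. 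Only configuration (a) with $k_1=1$ survives, proving $k_1=1$ and $t=k_2$. The main technical obstacle is the careful bookkeeping of the sign pair $(\epsilon_1,\epsilon_t)$ against the several interval geometries, so that no case slips through.
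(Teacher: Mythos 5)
Your argument is correct in outline and arrives at the right conclusion, but it takes a genuinely different route from the paper at the key step. The paper never enumerates your five sign/position configurations; instead it uses the changemaker arithmetic directly: tightness gives $\sigma_t \ge 1 + \sigma_0 + \sigma_{k_1} + \sigma_{k_2}$, while $\braket{\sigma}{x_0}=0$ (Proposition~\ref{x0}) gives $\sigma_{k_3} = \sigma_{k_1}+\sigma_{k_2}\pm\sigma_0 < \sigma_t$, which is impossible if $k_2 < t < k_3$. Together with the parity constraint (Lemma~\ref{Lem:X0Odd}) this yields at once that a tight $v_t$ must have $t < k_1$ or $t = k_2$, discarding your case (c) before any interval geometry is needed. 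Only after that does the paper invoke the $v_1$--vs--$v_t$ interval comparison, and only for $k_1 > 1$. Your approach buys a more uniform, purely lattice-geometric argument, but at the cost of having to handle the extra configuration (c) where $\braket{v_t}{x_0}=0$, which the paper never confronts.

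That extra case is also where your write-up is too terse to stand on its own. You assert that $|[v_1\cap v_t]|\le 7-a_1\le 4$, $\delta\le 2$, and $|[v_1\cap v_t]|-\delta=3$ are "incompatible," but numerically $(|[v_1\cap v_t]|,\delta)=(4,1)$ satisfies all three bounds. What rules it out is structural, not arithmetic: $\delta=1$ with $[v_1]\cap[v_t]\ne\emptyset$ forces the two intervals to share an end and hence one to contain the other (Lemma~\ref{intervalproduct} and the remark on $\delta$), and neither containment is possible --- $[v_t]\subset[v_1]$ would give $|[v_1\cap v_t]| = |v_t| > 5$, while $[v_1]\subset[v_t]$ is impossible since $x_1\in[v_1]$ but $[v_t]$ avoids $x_0,x_1$. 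Likewise $\delta=0$ means the intervals are distant, so $[v_1\cap v_t]=\emptyset\ne 3$. Filling in these two subcases makes case (c) airtight; without them the contradiction is not established. The remaining scenarios in your proof (same-endpoint containment giving pairing $4$, opposite-endpoint giving a sign contradiction since $x_1$ lies in both intervals) match the paper's interval argument closely and are sound.
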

\begin{proof}
We claim that if $v_t$ is tight, then either $t < k_1$ or $t= k_2$. Using Lemma~\ref{Lem:X0Odd}, we must have that either $k_2 \le t < k_3$ or $t < k_1$ as otherwise $v_t$ will have odd pairing with $x_0$. If $k_2<t < k_3$, then 
\[
\sigma_t = 1 + \sigma_0 + \sigma_1 + \cdots + \sigma_{t-1} \ge 1 + \sigma_0 + \sigma_{k_1} + \sigma_{k_2}.
\]
 However, by Proposition~\ref{x0}, the fact that $\braket{x_0}{\sigma} = 0$ implies that
\begin{equation*}
\sigma_{k_3} = \sigma_{k_2} + \sigma_{k_1} \pm \sigma_0 \le \sigma_{k_2} + \sigma_{k_1} + \sigma_0 < \sigma_t,
\end{equation*}
contradicting the fact that $t < k_3$. The claim follows. 

If $k_1 = 1$, it is only possible that $t = k_2$, so the second statement of the lemma follows. Suppose now that $k_1 > 1$. We have that $v_1 = 2e_0 - e_1$ by Corollary~\ref{v1}. So if $v_t$ is tight with $t > 1$, we get that $\braket{v_1}{v_t} = 3$ and $|v_t| > |v_1| = 5$. Also, since either $t < k_1$ or $t = k_2$, $\braket{v_t}{x_0} = \braket{v_1}{x_0} = 2$. Therefore, either $\epsilon_1 = -1$ and $[v_1]$ has left endpoint $1$, or $\epsilon_1 = 1$ and $[v_1]$ has left endpoint $0$, and the same holds for $\epsilon_t$ and $[v_t]$. By Lemma~\ref{intervalproduct}, 
\begin{equation*}
3 = \braket{v_1}{v_t} = \epsilon_1\epsilon_t(|[v_1 \cap v_t]| - \delta([v_1],[v_t]) ),
\end{equation*}
$|[v_1 \cap v_t]| \ge 2$ and $\delta([v_1],[v_t]) \le 3$, so if $\epsilon_1\neq \epsilon_t$, the right hand side of this equation is at most $1$. Therefore, $\epsilon_1 = \epsilon_t$, and the left endpoints of $[v_1]$ and $[v_t]$ are equal. Since $|v_t| > |v_1|$, the right endpoint of $[v_t]$ is to the right of the right endpoint of $[v_1]$. This means that $\delta([v_1],[v_t]) = 1$ and $v_1 \cap v_t =v_1$, so
\begin{equation*}
\braket{v_1}{v_t} = \epsilon_1\epsilon_t(|[v_1 \cap v_t]| - \delta([v_1],[v_t]) ) = |[v_1]| - 1 = 4 \neq 3.
\end{equation*}
Therefore, $v_1$ is the only tight vector.
\end{proof}


\begin{lemma}\label{positivity}
For $j \neq k_3$, $\braket{v_j}{x_0} \ge 0$. 
\end{lemma}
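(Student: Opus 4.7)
The plan is as follows. By Lemma~\ref{Lem:X0Odd}, $\braket{v_j}{x_0}$ is even, so to show $\braket{v_j}{x_0}\ge 0$ it suffices to show that the pairing is at least $-2$ and is not equal to $-2$. Writing $v_j=\sum_{i\in A}e_i-e_j$ (with the coefficient at $e_0$ equal to $2$ exactly when $v_j$ is tight) and using Proposition~\ref{x0} to write $x_0=e_0+\eta_1 e_{k_1}+\eta_2 e_{k_2}+\eta_3 e_{k_3}$ with $\eta_s\in\{\pm 1\}$ and $k_1<k_2<k_3$, the pairing $\braket{v_j}{x_0}$ becomes a sum of four terms, each in $\{-1,0,1,2\}$.

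In Case~1 of Proposition~\ref{x0}, where $(\eta_1,\eta_2,\eta_3)=(+,+,-)$, the only possible negative contributions when $j\ne k_3$ are $-\braket{v_j}{e_{k_3}}\ge -1$ and the $-1$ arising from $\braket{v_j}{e_j}$ when $j\in\{k_1,k_2\}$. A subcase check ($j<k_1$, $j=k_1$, $k_1<j<k_2$, $j=k_2$, $k_2<j<k_3$, $j>k_3$) shows that in each configuration the non-negative terms dominate and $\braket{v_j}{x_0}\ge -1$, which by parity gives $\ge 0$.

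In Case~2, where $(\eta_1,\eta_2,\eta_3)=(-,-,+)$, one uses $\sigma_0=1$ together with the orthogonality identity $\sigma_{k_1}+\sigma_{k_2}=\sigma_0+\sigma_{k_3}$ coming from $\braket{x_0}{\sigma}=0$. Running through the same subcases, the naive bound dips to $-2$ only when $\{k_1,k_2\}\subset A$, $k_3\notin A$, and $\braket{v_j}{e_0}=0$, which can occur only in the two index ranges $k_2<j<k_3$ and $j>k_3$. For $k_2<j<k_3$, the hypothesis $\{k_1,k_2\}\subset A$ forces
\[
\sigma_j\;\ge\;\sigma_{k_1}+\sigma_{k_2}\;=\;\sigma_0+\sigma_{k_3}\;>\;\sigma_{k_3},
\]
contradicting $j<k_3$. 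For $j>k_3$, one rules out this configuration via the swap $A':=(A\setminus\{k_1,k_2\})\cup\{0,k_3\}$: the identity yields $\sum_{i\in A'}\sigma_i=\sigma_j$, while
\[
\sum_{i\in A'}2^i-\sum_{i\in A}2^i \;=\; 2^0+2^{k_3}-2^{k_1}-2^{k_2}\;>\;0,
\]
since $2^{k_3}\ge 2^{k_2+1}>2^{k_1}+2^{k_2}$; this contradicts the binary-maximality in Definition~\ref{stbasis}.

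The main obstacle is this final swap argument: it is the only step where the elementary direct bound genuinely fails, and the contradiction requires combining the orthogonality relation $\braket{x_0}{\sigma}=0$ with the maximization of $\sum_{i\in A}2^i$ built into the definition of the standard basis. Everything else is a bookkeeping exercise of splitting into cases based on the sign pattern of $x_0$ and the location of $j$ relative to $k_1,k_2,k_3$, combined with the parity constraint from Lemma~\ref{Lem:X0Odd}.
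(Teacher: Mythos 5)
Your proof is correct and follows essentially the same approach as the paper's: reduce by the parity constraint from Lemma~\ref{Lem:X0Odd}, split on the sign pattern of $x_0$ from Proposition~\ref{x0}, and in the $(-,-,+)$ case rule out a pairing of $-2$ by swapping $A\mapsto A'=(A\setminus\{k_1,k_2\})\cup\{0,k_3\}$, which preserves $\sum_{i\in A'}\sigma_i=\sigma_j$ via the orthogonality relation while strictly increasing $\sum_{i\in A'}2^i$, contradicting the binary-maximality in Definition~\ref{stbasis}. Your presentation is slightly more careful than the paper's in one respect: you explicitly derive $j>k_3$ from $\sigma_j\ge\sigma_{k_1}+\sigma_{k_2}=\sigma_0+\sigma_{k_3}>\sigma_{k_3}$ before performing the swap (ruling out the range $k_2<j<k_3$), a step the paper leaves implicit when it asserts that $A'\subset\{0,\dots,j-1\}$.
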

\begin{proof}
Using~Proposition~\ref{x0}, either ${x_0 = e_0 + e_{k_1} + e_{k_2} - e_{k_3}}$ or $x_0= e_0 - e_{k_1} - e_{k_2} + e_{k_3}$. If $x_0 = e_0 + e_{k_1} + e_{k_2} - e_{k_3}$, it would only be possible to have $\braket{v_j}{x_0} < 0$ for $j = k_1$ or $j = k_2$. However, in these cases one has $\braket{v_j}{x_0} \ge -1$, and since $\braket{v_j}{x_0}$ is even, it follows that $\braket{v_j}{x_0} \ge 0$. If ${x_0 = e_0 - e_{k_1} - e_{k_2} + e_{k_3}}$, then $\braket{v_j}{x_0}$ is always at least $-3$, since $\braket{v_j}{e_0} \ge 0$. Therefore, since it is even, $\braket{v_j}{x_0} \ge -2$. Given that $j \ne k_3$, the only possible way to have $\braket{v_j}{x_0} = -2$ is that $k_1, k_2 \in \supp^+(v_j)$, and $0,k_3 \not \in \supp^+(v_j)$. Observe that this cannot happen since then $v_j + x_0$ is still of the form $-e_j + \sum_{i \in A'} e_i$ for some $A' \subset \{0,\dots,j-1\}$, but $A'$ is lexicographically after $\supp^+ v_j$, contradicting the maximality criterion in Definition~\ref{stbasis}.
\end{proof}

\begin{lemma}\label{excludedintervals}
If $v_i$ and $v_j$ are two unbreakable standard basis vectors with $i,j \neq k_3$, then it cannot be the case that $[v_i]$ contains $x_0$ and $[v_j]$ contains $x_1$ but not $x_0$. In particular, $\delta([v_i],[v_j]) \le 2$.
\end{lemma}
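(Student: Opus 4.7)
The plan is to assume such $v_i,v_j$ exist and derive a contradiction by reading off coefficients of $v_i+v_j$ in the orthonormal basis $(e_m)$ of $\Z^{n+2}$. First determine the signs $\epsilon_i,\epsilon_j\in\{\pm1\}$ with $v_\bullet=\epsilon_\bullet[v_\bullet]$. Since $[v_j]$ has left endpoint $1$, $\braket{[v_j]}{x_0}=-2$, and Lemma~\ref{positivity} forces $\epsilon_j=-1$; analogously $\braket{[v_i]}{x_0}>0$ forces $\epsilon_i=+1$, so $v_i=[v_i]$ and $v_j=-[v_j]$.

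Next dispose of the degenerate case $[v_i]=\{x_0\}$. Then $v_i=x_0$ is a standard basis element of norm $4$, hence takes the shape $-e_i+\sum_{k\in A_i}e_k$ with exactly one negative coefficient. Proposition~\ref{x0} gives two possibilities for $x_0$, one with a single negative coefficient located at $k_3$ and one with three negatives; only the first is of standard basis shape, and it forces $i=k_3$, contradicting the hypothesis. So one may assume $[v_i]=\{x_0,x_1,\dots,x_a\}$ and $[v_j]=\{x_1,\dots,x_b\}$ with $a,b\ge 1$. Unbreakability of $v_i,v_j$ combined with $a_1\ge 3$ then makes $x_1$ the unique high-weight vertex in each of the two intervals; hence $a_k=2$ for $2\le k\le\max(a,b)$ and $|v_i|=|v_j|=a_1$ by Lemma~\ref{Lem:IntervalNorm}.

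Now compute in $C(p,q)$:
\[
v_i+v_j=[v_i]-[v_j]=x_0+\tau,
\]
where $\tau$ is $0$ if $a=b$, the interval $x_{b+1}+\cdots+x_a$ if $a>b$, or $-(x_{a+1}+\cdots+x_b)$ if $a<b$. In each case $\tau$ is orthogonal to $x_0$ and has norm $0$ or $2$, and any norm-$2$ element of $(\sigma)^\perp$ takes the form $\pm(e_p-e_q)$ with $\sigma_p=\sigma_q$ and with the coefficients of $e_p,e_q$ in $x_0$ matching. The contradiction arises from comparing $e_m$-coefficients in the identity $v_i+v_j=x_0+\tau$ at $m=\max(i,j)$: the LHS coefficient equals $-1$ by the support bounds $A_i\subset\{0,\dots,i-1\}$ and $A_j\subset\{0,\dots,j-1\}$, while the RHS, using Proposition~\ref{x0} and the shape of $\tau$, forces $m\in\{0,k_1,k_2,k_3,p,q\}$. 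Combined with $m\ge 1$ and $m\ne k_3$, this traps $m$ in $\{k_1,k_2,p,q\}$. A second coefficient comparison at an index $\ell>\max(i,j)$ with $\ell\notin A_i\cup A_j$ (typically $\ell=k_3$) then has LHS zero but RHS $\pm1$, the desired contradiction. This template is run through the two forms of $x_0$ in Proposition~\ref{x0} and the three sub-cases $a=b,a>b,a<b$.

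The $\delta([v_i],[v_j])\le 2$ statement follows as a formal consequence: if $\delta=3$, Lemma~\ref{lem:delta} gives $\braket{[v_i]}{x_0}=-\braket{[v_j]}{x_0}=\pm2$, which up to swapping $v_i$ and $v_j$ is precisely the configuration just excluded. The main obstacle is choosing the second witness index $\ell$ uniformly; when $k_3\le\max(i,j)$ the index $k_3$ is no longer available, and one must instead play off $p$ or $q$ against the changemaker constraint $\sigma_p=\sigma_q$ together with the specific positions of $k_1,k_2,k_3$ dictated by Proposition~\ref{x0}.
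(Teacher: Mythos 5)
The strategy here is genuinely different from the paper's: you write $v_i+v_j-x_0=\tau$ and try to get a contradiction from coordinate bookkeeping in $\Z^{n+2}$, whereas the paper computes the pairing $\braket{v_i}{v_j}$ via Lemma~\ref{intervalproduct}, bounds it above by $0$ using $\delta\le 3$ and $|v_i|,|v_j|\ge 3$ together with $\braket{v_i}{v_j}\ge -1$ for distinct standard basis vectors, and then shows directly from the form of $x_0$ in Proposition~\ref{x0} that $\supp^+(v_i)\cap\supp^+(v_j)$ is large enough to force $\braket{v_i}{v_j}\ge 1$. Your sign determination, the exclusion of $[v_i]=\{x_0\}$, and the identification of $\tau$ as a norm-$0$ or norm-$2$ element of $(\sigma)^\perp$ are all correct and are fine groundwork.

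However, there is a real gap in the step you describe as the ``second coefficient comparison.'' You claim that looking at an index $\ell>\max(i,j)$ (typically $\ell=k_3$) gives ``LHS zero but RHS $\pm1$, the desired contradiction.'' But the two sides are \emph{equal} by hypothesis, and the equation $v_i+v_j=x_0+\tau$ already forces $\supp(x_0+\tau)\subset\{0,\dots,\max(i,j)\}$; there is simply no index $\ell>\max(i,j)$ with a nonzero RHS coefficient. Concretely, if $k_3>\max(i,j)$ then $\tau$ \emph{must} cancel the $\mp e_{k_3}$ term of $x_0$ (so $k_3\in\{p,q\}$), and one then has to chase the remaining constraints ($\sigma_p=\sigma_q$, $\braket{e_p}{x_0}=\braket{e_q}{x_0}$, the positions of $k_1,k_2$) through all the sub-cases, including the harder form $x_0=e_0-e_{k_1}-e_{k_2}+e_{k_3}$ where, unlike the other form, $\max(i,j)$ can in fact lie in $\{k_1,k_2\}$. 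You flag exactly this as ``the main obstacle'' but do not resolve it, and the two-coefficient template as stated does not close it. The paper's route sidesteps all of this by deducing $|v_i|\in\{3,4\}$ and $\braket{v_i}{v_j}\le 0$ from the interval calculus \emph{before} touching coordinates, which is the ingredient your argument is missing; with it, the case analysis collapses to a short support computation. As written, your proposal is a plausible plan with a known hole rather than a complete proof.
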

\begin{proof}
Assume the contrary. Since $i,j \neq k_3$, and $k_3 = \max \supp(x_0)$, neither $v_i$ nor $v_j$ is equal to $\pm x_0$, and by Lemma~\ref{positivity}, $\braket{v_i}{x_0}$ and $\braket{v_j}{x_0}$ are both nonnegative. Therefore, $\braket{v_i}{x_0} = \braket{v_j}{x_0} = 2$. Since $x_0$ is contained in $[v_i]$, the left endpoint of $[v_i]$ is $0$ and $\epsilon_i = 1$. Similarly, $[v_j]$ has left endpoint $1$ and $\epsilon_j = -1$. Therefore, $\delta([v_i], [v_j])$ is either $2$ or $3$, and since $v_i$ and $v_j$ are unbreakable and $a_1\ge3$, $z_i=z_j=1$ and $|[v_i \cap v_j]| = |v_i| = |v_j|=a_1$. This means that
\begin{equation}\label{OwnRef}
\braket{v_i}{v_j} = \epsilon_i \epsilon_j \left(|[v_i \cap v_j]| - \delta([v_i],[v_j])\right) = -|v_i| + \delta([v_i],[v_j]) = -|v_j| + \delta([v_i],[v_j])
\end{equation}
Since $v_i$ and $v_j$ are standard basis vectors, $\braket{v_i}{v_j} \ge -1$. Since $|v_i| \ge 3$ and $\delta([v_i],[v_j])$ is either $2$ or $3$, $|v_i|$ is either $3$ or $4$. That is, using~Equation~\eqref{OwnRef}, $\braket{v_i}{v_j}$ is equal to $-1$ if $|v_i| = 4$ and either $0$ or $-1$ if $|v_i| = 3$. In particular, 
\begin{equation}\label{eq:ijNeg}
\braket{v_i}{v_j}\le 0.
\end{equation}

Using Proposition~\ref{x0}, suppose first that $x_0 = e_0 + e_{k_1} + e_{k_2} - e_{k_3}$. Then since $\braket{v_i}{x_0} = \braket{v_j}{x_0} = 2$ and $i,j \neq k_3$, each of $\supp^+(v_i)$ and $\supp^+(v_j)$ contain at least two of $0, k_1$, and $k_2$, and $i,j\notin\{k_1,k_2\}$. In particular, $\supp^+(v_i)$ and $\supp^+(v_j)$ intersect, so $\braket{v_i}{v_j} \ge 0$. Therefore, using Equation~\eqref{OwnRef} and the earlier discussion, we must have $|v_i| = |v_j| = 3$, so $\supp^+(v_i)$ and $\supp^+(v_j)$ in fact contain no elements outside of $\{0,k_1,k_2\}$. In particular, $\supp^+(v_i)$ does not contain $j$, and vice versa, $\supp^+(v_j)$ does not contain $i$. Therefore, we get that $\braket{v_i}{v_j} \ge 1$ which is a contradiction to (\ref{eq:ijNeg}). 

If now $x_0 = e_0 - e_{k_1} - e_{k_2} + e_{k_3}$, then since $\braket{v_i}{x_0} = 2$ and $i \neq k_3$, there are two cases: Case~1 is that
$\supp^+(v_i)$ contains $0$ and $k_3$ but not $k_1$ and $k_2$, and Case~2 is that $i = k_2$ or $k_1$, $\supp^+(v_i)$ contains $0$, and (if $i = k_2$), $\supp^+(v_i)$ does not contain $k_1$. The same holds for $v_j$. 
If one of $v_i$ and $v_j$ is in Case~1, then $\braket{v_i}{v_j} \ge 1$, a contradiction to (\ref{eq:ijNeg}). If both $v_i$ and $v_j$ are in Case~2, we may assume $i=k_1$ and $j=k_2$, and we still have $\braket{v_i}{v_j} \ge 1$, a contradiction.
\end{proof}

\begin{cor}\label{unbreakablepairing}
If $v_i$ and $v_j$ are two unbreakable standard basis vectors with $i \neq j$ and $i,j \neq k_3$, then $|\braket{v_i}{v_j}| \le 1$, with equality if only if $[v_i]$ abuts $[v_j]$. 
\end{cor}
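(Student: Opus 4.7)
The plan is to combine Proposition~\ref{prop:IntervalsIrreducible}, Lemma~\ref{intervalproduct}, and Lemma~\ref{excludedintervals} with the standard-basis bound $\braket{v_i}{v_j}\ge -1$, and to analyze the geometric configuration of $[v_i]$ and $[v_j]$ case by case. Writing $v_i = \epsilon_i[v_i]$ and $v_j = \epsilon_j[v_j]$ via Proposition~\ref{prop:IntervalsIrreducible}, Lemma~\ref{intervalproduct} and Lemma~\ref{excludedintervals} together yield
\[\braket{v_i}{v_j} \;=\; \epsilon_i\epsilon_j\bigl(|[v_i\cap v_j]| - \delta([v_i],[v_j])\bigr),\qquad \delta([v_i],[v_j])\le 2.\]
The distant configuration is immediate (pairing $0$, not abutting), so the work lies in the abutting and transverse sub-cases.

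In the abutting case I first argue $\delta = 1$. By the remark after Lemma~\ref{intervalproduct}, $\delta = 2$ while abutting would force $\braket{[v_i]}{x_0}$ and $\braket{[v_j]}{x_0}$ to be distinct and both nonzero. Since these values lie in $\{-2,0,2,4\}$, each admissible pair of distinct nonzero values either requires one of the intervals to equal $\{x_0\}$ (so $v_i = \pm x_0$, hence $i = k_3$ by Proposition~\ref{x0}, contradicting the hypothesis) or reproduces the configuration excluded by Lemma~\ref{excludedintervals}. With $\delta = 1$, consecutive intervals give pair $\pm 1$, as predicted. When instead $[v_i] \subsetneq [v_j]$ share a common end, the pair equals $\epsilon_i\epsilon_j(|v_i|-1)$, and the task reduces to showing $|v_i| = 2$.

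To prove $|v_i|=2$, I would study the auxiliary vector $v_j - v_i = \epsilon_j [v_j\setminus v_i]$. Unbreakability of $[v_j]$ ensures that $[v_j\setminus v_i]$ contains no high-weight vertex, and a short sub-case analysis shows it cannot contain $x_0$ without reproducing the scenario excluded by Lemma~\ref{excludedintervals}. Lemma~\ref{Lem:IntervalNorm} then gives $|v_j - v_i| = 2$, so $v_j - v_i = \pm(e_r - e_s)$ for some $r,s$ with $\sigma_r = \sigma_s$. Comparing coefficients against the standard-basis expansions of $v_i$ and $v_j$, and treating the tight sub-cases separately using Lemma~\ref{lem:tightvector}, forces $A_i = A_j$ and $\sigma_i = \sigma_j$. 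Assuming $i < j$ without loss of generality, the singleton $\{i\}$ represents $\sigma_j$ with $\sum 2^k = 2^i$, strictly larger than $\sum_{k\in A_i} 2^k \le 2^i - 1$, contradicting the maximality clause in Definition~\ref{stbasis}. Hence $|v_i| = 2$, the pair is $\pm 1$, and the intervals abut. A parallel reduction handles the transverse case, bounding $|[v_i\cap v_j]|$ by $2$ so that the pairing vanishes; this in particular rules out non-abutting configurations with $|\braket{v_i}{v_j}| = 1$, securing the ``only if'' direction.

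The main obstacle is this reduction: converting the excess overlap between $[v_i]$ and $[v_j]$ into a norm-$2$ vector in $(\sigma)^\perp$ and extracting a contradiction from the maximality built into the changemaker definition. The tight sub-cases, which by Lemma~\ref{lem:tightvector} occur only when one of $i,j$ is $1$ or $k_2$, require short separate checks but follow the same pattern.
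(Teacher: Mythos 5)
Your plan is a genuinely different route from the paper's. The paper's proof is short: it uses Lemma~\ref{excludedintervals} to reduce to the cases where both $[v_i]$ and $[v_j]$ either avoid $x_0$ or can be moved off $x_0$ by the reflection $r_{x_0}$ of Lemma~\ref{Lem:X0Odd}, then lands both intervals inside the linear sublattice spanned by $x_1,\dots,x_n$ and invokes Greene's Lemma~4.4 for linear lattices as a black box. You instead redo the analysis from scratch, reducing the excess overlap to a norm-$2$ vector in $(\sigma)^\perp$ and playing it off against the lexicographic maximality in Definition~\ref{stbasis}. The abutting / shared-common-end branch of your argument is sound: if $|v_i|\ge 3$ then $[v_j\setminus v_i]$ has no high-weight vertex and (by Lemma~\ref{excludedintervals}) no $x_0$, so Lemma~\ref{Lem:IntervalNorm} gives $|v_j\pm v_i|=2$, hence $v_j\pm v_i=\pm(e_r-e_s)$, and the coefficient comparison together with the maximality clause produces the required contradiction; and when $\epsilon_i=-\epsilon_j$ one simply gets $\braket{v_i}{v_j}\le -2$, contradicting the changemaker bound.

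The gap is in the transverse case. You write that ``a parallel reduction handles the transverse case, bounding $|[v_i\cap v_j]|$ by $2$,'' but the reduction is \emph{not} parallel: if $[v_i]\pitchfork[v_j]$ with $\epsilon_i=\epsilon_j$ then $v_i-v_j=\epsilon_i([A_i']-[A_j'])$, the difference of two distant norm-$2$ intervals, so $|v_i-v_j|=4$, not $2$, and the norm-$2$ argument does not apply. The natural way to bound $|[v_i\cap v_j]|$ is to note that a common high-weight vertex would force $z_i=z_j$ and invoke Corollary~\ref{Cor:ZjDistinct}; likewise one would want to rule out $x_0\in[v_i\cap v_j]$ via Corollary~\ref{x0pairing}. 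But both of those corollaries are proved in the paper \emph{from} Corollary~\ref{unbreakablepairing}, so using either here would be circular. Unless you supply an independent argument that $|[v_i\cap v_j]|=2$ in the transverse case (which, in effect, is the nontrivial content Greene's Lemma~4.4 supplies), the ``only if'' direction and the $|\braket{v_i}{v_j}|\le 1$ bound are not established when $[v_i]\pitchfork[v_j]$. The deferred tight subcases are a smaller issue and do appear tractable by the same coefficient-comparison; the transverse case is the real hole.
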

\begin{proof}
If neither $[v_i]$ nor $[v_j]$ contains $x_0$, then both $v_i$ and $v_j$ are contained in a linear sublattice of $C(p,q)$ and this reduces to~\cite[Lemma~4.4]{greene:LSRP}. Similarly, if one of $[v_i]$ or $[v_j]$ contains $x_0$ and the other contains neither $x_0$ nor $x_1$, or if both $[v_i]$ and $[v_j]$ contain $x_0$, then reflecting both $v_i$ and $v_j$ about $x_0^{\perp}$ puts both of them in a linear sublattice of $C(p,q)$. Using Lemma~\ref{excludedintervals}, these are the only possibilities.
\end{proof}

\begin{cor}\label{Cor:ZjDistinct}
If $v_i$ and $v_j$ are unbreakable with $|v_i|,|v_j| \ge 3$, $i \neq j$ and $i,j \neq k_3$, then ${z_i} \neq {z_j}$, where ${z_i}$ and ${z_j}$ are defined in Definition~\ref{Def:Zj}.
\end{cor}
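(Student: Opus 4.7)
The plan is to assume for contradiction that $z_i = z_j =: z$ and then deduce a lower bound on $|\braket{v_i}{v_j}|$ incompatible with Corollary~\ref{unbreakablepairing}. Since $x_z$ is the unique high-weight vertex of each of $[v_i]$ and $[v_j]$, it lies in $[v_i]\cap[v_j]$, so this intersection is nonempty. Before doing anything else, I would use Lemma~\ref{excludedintervals} to rule out the ``mixed'' configuration in which one interval contains $x_0$ while the other contains $x_1$ but not $x_0$; this means either both intervals contain $x_0$ (forcing $z=1$) or neither does. In both scenarios, every vertex of $[v_i]$ and $[v_j]$ other than $x_z$ and possibly $x_0$ has norm $2$, so a direct application of Lemma~\ref{Lem:IntervalNorm} yields $|[v_i\cap v_j]| = a_z \ge 3$.

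Next, Lemma~\ref{excludedintervals} also gives $\delta([v_i],[v_j])\le 2$, so Lemma~\ref{intervalproduct} provides
\[
|\braket{v_i}{v_j}| = \bigl||[v_i\cap v_j]| - \delta([v_i],[v_j])\bigr| \ge a_z - 2.
\]
If $a_z\ge 4$ this already gives $|\braket{v_i}{v_j}|\ge 2$, contradicting Corollary~\ref{unbreakablepairing} and finishing this case.

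The remaining case $a_z = 3$ is where I expect the real obstacle to lie, and I would handle it by splitting on the relative position of the two intervals. If $[v_i]$ and $[v_j]$ share a common endpoint, then one is contained in the other, so $|[v_i\cap v_j]|$ equals the norm of the smaller interval, which by Lemma~\ref{Lem:IntervalNorm} is again $a_z = 3$; since both intervals have the same $x_0$-pairing (either $0$ or $2$), the Remark following the definition of $\pitchfork$ gives $\delta = 1$, forcing $|\braket{v_i}{v_j}| = 2$, a contradiction. If instead $[v_i]\pitchfork [v_j]$, then the same Remark gives $\delta = 2$ and hence $|\braket{v_i}{v_j}| = 1$, but the equality clause of Corollary~\ref{unbreakablepairing} then demands that $[v_i]$ and $[v_j]$ abut, which is incompatible with $\pitchfork$. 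In every subcase we reach a contradiction, so $z_i\ne z_j$, and the main bookkeeping challenge is tracking these subcases while keeping $|[v_i\cap v_j]|=a_z$ and the value of $\delta$ straight.
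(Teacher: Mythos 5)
Your proposal is correct and uses the same key lemmas and computations as the paper's proof: Lemma~\ref{excludedintervals} to get $\delta \le 2$, Lemma~\ref{Lem:IntervalNorm} to identify $|[v_i\cap v_j]| = a_z$, Lemma~\ref{intervalproduct} for the pairing, and Corollary~\ref{unbreakablepairing} for the final contradiction. The paper reaches the contradiction more directly without splitting on $a_z=3$ versus $a_z\ge 4$ (it derives $\braket{[v_i]}{[v_j]}=1$ and ``abut'' simultaneously and then notes $\delta=1$ forces $\braket{[v_i]}{[v_j]}\ge 2$), but this is a cosmetic difference rather than a different route.
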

\begin{proof}
Suppose for contradiction $x_{z_i} = x_{z_j}$. By Lemma~\ref{excludedintervals}, $\delta([v_i],[v_j]) \le 2$. Therefore, using Lemmas~\ref{intervalproduct}~and~\ref{Lem:IntervalNorm},
\begin{equation}\label{eq:vivjGe1}
\braket{[v_i]}{[v_j]} = |[v_i \cap v_j]| - \delta([v_i],[v_j]) = |x_{z_i}| - \delta([v_i],[v_j]) \ge 3 - 2 = 1,
\end{equation}
By Corollary~\ref{unbreakablepairing}, $\braket{[v_i]}{[v_j]} =1$ and $[v_i]$ abuts $[v_j]$. We would then have $\delta=1$, so the equality in (\ref{eq:vivjGe1}) cannot be attained, a contradiction.
\end{proof}

\begin{cor}\label{x0pairing}
There is at most one $j \neq k_3$ for which $v_j$ is unbreakable and $\braket{v_j}{x_0}$ is nonzero. 
\end{cor}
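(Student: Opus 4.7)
The plan is to assume for contradiction that two distinct unbreakable standard basis vectors $v_{j_1}$ and $v_{j_2}$, with $j_1,j_2\neq k_3$, both satisfy $\braket{v_{j_i}}{x_0}\ne 0$, and derive a contradiction. By Lemma~\ref{positivity} and Lemma~\ref{Lem:X0Odd}, each such pairing is an even nonnegative integer, so in fact $\braket{v_{j_i}}{x_0}\ge 2$. Each $v_{j_i}$ is irreducible (Lemma~\ref{lem:irred}), so by Proposition~\ref{prop:IntervalsIrreducible} one may write $v_{j_i}=\epsilon_{j_i}[v_{j_i}]$ with $\epsilon_{j_i}\in\{\pm1\}$ and $[v_{j_i}]$ an interval.

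First I would classify the intervals that can pair positively with $x_0$. Because $\braket{x_0}{x_m}=0$ for $m>1$, the value $\braket{x_0}{[I]}$ depends only on whether $[I]$ contains $x_0$ and/or $x_1$, taking the four values $4,-2,2,0$. Thus the constraint $\braket{v_{j_i}}{x_0}\ge 2$ forces one of three configurations: (a) $\epsilon_{j_i}=+1$ and $[v_{j_i}]\supseteq\{x_0,x_1\}$; (b) $\epsilon_{j_i}=+1$ and $[v_{j_i}]=\{x_0\}$; or (c) $\epsilon_{j_i}=-1$ and $[v_{j_i}]$ contains $x_1$ but not $x_0$. Case (b) means $v_{j_i}=x_0$; since any standard basis vector has a unique negative coordinate (at position $j_i$), Proposition~\ref{x0} forces $x_0=e_0+e_{k_1}+e_{k_2}-e_{k_3}$ and $j_i=k_3$, contradicting our assumption. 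So only (a) and (c) actually occur.

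It remains to rule out each of the three possible pairings of (a) and (c). The mixed case (a)-(c) is exactly the configuration forbidden by Lemma~\ref{excludedintervals}. For case (a)-(a), both intervals share the left endpoint $0$, and since distinct standard basis vectors give distinct intervals when the signs agree, one strictly contains the other; a direct computation via Lemmas~\ref{intervalproduct} and~\ref{Lem:IntervalNorm} gives $\braket{v_{j_1}}{v_{j_2}}=|v_{\min}|-1\ge a_1-1\ge 2$, contradicting Corollary~\ref{unbreakablepairing}. For case (c)-(c), both intervals have $x_1$ as their unique high-weight vertex, so $z_{j_1}=z_{j_2}=1$, and since $|v_{j_i}|\ge a_1\ge 3$ the hypotheses of Corollary~\ref{Cor:ZjDistinct} are satisfied, yielding $z_{j_1}\ne z_{j_2}$, a contradiction.

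The main obstacle is pinning down the correct sign-and-interval trichotomy and, in particular, eliminating case (b) by a careful use of Proposition~\ref{x0}; once the three surviving subcases (a)-(a), (a)-(c), (c)-(c) are identified, each is settled in one line by results already available, with the delicate cross case (a)-(c) being absorbed entirely into Lemma~\ref{excludedintervals}.
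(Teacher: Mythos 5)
Your proof is correct, and it reaches the same conclusion as the paper's by the same underlying mechanism, but with more case-splitting than is needed. The paper's proof is a two-liner: since $a_1 \ge 3$, any unbreakable $v_j$ with $j \ne k_3$ and $\braket{v_j}{x_0} \ne 0$ has $x_{z_j} = x_1$ (its interval must meet $\{x_0,x_1\}$, and ruling out $[v_j]=\{x_0\}$ as you do, the unique high-weight vertex is forced to be $x_1$); then Corollary~\ref{Cor:ZjDistinct} gives at most one such $j$. Notice that this observation already covers all three of your surviving subcases uniformly: in case (a) just as in case (c), the interval $[v_{j_i}]$ contains $x_1$ and, being unbreakable, $x_1$ is its unique high-weight vertex, so $z_{j_i} = 1$. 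Thus your separate (a)-(a) pairing computation via Lemmas~\ref{intervalproduct} and~\ref{Lem:IntervalNorm}, and your separate appeal to Lemma~\ref{excludedintervals} for (a)-(c), are both subsumed by a single application of Corollary~\ref{Cor:ZjDistinct}. (Lemma~\ref{excludedintervals} is used in the paper, but upstream, inside the proof of Corollary~\ref{Cor:ZjDistinct}.) Your handling of case (b) is the one piece of explicit bookkeeping the paper leaves implicit, and it is done correctly.
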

\begin{proof}
Since $a_1 \ge 3$, if there exists an unbreakable standard basis element $v_j$ for which $\braket{v_j}{x_0} \neq 0$, $j \neq k_3$, then $x_{z_j} =x_1$. It follows from Corollary~\ref{Cor:ZjDistinct} that there exists at most one such $j$.
\end{proof}

Since the pairings of $v_{k_3}$ with other standard basis vectors are difficult to control, and since Corollary~\ref{x0pairing} gives good control on the pairings between $x_0$ and the other standard basis vectors, it will be easier in what follows if we replace $S$ with the modified basis 
\begin{equation}\label{Eq:S'}
S' = \left(S \setminus \{v_{k_3}\}\right) \cup \{x_0\}. 
\end{equation}
The set $S'$ is still a basis of $(\sigma)^\perp$ because $\braket{x_0}{e_{k_3}} = \pm 1$ but $\braket{x_0}{e_j} = 0$ for $j > k_3$, so if we write $x_0$ as a linear combination of elements of $S$, the coefficient of $v_{k_3}$ will be $\pm 1$.

Using Lemmas~\ref{unbreakablepairing} and \ref{x0pairing}, we can relate the pairings between elements of $S'$ very closely to the geometry of the intervals. It will be convenient to use two graphs associated to a C-type lattice. Recall that the pairing graph $\hat{G}(V)$ for a subset $V$ of a lattice $L$ has vertex set $V$ and an edge $(v_i,v_j)$ whenever $\braket{v_i}{v_j} \neq 0$ (Definition~\ref{defn:pairinggraph}).

 
\begin{definition}
If $T$ is a set of irreducible vectors in a C-type lattice $C(p,q)$, the {\it intersection graph} $G(T)$ has vertex set $T$, and an edge between $v$ and $w$ if the intervals corresponding to $v$ and $w$ abut. We write $v\sim w$ if $v$ and $w$ are connected in $G(T)$.
\end{definition}

\begin{lemma}\label{lem:PairingNonzero}
If the intervals corresponding to $v$ and $w$ abut, then $\braket vw\ne0$.
\end{lemma}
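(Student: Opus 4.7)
The plan is to invoke Lemma~\ref{intervalproduct} and reduce to showing $|[I\cap J]|\ne\delta([I],[J])$, where $[I]$ and $[J]$ are the intervals corresponding to $v$ and $w$ via Proposition~\ref{prop:IntervalsIrreducible}. Since $v=\pm[I]$ and $w=\pm[J]$, the pairing $\braket{v}{w}$ equals $\pm(|[I\cap J]|-\delta([I],[J]))$, so verifying that this difference is nonzero suffices.

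First I would dispose of the consecutive subcase of \emph{abut}: here $I\cap J=\emptyset$, so $|[I\cap J]|=0$, while the edge of $C$ joining the two adjacent endpoints is by definition dangling, forcing $\delta\ge 1$, and hence $\braket{v}{w}\le -1$. In the common-end subcase, a shared endpoint forces $I\subseteq J$ or $J\subseteq I$; without loss of generality $I\subseteq J$, so $|[I\cap J]|=|[I]|\ge 2$ by Lemma~\ref{Lem:IntervalNorm}. Lemma~\ref{lem:delta} gives $\delta\le 3$, but the value $\delta=3$ requires $[I]\pitchfork[J]$ with oppositely signed nonzero pairings against $x_0$, which is incompatible with the containment $I\subseteq J$. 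Thus $\delta\le 2$, and when $\delta\le 1$ we are immediately done because $|[I]|\ge 2$.

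The main obstacle is ruling out $|[I]|=\delta=2$. Unpacking the definition of a dangling edge, the only way $\delta$ can exceed $1$ in the containment setting is for both strands of the double edge between $x_0$ and $x_1$ to be dangling; equivalently, exactly one of $x_0,x_1$ lies in $I$ while both lie in $J$. Combined with the shared common endpoint, this leaves just two configurations: either $I=\{x_0\}$, in which case $|[I]|=4$; or $x_1\in I$ while $x_0\notin I$ and $J\supseteq\{x_0,x_1,\ldots\}$, in which case Lemma~\ref{Lem:IntervalNorm} and the standing hypothesis $q>p$ (hence $a_1\ge 3$) yield $|[I]|\ge a_1\ge 3$. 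In either scenario $|[I]|\ge 3>2=\delta$, and the proof is complete.
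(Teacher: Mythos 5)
Your argument is correct and uses the same central tool as the paper (Lemma~\ref{intervalproduct}), but it is more careful in the common-end case. The paper's own proof dispatches the lemma in two lines: if one of $v,w$ is $\pm x_0$ then $\braket{v}{w}=\pm2$; otherwise it asserts $\delta([v],[w])=1$ and appeals to Lemma~\ref{intervalproduct}. That second assertion is in fact not always true: if $[I]=\{x_1,\dots,x_b\}$ and $[J]=\{x_0,x_1,\dots,x_b\}$ (abutting via a shared right endpoint, with neither equal to $\{x_0\}$), both strands of the doubled edge $x_0x_1$ are dangling and $\delta=2$. You correctly identify this as the delicate case: in the containment setting $\delta$ can exceed $1$ only when the boundary between $I$ and $J\setminus I$ is the doubled edge, which forces either $I=\{x_0\}$ (so $|[I]|=4$) or $x_1\in I$, $x_0\notin I$ (so $|[I]|\ge a_1\ge 3$ by Lemma~\ref{Lem:IntervalNorm} and the standing hypothesis $q>p$). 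Either way $|[I\cap J]|>\delta$, so the pairing is nonzero. Your treatment thus closes a small gap in the paper's reasoning while reaching the same (correct) conclusion; the rest of your case analysis (consecutive intervals, $\delta\le1$) matches the paper's intent.
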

\begin{proof}
If one of $v,w$ is $x_0$, $\braket vw=\pm2\ne0$. If none of $v,w$ is $x_0$, then $\delta([v],[w])=1$, our conclusion follows from Lemma~\ref{intervalproduct}.
\end{proof}

The following is immediate from Corollary~\ref{unbreakablepairing} and Lemma~\ref{lem:PairingNonzero}:
\begin{prop}
For $T \subset S'$, $G(T)$ is obtained from $\hat{G}(T)$ by removing some edges incident to breakable vectors.
\end{prop}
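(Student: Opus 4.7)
The plan is to prove two containments: first that every edge of $G(T)$ appears in $\hat G(T)$, and second that any edge of $\hat G(T)$ which is missing from $G(T)$ must be incident to a breakable vector. The first containment is immediate from Lemma~\ref{lem:PairingNonzero}: whenever the intervals $[v]$ and $[w]$ abut, we have $\braket{v}{w}\ne 0$, so $(v,w)$ is an edge of $\hat G(T)$. Thus $G(T)\subseteq\hat G(T)$, and the only question is what extra edges $\hat G(T)$ can have.

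For the second containment, suppose $v,w\in T$ satisfy $\braket{v}{w}\ne 0$ but the intervals $[v]$ and $[w]$ do not abut; I want to show that at least one of $v$, $w$ is breakable. There are two cases based on whether $x_0$ is among $v,w$. If neither $v$ nor $w$ equals $x_0$, then $v=v_i$ and $w=v_j$ are standard basis vectors with $i,j\ne k_3$ (because $v_{k_3}$ is excluded from $S'$ by~\eqref{Eq:S'}). Assuming both are unbreakable, Corollary~\ref{unbreakablepairing} gives $|\braket{v_i}{v_j}|\le 1$ with equality if and only if $[v_i]$ and $[v_j]$ abut; since we assumed the intervals do not abut, we would conclude $\braket{v_i}{v_j}=0$, contradicting our hypothesis. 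Hence at least one of $v_i$, $v_j$ is breakable, so the edge $(v,w)$ is incident to a breakable vector.

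If instead one of the two is $x_0$, say $v=x_0$ and $w=v_j$ with $j\ne k_3$, I will show that $\braket{x_0}{v_j}\ne 0$ together with unbreakability of $v_j$ already forces $[x_0]$ and $[v_j]$ to abut; so the assumed non-abutment forces $v_j$ to be breakable. Since the only vertex basis elements pairing nontrivially with $x_0$ are $x_0$ itself and $x_1$, the condition $\braket{x_0}{v_j}\ne 0$ means that the interval $[v_j]$ contains $x_0$ or $x_1$. If $[v_j]$ contains $x_0$, then either $v_j=\pm x_0$ or $[x_0]\prec[v_j]$, so they share the common left endpoint $0$; if $[v_j]$ contains $x_1$ but not $x_0$, then $[v_j]$ has left endpoint $1$ and $[x_0]$ has right endpoint $0$, so $[x_0]\dagger[v_j]$. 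In either subcase $[x_0]$ and $[v_j]$ abut, contradicting the assumption. Hence $v_j$ must be breakable, and the edge $(x_0,v_j)$ is indeed incident to a breakable vector.

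Combining the two cases, every edge of $\hat G(T)$ not already present in $G(T)$ has at least one breakable endpoint, so $G(T)$ is obtained from $\hat G(T)$ by deleting some edges incident to breakable vectors, as claimed. The only mildly delicate point is the $x_0$ case, where Corollary~\ref{unbreakablepairing} does not directly apply; but the structure of $[x_0]=\{x_0\}$ makes the geometric argument transparent once one notes that $\braket{x_0}{\cdot}$ is supported on intervals meeting $\{x_0,x_1\}$.
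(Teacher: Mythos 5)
Your proof is correct and follows the same approach the paper indicates: the paper presents this proposition as immediate from Corollary~\ref{unbreakablepairing} and Lemma~\ref{lem:PairingNonzero}, which are exactly the two tools you use. You supply more detail than the paper, in particular the small geometric observation handling edges incident to $x_0$; note that your case 2 argument in fact never uses unbreakability of $v_j$ (it shows no edge at $x_0$ is ever removed), which is harmless but slightly overstates the hypothesis needed.
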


In particular, if we write $\bar{S'}$ for the set of unbreakable elements of $S'$, ${G(\bar{S'}) = \hat{G}(\bar{S'})}$. The main use we have for this result is the following structural facts about the intersection graph. 

\begin{definition}
A {\it claw} in a graph $G$ is a quadruple $(v,w_1,w_2,w_3)$ of vertices such that $v$ neighbors all the $w_i$, but no two of the $w_i$ neighbor each other.
\end{definition}

\begin{lemma}[Lemma~4.8 of~\cite{greene:LSRP}]\label{claw}
The intersection graph $G(T)$ has no claws.
\end{lemma}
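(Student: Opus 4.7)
The plan is to reduce the claim to a combinatorial statement about intervals and handle it by case analysis, following Greene's argument for linear lattices in~\cite{greene:LSRP}. Suppose for contradiction that $(v,w_1,w_2,w_3)$ is a claw in $G(T)$. By Proposition~\ref{prop:IntervalsIrreducible}, each of these vectors has the form $\pm[I]$ for some interval $[I]$; write $[v]$ for the interval underlying $v$, with left endpoint $a$ and right endpoint $b$, and define $[w_i]$ similarly. I would first handle the degenerate case $[w_i]=[v]$: then $w_i=-v$, and since each other $[w_j]$ abuts $[v]=[w_i]$, we would get $w_i\sim w_j$, contradicting the claw condition. So I may assume $[w_i]\ne[v]$ for all $i$.

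Next I would classify each $[w_i]$ into exactly one of four mutually exclusive categories, according to the definition of abut: (A) left endpoint equals $a$; (B) right endpoint equals $b$; (C) right endpoint equals $a-1$; or (D) left endpoint equals $b+1$. Mutual exclusivity follows from $a\le b$ together with $[w_i]\ne[v]$, by a brief check on endpoint inequalities. Two intervals falling in the same category automatically share a common left or right endpoint, hence abut. Since no two $w_i$ are adjacent in $G(T)$, the three intervals must lie in three distinct categories.

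Finally I would observe that any interval in category (A) is consecutive to any interval in category (C), because the right endpoint $a-1$ of the latter sits one to the left of the left endpoint $a$ of the former; and likewise any interval in (B) is consecutive to any in (D) at the gap between $b$ and $b+1$. Enumerating the four three-element subsets of $\{A,B,C,D\}$, each of them contains either the pair $\{A,C\}$ or the pair $\{B,D\}$, and in either event two of the $[w_i]$ abut, yielding the desired contradiction. The argument is entirely combinatorial on the interval endpoints and uses the C-type structure only through Proposition~\ref{prop:IntervalsIrreducible}; the vertex $x_0$ plays no distinguished role here. The only mild subtlety is disposing of the degenerate case $[w_i]=[v]$ at the outset, so no serious obstacle is anticipated.
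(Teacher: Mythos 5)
Your proof is correct. The paper itself offers no argument here---it simply cites Lemma~4.8 of Greene's lens space paper---so what you have produced is a self-contained reconstruction of the underlying combinatorial argument, and it does indeed work unchanged in the C-type setting because the definitions of \emph{interval} and \emph{abut} are purely about endpoints on a path and never invoke the special vertex $x_0$. To be precise about the steps: (i) after disposing of $[w_i]=[v]$ (which forces $w_i=-v$ and then $w_i\sim w_j$ for any other leaf), each $[w_i]$ falls into exactly one of your four categories by the endpoint inequalities $a\le b$ and (left endpoint)~$\le$~(right endpoint); (ii) two intervals in the same category share an end, hence abut, hence are adjacent in $G(T)$ by definition, so the three leaves occupy three distinct categories; (iii) the only ``dangerous'' cross-category pairs that \emph{always} abut are $\{A,C\}$ and $\{B,D\}$, and every $3$-element subset of $\{A,B,C,D\}$ contains one of these two pairs, giving the contradiction. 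This matches, in content and in spirit, Greene's original proof of his Lemma~4.8; the only real subtlety, which you correctly flag and handle, is the degenerate case where some leaf's interval coincides with $[v]$.
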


\begin{definition}
Given a set $T$ of unbreakable elements in a C-type lattice and $v_1,v_2,v_3 \in T$, $(v_1,v_2,v_3)$ is a {\it heavy triple} if $|v_i| \ge 3$ and $v_i\ne\pm x_0$ for each $i$, and if each pair among the $v_i$ is connected by a path in $G(T)$ disjoint from the third. 
\end{definition}

\begin{lemma}[Based on Lemma~4.10 of~\cite{greene:LSRP}]\label{heavytriple}
$G(\bar{S'})$ has no heavy triples.
\end{lemma}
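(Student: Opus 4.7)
The plan is to reduce to the linear-lattice statement (Lemma~4.10 of \cite{greene:LSRP}) by showing that the special vertex $x_0$ plays no essential role in a heavy triple.

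First I would invoke Lemma~\ref{excludedintervals}: among the elements of $\bar{S'}\setminus\{x_0\}$, it cannot simultaneously happen that one interval contains $x_0$ while another contains $x_1$ but not $x_0$. After applying the involution $r_{x_0}$ from Lemma~\ref{Lem:X0Odd} if necessary---which preserves pairings, hence preserves $\hat G(\bar{S'})=G(\bar{S'})$ (the equality holding because every element of $\bar{S'}$ is unbreakable), while interchanging intervals with left endpoint $0$ and intervals with left endpoint $1$---I may assume that no interval corresponding to an element of $\bar{S'}\setminus\{x_0\}$ contains $x_0$. Equivalently, every element of $\bar{S'}\setminus\{x_0\}$ lives in the linear sublattice $L_0=\langle x_1,\dots,x_n\rangle$.

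Next, suppose for contradiction that $(v_1,v_2,v_3)$ is a heavy triple in $G(\bar{S'})$. By definition no $v_i$ equals $\pm x_0$, so each $v_i$ corresponds to an interval in $L_0$, and Corollary~\ref{Cor:ZjDistinct} ensures the three high-weight vertices $x_{z_i}$ are distinct. The crux is to arrange that the three connecting paths avoid $x_0$. If some path $v_i=u_0,u_1,\dots,u_m=v_j$ visits $u_\ell=x_0$, then $u_{\ell-1}$ and $u_{\ell+1}$ both abut $\{x_0\}$. Since their intervals lie in $L_0$, neither shares an endpoint with $\{x_0\}$, so each must be consecutive to $\{x_0\}$---that is, each has left endpoint $1$. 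But then $u_{\ell-1}$ and $u_{\ell+1}$ share left endpoint $1$, so they share a common end, abut one another, and are already joined by an edge in $G(\bar{S'})$. Thus $x_0$ can be bypassed, yielding a strictly shorter path still avoiding $v_k$; iterating, every appearance of $x_0$ is removed.

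After this reduction, $(v_1,v_2,v_3)$ is a heavy triple in the intersection graph of an unbreakable subset of the linear lattice $L_0$, which directly contradicts Lemma~4.10 of \cite{greene:LSRP}. The main obstacle, and the only genuine departure from Greene's linear argument, is handling the distinguished vertex $x_0$; the dichotomy supplied by Lemma~\ref{excludedintervals} combined with the reflection $r_{x_0}$ are precisely what allow me to straighten all relevant intervals into $L_0$ and to delete $x_0$ from any connecting path.
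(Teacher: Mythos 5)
Your reflection-and-bypass reduction is technically fine (the dichotomy supplied by Lemma~\ref{excludedintervals} together with $r_{x_0}$ does let you assume every element of $\bar{S'}\setminus\{x_0\}$ lies in $\langle x_1,\dots,x_n\rangle$, and the observation that two such intervals abutting $\{x_0\}$ must both have left endpoint $1$, hence abut each other, correctly lets you delete $x_0$ from any path). The gap is the final step: citing Lemma~4.10 of~\cite{greene:LSRP} as a black box. That lemma is not a statement about arbitrary collections of unbreakable irreducible elements of a linear lattice; it concerns the unbreakable part of the \emph{standard basis of a changemaker lattice} realized as a linear lattice. After reflecting by $r_{x_0}$, the elements of $\bar{S'}\setminus\{x_0\}$ are no longer standard basis vectors of any changemaker lattice, and $\langle x_1,\dots,x_n\rangle$ is not exhibited as a changemaker lattice, so the hypotheses of Greene's lemma are not verified. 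This is precisely why the present paper labels the result ``Based on Lemma~4.10'' rather than citing it outright---the argument has to be redone.

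The reduction is in fact unnecessary, and the paper's direct argument works in $G(\bar{S'})$ with $x_0$ still in the picture. Once Corollary~\ref{Cor:ZjDistinct} gives distinct indices $z_i<z_j<z_k$, any path in $G(\bar{S'})$ from $v_i$ to $v_k$ traces out a connected union of intervals containing both $x_{z_i}$ and $x_{z_k}$, hence $x_{z_j}$. Since $z_j>0$, the singleton interval $\{x_0\}$ cannot cover $x_{z_j}$, so some $v_\ell\in\bar{S'}\setminus\{x_0\}$ on the path satisfies $x_{z_j}\in[v_\ell]$; Corollary~\ref{Cor:ZjDistinct} then forces $v_\ell=v_j$, so the path is not disjoint from $v_j$. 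You already invoked Corollary~\ref{Cor:ZjDistinct}---following it through directly closes the argument, whereas the reduction-plus-black-box route leaves the key step unjustified.
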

\begin{proof}
If $v_i,v_j,$ and $v_k$ are unbreakable and have norm at least $3$, and none of them is $\pm x_0$, then by Corollary~\ref{Cor:ZjDistinct} we might as well assume ${z_i} < {z_j} < {z_k}$. Then any path from $v_i$ to $v_k$ in $G(\bar{S'})$ includes some $v_\ell \in \bar{S'}$ such that $[v_\ell]$ contains $x_{z_j}$, where $\bar{S'}$ is defined in~\eqref{Eq:S'}. But then $\ell = j$, so $(v_i,v_j,v_k)$ is not heavy.
\end{proof}

The proof of the following lemma is identical to~\cite[Lemma~3.8]{greene:LSRP}.

\begin{lemma}\label{Lem:CompleteSubgraph}
If the elements of $T$ are linearly independent, any cycle in $G(T)$ induces a complete subgraph.
\end{lemma}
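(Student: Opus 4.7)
I would argue by contradiction and minimal counterexample. Suppose some cycle in $G(T)$ fails to induce a complete subgraph, and choose such a cycle $v_{i_1}, v_{i_2}, \ldots, v_{i_m}, v_{i_1}$ of minimal length. Triangles are trivially complete, so $m \ge 4$, and by the minimality of $m$ the cycle must be chordless: for $a,b$ with $|a-b| \not\equiv \pm 1 \pmod m$, we have $\braket{v_{i_a}}{v_{i_b}} = 0$, i.e.\ $[v_{i_a}]$ and $[v_{i_b}]$ do not abut (by Lemma~\ref{lem:PairingNonzero} and Lemma~\ref{intervalproduct}).

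By Proposition~\ref{prop:IntervalsIrreducible}, each $v_{i_a}$ equals $\epsilon_a[I_a]$ with $\epsilon_a \in \{\pm 1\}$ and $I_a = \{x_{\ell_a}, x_{\ell_a+1}, \ldots, x_{r_a}\}$. The edge condition $v_{i_a} \sim v_{i_{a+1}}$ means that $[I_a]$ and $[I_{a+1}]$ either share an endpoint (one of $\ell_a = \ell_{a+1}$ or $r_a = r_{a+1}$) or are consecutive (one of $r_a + 1 = \ell_{a+1}$ or $r_{a+1} + 1 = \ell_a$). Because the vertex basis of $C(p,q)$ forms an almost-linear chain $x_0, x_1, \ldots, x_n$, the intervals live on a nearly-linear arrangement, and the chordlessness of the cycle forces a rigid geometric picture: consecutive intervals overlap or touch only at the prescribed endpoints, and non-consecutive intervals are mutually disjoint-and-separated.

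The heart of the argument is then to exploit this geometry to produce explicit signs $\eta_a \in \{\pm 1\}$ with
\[
\sum_{a=1}^{m} \eta_a\, v_{i_a} \;=\; 0,
\]
contradicting the linear independence assumed of $T$. The signs are constructed by telescoping: at each abutment, either the shared endpoint makes $[I_a] - [I_{a+1}]$ collapse onto a shorter interval (the non-shared portion), or consecutive intervals combine as $[I_a] + [I_{a+1}]$ into a single longer interval. Going once around the cycle, each basis vector $x_j$ that appears in some $I_a$ must cancel out in the signed sum, and the chordless cyclic structure is exactly what forces the cancellation to close up consistently; this is the mechanism of Greene's Lemma 3.8.

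The main obstacle is carrying out the combinatorial case analysis for how the $m$ intervals can be arranged to form a chordless abutment cycle, especially at the vertex $x_0$, whose pairings with $x_1$ are $-2$ rather than $-1$. However, abutment is purely an endpoint condition and Lemma~\ref{intervalproduct} records pairings uniformly in terms of $\delta([I],[J])$, so the presence of $x_0$ does not affect the combinatorics of abutment and the linear-lattice argument of Greene transfers verbatim. Thus the minimal chordless cycle cannot exist, and every cycle of $G(T)$ induces a complete subgraph.
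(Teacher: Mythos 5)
The paper gives no proof of this lemma; it simply states that the proof is identical to \cite[Lemma~3.8]{greene:LSRP}. Your plan reconstructs a proof from scratch, but it contains a genuine gap at the very first reduction: ``by the minimality of $m$ the cycle must be chordless'' is not a valid inference for the statement being proved. That step is the standard move for proving a graph is \emph{chordal}, but ``every cycle induces a complete subgraph'' is strictly stronger. If a minimal-length counterexample cycle $C$ has a chord $v_a v_b$, the chord splits $C$ into two shorter cycles, and by minimality each of those two sub-cycles induces a clique --- but that does \emph{not} make $C$ a clique: a vertex $v_i$ strictly inside one arc and a vertex $v_j$ strictly inside the other arc need not be adjacent, and nothing in your minimality hypothesis forces them to be. (Concretely, a $4$-cycle with exactly one chord --- a ``diamond'' --- is the minimal-length cycle not inducing a clique, and it is not chordless.) Repairing this requires an extra step (e.g.\ forming the $4$-cycle $v_i,v_a,v_j,v_b$ and applying minimality to it, plus a separate base case for $m=4$), which you do not supply. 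There is also a secondary error: you assert that chordlessness forces non-consecutive intervals to be ``disjoint-and-separated,'' but two intervals that do not abut can still overlap (the paper's relation $\pitchfork$); for instance $\{x_1,\dots,x_5\}$ and $\{x_3\}$ do not abut yet are nested, and they can sit as opposite vertices of a chordless $4$-cycle such as $\{x_1,x_2\},\ \{x_1,\dots,x_5\},\ \{x_4,x_5\},\ \{x_3\}$. So the ``rigid geometric picture'' you invoke before producing the signs $\eta_a$ is not established.

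Greene's actual argument is cleaner and avoids chordlessness altogether. Setting $y_{-1}=0$ and $y_i = x_0+\cdots+x_i$, every interval $\{x_a,\dots,x_b\}$ becomes $y_b - y_{a-1}$, i.e.\ an \emph{edge} $\{a-1,b\}$ in a graph $H$ on the vertex set $\{-1,0,\dots,n\}$; two intervals abut precisely when the corresponding edges of $H$ share an endpoint, so $G(T)$ is the line graph of $H$, and linear independence of $T$ is equivalent to these edges forming a \emph{forest}. Given a cycle $e_1,\dots,e_m$ in the line graph, pick $u_i \in e_i \cap e_{i+1}$ (cyclically); the closed walk $u_m,u_1,\dots,u_m$ in $H$ traverses each $e_i$ at most once, and in a forest a closed walk must traverse each edge an even number of times, hence zero times, forcing all $u_i$ to coincide. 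Thus all the $e_i$ share a common vertex of $H$, so the cycle induces a clique. The final sentence of your plan --- that the $\langle x_0, x_1\rangle = -2$ pairing does not affect the combinatorics of abutment --- is correct and is exactly the observation that lets the paper invoke Greene's linear-lattice lemma verbatim.
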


\begin{cor}[Based on Lemma~4.11 of~\cite{greene:LSRP}]\label{cycles}
Any cycle in $G(\bar{S'})$ has length three.
\end{cor}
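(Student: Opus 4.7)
The plan is to rule out a $K_4$ in $G(\bar{S'})$. Since $S'$ is a basis of $(\sigma)^\perp$, its elements are linearly independent, so by Lemma~\ref{Lem:CompleteSubgraph} any cycle in $G(\bar{S'})$ induces a complete subgraph, and a cycle of length $\ge 4$ would produce a $K_4$. The first step is to show that $x_0$ cannot be a vertex of such a $K_4$. For $v\in \bar{S'}$, the interval $[v]$ abuts $\{x_0\}$ exactly when $[v]$ contains $x_0$ or $x_1$, which, by direct computation using $\braket{x_i}{x_0}$, holds if and only if $\braket{v}{x_0}\ne 0$. Corollary~\ref{x0pairing} then says at most one unbreakable $v_j\in S$ with $j\ne k_3$ is adjacent to $x_0$, so $x_0$ has degree at most one in $G(\bar{S'})$ and lies on no cycle. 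Hence a hypothetical $K_4$ lives in the induced subgraph $G(T)$, where $T:=\bar{S'}\setminus\{x_0\}$ is the set of unbreakable elements of $S\setminus\{v_{k_3}\}$.

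Suppose $u_1,\dots,u_4\in T$ span a $K_4$. Each $u_i$ is a standard basis vector of the form $-e_j+\sum_{i'\in A_j}e_{i'}$, so $u_i\ne\pm x_0$ by the description of $x_0$ in Proposition~\ref{x0}. Applying Lemma~\ref{heavytriple} to each of the four triples forces at least one member of every triple to have norm $2$, hence at least two of $u_1,\dots,u_4$ have norm $2$. By Lemma~\ref{Lem:IntervalNorm} together with $a_1\ge 3$, a norm-$2$ interval is entirely contained in $\{x_2,\dots,x_n\}$. Combining this with Corollary~\ref{x0pairing}, which limits to one the number of $u_i$ whose interval meets $\{x_0,x_1\}$, at least three of the four $u_i$ lie in the linear sublattice $\langle x_2,\dots,x_n\rangle$.

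If all four lie in this linear sublattice, the impossibility of such a $K_4$ is precisely the content of Greene's Lemma~4.11 in~\cite{greene:LSRP}. Otherwise exactly one vector, say $u_4$, has $[u_4]$ meeting $\{x_0,x_1\}$; being unbreakable with $x_1$ as its unique high-weight vertex, $u_4$ forces every $x_i$ with $2\le i\le d$ to have norm $2$, where $d$ is the right endpoint of $[u_4]$. The abutment conditions between $u_4$ and each $u_i\subset\{x_2,\dots,x_n\}$ then force $[u_i]$ either to share the right endpoint $d$ with $[u_4]$ or to lie immediately to the right of $[u_4]$. A short case analysis in this highly constrained setting forces two of $u_1,u_2,u_3$ to share a common high-weight vertex, contradicting Corollary~\ref{Cor:ZjDistinct}. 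The main technical obstacle is this last case analysis, which adapts Greene's argument to accommodate the single interval reaching into $\{x_0,x_1\}$; the preceding reductions, via Corollary~\ref{x0pairing} and the heavy-triple lemma, are what make such an adaptation possible.
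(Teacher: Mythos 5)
Your initial reductions are correct and match the paper: $x_0$ has degree at most one in $G(\bar{S'})$ by Corollary~\ref{x0pairing}, so it lies on no cycle; and Lemma~\ref{Lem:CompleteSubgraph} together with Lemma~\ref{heavytriple} forces any length-$\ge 4$ cycle to induce a complete graph containing at least two norm-$2$ vertices. But the proof then stalls at exactly the point where it needs to close. You reduce to the case where one vector $u_4$ meets $\{x_0,x_1\}$ and the other three lie in the linear sublattice, and then assert that ``a short case analysis'' forces two of $u_1,u_2,u_3$ to share a high-weight vertex, contradicting Corollary~\ref{Cor:ZjDistinct}. This claimed contradiction cannot actually arise: since $u_4$ contains $x_1$, $|u_4|\ge 3$, so at most one of $u_1,u_2,u_3$ has norm $\ge 3$, and $u_4$'s unique high-weight vertex $x_1$ lies outside $\{x_2,\dots,x_n\}\supset [u_i]$ for $i\le 3$. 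Thus there is never a pair sharing a high-weight vertex, and the case analysis you defer does not have the shape you describe. Without it, the argument has a genuine gap.

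The paper closes this much more directly, without splitting on membership in the linear sublattice. After observing that the complete subgraph has (at least) two norm-$2$ vertices, it notes that since they pair nonzero, they must be consecutive standard basis vectors $v_i=e_{i-1}-e_i$ and $v_{i+1}=e_i-e_{i+1}$ (a norm-$2$ element of $S$ is always of this form by Lemma~\ref{lem:j-1}). Then for any other $v_j$, Lemma~\ref{gappy3} (neither $i-1$ nor $i$ is a gappy index, since $|v_i|=|v_{i+1}|=2$) constrains $\supp(v_j)\cap\{i-1,i,i+1\}$ so that $v_j$ pairs nonzero with at most one of $v_i,v_{i+1}$. This rules out any third vertex adjacent to both, regardless of whether it meets $\{x_0,x_1\}$; in particular it already handles your $u_4$. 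The detour through a linear-sublattice dichotomy and the citation of Greene's Lemma~4.11 are unnecessary and, in the residual case, not carried out.
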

\begin{proof}
By Corollary~\ref{x0pairing}, any cycle in $G(\bar{S'})$ does not contain $x_0$.
Using Lemma~\ref{Lem:CompleteSubgraph}, the cycle will contain at most two vertices of norm $>2$ to avoid producing a heavy triple. (See Definition~\ref{Def:HighNorm}.) If it had two vertices of norm $2$, using Lemma~\ref{Lem:CompleteSubgraph}, they would have nonzero inner product, so must be of the form $v_i = e_{i-1} - e_i$ and $v_{i+1} = e_i - e_{i+1}$ for some $i$. But for any other $j$ ($j\ne i, i+1$), Lemma~\ref{gappy3} implies that $\supp(v_j) \cap \{i-1,i,i+1\}$ is one of $\emptyset$, $\{i+1\}$, $\{i,i+1\}$, or $\{i-1,i,i+1\}$. In none of these cases does $v_j$ have nonzero inner product with both $v_i$ and $v_{i+1}$, a criterion that must be fulfilled by Lemma~\ref{Lem:CompleteSubgraph}. That is, any cycle in $G(\bar{S'})$ must be of length three. 
\end{proof}

\begin{lemma}\label{lem:AllNorm2}
Let $m<N$ be two possitive integers satisfying $k_3\notin[m,N]$. Suppose that $v_m$ is unbreakable and it neighbors either $x_0$ or some unbreakable $v_j$ with $j<m$. Suppose that for any index $i$ satisfying $m< i\le N$, we have $\min\supp(v_i)\ge m$, and $v_i$ is unbreakable. Then $|v_i|=2$ for any $i$ satisfying $m< i\le N$.
\end{lemma}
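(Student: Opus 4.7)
The plan is induction on $i\in\{m+1,\dots,N\}$. The base case $i=m+1$ is immediate: the hypothesis $\min\supp(v_{m+1})\ge m$ combined with $m=(m+1)-1\in\supp^+(v_{m+1})$ (Lemma~\ref{lem:j-1}) forces $v_{m+1}=e_m-e_{m+1}$, whose norm is $2$.

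For the inductive step, assume $|v_\ell|=2$ for all $m<\ell<i$, so that $v_\ell=e_{\ell-1}-e_\ell$. Lemma~\ref{gappy3} then forbids each of $m,\dots,i-2$ from being a gappy index for $v_i$, and combined with $i-1\in\supp^+(v_i)\subset\{m,\dots,i-1\}$ this forces $v_i=e_{m'}+e_{m'+1}+\cdots+e_{i-1}-e_i$ for some $m\le m'\le i-1$. If $m'=i-1$ we are done; otherwise $|v_i|\ge 3$ and I will derive a contradiction by exhibiting a claw in an intersection graph, contradicting Lemma~\ref{claw}.

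When $m'>m$, I take $T=\{v_{m'-1},v_{m'},v_{m'+1},v_i\}$. Direct inner-product calculation (using the explicit form $v_\ell=e_{\ell-1}-e_\ell$ of the intermediate vectors, and $\supp(v_m)\subset\{0,\dots,m\}$ in the edge case $m'-1=m$) shows that $v_{m'}$ pairs $-1$ with each of $v_{m'-1},v_{m'+1},v_i$ while those three pairwise pair $0$. Every index involved lies in $[m,N]$ and hence differs from $k_3$, so Corollary~\ref{unbreakablepairing} turns the $-1$ pairings into abut-edges in $G(T)$ and Lemma~\ref{lem:PairingNonzero} turns the vanishing pairings into non-edges, producing a claw at $v_{m'}$. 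When $m'=m$, I take $T=\{w,v_m,v_{m+1},v_i\}$; the analogous calculation gives $\braket{v_m}{v_{m+1}}=\braket{v_m}{v_i}=-1$ and $\braket{v_{m+1}}{v_i}=0$, while $v_m\sim w$ by hypothesis. The vanishings $\braket{w}{v_{m+1}}=\braket{w}{v_i}=0$ follow from disjointness of supports when $w=v_j$ with $j<m$, and from Corollary~\ref{x0pairing} combined with $m\ne k_3$ when $w=x_0$; the same two lemmas then yield the claw $(v_m;w,v_{m+1},v_i)$.

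I expect the main technical obstacle to be the bookkeeping across these subcases and verifying that every index involved avoids $k_3$ so that Corollary~\ref{unbreakablepairing} applies; the hypothesis $k_3\notin[m,N]$ is essential throughout. The $w=x_0$ subcase additionally relies on Corollary~\ref{x0pairing} to propagate $\braket{x_0}{v_m}\ne 0$ into the required vanishings at $v_{m+1}$ and $v_i$, which succeeds precisely because neither $m+1$ nor $i$ equals $k_3$.
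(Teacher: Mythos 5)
Your proof is correct and follows essentially the same route as the paper's: set $m'=\min\supp(v_i)$ (the paper calls it $t$), use Lemmas~\ref{gappy3} and~\ref{lem:j-1} plus the inductive hypothesis to force $v_i$ to be just right with contiguous support ending at $i-1$, and rule out $m\le m'<i-1$ by exhibiting the claws $(v_{m'};v_{m'-1},v_{m'+1},v_i)$ and $(v_m;w,v_{m+1},v_i)$, invoking Corollary~\ref{x0pairing} in the $w=x_0$ subcase. The additional support and pairing bookkeeping you spell out is left implicit in the paper.
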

\begin{proof}
When $i=m+1$, we clearly have $|v_i|=2$. Now assume $|v_i|=2$ for any $i$ satisfying $m< i< l\le N$, we want to prove $|v_l|=2$. Let $t=\min\supp(v_l)\ge m$, then $v_l$ is just right by Lemmas~\ref{gappy3} and \ref{lem:j-1}. If $m<t<l-1$, we would have a claw $(v_t,v_l,v_{t-1},v_{t+1})$. If $t=m$ and $v_m$ neighbors $x_0$, we would have a claw $(v_m,v_l,x_0,v_{m+1})$ by Corollary~\ref{x0pairing}. If $t=m$ and $v_m$ neighbors an unbreakable $v_j$ with $j<m$, we would have a claw
$(v_m,v_l,v_{j},v_{m+1})$. So $t=l-1$ and $|v_l|=2$.
\end{proof}

\section{{\bf $k_1=1, k_2>2$}}\label{sec:Case2}

In this section we consider, in the notation of Proposition~\ref{x0}, the case where $k_1=1$ and $k_2>2$. Using Corollary~\ref{v1}, one has
\begin{equation}\label{Eq:Case2X0}
x_0=e_0 + e_1 + e_{k_2} - e_{k_3}, 
\end{equation}
where $k_2>2$. 
Also, we have that $v_1 = e_0 - e_1$. So 
\begin{equation}\label{Eq:Case2Sigma}
\sigma_0 = \sigma_1 = 1.
\end{equation} 
By Lemmas~\ref{lem:BrIsTight} and \ref{lem:tightvector}, the only possible breakable vector is $v_{k_2}$.
In what follows we classify all changemaker vectors whose orthogonal complements are isomorphic to C-type lattices with $x_0$ as given in~\eqref{Eq:Case2X0} and $k_2>2$. We start by determining the first $k_3+1$ components of such changemaker vectors.  

\begin{prop}\label{prop:Segk_1=1k_2>2}
If $k_1 = 1$ and $k_2 > 2$, the initial segment $(\sigma_0,\sigma_1,\cdots,\sigma_{k_3})$ of $\sigma$ is equal to $(1,1,2^{[s]}, \sigma_{k_2},\sigma_{k_2} + 2)$ for some $s > 0$.
\end{prop}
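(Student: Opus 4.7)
The plan is: (i) pin down $\sigma_{k_3}$ via orthogonality with $x_0$; (ii) determine $\sigma_2=2$; (iii) show inductively that $\sigma_j=2$ for $2\le j\le k_2-1$ using Lemma~\ref{lem:AllNorm2}; (iv) prove $k_3=k_2+1$ by ruling out any intermediate index.

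For (i), pairing $\sigma$ with $x_0=e_0+e_1+e_{k_2}-e_{k_3}$ gives $\sigma_0+\sigma_1+\sigma_{k_2}=\sigma_{k_3}$, and \eqref{Eq:Case2Sigma} yields $\sigma_{k_3}=\sigma_{k_2}+2$. For (ii), the changemaker bound and monotonicity force $\sigma_2\in\{1,2,3\}$: $\sigma_2=1$ gives $v_2=e_1-e_2$ with $\braket{v_2}{x_0}=1$, violating Lemma~\ref{Lem:X0Odd}; $\sigma_2=3$ makes $v_2$ tight, contradicting Lemma~\ref{lem:tightvector} (since $k_1=1$ and $2<k_2$). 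Hence $\sigma_2=2$, $v_2=e_0+e_1-e_2$, $|v_2|^2=3$, and $\braket{v_2}{x_0}=2$. By Lemma~\ref{lem:BrIsTight}, $v_2$ is unbreakable and so neighbors $x_0$ in $G(\bar{S'})$.

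For (iii), I argue by induction on $j\in(2,k_2-1]$, with base case (ii). Assume $\sigma_\ell=2$ for $\ell\in\{2,\dots,j-1\}$. Since $j\ne 1,k_2$, Lemma~\ref{lem:tightvector} forbids $v_j$ from being tight, so Lemma~\ref{lem:BrIsTight} gives $v_j$ unbreakable. Write $A_j=\supp^+ v_j$. Since $\supp v_j\subset\{0,\dots,j\}\subset\{0,\dots,k_2-1\}$, Lemma~\ref{Lem:X0Odd} forces $\{0,1\}$ to lie either wholly inside or wholly outside $A_j$. In the inside case, any $\ell\in\{2,\dots,j-1\}\setminus A_j$ permits swapping $\{0,1\}$ for $\{\ell\}$: the subset sum is preserved (as $\sigma_\ell=2$ by the inductive hypothesis) while $\sum_{i\in A_j}2^i$ strictly increases, contradicting lex-maximality in Definition~\ref{stbasis}; absent such $\ell$, $A_j=\{0,1,\dots,j-1\}$ makes $v_j$ tight, again forbidden. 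Hence $\{0,1\}\cap A_j=\emptyset$, i.e.~$\min\supp v_j\ge 2$. Lemma~\ref{lem:AllNorm2} now applies with $m=2$ and $N=j$ (the hypotheses for indices between $2$ and $j-1$ having been verified at prior inductive steps), giving $|v_j|^2=2$; so $|A_j|=1$, and monotonicity together with the inductive hypothesis pin down $\sigma_j=2$.

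For (iv), suppose for contradiction $k_3\ge k_2+2$, so $v_{k_2+1}$ is defined and $\sigma_{k_2+1}\in\{\sigma_{k_2},\sigma_{k_2}+1,\sigma_{k_2}+2\}$ by monotonicity and (i). Since $\supp v_{k_2+1}\subset\{0,\dots,k_2+1\}$ and $k_3\ge k_2+2$, one has $\braket{v_{k_2+1}}{x_0}=|A_{k_2+1}\cap\{0,1,k_2\}|$, which Lemma~\ref{Lem:X0Odd} forces to be $0$ or $2$. Three of the four admissible intersections are eliminated by lex-maximality: the empty intersection is beaten by $A'=\{k_2\}\cup B$ for an appropriate $B\subset\{0,\dots,k_2-1\}$ realizing $\sigma_{k_2+1}$, as $\sum_{i\in A'}2^i\ge 2^{k_2}>\sum_{i\in A_{k_2+1}}2^i$; the intersection $\{0,k_2\}$ is killed by the $0\leftrightarrow 1$ swap; and $\{0,1\}\subset A_{k_2+1}$ with $k_2\notin A_{k_2+1}$ collapses, via the swap argument of (iii), to $A_{k_2+1}=\{0,\dots,k_2-1\}$, which is then beaten by inserting $\{k_2\}$. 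The remaining intersection $\{1,k_2\}$ with $0\notin A_{k_2+1}$, together with the bound $\sigma_{k_2+1}\le\sigma_{k_2}+2$, forces $A_{k_2+1}=\{1,k_2\}$ and $v_{k_2+1}=e_1+e_{k_2}-e_{k_2+1}$, an unbreakable vector of norm $3$ with $\braket{v_{k_2+1}}{x_0}=2$. Repeating the interval analysis of (ii) via Lemma~\ref{Lem:IntervalNorm}, $[v_{k_2+1}]$ must contain $x_1$ as its unique high-weight vertex (forcing $a_1=3$), so $z_{k_2+1}=z_2=1$, contradicting Corollary~\ref{Cor:ZjDistinct}. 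The main technical obstacle is this four-way case analysis in (iv), and in particular identifying the high-weight vertex of $[v_{k_2+1}]$ in the last case to apply Corollary~\ref{Cor:ZjDistinct}.
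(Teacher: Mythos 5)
There is a genuine gap in step (iii). You claim that if $\{0,1\}\subset A_j$ and no swap index $\ell\in\{2,\dots,j-1\}\setminus A_j$ exists, then $A_j=\{0,1,\dots,j-1\}$ makes $v_j$ \emph{tight}, ``again forbidden.'' This is not correct: by Definition~\ref{stbasis}, $v_j$ is tight precisely when $v_j=2e_0+e_1+\cdots+e_{j-1}-e_j$ (equivalently $\sigma_j=1+\sigma_0+\cdots+\sigma_{j-1}$), whereas $A_j=\{0,\dots,j-1\}$ produces $v_j=e_0+e_1+\cdots+e_{j-1}-e_j$ with $\sigma_j=\sigma_0+\cdots+\sigma_{j-1}$, which is a \emph{just right} vector. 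So Lemma~\ref{lem:tightvector} does not rule out this case, and your lexicographic swap cannot reach it either (there is nothing larger than $j-1$ available in $A_j$). The correct way to kill it is the one the paper uses throughout: such a $v_j$ is unbreakable (Lemma~\ref{lem:BrIsTight} plus Lemma~\ref{lem:tightvector}), $j\ne k_3$, and $\braket{v_j}{x_0}=2\ne 0$; since $\braket{v_2}{x_0}=2$ as well, Corollary~\ref{x0pairing} is violated. You need to insert this (or some equivalent) argument; as written, the induction in (iii) does not close.

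Beyond this gap, your route differs from the paper's. The paper applies Corollary~\ref{x0pairing} once and for all to conclude $\min\supp(v_i)\ge 2$ for $2<i<k_2$, then invokes Lemma~\ref{lem:AllNorm2}; your (iii) tries to extract the same conclusion purely from the lexicographic maximality clause of Definition~\ref{stbasis}, which is a nice idea but, as above, does not cover the ``full segment'' subcase without eventually appealing to Corollary~\ref{x0pairing} anyway. Similarly, your step (iv) reaches the right endpoint but by a longer path: after pinning $v_{k_2+1}=e_1+e_{k_2}-e_{k_2+1}$, you pass through interval norms and Corollary~\ref{Cor:ZjDistinct}, whereas one can simply observe $\braket{v_{k_2+1}}{x_0}=2\ne 0$ and again invoke Corollary~\ref{x0pairing} against $v_2$, as the paper does. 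The paper's treatment of the cases $\sigma_{k_2+1}\in\{\sigma_{k_2},\sigma_{k_3}\}$ (producing an auxiliary element of $(\sigma)^\perp$ with odd pairing against $x_0$) is also cleaner than enumerating $A_{k_2+1}\cap\{0,1,k_2\}$. Finally, a notational nit: write $|v_2|=3$, not $|v_2|^2=3$; the paper's $|v|$ already denotes $\langle v,v\rangle$.
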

\begin{proof}
We start by observing that, using Lemma~\ref{Lem:X0Odd}, we must have $v_2 = e_0 + e_1 - e_2$. So $\sigma_2 = 2$. By Corollary~\ref{x0pairing}, $\min\supp(v_i)\ge2$ for all $2 < i < k_2$. It follows from Lemma~\ref{lem:AllNorm2} that $|v_i| = 2$ for all $2 < i < k_2$. So $\sigma_i = 2$ for $2 \le i < k_2$. Now, using \eqref{Eq:Case2X0}~and~\eqref{Eq:Case2Sigma} together with the fact that $\braket{\sigma}{x_0} = 0$, we get that $\sigma_{k_3} = \sigma_{k_2} + 2$. We claim that $k_3 = k_2 +1$. Suppose for contradiction that $k_3 \neq k_2 +1$. The component $\sigma_{k_2 + 1}$ must be between $\sigma_{k_2}$ and $\sigma_{k_2} + 2 = \sigma_{k_3}$. If $\sigma_{k_2 + 1}$ is equal to either $\sigma_{k_2}$ or $\sigma_{k_3}$, there will be an element $v \in (\sigma)^\perp$ with $\braket{v}{x_0} = 1$, contradicting Lemma~\ref{Lem:X0Odd}. If $\sigma_{k_2 + 1} = \sigma_{k_2} + 1$, then $v_{k_2 + 1} = e_1 + e_{k_2} - e_{k_2 + 1}$. But then $\braket{v_{k_2 + 1}}{x_0} = 2 \neq 0$, contradicting Corollary~\ref{x0pairing} since $\braket{v_2}{x_0} = 2$. This finishes the proof.
\end{proof}

\begin{cor}\label{cor:sigmak2}
In the situation of Proposition~\ref{prop:Segk_1=1k_2>2}, the component $\sigma_{k_2}$ of the changemaker vector is one of $2s-1$, $2s+1$, or $2s+3$. These correspond to $v_{k_2}$ being gappy, just right, or tight, respectively.
\end{cor}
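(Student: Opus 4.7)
The plan is to split into cases according to whether $v_{k_2}$ is tight, handle the tight case by direct computation, and in the non-tight case use a claw argument in the intersection graph to rule out all but two possible values of $\sigma_{k_2}$.

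First, if $v_{k_2}$ is tight, then by Definition~\ref{stbasis} together with Proposition~\ref{prop:Segk_1=1k_2>2},
\[\sigma_{k_2}=1+\sigma_0+\sigma_1+\cdots+\sigma_{k_2-1}=1+1+1+2s=2s+3.\]
Otherwise, $v_{k_2}$ is unbreakable by Lemma~\ref{lem:BrIsTight}. Write $v_{k_2}=-e_{k_2}+\sum_{i\in A}e_i$ with $A\subset\{0,\ldots,k_2-1\}$ lex-maximal. The pairing $\braket{v_{k_2}}{x_0}=[0\in A]+[1\in A]-1$ must be even (Lemma~\ref{Lem:X0Odd}) and non-negative (Lemma~\ref{positivity}), which forces exactly one of $0,1$ into $A$; lex-maximality then picks $1\in A$ and $0\notin A$, and similarly forces $A=\{1\}\cup\{k_2-k,\ldots,k_2-1\}$ for some $k$. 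Using $\sigma_j=2$ for $2\le j\le k_2-1$, this gives $\sigma_{k_2}=2k+1$. The case $k=s$ fills up $A$ completely and is just right ($\sigma_{k_2}=2s+1$); any $k<s$ gives the gappy case with gappy index $1$.

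The main obstacle is to rule out $k\le s-2$ in the gappy case, and the plan is to produce a forbidden claw. Assuming $k\le s-2$, one has $k_2-k\ge 4$ so that $v_{k_2-k}=e_{k_2-k-1}-e_{k_2-k}$ is a standard basis vector with support in $\{3,\ldots,k_2-1\}$. Using $1\in A$, $2\notin A$, $k_2-k\in A$, and $k_2-k-1\notin A$, a direct calculation gives
\[\braket{v_1}{v_{k_2}}=-1,\qquad \braket{v_2}{v_{k_2}}=1,\qquad \braket{v_{k_2-k}}{v_{k_2}}=-1.\]
All of $v_1,v_2,v_{k_2-k},v_{k_2}$ are unbreakable and have index different from $k_3$, so they lie in $\bar{S'}$, and the above pairings show each of the first three neighbors $v_{k_2}$ in $G(\bar{S'})$. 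On the other hand, the supports $\{0,1\}$, $\{0,1,2\}$, and $\{k_2-k-1,k_2-k\}$ of $v_1$, $v_2$, $v_{k_2-k}$ have vanishing pairwise pairings (using $k_2-k-1\ge 3$ together with a quick check that $\braket{v_1}{v_2}=0$), so $(v_{k_2},v_1,v_2,v_{k_2-k})$ is a claw in $G(\bar{S'})$, contradicting Lemma~\ref{claw}. Hence $k\in\{s-1,s\}$, and combined with the tight case this yields $\sigma_{k_2}\in\{2s-1,2s+1,2s+3\}$, with the three values corresponding to gappy, just right, and tight respectively.
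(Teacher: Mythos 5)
Your proof is correct, and it reaches the same conclusion as the paper by a noticeably different route at each key step. Both proofs handle the tight case by direct computation. In the non-tight case, the paper deduces $\braket{v_{k_2}}{x_0}=0$ by invoking Corollary~\ref{x0pairing} (since $v_2$ is already the unique unbreakable vector pairing nontrivially with $x_0$), whereas you derive it from Lemma~\ref{Lem:X0Odd} together with the observation that $\braket{v_{k_2}}{x_0}\in\{-1,0,1\}$, so parity alone pins it to $0$ (Lemma~\ref{positivity} is mentioned but not actually needed at this step). To constrain the shape of $A$, the paper applies Lemma~\ref{gappy3} to exclude gappy indices in $\{2,\dots,k_2-2\}$; you instead argue directly from the lex-maximality in Definition~\ref{stbasis} via the equalities $\sigma_0=\sigma_1$ and $\sigma_2=\cdots=\sigma_{k_2-1}$, which is more elementary and bypasses that lemma. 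Finally, to eliminate the range $k\le s-2$ (equivalently $j>3$ in the paper's notation), the paper produces a long cycle $(v_2,\dots,v_j,v_{k_2})$ and invokes Corollary~\ref{cycles}, while you produce the claw $(v_{k_2};v_1,v_2,v_{k_2-k})$ and invoke Lemma~\ref{claw}; your pairing computations are correct, and all four vectors are indeed unbreakable and distinct from $v_{k_3}$, so the claw lives in $G(\bar{S'})$ as required. The two obstruction arguments are equally effective here; your lex-maximality shortcut is a small simplification, and the claw argument is arguably more self-contained since it uses fewer intermediate standard basis vectors.
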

\begin{proof}
If $v_{k_2}$ is tight, the third of these possibilities occurs. If not, using Corollary~\ref{x0pairing}, we get that $\braket{v_{k_2}}{x_0} = 0$. (Note that $\braket{v_2}{x_0} = 2$.) So $1 \in \supp^+(v_{k_2})$ and $0 \not \in \supp^+(v_{k_2})$. Since $|v_j| = 2$ for $2 < j < k_2$, Lemma~\ref{gappy3} implies that the only possible gappy index for $v_{k_2}$ is $1$, so
\begin{equation*}
v_{k_2} = e_1 + e_j + e_{j+1} + \cdots + e_{k_2 - 1} - e_{k_2},
\end{equation*}
for some $1 < j < k_2$. If $j > 3$, the pairing graph will have a cycle on $v_2, \cdots, v_j, v_{k_2}$ of length larger than $3$, contradicting Corollary~\ref{cycles}. In particular, if $1$ is indeed a gappy index for $v_{k_2}$, then $j = 3$, and $\sigma_{k_2} = 2s -1$. Otherwise one has $j = 2$, and therefore $\sigma_{k_2} = 2s+1$. 
\end{proof}

It turns out that the classification will highly depend on the type of the vector $v_{k_2}$: whether it is tight, just right, or gappy. For $j > k_3$, let 
\begin{equation}\label{Eq:Sj}
S_j = \supp(v_j) \cap \{0,1,\dots,k_3\},
\end{equation} 
and let 
\begin{equation}\label{Eq:S'j}
S_j' = \supp(v_j) \cap \{0,1,k_2,k_3\}.
\end{equation} 
Given that $\braket{v_2}{x_0} = 2$ and, using Corollary~\ref{x0pairing}, we must have $\braket{v_j}{x_0} = 0$, and that $S_j'$ is one of $\emptyset$, $\{1,k_3\}$, or $\{k_2,k_3\}$ by Lemma~\ref{gappy3}. Figure~\ref{pairinggraphs} depicts the paring graphs of the possible changemaker C-type lattices on their first $k_3$ vectors of the basis $S'$, defined in~\eqref{Eq:S'}, depending on the type of $v_{k_2}$. With a slight abuse of notation, we often use $v_{k_3}$ in place of $x_0$.

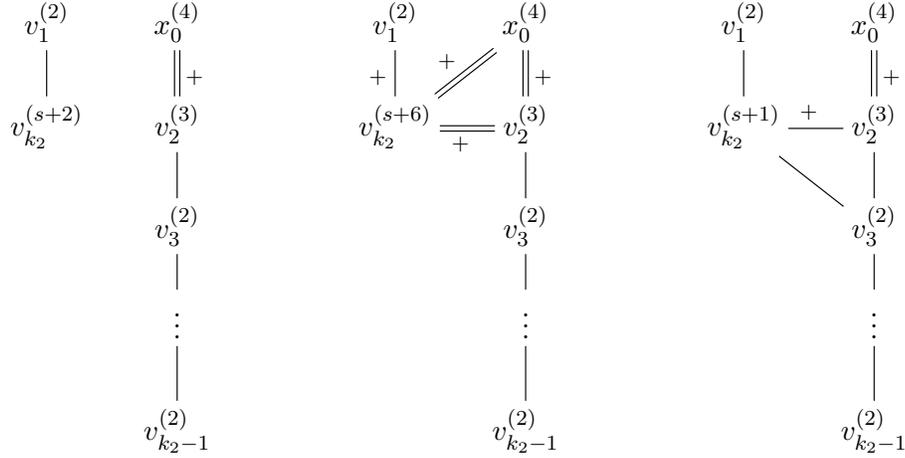
\begin{figure}[t]
\centering
\begin{align*}
\xymatrix@R1.5em@C1.5em{
v_1^{(2)} \ar@{-}[d] & x_0^{(4)} \ar@{=}[d]^+ \\
v_{k_2}^{(s+2)} & v_2^{(3)} \ar@{-}[d] \\
 & v_{3}^{(2)} \ar@{-}[d] \\
 & \vdots \ar@{-}[d] \\
 & v_{k_2 - 1}^{(2)}
}
&&
\xymatrix@R1.5em@C1.5em{
v_1^{(2)} \ar@{-}[d]_+ & x_0^{(4)} \ar@{=}[d]^+ \ar@{=}[dl]_+\\
v_{k_2}^{(s+6)} \ar@{=}[r]_+ & v_2^{(3)} \ar@{-}[d] \\
 & v_{3}^{(2)} \ar@{-}[d] \\
 & \vdots \ar@{-}[d] \\
 & v_{k_2 - 1}^{(2)}
}
&&
\xymatrix@R1.5em@C1.5em{
v_1^{(2)} \ar@{-}[d] & x_0^{(4)} \ar@{=}[d]^+ \\
v_{k_2}^{(s+1)} \ar@{-}[r]^+ \ar@{-}[dr] & v_2^{(3)} \ar@{-}[d] \\
 & v_{3}^{(2)} \ar@{-}[d] \\
 & \vdots \ar@{-}[d] \\
 & v_{k_2 - 1}^{(2)}
}
\end{align*}
\caption{Pairing graphs of the standard basis when $v_{k_2}$ is just right (left), tight (center), and gappy (right). Superscripts give the norm of the basis vector, the number of edges gives the absolute value of the inner product, and an edge is labelled with $+$ if the inner product is positive. }\label{pairinggraphs}
\end{figure}

With the notation of this section in place:
\begin{lemma}\label{Sjempty}
If $S_j' = \emptyset$, $S_j$ is either $\emptyset$ or $\{k_2 - 1\}$. In the second case, $v_{k_2}$ is not gappy. 
\end{lemma}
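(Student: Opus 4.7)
The plan is to split the proof into two parts: (1) show $S_j\in\{\emptyset,\{k_2-1\}\}$ uniformly for the three types of $v_{k_2}$ from Corollary~\ref{cor:sigmak2}, via a claw argument that sidesteps $v_{k_2}$ entirely; and (2) rule out the gappy case when $S_j=\{k_2-1\}$ via a heavy triple. Throughout, $v_j$ is unbreakable by Lemmas~\ref{lem:tightvector} and~\ref{lem:BrIsTight} (since $j>k_3$), and hence lies in $\bar{S'}$.

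For part (1), note first that $S_j'=\emptyset$ gives $S_j\subseteq\{2,\dots,k_2-1\}$. By Proposition~\ref{prop:Segk_1=1k_2>2} each $v_i$ with $3\le i\le k_2-1$ has norm $2$, and $j-1>k_2-1$, so Lemma~\ref{gappy3} forbids any $i\in\{2,\dots,k_2-2\}$ from being a gappy index for $v_j$. This propagates $i\in\supp^+(v_j)$ to $i+1\in\supp^+(v_j)$ throughout that range, forcing $S_j$ to be empty or a terminal block $\{i_0,i_0+1,\dots,k_2-1\}$ for some $i_0\in\{2,\dots,k_2-1\}$. To rule out $|S_j|\ge 2$ (that is, $i_0\le k_2-2$), I would exhibit a claw in $G(\bar{S'})$, contradicting Lemma~\ref{claw}:
\[
(v_{i_0};\,v_j,v_{i_0-1},v_{i_0+1})\ \text{for } i_0\ge 4,\quad (v_3;\,v_j,v_2,v_4)\ \text{for } i_0=3,\quad (v_2;\,v_j,v_3,x_0)\ \text{for } i_0=2.
\]
In every case the center pairs nontrivially with each leg (pairing $\pm 1$, or $2$ for the edge between $v_2$ and $x_0$), while the three legs are pairwise orthogonal by direct inspection of supports (using $S_j'=\emptyset$ in particular for the pairs involving $v_j$ and $x_0$). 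All four vertices are unbreakable standard basis elements, or equal to $x_0$, and distinct from $v_{k_3}$, so they lie in $\bar{S'}$. The crucial point is that $v_{k_2}$ does not appear in any of these claws, so the argument applies uniformly across the just right, tight, and gappy sub-cases of Corollary~\ref{cor:sigmak2}.

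For part (2), assume $S_j=\{k_2-1\}$ and $v_{k_2}$ is gappy. Then $v_{k_2}$ is unbreakable with $|v_{k_2}|=k_2-1\ge 3$ (gappy forces $k_2\ge 4$), and $|v_j|\ge 3$ (otherwise $v_j=e_{k_2-1}-e_j$ would force $\sigma_j=\sigma_{k_2-1}=2<\sigma_{k_3}$). A direct computation yields $\braket{v_2}{v_{k_2}}=\braket{v_j}{v_{k_2}}=1$ and $\braket{v_2}{v_j}=0$, while the edges $v_2v_3,\,v_3v_4,\,\ldots,\,v_{k_2-2}v_{k_2-1},\,v_{k_2-1}v_j$ in $G(\bar{S'})$ together form a path from $v_2$ to $v_j$ avoiding $v_{k_2}$. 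Hence $(v_2,v_j,v_{k_2})$ is a heavy triple, contradicting Lemma~\ref{heavytriple}.

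The main obstacle is the $i_0=2$ claw: the naive fourth leg $v_1$ fails because $\braket{v_1}{v_2}=0$, so there is no claw $(v_2;v_j,v_1,v_3)$. This is precisely where the substitution $v_{k_3}\leftrightarrow x_0$ passing from $S$ to $S'$ is essential --- $x_0$ pairs with $v_2$ with value $2$ while remaining orthogonal to both $v_3$ and $v_j$ (the latter by $S_j'=\emptyset$), allowing the claw $(v_2;v_j,v_3,x_0)$ to go through.
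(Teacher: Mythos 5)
Your proof is correct, and Part~(1) is essentially identical to the paper's argument: the paper sets $i=\min S_j$ and produces a claw on $(v_i;v_{i-1},v_{i+1},v_j)$ when $2<i<k_2-1$ and on $(v_2;v_3,x_0,v_j)$ when $i=2$ --- exactly your claws (your separate $i_0=3$ case is subsumed by your $i_0\ge4$ formula, so the split is redundant but harmless). Your observation about why passing to $S'$ is essential for the $i_0=2$ case is a nice point the paper leaves implicit.

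The only real difference is in Part~(2). The paper produces a \emph{claw} $(v_{k_2};v_1,v_2,v_j)$, centered at the gappy $v_{k_2}$: the legs $v_1,v_2,v_j$ are pairwise orthogonal while $v_{k_2}$ pairs with all three. You instead build a \emph{heavy triple} $(v_2,v_j,v_{k_2})$ using the path $v_2\sim v_3\sim\cdots\sim v_{k_2-1}\sim v_j$ and the two direct edges $v_2\sim v_{k_2}$ and $v_j\sim v_{k_2}$. Both arguments hinge on the same gappy-specific fact $\braket{v_2}{v_{k_2}}=1\ne 0$ (which fails for just-right and tight $v_{k_2}$, so neither argument overreaches), and both are valid. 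The paper's claw is slightly more economical since it avoids constructing paths and avoids verifying $|v_j|,|v_{k_2}|\ge 3$, but the two contradictions are of comparable weight; this is a minor variation rather than a structurally different proof.
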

\begin{proof}
Set $i = \min  S_j$. Suppose for contradiction that $S_j$ is nonempty and $i < k_2 - 1$. If $i > 2$,  then there will be a claw on $v_i, v_{i+1}, v_{i-1}, v_j$. If $i = 2$ there will be a claw $(v_2, v_3, x_0, v_j)$. Therefore, $i = k_2 - 1$, and so the first statement follows. If $S_j = \{k_2 - 1\}$, then $\braket{v_j}{v_{k_2 - 1}} = -1$, $\braket{v_j}{v_{k_2}} = 1$, and $\braket{v_j}{v_i} = 0$ for all other $i \le k_3$, so if $v_{k_2}$ is gappy there is a claw $(v_{k_2}, v_1, v_2, v_j)$ (see Figure~\ref{pairinggraphs}).
\end{proof}

\begin{lemma}\label{Sjup}
If $S_j' = \{k_2,k_3\}$, $S_j$ is either $\{k_2,k_3\}$ or $\{k_2 - 1, k_2, k_3\}$. In either case, $v_{k_2}$ is not gappy. 
\end{lemma}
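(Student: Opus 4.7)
The plan is to prove both assertions together by contradiction, using the no-claw constraint of Lemma~\ref{claw}, the no-heavy-triple constraint of Lemma~\ref{heavytriple}, and the gappy-index constraint of Lemma~\ref{gappy3}. From $S_j' = \{k_2, k_3\}$ together with $x_0 = e_0 + e_1 + e_{k_2} - e_{k_3}$, one immediately has $(v_j)_0 = (v_j)_1 = 0$ and $\braket{v_j}{x_0} = (v_j)_{k_2} - (v_j)_{k_3} = 0$. I would also note at the outset that $v_j \in \bar{S'}$: since $j > k_3 > k_2$, Lemma~\ref{lem:tightvector} shows $v_j$ is not tight, and so Lemma~\ref{lem:BrIsTight} shows it is not breakable.

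First I would establish $S_j \subset \{k_2 - 1, k_2, k_3\}$. Assume for contradiction that some $i \in \supp(v_j) \cap \{2, \dots, k_2 - 2\}$ exists, and take $i$ minimal. Because $|v_{i+1}| = 2$, Lemma~\ref{gappy3} forbids $i$ from being a gappy index of $v_j$, forcing $i+1 \in \supp(v_j)$ and hence $\braket{v_j}{v_{i+1}} = (v_j)_i - (v_j)_{i+1} = 0$. Minimality of $i$ together with $(v_j)_0 = (v_j)_1 = 0$ yields $\braket{v_j}{v_{i-1}} = \braket{v_j}{x_0} = 0$, while $\braket{v_j}{v_i} = -1$. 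For $i \ge 3$ this realizes the claw $(v_i; v_{i-1}, v_{i+1}, v_j)$; for $i = 2$ (necessarily $k_2 \ge 4$, so that $\braket{x_0}{v_3} = 0$) it realizes the claw $(v_2; x_0, v_3, v_j)$. Either violates Lemma~\ref{claw}.

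Next I would rule out $v_{k_2}$ being gappy. If it were, Corollary~\ref{cor:sigmak2} gives $v_{k_2} = e_1 + e_3 + e_4 + \cdots + e_{k_2 - 1} - e_{k_2}$ and $k_2 \ge 4$, so $|v_{k_2}| = k_2 - 1 \ge 3$; reading off pairings from this explicit form, the neighbors of $v_{k_2}$ in $G(\bar{S'})$ among $\{v_1, v_2, \ldots, v_{k_2-1}, x_0\}$ are exactly $v_1, v_2, v_3$ (with pairings $-1, +1, -1$). I would then split on whether $k_2 - 1 \in \supp(v_j)$. If $S_j = \{k_2, k_3\}$, direct computation gives $\braket{v_j}{v_{k_2}} = -1$, while $\braket{v_1}{v_3} = \braket{v_1}{v_j} = \braket{v_3}{v_j} = 0$ (the last because $(v_j)_2 = (v_j)_3 = 0$), so $(v_{k_2}; v_1, v_3, v_j)$ is a claw and Lemma~\ref{claw} is violated. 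If instead $S_j = \{k_2 - 1, k_2, k_3\}$, then $\braket{v_j}{v_{k_2}} = 0$ while $\braket{v_j}{v_{k_2 - 1}} = -1$, so $v_j$ attaches to the linear chain at $v_{k_2 - 1}$; the triple $(v_2, v_{k_2}, v_j)$ then has all three norms $\ge 3$ and none equal to $\pm x_0$, and the direct edge $v_2 \sim v_{k_2}$ together with the chains $v_2 \sim v_3 \sim \cdots \sim v_{k_2 - 1} \sim v_j$ and $v_{k_2} \sim v_3 \sim \cdots \sim v_{k_2 - 1} \sim v_j$ exhibit it as a heavy triple, contradicting Lemma~\ref{heavytriple}.

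The main delicate point is not any individual computation but the simultaneous bookkeeping: each putative non-edge in a claw or heavy-triple candidate has to be verified against $S_j' = \{k_2, k_3\}$, the minimality of $i$ (in the first step), and the explicit gappy form of $v_{k_2}$ (in the second). Lemma~\ref{gappy3} is the pivotal tool in the first step, as it is precisely what upgrades the $v_i$--$v_{i+1}$--$v_j$ triangle into a genuine claw, while the precise identification of the gappy form of $v_{k_2}$ is what lets the two chains through $v_3, \ldots, v_{k_2-1}$ be disjoint from $v_2$ and $v_{k_2}$ respectively in the heavy-triple argument.
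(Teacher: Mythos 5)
Your proof is correct and follows essentially the same route as the paper's: establish $S_j \subseteq \{k_2-1,k_2,k_3\}$ via a claw at $v_i$ (resp.\ at $v_2$ with $x_0$ as a leaf) for a minimal offending index $i$, then kill the gappy option via a claw at $v_{k_2}$ when $S_j = \{k_2,k_3\}$ and a heavy triple $(v_2, v_{k_2}, v_j)$ when $S_j = \{k_2-1,k_2,k_3\}$. The only (cosmetic) departure is that in the $S_j = \{k_2,k_3\}$ gappy subcase you use the claw $(v_{k_2}; v_1, v_3, v_j)$ where the paper uses $(v_{k_2}; v_1, v_2, v_j)$; both are valid since $v_1$ is orthogonal to both $v_2$ and $v_3$.
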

\begin{proof}
Again, set $i = \min  S_j$. If $i< k_2-1$, there will be a claw on either $v_i, v_{i+1}, v_{i-1}, v_j$ or $v_2, v_3, x_0, v_j$, depending on whether $i>2$ or $i=2$. So the first statement follows. Corresponding to the two possibilities for $S_j$, the vector $v_j$ will have nonzero inner product with either $v_{k_2}$ or $v_{k_2 - 1}$, but no other $v_i$ with $i \le k_3$. If $v_{k_2}$ is gappy, this creates a claw $(v_{k_2}, v_1, v_2, v_j)$ in the first case, and  a heavy triple $(v_2,v_{k_2},v_j)$ in the second: again, see Figure~\ref{pairinggraphs}.
\end{proof}

\begin{lemma}\label{Sjdown}
If $S_j' = \{1,k_3\}$, either $S_j$ is one of $\{1,2,3,\dots,k_2 - 1, k_3\}$ and $\{1,3,\dots,k_2 - 1, k_3\}$ and $v_{k_2}$ is tight, or $S_j = \{1,k_3\}$, $s = 1$, and $v_{k_2}$ is not gappy.
\end{lemma}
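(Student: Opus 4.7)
The plan is to carry out a case analysis on $A := \supp^+(v_j) \cap \{2, 3, \dots, k_2-1\}$, relying on the gappy-index restrictions from Lemma~\ref{gappy3} together with the no-claw and no-large-cycle principles for $G(\bar{S'})$.

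First I would pin down the coarse shape of $A$. From the hypothesis $S_j' = \{1, k_3\}$, we have $\{1, k_3\} \subseteq \supp^+(v_j)$ and $\{0, k_2\} \cap \supp^+(v_j) = \emptyset$. For each $k \in \{2, \dots, k_2-2\}$, Proposition~\ref{prop:Segk_1=1k_2>2} gives $|v_{k+1}| = 2$, so Lemma~\ref{gappy3} prohibits $k$ from being a gappy index for $v_j$. Together with $k_2 \notin \supp^+(v_j)$, this forces $A$ to be either empty or of the form $\{i_0, i_0+1, \dots, k_2-1\}$ for some $i_0 \in \{2, \dots, k_2-1\}$, producing the four candidate cases $A = \emptyset$, $i_0 = 2$, $i_0 = 3$, and $i_0 \ge 4$.

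Next I would eliminate $i_0 \ge 4$. In that regime, $v_j$ has coefficient $0$ at $e_{i_0-1}$, $e_2$, and $e_3$, so direct computation yields $\braket{v_j}{v_1} = -1$, $\braket{v_j}{v_2} = 1$, and $\braket{v_j}{v_{i_0}} = -1$, while $v_1, v_2, v_{i_0}$ are pairwise orthogonal. Since $j > k_3 > k_2$, Lemma~\ref{lem:tightvector} tells us $v_j$ is not tight, hence unbreakable by Lemma~\ref{lem:BrIsTight}; the low-norm vectors $v_1, v_2, v_{i_0}$ are automatically unbreakable, so $(v_j;\, v_1, v_2, v_{i_0})$ is a claw in $G(\bar{S'})$, contradicting Lemma~\ref{claw}.

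For $i_0 \in \{2, 3\}$, I would run through the three flavors of $v_{k_2}$ (tight, just right, gappy) from Corollary~\ref{cor:sigmak2}. A direct coefficient computation gives $\braket{v_j}{v_{k_2}} \in \{s, s+1\}$, with the value determined by $i_0$ and the placement of the possible gap in $v_{k_2}$. In each subcase where $v_{k_2}$ is just right or gappy, $v_{k_2}$ is unbreakable (again by Lemma~\ref{lem:BrIsTight}), so Corollary~\ref{unbreakablepairing} imposes $|\braket{v_j}{v_{k_2}}| \le 1$; this is incompatible with the lower bound on $s$ forced by the case assumption (recall $s \ge 2$ whenever $i_0 = 3$ and $s \ge 1$ always). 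Hence $v_{k_2}$ must be tight in both subcases, matching the first alternative of the lemma.

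Finally, in the case $A = \emptyset$, so $S_j = \{1, k_3\}$, I would assume $s \ge 2$ and derive a contradiction by exhibiting a claw $(v_2;\, v_3, v_j, x_0)$ in $G(\bar{S'})$. The pairings $\braket{v_2}{v_3} = -1$, $\braket{v_2}{v_j} = 1$, $\braket{v_2}{x_0} = 2$ are all nonzero, while a quick inspection of supports shows $\braket{v_3}{v_j} = 0$, $\braket{v_3}{x_0} = 0$, and $\braket{v_j}{x_0} = 0$; the vertex $x_0$ is unbreakable via Proposition~\ref{prop:IntervalsIrreducible} applied to the singleton $\{x_0\}$, and the other three vectors are unbreakable as before, so Lemma~\ref{claw} is violated. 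Therefore $s = 1$; because gappy requires $\sigma_{k_2} = 2s - 1 \ge 2$, i.e.\ $s \ge 2$, this automatically excludes $v_{k_2}$ from being gappy, yielding the second alternative.

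The main obstacle is the coefficient bookkeeping in the $i_0 \in \{2, 3\}$ step, where $\braket{v_j}{v_{k_2}}$ has to be recomputed separately in the tight, just right, and gappy descriptions of $v_{k_2}$ and then matched against the bound from Corollary~\ref{unbreakablepairing}; once those pairings are on the table, the claw and cycle arguments are routine applications of the machinery set up in Section~\ref{sec:ChangemakerLattices}.
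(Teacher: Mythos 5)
Your proof is correct and follows essentially the same strategy as the paper: Lemma~\ref{gappy3} restricts the shape of $S_j$ to $\{1,k_3\}$ or $\{1,i_0,\dots,k_2-1,k_3\}$, then claw/cycle/heavy-triple machinery and the pairing bound of Corollary~\ref{unbreakablepairing} dispose of the subcases. The two places you diverge cosmetically from the paper are harmless: you exclude $i_0\ge 4$ via the claw $(v_j;v_1,v_2,v_{i_0})$ where the paper invokes Corollary~\ref{cycles} on $v_2\sim\cdots\sim v_{i_0}\sim v_j$, and for $S_j=\{1,k_3\}$ with $s\ge 2$ you run a single claw $(v_2;v_3,v_j,x_0)$ for all types of $v_{k_2}$ rather than splitting off a heavy triple $(v_2,v_{k_2},v_j)$ for the gappy case as the paper does. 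Two small imprecisions worth tightening: when bounding $\braket{v_j}{v_{k_2}}$ for $i_0=2$ with $v_{k_2}$ gappy you get only $\braket{v_j}{v_{k_2}}=s$, so you must invoke $s\ge 2$ (from $\sigma_{k_2}=2s-1\ge\sigma_{k_2-1}=2$) at that point, not just $s\ge 1$; and unbreakability of $x_0$ follows from the unnumbered characterization of unbreakable intervals (at most one high-weight vertex) rather than from Proposition~\ref{prop:IntervalsIrreducible}, which concerns irreducibility.
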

\begin{proof}
Using Lemma~\ref{gappy3}, none of $2,\dots,k_2-2$ can be a gappy index for $v_j$. Thus, we must have either $S_j = \{1,k_3\}$ or
$
S_j = \{1,k,k+1,\dots,k_2 - 1, k_3\}
$
for some $1 < k < k_2$. 

In the first case, $v_j$ will have nonzero inner product with just $v_1$, $v_2$, and $v_{k_2}$. If $v_{k_2}$ is gappy, this creates a heavy triple $(v_2, v_{k_2}, v_j)$. If $v_{k_2}$ is just right or tight, this creates a claw $(v_2, v_j, x_0, v_3)$, unless $s = 1$: see Figure~\ref{pairinggraphs}.

In the second case, to avoid a cycle $(v_2,v_3,\dots,v_k,v_j)$ of length longer than $3$ (Corollary~\ref{cycles}) we must have $k$ equal to $2$ or $3$. Then $\braket{v_j}{v_{k_2}}$ is either $s$ or $s+1$, and unless $v_{k_2}$ is tight this must be at most $1$ (Corollary~\ref{unbreakablepairing}). Since $s \ge 1$, if $v_{k_2}$ is not tight, we must have $\braket{v_j}{v_{k_2}} = s = 1$. Note that in this case $k_3 = 4$, $k_2=3$, $v_{k_2} = e_1 + e_2 - e_3$, and $S_j = \{ 1,2,4\}$. Consequently, $\braket{v_j}{v_3} = 2$, again contradicting Corollary~\ref{unbreakablepairing}.
\end{proof}

\begin{prop}\label{example}
If $v_{k_2}$ is gappy, then $s\ge2$ and $n+1 = k_3$ (i.e. $v_{k_3}$ is the last standard basis vector). The corresponding changemaker vectors are 
\[(1,1,2^{[s]},2s-1,2s+1), s\ge2.\]
\end{prop}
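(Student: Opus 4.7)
The plan is to combine the classification lemmas of this section to pin down both the length and the entries of $\sigma$. Three things must be verified: that $s \geq 2$, that there are no standard basis vectors beyond $v_{k_3}$, and that the resulting $\sigma$ takes the announced form.

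For $s \geq 2$, I would revisit the argument in the proof of Corollary~\ref{cor:sigmak2}. When $v_{k_2}$ is gappy, its gappy index is forced to be $1$ by Lemma~\ref{gappy3}, and the run then starts at index $3$ to avoid a long cycle in the intersection graph, so
\[
v_{k_2} = e_1 + e_3 + e_4 + \cdots + e_{k_2-1} - e_{k_2}.
\]
This expression makes sense only if $k_2 \geq 4$. Since $k_2 = s + 2$ by Proposition~\ref{prop:Segk_1=1k_2>2}, this gives $s \geq 2$.

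For $n + 1 = k_3$, I would argue by contradiction. Suppose $v_{k_3 + 1}$ exists. Because $v_{k_2}$ is the only possibly breakable vector (Lemmas~\ref{lem:BrIsTight} and \ref{lem:tightvector}), and a gappy vector is not tight, $v_{k_3 + 1}$ is automatically unbreakable, so Lemmas~\ref{Sjempty}, \ref{Sjup}, and \ref{Sjdown} apply. The condition $\braket{v_{k_3+1}}{x_0} = 0$ (from Corollary~\ref{x0pairing}, since $\braket{v_2}{x_0} = 2$ already) together with the lexicographic maximality in Definition~\ref{stbasis} (which forces $1 \in A$ whenever $0 \in A$, using $\sigma_0 = \sigma_1 = 1$) leaves exactly three admissible values for $S_{k_3+1}'$: namely $\emptyset$, $\{k_2, k_3\}$, and $\{1, k_3\}$. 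Under the gappy hypothesis, the last two are ruled out directly by Lemmas~\ref{Sjup} and \ref{Sjdown}, while the first forces $S_{k_3+1} = \emptyset$ by Lemma~\ref{Sjempty}. But $S_{k_3+1}$ is precisely the positive support of $v_{k_3+1}$ (which sits inside $\{0, \ldots, k_3\}$), so its emptiness would force $\sigma_{k_3+1} = 0$, contradicting $\sigma_{k_3+1} \geq \sigma_{k_3} = 2s + 1 > 0$.

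With $s \geq 2$ and $n + 1 = k_3$ in hand, the form of $\sigma$ is determined: Proposition~\ref{prop:Segk_1=1k_2>2} gives the first $k_3 + 1 = s + 4$ coordinates as $(1, 1, 2^{[s]}, \sigma_{k_2}, \sigma_{k_2} + 2)$, the gappy clause of Corollary~\ref{cor:sigmak2} specializes $\sigma_{k_2}$ to $2s - 1$, and then there are no further coordinates to record. The resulting vector is $(1, 1, 2^{[s]}, 2s-1, 2s+1)$ with $s \geq 2$. The main (really only) obstacle is the exhaustive case analysis on $S_{k_3+1}'$; the real content has already been absorbed into Lemmas~\ref{Sjempty}--\ref{Sjdown}, so this proposition is essentially bookkeeping.
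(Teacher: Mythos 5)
Your proof is correct and follows essentially the same route as the paper: $s\ge 2$ is extracted from the structure of the gappy $v_{k_2}$ forced by Corollary~\ref{cor:sigmak2} (the paper instead notes $\sigma_{k_2}=2s-1\ge\sigma_{k_2-1}=2$, an equivalent observation), and the nonexistence of $v_{k_3+1}$ is obtained by running Lemmas~\ref{Sjempty}, \ref{Sjup}, \ref{Sjdown} to force $S_{k_3+1}=\emptyset$ and then deriving a contradiction (the paper cites Lemma~\ref{lem:j-1} to get $k_3\in\supp v_{k_3+1}$, where you instead note $A=\emptyset$ would force $\sigma_{k_3+1}=0$). Both variations are sound and the bookkeeping is the same.
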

\begin{proof}
By Corollary~\ref{cor:sigmak2}, $\sigma_{k_2}=2s-1\ge2$, so $s\ge2$.
By Lemmas~\ref{Sjdown},~\ref{Sjup},~and~\ref{Sjempty}, we get that $S_{j} = \emptyset$ for all $j > k_3$. If $v_{k_3 +1}$ existed it would have $k_3 \in S_{k_3 + 1}$. 
\end{proof}

\begin{prop}\label{1k2>2}
If $v_{k_2}$ is just right, then one of the following holds:
\begin{enumerate}
\item $v_{k_3 + 1} = e_{k_2} + e_{k_3} - e_{k_3 + 1}$, $v_{k_3 + 2} = e_{k_2 - 1} + e_{k_2} + e_{k_3} + e_{k_3 + 1} - e_{k_3 + 2}$, and $k_3 + 2 = n+1$.
\item $v_{k_3 + 1} = e_{k_2 - 1} + e_{k_2} + e_{k_3} - e_{k_3 + 1}$, $v_{k_3 + 2} = e_{k_2} + e_{k_3} + e_{k_3 + 1} - e_{k_3 + 2}$, and $k_3 + 2 = n+1$.
\item $s = 1$, so $k_2 = 3$. $v_5 = e_3 + e_4 - e_5$, $|v_i| = 2$ for $5 < i < \ell$, $v_\ell = e_1 + e_4 + e_5 + \cdots + e_{\ell-1} - e_\ell$, and either $v_{\ell+1} = e_{\ell-1} + e_\ell - e_{\ell+1}$ and $|v_i| = 2$ for $i > \ell+1$, or $\ell = n+1$.
\item $s = 1$, so $k_2 = 3$. $v_5 = e_1 + e_4 - e_5$, and either $v_6 = e_3 + e_4 + e_5 - e_6$ and $|v_i| = 2$ for $i > 6$ or $5 = n+1$. 
\end{enumerate}
The corresponding changemaker vectors are
\begin{enumerate}
\item $(1,1,2^{[s]},2s+1,2s+3,4s+4,8s+10)$, $s\ge1$.
\item $(1,1,2^{[s]},2s+1,2s+3,4s+6,8s+10)$, $s\ge1$.
\item $(1,1,2,3,5,8^{[s]},8s+6,8s+14^{[t]})$, $s,t\ge0$, (the parameter $s$ in this family is not the previous $s$.)
\end{enumerate}
\end{prop}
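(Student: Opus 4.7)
The proof extends the initial segment of $\sigma$ fixed by Proposition~\ref{prop:Segk_1=1k_2>2} and Corollary~\ref{cor:sigmak2}, namely $(\sigma_0,\ldots,\sigma_{k_3}) = (1,1,2^{[s]},2s+1,2s+3)$ with $k_2=s+2$ and $k_3=s+3$, by successively classifying $v_j$ for $j\ge k_3+1$. The main tools are Lemma~\ref{lem:j-1}, which forces $j-1\in\supp v_j$; the support-classification Lemmas~\ref{Sjempty}, \ref{Sjup}, and \ref{Sjdown}; the graph-theoretic obstructions from Lemma~\ref{claw}, Lemma~\ref{heavytriple}, and Corollary~\ref{cycles}; and Lemma~\ref{lem:AllNorm2} to control any run of norm-$2$ vectors that appears.

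The first step is to identify $v_{k_3+1}$. Since $v_{k_2}$ is just right (hence not tight), Lemma~\ref{lem:tightvector} prevents $v_{k_3+1}$ from being tight. Together with $k_3\in\supp v_{k_3+1}$, the support lemmas force $S_{k_3+1}$ to be exactly one of $\{k_2,k_3\}$, $\{k_2-1,k_2,k_3\}$, or (only when $s=1$) $\{1,k_3\}$. These three options correspond respectively to the starts of basis subcases (1) and (3), basis subcase (2), and basis subcase (4).

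The second step is to determine $v_{k_3+2}$ and the tail of the basis in each branch. In the first branch, $v_{k_3+1}=e_{k_2}+e_{k_3}-e_{k_3+1}$, and applying the same lemmas to $S_{k_3+2}$ while enforcing no claw and no heavy triple at $v_{k_2}$ and $v_{k_3+1}$ leaves only two possibilities: either $S_{k_3+2}=\{k_2-1,k_2,k_3\}$, which closes the basis and yields subcase (1); or $v_{k_3+2}$ has norm~$2$, in which case Lemma~\ref{lem:AllNorm2} continues a run of norm-$2$ vectors that can only be interrupted (by Lemma~\ref{Sjdown}) by a single $\{1,k_3\}$-type vector, and only when $s=1$, producing subcase (3). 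In the second branch, $v_{k_3+1}=e_{k_2-1}+e_{k_2}+e_{k_3}-e_{k_3+1}$, and the same considerations force $S_{k_3+2}=\{k_2,k_3\}$: every other option creates a heavy triple with $v_{k_2}$ and $v_{k_3+1}$ or a long cycle through the earlier norm-$2$ chain, and no further vector can be added without violating Lemma~\ref{heavytriple}, giving subcase (2). The third branch occurs only for $s=1$: a parallel case analysis on the possible continuation of $v_{k_3+1}=e_1+e_{k_3}-e_{k_3+1}$ forces the pattern of subcase (4).

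Finally, reading off $\sigma_j = \sum_{i\in\supp^+ v_j}\sigma_i$ from the explicit forms of $v_j$ and the already-fixed initial segment yields the three listed changemaker vector families. The main obstacle will be the bookkeeping in subcase (3): through the possibly long intervening run of norm-$2$ vectors and the single $\{1,k_3\}$-type vector, one must verify inductively that no claw involving $v_1$, $v_2$, or $x_0$, and no heavy triple involving $v_{k_2}$ together with the earlier $\{k_2,k_3\}$-type vector, is ever produced, and that the optional norm-$2$ tail after the $\{1,k_3\}$-type vector is likewise controlled by Lemma~\ref{lem:AllNorm2}.
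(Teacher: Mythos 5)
Your reorganization of the case analysis---splitting first on the three possible values of $S_{k_3+1}$ rather than, as the paper does, on whether a $\{1,k_3\}$-type vector exists anywhere---is workable, and your mapping of the three branches to the four basis subcases is correct. The invoked lemmas are the right ones. However, there are concrete gaps in the branch where $S_{k_3+1}=\{k_2,k_3\}$.

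First, the dichotomy for $S_{k_3+2}$ is not ``$S_{k_3+2}=\{k_2-1,k_2,k_3\}$ or $|v_{k_3+2}|=2$'': when $s=1$, $v_{k_3+2}$ can itself be the $\{1,k_3\}$-type vector, in which case $|v_{k_3+2}|=4$ (this is $\ell=k_3+2$ in subcase~(3)). Second, and more consequentially, after the $\{1,k_3\}$-type vector $v_\ell$ there must be a norm-$3$ vector $v_{\ell+1}=e_{\ell-1}+e_\ell-e_{\ell+1}$ whenever $\ell<n+1$; only from $v_{\ell+2}$ onward is the tail norm~$2$. You cannot simply apply Lemma~\ref{lem:AllNorm2} from $m=\ell$ because $\min\supp v_{\ell+1}=\ell-1<\ell$: the orthogonality of all later $v_i$ to $v_\ell$ forces $\ell-1\in\supp v_{\ell+1}$. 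Skipping $v_{\ell+1}$ would give a changemaker vector missing the $(8s+14)^{[t]}$ tail. Third, your claim that the run of norm-$2$ vectors ``can only be interrupted (by Lemma~\ref{Sjdown}) by a single $\{1,k_3\}$-type vector'' misattributes the argument: Lemma~\ref{Sjdown} only classifies $S_j$ when $S'_j=\{1,k_3\}$; ruling out a $\{k_2-1,k_2,k_3\}$-type vector appearing at an index $j'>k_3+2$ after a nontrivial norm-$2$ run requires the separate claw argument the paper gives for $j'=k_3+2$. The paper sidesteps all of these local difficulties by first establishing the global facts~(\ref{eq:Si2Cases}) and~(\ref{eq:pairingvl0}) once the $\{1,k_3\}$-type vector is known to exist, which makes the subsequent bookkeeping much cleaner; you should either adopt that split or be prepared to rederive those constraints within your branch~1.
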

\begin{proof}
We divide the proof into two cases, based on whether or not there is some $\ell$ with $S_\ell = \{1,k_3\}$. If there is no such $\ell$, then by Lemmas~\ref{Sjdown},~\ref{Sjup},~and~\ref{Sjempty}, for any $j > k_3$, $S_j$ is either empty or one of the three possibilities: $\{k_2 - 1\}$, $\{k_2,k_3\}$, or $\{k_2 - 1, k_2, k_3\}$. If $S_j=\{k_2 - 1\}$, $\braket{v_j}{v_{k_2 - 1}}$ and $\braket{v_j}{v_{k_2}}$ are both nonzero, but $\braket{v_j}{v_i} = 0$ for all other $i \le k_3$. If $S_j=\{k_2,k_3\}$, $\braket{v_j}{v_{k_2}}$ is nonzero but $\braket{v_j}{v_i} = 0$ for all other $i \le k_3$, and if $S_j=\{k_2-1,k_2,k_3\}$ only $\braket{v_j}{v_{k_2 - 1}}$ is nonzero. In particular, no $v_j$ with $j \le k_3$ except for $v_{k_2}$ and $v_{k_2 - 1}$ can have nonzero pairing with $v_i$ for some $i > k_3$. Furthermore, for $j$ equal to either $k_2$ or $k_2 - 1$, we claim that there can be at most one $i > k_3$ with $\braket{v_j}{v_i}$ nonzero: if there were two, there would be either a claw if they did not neighbor each other, or a heavy triple if they did. See Figure~\ref{pairinggraphs}. (For instance, if $v_r$ and $v_t$, with $r, t >k_3$, both have nonzero pairing with $v_{k_2-1}$, and also if $v_r$ and $v_t$ pair with each other, then there will be a heavy triple $(v_r, v_t, v_2)$.) Since the pairing graph of a basis must be connected, there in fact must be some $j > k_3$ with $\braket{v_j}{v_{k_2}}$ nonzero, and some $j > k_3$ with $\braket{v_j}{v_{k_2-1}}$ nonzero. This has two implications. First that the vector $v_{k_3+1}$ exists, and either $S_{k_3 + 1}=\{k_2,k_3\}$ or $S_{k_3 + 1}=\{k_2 - 1, k_2, k_3\}$. Second, there is another index $j' > k_3+1$ with $S_{j'}$ equal to the other of these two possibilities of $S_{k_3+1}$.

It remains only to show that $j' = k_3 +2$, and that there is no further standard basis vector. Since $S_{k_3 + 1}\cap S_{j'}=\{k_2,k_3\}$, in order to keep $\braket{v_{k_3 + 1}}{v_{j'}} \le 1$ (Corollary~\ref{unbreakablepairing}), it must be the case that $k_3 + 1 \in \supp^+(v_{j'})$, and in this case $\braket{v_{k_3 + 1}}{v_{j'}} = 1$. Therefore, $v_{k_3 + 1}$ and $v_{j'}$ are adjacent in the intersection graph. If $j' > k_3 + 2$, then since $S_{k_3 + 2} = \emptyset$, we get that $|v_{k_3 + 2}| = 2$. Therefore, using Lemma~\ref{gappy3}, $k_3 + 1$ cannot be a gappy index for $v_{j'}$, so $k_3 + 2 \in \supp^+(v_{j'})$. This means that $\braket{v_{j'}}{v_{k_3 + 2}} = 0$, so there is a claw on either $v_{k_3 + 1}, v_{k_2}, v_{k_3 + 2}, v_{j'}$ or $v_{k_3 + 1}, v_{k_2-1}, v_{k_3 + 2}, v_{j'}$, depending on the possibilities for $S_{k_3+1}$. Therefore, $j' = k_3 + 2$.

Finally, if $v_{k_3 + 3}$ existed, it would have $S_{k_3 + 3} = \emptyset$, so would equal either $e_{k_3 + 1} + e_{k_3 + 2} - e_{k_3 + 3}$ or $e_{k_3 + 2} - e_{k_3 + 3}$. Therefore, $v_{k_3 + 3}$ would have nonzero inner product with either $v_{k_3 + 1}$ or $v_{k_3 + 2}$ but not both, hence we get a claw centered at either $v_{k_3 + 1}$ or $v_{k_3 + 2}$. 

If there is some $\ell$ with $S_\ell = \{1,k_3\}$, then $s = 1$ by Lemma~\ref{Sjdown}. In this case, $\braket{v_\ell}{v_1} = -1$, $\braket{v_\ell}{v_2} = 1$, $k_2=3$, and $\braket{v_\ell}{v_{k_2}} = 1$. If, for any $ i > k_3$ with $i \neq \ell$, we had $\braket{v_i}{v_2} \neq 0$, there would be either a claw $(v_2,x_0,v_i,v_\ell)$ or a heavy triple $(v_2,v_i,v_\ell)$ depending on whether or not $[v_i]$ and $[v_\ell]$ abut. Since we must have $\braket{v_i}{v_2}=0$ for all $i > k_3$ with $i \neq \ell$, the set $S_i$ cannot be $\{1,k_3\}$, $\{ k_{2}-1\}$ or $\{ k_2-1, k_2, k_3\}$, so by Lemmas~\ref{Sjempty} \ref{Sjup} and \ref{Sjdown}, 
\begin{equation}\label{eq:Si2Cases}
S_i=\emptyset\text{ or }\{k_2,k_3\}.
\end{equation}
Also,  we have 
\begin{equation}\label{eq:pairingvl0}
\braket{v_i}{v_\ell} = 0,\quad \text{ for any $i > k_3$ with $i \neq \ell$}.
\end{equation}
 Otherwise, either $S_i = \emptyset$ in which case there would be a claw $(v_\ell, v_1, v_2, v_i)$, or $S_i = \{k_2,k_3\}$ and there would be a heavy triple $(v_i, v_\ell, v_{k_2})$. 

Now, $k_3 \in S_{k_3 + 1}$ (Lemma~\ref{lem:j-1}), so $S'_{k_3 + 1}$ is either $\{1,k_3\}$ or $\{k_2,k_3\}$. 
It follows from Lemmas~\ref{Sjup} and \ref{Sjdown} and (\ref{eq:Si2Cases}) that $S_{k_3 + 1}=S'_{k_3 + 1}$. If $S_{k_3 + 1} = \{1,k_3\}$, from (\ref{eq:pairingvl0}) we get that $\braket{v_{k_3+2}}{v_{k_3+1}} = 0$ if $n+1\ge k_3+2$, and therefore by (\ref{eq:Si2Cases}), $S_{k_3+2} = \{k_2,k_3\}$. We claim that $S_i = \emptyset$ for $i > k_3 + 2$, and also $k_3 + 1 \not \in \supp(v_i)$. Note that from (\ref{eq:Si2Cases}) if $S_i \not = \emptyset$, one necessarily has $S_i = \{ k_2, k_3\}$. Also, to avoid pairing with $v_{k_3+1}$, it must be the case that $k_3+1 \in \supp^+(v_i)$, but this would imply $\supp^+(v_i)\cap\supp^+(v_{k_3+2})=\{k_2,k_3,k_3+1\}$ hence $\braket{v_i}{v_{k_3+2}}\ge2$, contradicting Corollary~\ref{unbreakablepairing}. So $S_i=\emptyset$, hence $k_3 + 1 \not \in \supp(v_i)$ by (\ref{eq:pairingvl0}). This justifies the claim. It follows from Lemma~\ref{lem:AllNorm2} that $|v_i| = 2$ for $i > k_3 + 2$. This is the last of the possibilities listed in the statement of the proposition.

Lastly, suppose that $S_{k_3 + 1} = \{k_2,k_3\}$ (note that $S_\ell = \{ 1, k_3\}$). When $i>k_3+1$ and $i\ne\ell$, $S_i\ne\{k_2,k_3\}$, otherwise we get a heavy triple $(v_i,v_{k_2},v_{k_3+1})$. So $S_i=\emptyset$  by (\ref{eq:Si2Cases}).
By Lemma~\ref{lem:AllNorm2}, $|v_i| = 2$ for $k_3 + 1 < i < \ell$. By (\ref{eq:pairingvl0}), $v_\ell$ is orthogonal to all of $v_{k_3 + 1}, \dots, v_{\ell - 1}$, so all of $k_3 + 1,\dots,\ell-1$ are members of $\supp v_\ell$, forcing $v_\ell$ to be of the listed form. If $n+1\ge l+1$, $v_{\ell+1}$ is also orthogonal to $v_\ell$, so $\supp v_{\ell+1}\cap\{k_3 + 1,\dots,\ell-1\}$ contains exactly one element, which must be $\ell-1$ by Lemma~\ref{gappy3}. It follows that $v_{\ell+1} = e_{\ell-1} + e_\ell - e_{\ell+1}$, as desired. If, for some $i > \ell+1$, $\braket{v_i}{v_{\ell-1}}$ is nonzero, then $\ell-1\in\supp(v_i)$, and $\ell\in\supp(v_i)$ by (\ref{eq:pairingvl0}), so $\braket{v_i}{v_{\ell+1}}\ne0$ and hence  $(v_{k_3+1}, v_{\ell+1}, v_i)$ is a heavy triple. Therefore, $v_i$ is orthogonal to both $v_{\ell-1}$ and $v_\ell$ for $i > \ell+1$, so by Lemma~\ref{gappy3} $\min \supp v_{i} \ge \ell+1$. Then  Lemma~\ref{lem:AllNorm2} implies that $|v_i| = 2$ for $i > \ell+1$, so we are in the third listed situation.
\end{proof}

\begin{lemma}\label{lem:tightSj}
If $v_{k_2}$ is tight, $S_j$ is one of $\emptyset$, $\{k_2 - 1\}$, or $\{1,2,3,\dots,k_2 - 1, k_3\}$ for each $j > k_3$. 
\end{lemma}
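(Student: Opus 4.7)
The plan is to proceed by case analysis on $S'_j = \supp(v_j) \cap \{0, 1, k_2, k_3\}$, invoking Lemmas~\ref{Sjempty}, \ref{Sjup}, and \ref{Sjdown} to enumerate the possibilities for $S_j$ and then ruling out those not appearing in the stated conclusion under the tightness hypothesis on $v_{k_2}$.

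The opening step is to observe that when $v_{k_2}$ is tight, it must be \emph{breakable}. The tight form $v_{k_2} = 2e_0 + e_1 + \cdots + e_{k_2-1} - e_{k_2}$ has $\braket{v_{k_2}}{x_0} = 2 \ne 0$; since $v_2$ is unbreakable and also has $\braket{v_2}{x_0} = 2$, Corollary~\ref{x0pairing} forces $v_{k_2}$ to be breakable (consistent with Lemma~\ref{lem:BrIsTight}). A direct computation exhibits $v_{k_2} = v_2 + w$ with $w = e_0 + 2e_2 + e_3 + \cdots + e_{k_2-1} - e_{k_2}$, where $|w| = s+5 \ge 6$ and $\braket{v_2}{w} = -1$; this gives explicit unbreakable pieces of $v_{k_2}$ to work with in the graph-theoretic arguments.

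Now I would run through the three possibilities for $S'_j$. The case $S'_j = \emptyset$ (Lemma~\ref{Sjempty}) already yields only the allowed values $S_j \in \{\emptyset, \{k_2-1\}\}$. The case $S'_j = \{k_2, k_3\}$ (Lemma~\ref{Sjup}) yields $S_j \in \{\{k_2, k_3\}, \{k_2-1, k_2, k_3\}\}$, both of which must be excluded. The case $S'_j = \{1, k_3\}$ (Lemma~\ref{Sjdown}) yields $S_j = \{1, 2, \ldots, k_2-1, k_3\}$ (allowed), $\{1, 3, 4, \ldots, k_2-1, k_3\}$ (to exclude), or $\{1, k_3\}$ with $s=1$ (to exclude). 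For each of the four cases to be excluded, I would compute the pairings of the explicit $v_j$ with $v_1, v_2, \ldots, v_{k_2-1}$, with $x_0$, and with the companion $w$, then locate a claw (contradicting Lemma~\ref{claw}) or a heavy triple (contradicting Lemma~\ref{heavytriple}) among the unbreakable vectors at hand.

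The main obstacle is that $v_{k_2}$, being breakable, is absent from $\bar{S'}$, so the standard no-claw and no-heavy-triple obstructions cannot be applied directly with $v_{k_2}$ as a vertex. The workaround is to use the unbreakable pieces $v_2$ and $w$ of $v_{k_2}$: the tightness of $v_{k_2}$ pins down $w$ precisely enough that the pairings $\braket{w}{v_j}$ and $\braket{v_2}{v_j}$ force an obstructing subgraph. In the $S'_j = \{1, k_3\}$ subcases the argument additionally leans on Corollary~\ref{Cor:ZjDistinct}, which forbids the high-weight index $z_j$ of the norm-$3$ vector $v_j$ from coinciding with $z_2$; combined with the computed pairing patterns, this rules out the configurations $\{1, 3, 4, \ldots, k_2-1, k_3\}$ and $\{1, k_3\}$ with $s = 1$.
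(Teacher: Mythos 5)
Your opening framework — invoking Lemmas~\ref{Sjempty}, \ref{Sjup}, and \ref{Sjdown} to enumerate the candidates for $S_j$ and then excluding four of them under the tightness hypothesis — matches the paper's strategy, and your decomposition $v_{k_2} = v_2 + w$ with $w = e_0 + 2e_2 + e_3 + \cdots + e_{k_2-1} - e_{k_2}$ is arithmetically correct. However, the proof has a genuine gap. The ``main obstacle'' you identify is not actually an obstacle: the no-claw lemma (Lemma~\ref{claw}) applies to the intersection graph of any set of linearly independent \emph{irreducible} intervals, and $v_{k_2}$, being a standard basis vector, is irreducible even though it is breakable. The paper's proof for the case $S_j = \{k_2, k_3\}$ directly uses $v_{k_2}$ in a claw argument on $(v_{k_2}, v_1, v_2, v_j)$ and does not need to break $v_{k_2}$ into pieces. (The lemma $G(\bar{S'})$ has no heavy triples, Lemma~\ref{heavytriple}, is the one whose proof genuinely requires unbreakability, via Corollary~\ref{Cor:ZjDistinct}; the claw lemma has no such restriction.)

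Your workaround, replacing $v_{k_2}$ by its pieces $v_2$ and $w$, introduces new difficulties that your sketch does not resolve. To use $w$ in a claw or heavy-triple argument you would first need to know that $w$ is unbreakable, which is what allows one to pass from $\braket{w}{v_j} \ne 0$ to ``$[w]$ abuts $[v_j]$'' via Corollary~\ref{unbreakablepairing}. But $w$ is the interval $[v_{k_2}] \setminus [v_2]$, and its unbreakability depends on how many high-weight vertices $[v_{k_2}]$ contains beyond $x_1$; this is not controlled a priori and you do not establish it. More seriously, the exclusion of $S_j = \{k_2, k_3\}$ in the paper does \emph{not} terminate in a claw or heavy-triple contradiction: after using the claw to force $[v_j] \pitchfork [v_{k_2}]$, the argument pins $|v_j| = 3$ and $\epsilon_j = -\epsilon_{k_2}$, deduces that $v_j + v_{k_2}$ is a sum of two distant intervals and hence reducible, and then contradicts this with Lemma~\ref{lem:SumIrr}~(1), which asserts that for a tight $v_t$ and $v_j = e_t + e_{j-1} - e_j$ the sum $v_t + v_j$ is irreducible. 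Your plan ``locate a claw or a heavy triple'' omits this irreducibility step entirely, and it is not clear how to reach a contradiction in that case without it. The exclusion of $S_j = \{k_2-1, k_2, k_3\}$ similarly ends in a high-weight containment argument rather than a graph obstruction, and the $\{1, k_3\}$ and $\{1, 3, \ldots, k_2-1, k_3\}$ cases end with a norm-counting argument ($|v_{k_2}| \le s+3$ versus $|v_{k_2}| = s+6$), none of which appear in your sketch.
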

\begin{proof}
By Lemmas~\ref{Sjdown},~\ref{Sjup},~and~\ref{Sjempty}, it suffices to show that $S_j$ cannot be $\{k_2,k_3\}$, $\{k_2 - 1, k_2, k_3\}$, $\{1,3,\dots,k_2 - 1, k_3\}$, or $\{1,k_3\}$. In the first case, $\braket{v_j}{v_{k_2}} = -1$ and $\braket{v_j}{v_i} = 0$ for all other $i \le k_3$. In particular since $v_j$ is orthogonal to $v_1$ and $v_2$, $v_j$ cannot neighbor $v_{k_2}$ in the intersection graph without creating a claw. Therefore, $[v_j] \pitchfork [v_{k_2}]$, and so $\delta([v_j],[v_{k_2}]) = 2$. In order to have $\braket{v_j}{v_{k_2}} = -1$, then, we must have $|v_j| =|[v_j\cap v_{k_2}]|= 3$ and $\epsilon_j = -\epsilon_{k_2}$. Since $\epsilon_j = -\epsilon_{k_2}$ and $[v_j] \pitchfork [v_{k_2}]$, $v_j + v_{k_2}$ is the sum of two distant intervals, so is reducible. However, since $|v_j| = 3$, $j = k_3 + 1$ and $v_j = e_{k_2} + e_{k_3} - e_{k_3 + 1}$, and so $v_{k_2} + v_j$ is irreducible by Lemma~\ref{lem:SumIrr}.

In the second case, $\braket{v_j}{v_{k_2 - 1}} = -1$ and all other $\braket{v_j}{v_i}$ with $i \le k_3$ are zero. Since $\braket{v_2}{x_0}\ne0$, $[v_2]$ contains $x_1$, so $3=|v_2|=|x_1|$.
Since $|v_{k_2}| > 3$, $[v_{k_2}]$ contains high weight elements other than $x_1$. Since $[v_2]$ contains $x_1$ and $v_{k_2 - 1}$ is connected by a path of norm-two vectors to $v_2$, the unique high weight element $x_{z_j}$ of $[v_j]$ is contained in $[v_{k_2}]$. This implies that $\braket{v_j}{v_{k_2}}$ must be nonzero, a contradiction. 

In the last two cases, $v_j$ has nonzero inner product with both $v_1$ and $v_2$, so $[v_j]$ abuts both $[v_1]$ and $[v_2]$. Since $[v_1]$ and $[v_2]$ abut $[v_{k_2}]$ at opposite ends, $[v_{k_2}]$ must be contained in the union of $[v_1], [v_2]$, and $[v_j]$. However, $\braket{v_j}{v_{k_2}} \le s$, so $|v_j| \le s + \delta([v_{k_2}],[v_j]) \le s+2$. This means that there are only two high weight elements in $[v_{k_2}]$, with one being $x_1$ and the other having norm at most $s+2$, so by Lemma~\ref{Lem:IntervalNorm}, $|v_{k_2}|\le s+3$. This contradicts the fact that $|v_{k_2}| = s+6$. 
\end{proof}

\begin{prop}\label{2k2>2}
If $v_{k_2}$ is tight, $v_{k_3 + 1} = e_1 + e_2 + \cdots + e_{k_2 - 1} + e_{k_3} - e_{k_3 + 1}$, $v_{k_3 + 2}$ is either $e_{k_3 + 1} - e_{k_3+2}$ or $e_{k_2 - 1} +  e_{k_3 + 1} - e_{k_3+2}$, and $|v_j| = 2$ for all $j > k_3 + 2$. (None of the vectors past $v_{k_3}$ are necessary to make the lattice C-type --- $n+1$ could be $k_3$ or anything larger.)

The corresponding changemaker vectors are
\begin{enumerate}
\item $(1,1,2^{[s]},2s+3,2s+5,4s+6^{[t]})$, $s\ge1,t\ge0$.
\item $(1,1,2^{[s]},2s+3,2s+5,4s+6,4s+8^{[t]})$, $s\ge1,t\ge1$.
\end{enumerate}
\end{prop}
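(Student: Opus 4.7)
My plan is to proceed in three steps: (1) pin down $v_{k_3+1}$ and $v_{k_3+2}$ explicitly, (2) prove $|v_j|=2$ for every $j>k_3+2$, and (3) read off the two families of changemaker vectors.

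For step~(1), Lemma~\ref{lem:j-1} forces $k_3\in\supp v_{k_3+1}$, and among the three options in Lemma~\ref{lem:tightSj}, only $S_{k_3+1}=\{1,\dots,k_2-1,k_3\}$ contains $k_3$; this pins down $v_{k_3+1}=e_1+\cdots+e_{k_2-1}+e_{k_3}-e_{k_3+1}$. For $v_{k_3+2}$, Lemma~\ref{lem:j-1} requires $k_3+1\in\supp v_{k_3+2}$. The case $S_{k_3+2}=\{1,\dots,k_2-1,k_3\}$ is excluded by a direct inner-product calculation: the shared positive support on $\{1,\dots,k_2-1,k_3\}$ contributes $k_2$, the $-e_{k_3+1}$ in $v_{k_3+1}$ subtracts at most $1$, giving $\braket{v_{k_3+1}}{v_{k_3+2}}\ge k_2-1\ge 2$, which contradicts Corollary~\ref{unbreakablepairing} (both vectors are unbreakable by Lemmas~\ref{lem:BrIsTight} and \ref{lem:tightvector}, and neither index equals $k_3$). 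The two remaining options for $S_{k_3+2}$ yield the two listed forms $v_{k_3+2}=e_{k_3+1}-e_{k_3+2}$ (Case~A) and $v_{k_3+2}=e_{k_2-1}+e_{k_3+1}-e_{k_3+2}$ (Case~B).

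For step~(2), the plan is to apply Lemma~\ref{lem:AllNorm2} with $m=k_3+2$. The base hypothesis is checked case by case: in Case~A, $\braket{v_{k_3+1}}{v_{k_3+2}}=-1$, so $v_{k_3+2}$ neighbors the unbreakable $v_{k_3+1}$; in Case~B, $v_{k_3+1}$ and $v_{k_3+2}$ are orthogonal but $\braket{v_{k_2-1}}{v_{k_3+2}}=-1$, so $v_{k_3+2}$ neighbors the unbreakable $v_{k_2-1}$. Unbreakability of $v_j$ for $j>k_3+2$ is immediate from Lemmas~\ref{lem:BrIsTight} and \ref{lem:tightvector}, since $v_{k_2}$ is the only possible tight vector. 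What remains is to show $\min\supp v_j\ge k_3+2$ for every $j>k_3+2$. Using Lemma~\ref{lem:tightSj}, $S_j\in\{\emptyset,\{k_2-1\},\{1,\dots,k_2-1,k_3\}\}$. The third possibility is dismissed by the same pairing estimate with $v_{k_3+1}$ used in step~(1).

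I expect the main obstacle to be ruling out $S_j=\{k_2-1\}$. Such a $v_j$ would pair nontrivially with $v_{k_2-1}$, $v_{k_2}$, and possibly $v_{k_3+1}$, producing a subgraph of $G(\bar{S'})$ that cuts across the components reached from $v_{k_3+1}$ and from $v_{k_2-1}$; the plan is to locate the forced high-weight vertex $x_{z_j}$ via Corollary~\ref{Cor:ZjDistinct} (distinct from $z_{k_3+1}$ and $z_{k_3+2}$ when relevant) and then invoke Lemma~\ref{claw}, Lemma~\ref{heavytriple}, or Corollary~\ref{cycles} to produce a forbidden substructure. Once $S_j=\emptyset$ is secured, it remains to show $k_3+1\notin\supp v_j$: in Case~A, $|v_{k_3+2}|=2$ and Lemma~\ref{gappy3} rules out $k_3+1$ as a gappy index of $v_j$, and the equality $\sigma_{k_3+1}=\sigma_{k_3+2}$ together with the lexicographic maximality in Definition~\ref{stbasis} forces either $k_3+1\notin\supp v_j$ directly or a contradiction with the maximality rule; in Case~B, $|v_{k_3+2}|=3$, and one instead exploits $v_{k_3+2}=e_{k_2-1}+e_{k_3+1}-e_{k_3+2}$ to produce a pairing conflict of the form $\braket{v_{k_3+2}}{v_j}$ being too large or of the wrong sign.

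Once $\min\supp v_j\ge k_3+2$ is established, Lemma~\ref{lem:AllNorm2} yields $|v_j|=2$, so $v_j=e_{j-1}-e_j$ and $\sigma_j=\sigma_{j-1}$ for all $j>k_3+2$. Step~(3) is then arithmetic: $\sigma_{k_3+1}=\sigma_1+\cdots+\sigma_{k_2-1}+\sigma_{k_3}=1+2s+(2s+5)=4s+6$, and $\sigma_{k_3+2}$ is $4s+6$ in Case~A and $\sigma_{k_2-1}+\sigma_{k_3+1}=2+(4s+6)=4s+8$ in Case~B. Letting $t$ count the norm-$2$ tail vectors past $v_{k_3+1}$ in Case~A, and past $v_{k_3+2}$ in Case~B, produces precisely the two listed families of changemaker vectors.
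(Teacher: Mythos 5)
Your step (1) and step (3) match the paper: the identification of $v_{k_3+1}$ via Lemma~\ref{lem:tightSj}, the exclusion of $S_{k_3+2}=\{1,\dots,k_2-1,k_3\}$ by the pairing estimate $\braket{v_{k_3+1}}{v_{k_3+2}}\ge k_2-1$, the resulting two options for $v_{k_3+2}$, and the final arithmetic are all correct and are essentially the paper's computations. The gaps are all in your step (2), and some of the specific mechanisms you propose there do not actually work.

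First, to rule out $S_j=\{k_2-1\}$ for $j>k_3+2$, you only offer a plan (``locate $x_{z_j}$ and invoke a claw/heavy triple/cycle''), but the obvious candidate substructures do not obviously arise: the vector $v_{k_2}$, which is the only thing $v_j$ pairs positively with among the low-index vectors, is breakable and hence excluded from $\bar{S'}$, so it cannot be used as a vertex of a claw or heavy triple. The paper instead first pins down $|v_j|=3$ by an interval-geometry argument ($v_j$ is orthogonal to $x_0,v_1$ and pairs $+1$ with $v_{k_2}$, forcing $x_{z_j}\in[v_{k_2}]$ and $\epsilon_j=\epsilon_{k_2}$), obtaining $v_j=e_{k_2-1}+e_{j-1}-e_j$; the direct computation $\braket{v_j}{v_{k_3+1}}=1$ for $j>k_3+2$ then contradicts the fact that this pairing must be $0$ or $-1$. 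This argument simultaneously identifies $v_{k_3+2}$ and excludes larger $j$, which is cleaner than treating those steps separately.

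Second, your argument to show $k_3+1\notin\supp v_j$ is flawed in Case~A and vague in Case~B. In Case~A the lexicographic-maximality observation, together with $\sigma_{k_3+1}=\sigma_{k_3+2}$, only forces $k_3+2\in\supp^+v_j$ whenever $k_3+1\in\supp^+v_j$; Lemma~\ref{gappy3} already gives this, and there is no contradiction with the maximality rule when both indices lie in $\supp^+v_j$. (A claw $(v_{k_3+1};v_1,v_{k_3+2},v_j)$ does the job in Case~A, since then $\braket{v_1}{v_{k_3+2}}=\braket{v_1}{v_j}=\braket{v_{k_3+2}}{v_j}=0$, but you did not find that.) In Case~B the claw fails because $\braket{v_{k_3+1}}{v_{k_3+2}}=0$, and the ``pairing conflict'' you suggest does not materialize: for such a hypothetical $v_j$ one computes $\braket{v_{k_3+2}}{v_j}\in\{0,1\}$, both of which are allowed by Corollary~\ref{unbreakablepairing}. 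The paper instead gives a single, case-free interval argument: since $v_{k_3+1}\sim v_1$ and $x_{z_{k_3+1}}$ is the rightmost high-weight vertex in $[v_{k_2}]$, $[v_1]$ abuts the right end of $[v_{k_3+1}]$; an unbreakable $v_j$ with $\min\supp v_j=k_3+1$, being orthogonal to $v_{k_2}$, would have to abut that same right end, forcing $[v_j]$ and $[v_1]$ to abut, contradicting $\braket{v_j}{v_1}=0$. So while your outline is pointed in the right direction, the hard half of the proof is missing, and the particular tools you propose for it would not close it.
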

\begin{proof}
Since $k_3 \in \supp(v_{k_3 + 1})$, $S_{k_3 + 1}$ is necessarily equal to $\{1,2,3,\dots,k_2 - 1, k_3\}$ by Lemma~\ref{lem:tightSj}, and so $v_{k_3 + 1} = e_1 + e_2 + \cdots + e_{k_2 - 1} + e_{k_3} - e_{k_3 + 1}$. 
For any other $j$ with $S_j = S_{k_3 + 1}$, we get that $\braket{v_j}{v_{k_3+1}} \ge k_2-1\ge2$, contradicting Corollary~\ref{unbreakablepairing}. Therefore, for $j > k_3 + 1$, $S_j$ is either $\emptyset$ or $\{k_2 - 1\}$. Suppose for some $j > k_3 + 1$ we have $S_j = \{k_2 - 1\}$. Then $\braket{v_j}{v_{k_2}} = 1$ while $v_j$ is orthogonal to both $x_0$ and $v_1$. Since $\braket{v_{k_2}}{v_1}=1$ and $\braket{x_0}{v_1}=0$, $[v_1]$ abuts the right endpoint of $[v_{k_2}]$. Hence $x_{z_j} \in [v_{k_2}]$.
By Lemma~\ref{intervalproduct}, we get that $|v_j| = 3$,  and $\epsilon_j = \epsilon_{k_2}$. Since also $\braket{v_{k_3 +1}}{v_{k_2}} = s+1$, $\epsilon_{k_3 + 1} = \epsilon_{k_2} = \epsilon_j$, so $\braket{v_j}{v_{k_3 + 1}}$ is either $-1$ or $0$ depending on whether their intervals abut. However, since $|v_j| = 3$, $v_j = e_{k_2 - 1} + e_{j-1} - e_j$, so $\braket{v_j}{v_{k_3 + 1}}$ is $1$ if $j > k_3 + 2$ and $0$ if $j = k_3 + 2$. Therefore, $j = k_3 + 2$ and $S_i = \emptyset$ for $i > k_3 + 2$. 
For any $i > k_3 + 2$, if $\min\supp(v_i)=k_3+1$, $v_i\sim v_{k_3+1}$. Since $v_{k_3+1}\sim v_1$, $\braket{v_{k_3+1}}{v_{k_2}}\ne0$ and $[v_1]$ abuts the right endpoint of $[v_{k_2}]$, $x_{z_{k_3+1}}$ is the rightmost high weight vertex in $[v_{k_2}]$ and $[v_1]$ abuts the right endpoint of  $[v_{k_3+1}]$. As $\braket{v_i}{v_{k_2}}=0$, $[v_i]$ must abut the right endpoint of $[v_{k_3+1}]$. We then conclude that $[v_1]$ and $[v_i]$ abut, which is impossible.
 So $\min\supp(v_i)>k_3+1$ when $i > k_3 + 2$.
Using Lemma~\ref{lem:AllNorm2}, we conclude that $|v_i| = 2$ for $i > k_3 + 2$. 
\end{proof}

\section{{\bf $k_1 = 1$, $k_2 = 2$}}\label{sec:k1k21}

In this section we consider the case where $k_1=1$ and $k_2=2$. Using Corollary~\ref{v1}, we get that
\begin{equation}\label{Eq:Case1X0}
x_0=e_0 + e_1 + e_{2} - e_{k_3}. 
\end{equation}
Also, we have that $v_1 = e_0 - e_1$. So 
\begin{equation}\label{Eq:Case1Sigma}
\sigma_0 = \sigma_1 = 1.
\end{equation} 
By Lemma~\ref{lem:tightvector}, the only possible tight vector is $v_2$.
In what follows we classify all the changemaker vectors whose orthogonal complements are isomorphic to C-type lattices with $x_0$ as given in~\eqref{Eq:Case1X0}. As in the previous section, we start by determining the first $k_3+1$ components of such changemaker vectors. It turns out that the initial segment of $\sigma$ depends on whether or not $v_2$ is tight. 

\begin{lemma}\label{prop1}
If $v_2$ is tight, the initial segment $(\sigma_0,\sigma_1,\cdots,\sigma_{k_3})$ of $\sigma$ is equal to $(1, 1, 3, 5)$.
\end{lemma}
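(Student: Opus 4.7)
The plan is to read off $\sigma_2$ and $\sigma_{k_3}$ directly and then exclude the existence of any intermediate index.

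First I would use the tightness hypothesis together with Definition~\ref{stbasis}: tightness of $v_2$ means $\sigma_2 = 1+\sigma_0+\sigma_1 = 3$, and in that case $v_2 = 2e_0 + e_1 - e_2$. From \eqref{Eq:Case1X0} this gives $\braket{v_2}{x_0} = 2+1-1 = 2$, so $v_2$ is already the (at most one) unbreakable standard basis vector indexed by some $j\neq k_3$ for which $\braket{v_j}{x_0}\ne 0$ guaranteed by Corollary~\ref{x0pairing}. Next, orthogonality $\braket{\sigma}{x_0}=0$ applied to \eqref{Eq:Case1X0} yields $\sigma_{k_3} = \sigma_0+\sigma_1+\sigma_2 = 5$.

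It then remains to rule out any intermediate index, i.e., to show there is no $j$ with $2<j<k_3$. Such a $j$ would force $\sigma_j\in\{3,4,5\}$ by monotonicity. For each value, one reads off the unique candidate for $v_j$ from the maximality clause in Definition~\ref{stbasis}, using that the only available subset sums of $\{\sigma_0,\sigma_1,\sigma_2\}=\{1,1,3\}$ that equal $3,4,5$ are $\{2\}$, $\{1,2\}$, $\{0,1,2\}$ respectively. This gives three cases:
\begin{itemize}
\item $\sigma_j = 3$: $v_j = e_2 - e_j$ and $\braket{v_j}{x_0} = 1$, contradicting Lemma~\ref{Lem:X0Odd}.
\item $\sigma_j = 5$: $v_j = e_0+e_1+e_2-e_j$ and $\braket{v_j}{x_0} = 3$, again contradicting Lemma~\ref{Lem:X0Odd}.
\item $\sigma_j = 4$: $v_j = e_1+e_2-e_j$ and $\braket{v_j}{x_0} = 2$. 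Since tightness would require $\sigma_j \ge 1+\sigma_0+\sigma_1+\sigma_2 = 6$, this $v_j$ is not tight, hence is unbreakable by Lemma~\ref{lem:BrIsTight}. But then $v_j$ is a second unbreakable standard basis vector (other than $v_{k_3}$) with nonzero pairing with $x_0$, contradicting Corollary~\ref{x0pairing}.
\end{itemize}

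The main (and really only) obstacle is simply being careful about the third bullet: one has to certify that the candidate $v_j = e_1+e_2-e_j$ is unbreakable in order to apply Corollary~\ref{x0pairing}, which is where the tightness exclusion coming from Lemma~\ref{lem:BrIsTight} is used. Once these three cases are excluded, we conclude $k_3 = 3$ and the initial segment is $(1,1,3,5)$ as claimed.
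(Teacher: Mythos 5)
Your bullets for $\sigma_j = 3$ and $\sigma_j = 5$ are fine and match the spirit of the paper's first step (both reduce to parity violations of Lemma~\ref{Lem:X0Odd}). The gap is in the third bullet: you apply Corollary~\ref{x0pairing} to produce a contradiction by claiming $v_j$ would be a \emph{second} unbreakable standard basis vector (other than $v_{k_3}$) with nonzero pairing with $x_0$, the first being $v_2$. But you have not established that $v_2$ is unbreakable, and in fact you cannot. Lemma~\ref{lem:BrIsTight} says breakable implies tight; the contrapositive lets you conclude that $v_j$ (not tight, $j>2$) is unbreakable, but it tells you nothing about $v_2$, which \emph{is} tight. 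Worse, under the hypothesis of that bullet one can see $v_2$ is actually forced to be breakable: $v_j = e_1 + e_2 - e_j$ is unbreakable with $\braket{v_j}{x_0}=2$, so $[v_j]$ contains $x_1$ and $x_{z_j}=x_1$ has norm $3$; since $[v_2]$ also contains $x_1$, if $v_2$ were unbreakable its unique high-weight vertex would be $x_1$ and Lemma~\ref{Lem:IntervalNorm} would give $|v_2|=3$, contradicting $|v_2|=6$. So Corollary~\ref{x0pairing} is not violated by having $v_j$ unbreakable with nonzero pairing, and your contradiction evaporates.

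The paper closes this case by a direct interval-geometry argument rather than by counting unbreakable vectors: from $\braket{v_1}{v_2}=1$, $\braket{v_1}{x_0}=0$, $\braket{v_2}{x_0}=2$, one deduces that $[v_1]$ abuts the right endpoint of $[v_2]$; since $[v_3]$ abuts both $x_0$ and $[v_1]$, the only high-weight vertex of $[v_2]$ is $x_{z_3}$, which forces $|v_2|=|v_3|=3$, contradicting $|v_2|=6$. You would need to replace your Corollary~\ref{x0pairing} step with an argument of this kind (or otherwise justify that $v_2$ cannot coexist with such a $v_3$); as written, the third bullet does not go through.
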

\begin{proof}
By assumption, $v_2 = 2e_0 + e_1 - e_2$, so $\sigma_2 = 3$ and $|v_2| = 6$. This together with~\eqref{Eq:Case1X0}~and~\eqref{Eq:Case1Sigma}, yields $\sigma_{k_3} = 5$. We claim that $k_3 = k_2+1 = 3$. Suppose for contradiction that $k_3 \not = k_2 + 1$. Recall from Lemma~\ref{lem:tightvector} that $v_{k_2+1}$ cannot be tight. By combining this together with Lemma~\ref{Lem:X0Odd}, it can only be the case that $\sigma_{k_2+1}=4$ and $v_3 = e_1+e_2-e_3$. Note that $\braket{v_2}{x_0}=2$, $\braket{v_1}{x_0}=0$, and $\braket{v_1}{v_2}=1$. Therefore, $[v_1]$ abuts the right endpoint of $[v_2]$. Given that $[v_3]$ abuts both $x_0$ and $[v_1]$, it follows that the only high weight vertex of $[v_2]$ is that of $[v_3]$ (see Definition~\ref{Def:HighNorm} and Lemma~\ref{lem:TwoIndBr}). This implies that $|[v_2]|=|[v_3]|=3$ which is a contradiction. Hence $k_3 = 3$ and $v_3 = e_0+e_1+e_2 - e_3$. 
\end{proof}

\begin{lemma}\label{prop2}
If $v_2$ is not tight, the initial segment $(\sigma_0,\sigma_1,\cdots,\sigma_{k_3})$ of $\sigma$ is equal to either $(1, 1, 1, 3)$ or $(1, 1, 1, 2, 3)$.
\end{lemma}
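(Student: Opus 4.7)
My plan is to constrain the initial segment $(\sigma_0,\sigma_1,\dots,\sigma_{k_3})$ one entry at a time, parallel to Lemma~\ref{prop1}, using the parity constraint Lemma~\ref{Lem:X0Odd} to knock out parity-violating options and finishing with a claw argument (Lemma~\ref{claw}) when parity alone is insufficient.

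For $\sigma_2$: since $v_2$ is not tight and $\sigma_2\ge\sigma_1=1$, only $\sigma_2\in\{1,2\}$ is possible. The value $\sigma_2=2$ would force (by maximality in Definition~\ref{stbasis}) $v_2=e_0+e_1-e_2$; but then $\braket{v_2}{x_0}=1+1-1=1$ is odd, contradicting Lemma~\ref{Lem:X0Odd}. So $\sigma_2=1$ and $v_2=e_1-e_2$; plugging into $\braket{\sigma}{x_0}=0$ gives $\sigma_{k_3}=3$. For $\sigma_3$: by Lemma~\ref{lem:tightvector}, $v_3$ is not tight, so $\sigma_3<4$; combined with monotonicity and $\sigma_{k_3}=3$, we have $\sigma_3\in\{1,2,3\}$. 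The case $\sigma_3=1$ forces $v_3=e_2-e_3$ with $k_3>3$, hence $\braket{v_3}{x_0}=1$, ruled out. The case $\sigma_3=3$ forces $A=\{0,1,2\}$ (the unique subset of $\{0,1,2\}$ summing to $3$) and $v_3=e_0+e_1+e_2-e_3$; if $k_3>3$ then $\braket{v_3}{x_0}=3$ is odd, so $k_3=3$ and we land on $(1,1,1,3)$. The case $\sigma_3=2$ gives $v_3=e_1+e_2-e_3$ with $\braket{v_3}{x_0}=2$ and $k_3>3$; it remains to show $k_3=4$.

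For this last subcase I would argue by contradiction, assuming $k_3\ge 5$. Since $\sigma_4\in\{2,3\}$ and $\sigma_4=3$ would give $k_3=4$, we must have $\sigma_4=2$, forcing $A=\{3\}$ and $v_4=e_3-e_4$. The four vectors $v_1=e_0-e_1$, $v_3=e_1+e_2-e_3$, $v_4=e_3-e_4$, and $x_0$ are all unbreakable: $v_1, v_4$ have norm $2$; $v_3$ has norm $3$, so by Lemma~\ref{Lem:IntervalNorm} its corresponding interval contains at most one high-weight vertex, hence by Lemma~\ref{lem:TwoIndBr} is unbreakable; and $x_0$ is a vertex basis element. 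A direct computation in $\Z^{n+2}$ yields $\braket{v_3}{v_1}=\braket{v_3}{v_4}=-1$ and $\braket{v_3}{x_0}=2$, while $\braket{v_1}{v_4}=\braket{v_1}{x_0}=\braket{v_4}{x_0}=0$. This is a claw centered at $v_3$ with leaves $v_1,v_4,x_0$, contradicting Lemma~\ref{claw}. Hence $k_3=4$ and the initial segment is $(1,1,1,2,3)$. The main obstacle is precisely this last subcase: parity alone does not rule out $\sigma_4=2$ with $k_3\ge 5$, and the claw exclusion is what terminates the initial segment at $k_3=4$.
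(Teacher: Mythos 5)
Your proof is correct and follows essentially the same route as the paper's: pin down $\sigma_2=1$, $\sigma_{k_3}=3$ via the orthogonality $\braket{\sigma}{x_0}=0$, then $\sigma_3$ via the parity constraint of Lemma~\ref{Lem:X0Odd}, and finally exclude $k_3\ge 5$ by exhibiting the claw $(v_3;v_1,v_4,x_0)$ — exactly the claw $(v_3,v_4,x_0,v_1)$ the paper invokes. The only place you gloss over a detail is the assertion that $\sigma_4=3$ forces $k_3=4$; this does hold (if $k_3\ge 5$ and $\sigma_4=3$ then $v_4=e_2+e_3-e_4$ has $\braket{v_4}{x_0}=1$, odd), but you should state the reason since it is a parity check of the same flavor as the others.
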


\begin{proof}
When $v_2$ is not tight, using Lemma~\ref{Lem:X0Odd} together with the fact that $k_2 = 2$, we get that $v_2 = e_1 - e_2$, so $\sigma_2 = 1$. This together with~\eqref{Eq:Case1X0}~and~\eqref{Eq:Case1Sigma}, gives us that $\sigma_{k_3} = 3$. Either $k_3=3$ and we get the first possibility stated in the proposition, or $k_3>3$. In the latter case, using Lemmas~\ref{Lem:X0Odd}~and~\ref{lem:j-1}, we must have that $v_3=e_1+e_2-e_3$, so $\sigma_3=2$. We claim that, if $k_3>3$, then $k_3=4$. If $k_3\not = 4$, then we must have $v_4=e_3-e_4$. That will produce a claw on $(v_3, v_4, x_0, v_1)$. This gives the second stated possibility.  
\end{proof}

We use the notation of Equations~\eqref{Eq:Sj}~and~\eqref{Eq:S'j} in Section~\ref{sec:Case2}. Again, we use the basis $S'$, defined in~\eqref{Eq:S'}. Note that in this section, $v_{k_3} = x_0$. Moreover, if $k_3=3$, then $S_j=S'_j$.


\begin{prop}\label{lem:k1=1,k2=2,v2tight}
	If $v_2$ is tight, then one of the following is true: 
	\begin{enumerate}
		\item $\abs{v_3} = 4$, $v_4 = e_1 + e_3 - e_4$, and $\abs{v_j} = 2$ for all $5\leq j \leq 4+t$, $t\geq 0$. 
		\item $\abs{v_3} = 4$, $v_4 = e_1 + e_3 - e_4$, $v_5 = e_0 + e_1 +e_4-e_5$, and $\abs{v_j} = 2$ for all $6\leq j \leq 5 + t$, $t \geq 0$. 
	\end{enumerate}
	The corresponding changemaker vectors are:
	\begin{enumerate}
		\item $(1,1,3,5,6^{[t]})$
		\item $(1,1,3,5,6,8^{[t+1]})$
	\end{enumerate}
\end{prop}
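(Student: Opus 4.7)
The plan is to proceed in three stages: use Lemma~\ref{prop1} to fix the initial segment of $\sigma$, then pin down $v_4$, and finally analyze the tail to split into the two listed families.

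By Lemma~\ref{prop1}, the hypothesis that $v_2$ is tight already forces $(\sigma_0,\sigma_1,\sigma_2,\sigma_3)=(1,1,3,5)$ and $k_3=3$, so $v_2 = 2e_0 + e_1 - e_2$ and $v_3 = e_0 + e_1 + e_2 - e_3 = x_0$, giving $|v_3| = 4$ as claimed. What remains is to determine the standard basis vectors $v_j$ for $j\ge 4$.

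To determine $v_4 = \sum_{i\in A} e_i - e_4$ with $A\subset\{0,1,2,3\}$, Lemma~\ref{lem:j-1} forces $3 \in A$, and Lemma~\ref{Lem:X0Odd} forces $|A \cap \{0,1,2\}|$ to be odd (so that $\braket{v_4}{x_0} = |A\cap\{0,1,2\}| - 1$ is even). Combining these constraints with the lex-maximality prescription of Definition~\ref{stbasis} and the changemaker condition on $\sigma$, the only surviving candidates are $A=\{1,3\}$ (giving $\sigma_4 = 6$ and $v_4 = e_1+e_3-e_4$) and $A=\{2,3\}$ (giving $\sigma_4 = 8$ and $v_4 = e_2+e_3-e_4$). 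Ruling out the second candidate is the main combinatorial step: the putative $v_4$ satisfies $\braket{v_4}{v_2}=-1$ and $\braket{v_4}{v_1}=\braket{v_4}{x_0}=0$, so in $G(\bar{S'})$ it touches $v_2$ but not $v_1$ or $x_0$; then Corollary~\ref{Cor:ZjDistinct} applied to the high-weight vertex forced to be shared by $[v_4]$ and $[v_2]$ (unbreakable sub-case), or Corollary~\ref{x0pairing} together with a direct claw on $(v_2,v_1,x_0,v_4)$ (breakable sub-case), yields a contradiction. Hence $v_4 = e_1+e_3-e_4$ with $\sigma_4=6$.

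The analysis of $v_5$ (if it exists) follows the same template: Lemma~\ref{lem:j-1} forces $4\in\supp v_5$, Lemma~\ref{Lem:X0Odd} restricts $\supp^+ v_5 \cap \{0,1,2,3\}$ by parity, and maximality filters the options into three classes: $v_5 = e_4 - e_5$ of norm $2$ (leading to Case~(1)); $v_5 = e_0 + e_1 + e_4 - e_5$ of norm $4$ with $\sigma_5 = 8$ and $\braket{v_5}{x_0}=2$, which turns out to be breakable in the ambient lattice (hence compatible with Corollary~\ref{x0pairing}), leading to Case~(2); and various norm-$4$ candidates with $3\in\supp^+ v_5$. The last class is eliminated by producing a forbidden configuration in $G(\bar{S'})$ (claw, heavy triple via Lemma~\ref{heavytriple}, long cycle via Corollary~\ref{cycles}, or repeated high-weight vertex via Corollary~\ref{Cor:ZjDistinct}). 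For indices beyond $v_5$, I then apply Lemma~\ref{lem:AllNorm2} starting at $m=4$ in Case~(1) and $m=6$ in Case~(2); the neighbor hypothesis is immediate from $\braket{v_4}{v_1}=-1$ (Case~(1)) and from $\braket{v_6}{v_5}=-1$ combined with a direct verification that the tail cannot introduce new high-weight vertices (Case~(2)), while the support hypothesis $\min\supp v_j \ge m$ follows by ruling out earlier indices via Corollary~\ref{x0pairing} and Lemma~\ref{gappy3}. This produces precisely the two families of changemaker vectors listed in the statement.

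The hardest step will be excluding the spurious norm-$4$ candidate $v_4 = e_2+e_3-e_4$ and the analogous candidates for $v_5$ with $3\in\supp^+ v_5$, since these satisfy every arithmetic constraint on $\sigma$ and must be killed using only the finer intersection-graph obstructions. A secondary technical subtlety is that whether $v_2$ (and likewise the Case~(2) vector $v_5$) is breakable in the ambient $C$-type lattice is not evident a priori; the argument must therefore handle the breakable and unbreakable sub-cases in parallel whenever Corollary~\ref{x0pairing} is invoked.
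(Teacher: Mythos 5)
Your outline correctly uses Lemma~\ref{prop1} to get $(\sigma_0,\sigma_1,\sigma_2,\sigma_3)=(1,1,3,5)$, $k_3=3$, and $v_3=x_0$, and the overall plan (pin down $v_4$, then $v_5$, then apply Lemma~\ref{lem:AllNorm2} to the tail) mirrors the paper's. But the candidate enumeration is incomplete in a way that leaves a genuine gap, and there is also a wrong breakability claim.

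The gap: your list of candidates for $v_4$ omits $A=\{0,1,2,3\}$, i.e.\ $v_4=e_0+e_1+e_2+e_3-e_4$ with $\sigma_4=10$. This candidate survives every arithmetic filter you invoke: $3\in A$ (Lemma~\ref{lem:j-1}); $|A\cap\{0,1,2\}|=3$ is odd, so $\braket{v_4}{x_0}=2$ is even (Lemma~\ref{Lem:X0Odd}); $\{0,1,2,3\}$ is the unique subset of $\{0,\dots,3\}$ summing to $10$, so lex-maximality is vacuous; and $\sigma_4=10\le 11=1+\sum_{i<4}\sigma_i$, so $(1,1,3,5,10,\dots)$ is a legitimate changemaker initial segment. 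It is not tight either (tight would require $\sigma_4=11$), so it is not killed by Lemma~\ref{lem:tightvector}. The paper explicitly excludes $S_j=\{0,1,2,3\}$ by computing $\braket{v_j}{v_2}=2$, $\braket{v_j}{x_0}=2$, $\braket{v_j}{v_1}=0$, deducing $\epsilon_j=\epsilon_2$ and $\delta([v_2],[v_j])=3$, and then appealing to Lemma~\ref{lem:delta} to derive $\braket{v_j}{x_0}=-\braket{v_2}{x_0}$, a contradiction; nothing in your sketch performs this step. A similar omission affects your list of $v_5$-candidates: after the parity filter, $S_5$ can still be $\{1,2\}$ (norm $4$, $3\notin\supp^+v_5$) or $\{0,1,2,3\}$ (norm $5$), and neither fits into your three classes.

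Separately, your statement that $v_5=e_0+e_1+e_4-e_5$ ``turns out to be breakable, hence compatible with Corollary~\ref{x0pairing}'' is backwards. With $\sigma_5=8$ and $1+\sum_{i<5}\sigma_i=17$, this $v_5$ is not tight, so by Lemma~\ref{lem:BrIsTight} it is \emph{un}breakable. What actually reconciles $\braket{v_2}{x_0}=2$ and $\braket{v_5}{x_0}=2$ with Corollary~\ref{x0pairing} is that $v_2$ (the unique tight vector) must be the breakable one. This is not merely cosmetic: a large part of the paper's tail analysis in this case hinges on $v_2$'s breakability and the resulting geometry of $[v_2]$, $[v_4]$, and $[v_5]$, and your proposal does not engage with it.
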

\begin{proof}
When $v_2$ is tight, using Lemma~\ref{prop1}, the initial segment $(\sigma_0, \cdots, \sigma_{k_3})$ of $\sigma$ is $(1,1,3,5)$. For any $j > 3$, $S_j$ will be one of $\emptyset,\{ 1,2\}$, $\{ 2, 3\}$, $\{ 1,3\}$, $\{0,1\}$, or $\{ 0,1,2,3\}$ by Lemma~\ref{Lem:X0Odd} and Lemma~\ref{gappy3}. We will first show that $\{ 1,2\}$, $\{ 2, 3\}$ and $\{ 0,1,2,3\}$  do not occur. If $S_j = \{1,2\}$ for some $j > 4$, then $\braket{v_j}{v_1} = -1$, $\braket{v_j}{x_0} = 2$, and $\braket{v_j}{v_2} = 0$. Since $[x_0]$ and $[v_1]$ abut $[v_2]$ on opposite ends, and $[v_j]$ abuts both $[x_0]$ and $[v_1]$, the interval $[v_2]$ is contained in the union of $[x_0]$, $[v_j]$, and $[v_1]$. Therefore, $|[v_2 \cap v_j]| =|v_2|= 6$, so $|\braket{v_j}{v_2}| = 6 - \delta([v_j],[v_2]) \ge 3$, a contradiction. If $S_j = \{2,3\}$, then $\braket{v_j}{v_2} = -1$ but $\braket{v_j}{v_1} = \braket{v_j}{x_0} = 0$. To avoid a claw $(v_2,v_1,x_0,v_j)$, then, we must have $[v_2] \pitchfork [v_j]$. Since $v_j$ is orthogonal to $x_0$, this means that $\delta([v_2],[v_j]) = 2$, so $|v_j| = |[v_j\cap v_2]| = 3$ and $\epsilon_2 \neq \epsilon_j$. Therefore, $v_j + v_2$ is reducible. Since $j-1 \in \supp^+(v_j)$, the only way to have $|v_j| = 3$ is to have $j = 4$, but then $v_j + v_2$ is irreducible by Lemma~\ref{lem:SumIrr}. If $S_j = \{0,1,2,3\}$, $\braket{v_j}{v_2} = 2$, $\braket{v_j}{x_0} = 2$, and $\braket{v_j}{v_1} = 0$. Also, $|[v_2 \cap v_j]| = |v_j| \ge 5$, so in order to have $\braket{v_j}{v_2} = 2$ we must have $\epsilon_j = \epsilon_2$ and $\delta([v_2],[v_j]) = 3$. By Lemma~\ref{lem:delta}, $\braket{v_j}{x_0}=-\braket{v_2}{x_0}=\pm2$, a contradiction. Therefore, for each $j > 3$, $S_j$ is one of $\emptyset$, $\{0,1\}$ and $\{1,3\}$. Furthermore, if $S_j = \{0,1\}$, then $\braket{v_j}{x_0} \neq 0$, so by Corollary~\ref{x0pairing} there is at most one $j$ with $S_j = \{0,1\}$.

If the index $4$ exists, $3 \in S_4$, so $S_4 = \{1,3\}$, $v_4 = e_1 + e_3 - e_4$, and $\sigma_4 = 6$. If, for some $j > 4$, $S_j = \{1,3\}$, then also $4 \in \supp^+(v_j)$ by Corollary~\ref{unbreakablepairing}. Therefore, $|v_j| \ge 4$ and $\braket{v_j}{v_4} = 1$, so $[v_4]$ abuts $[v_j]$. Since $v_j$ is orthogonal to $x_0$, $\delta([v_2],[v_j])\le 2$, so since $|v_j| \ge 4$ and $\braket{v_j}{v_2} = 1$ we must have $[v_2]\dagger [v_j]$. Therefore, using Corollary~\ref{Cor:ZjDistinct}, either $[v_2]$ and $[v_4]$ are distant or they share a common end, but in either case we cannot have $\braket{v_2}{v_4} = 1$. Therefore, there is at most one $j > 4$ with $S_j = \{0,1\}$, and for all other $i$ we have $S_i = \emptyset$. 
Suppose that for some $j$ we have $S_j = \{0,1\}$. It follows from Lemma~\ref{lem:AllNorm2} that $|v_i|=2$ when $4<i<j$.
By Lemma~\ref{gappy3}, $v_j = e_0 + e_1 + e_k + e_{k+1} + \cdots + e_{j-1} - e_j$ for some $4 \le k < j$, and to avoid a claw $(v_j,v_1,x_0,v_k)$ we must have $k = 4$. Therefore, $|v_j| = j-1 \ge 4$. Since $\braket{v_j}{v_2} = 3$, we must have $\epsilon_j = \epsilon_2$, and since $\braket{v_j}{x_0}=\braket{v_2}{x_0}=2$ this means that $\delta([v_2],[v_j]) = 1$. Therefore, $|v_j| = \braket{v_j}{v_2} + 1 = 4$, so $j = 5$. This means that $S_5$ is either $\emptyset$ or $\{0,1\}$, and $S_i = \emptyset$ for $i > 5$. 

If $S_5=\emptyset$, by  Lemma~\ref{lem:AllNorm2}, $|v_i|=2$ when $i \ge 5$. If $S_5=\{0,1\}$, we will show that $\min \supp v_i \ge5$ when $i>5$.

We first claim that $x_{z_4}\in[v_2]$. Otherwise, as $\braket{v_4}{v_2}=1$, we get $[v_2]\dagger[v_4]$ and $\epsilon_2=-\epsilon_4$. We also have $\braket{v_2}{v_1}=-\braket{v_4}{v_1}=1$. Thus we have either $[v_1]\prec[v_2]$ or $[v_1]\prec[v_4]$. If $[v_1]\prec[v_2]$, then $\epsilon_1=\epsilon_2$ and $\epsilon_1=\epsilon_4$, a contradiction to $\epsilon_2=-\epsilon_4$. Similarly, we can rule out $[v_1]\prec[v_4]$. This proves the claim.

Note that $\sigma_0=\sigma_1$ are the only two $1$'s in the coordinates of $\sigma$, so there does not exist any norm $2$ vector $y\in (\sigma)^{\perp}$ such that $\braket{y}{v_1}=-1$. Thus $[v_1]$ contains only one vertex which does not neighbor any norm $2$ vertex. 
Since $v_1\sim v_2$ and $\braket{v_1}{x_0}=0$, $[v_1]$ abuts the right end of $[v_2]$. 
As $x_{z_4}\in[v_2]$ and $v_4\sim v_1$, $x_{z_4}$ is the rightmost high weight vertex in $[v_2]$.
If $\min \supp v_i=4$ for some $i>5$, then $v_i\sim v_4$ and $|v_i|\ge3$. 
As $\braket{v_i}{v_2}=0$, $x_{z_i}$ is the leftmost high weight vertex to the right of $[v_2]$. So $[v_1]$ is the unique vertex between $x_{z_4}$ and $x_{z_i}$. We then see that $[v_1]$ and $[v_i]$ abut, which is not possible as $\braket{v_1}{v_i}=0$.
This proves that  $\min \supp v_i \ge5$ when $i>5$.
By  Lemma~\ref{lem:AllNorm2}, $|v_i|=2$ when $i > 5$.
\end{proof}

\begin{prop}\label{prop:k1=1,k2=2,v2justright1}
	If $v_2$ is not tight and $(\sigma_0,\dots,\sigma_{k_3})\not = (1,1,1,2,3)$, then one of the following is true (if only the norm of a standard basis vector is given, it is just right):
	\begin{enumerate}
		\item $\abs{v_3} = 4$, $\abs{v_4} = 3$, $\abs{v_j} = 2$ for $5\leq j \leq 4+t$, $v_{5+t} = e_1 + e_2 + e_4 + e_5 + \dots + e_{4+t} - e_{5+t}$, $\abs{v_{6+t}} = 3$, and $|v_j| = 2$ for $j > 6+t$ ($t \ge 0$).
		\item $\abs{v_3} = 4$, $\abs{v_4} = 3$, and $\abs{v_5} = 6$.
		\item $\abs{v_3} = 4$, $\abs{v_4} = 5$, and $\abs{v_5} = 4$.
			\end{enumerate}
	with corresponding changemaker vectors:
	\begin{enumerate}
		\item $(1,1,1,3,4,4^{[t]},4t+6, (4t+10)^{[s]})$, $s,t\ge0$
		\item $(1,1,1,3,4,10)$	
		\item $(1,1,1,3,6,10)$
			\end{enumerate}
\end{prop}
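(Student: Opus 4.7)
The plan is to extend the standard basis analysis in the spirit of Propositions~\ref{1k2>2}, \ref{2k2>2}, and \ref{lem:k1=1,k2=2,v2tight}. By Lemma~\ref{prop2} and the hypothesis, the initial segment is $(1,1,1,3)$ with $k_3 = 3$, giving $v_1 = e_0 - e_1$, $v_2 = e_1 - e_2$, and $v_3 = x_0 = e_0 + e_1 + e_2 - e_3$. For each $j > 3$, I would first enumerate the set $S_j := \supp(v_j) \cap \{0,1,2,3\}$: the parity of $\braket{v_j}{x_0}$ (Lemma~\ref{Lem:X0Odd}), non-gappiness of indices $0$ and $1$ (Lemma~\ref{gappy3} applied to $|v_1|=|v_2|=2$), and the maximality clause of Definition~\ref{stbasis} together yield
\[
S_j \in \{\emptyset,\ \{2,3\},\ \{1,2\},\ \{0,1,2,3\}\}.
\]
The last two give $\braket{v_j}{x_0} = 2$, and Corollary~\ref{x0pairing} permits at most one such unbreakable $v_j$ with $j \ne 3$. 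Since Lemma~\ref{lem:j-1} forces $3 \in S_4$, either Case A ($S_4 = \{2,3\}$, so $v_4 = e_2+e_3-e_4$ with $|v_4|=3$ and $\sigma_4=4$, leading to families (1) and (2)) or Case B ($S_4 = \{0,1,2,3\}$, so $v_4 = e_0+e_1+e_2+e_3-e_4$ with $|v_4|=5$ and $\sigma_4=6$, leading to family (3)) occurs.

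In Case B, $\braket{v_4}{x_0}=2$ already, so Corollary~\ref{x0pairing} restricts $S_j \in \{\emptyset, \{2,3\}\}$ for $j > 4$. Lemma~\ref{lem:j-1} together with maximality then force $v_5 = e_2+e_3+e_4-e_5$ with $|v_5|=4$ and $\sigma_5=10$; a heavy-triple argument on $(v_4, v_5, v_6)$ (should $S_6 = \{2,3\}$) together with a claw centered at $v_5$ (should $S_6 = \emptyset$) rule out any further $v_j$, yielding $(1,1,1,3,6,10)$. In Case A, the pairings among $\{v_1, v_2, x_0, v_4\}$ form a chain $v_1 \sim v_2 \sim v_4$ with $x_0$ isolated, so Corollary~\ref{indecomposable} demands some later $v_m$ with $S_m \in \{\{1,2\}, \{0,1,2,3\}\}$ to reconnect $x_0$; Corollary~\ref{x0pairing} makes it unique. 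For $4 < j < m$, the combination of $S_j \in \{\emptyset, \{2,3\}\}$, Lemma~\ref{lem:AllNorm2}, and claw-freeness at $v_4$ forces $v_j = e_{j-1}-e_j$ with $|v_j|=2$ and $\sigma_j=4$.

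The two subcases then split. Subcase A1 ($S_m = \{0,1,2,3\}$) gives family (2): Lemma~\ref{lem:tightvector} forbids $v_m$ being tight, so maximality and claw-freeness force $m=5$ and $v_5 = e_0+e_1+e_2+e_3+e_4-e_5$ with $|v_5|=6$, and a tail analysis as in the proof of Proposition~\ref{2k2>2} rules out further vectors. Subcase A2 ($S_m = \{1,2\}$) gives family (1): here $v_m$ is gappy at index $2$, and claw-avoidance between $v_m$ and $v_4$ forces the positive support of $v_m$ above index $2$ to begin exactly at index $4$ (otherwise $\braket{v_m}{v_4}\ne 0$ would produce the claw $(v_m, v_1, x_0, v_4)$), yielding $v_m = e_1+e_2+e_4+\cdots+e_{4+t}-e_{5+t}$ with $\sigma_m=4t+6$ for some $t\ge 0$. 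The key closing step is that $|v_{6+t}|=2$ would create the claw $(v_m, x_0, v_1, v_{6+t})$ (since $v_m$'s only prior neighbors are $x_0$ and $v_1$, and these are pairwise non-adjacent), so $|v_{6+t}|\ge 3$; maximality then pins down $v_{6+t} = e_{4+t}+e_{5+t}-e_{6+t}$ with $\sigma_{6+t}=4t+10$, and any further $v_j$ must have $|v_j|=2$ of the form $e_{j-1}-e_j$ by a similar claw/heavy-triple argument, introducing the $s$-parameter. The main obstacle I anticipate is precisely this final cascade in Subcase A2: accurately tracking the evolving pairing graph after $v_m$ is adjoined, and verifying that no alternative extension (for instance a more complicated $v_{6+t}$ of norm $\ge 4$) survives the claw, heavy-triple, and cycle-length-3 constraints, mirroring the bookkeeping used in Proposition~\ref{1k2>2}.
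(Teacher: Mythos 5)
Your outline is essentially the paper's own argument: enumerate $S_j = \supp(v_j)\cap\{0,1,2,3\}$ via Lemma~\ref{Lem:X0Odd}, Lemma~\ref{gappy3}, and the maximality clause in Definition~\ref{stbasis}; split on $S_4 \in \{\{2,3\},\{0,1,2,3\}\}$; invoke indecomposability (Corollary~\ref{indecomposable}) together with Corollary~\ref{x0pairing} to locate a unique connector for $x_0$; and close with claw/heavy-triple/cycle arguments and Lemma~\ref{lem:AllNorm2} for the tails. Two of the cited justifications are imprecise even though the conclusions are right: in Case B, Lemma~\ref{lem:j-1} and maximality alone do not force $v_5 = e_2+e_3+e_4-e_5$, since $v_5 = e_4 - e_5$ with $S_5 = \emptyset$ is a priori possible --- one needs connectivity to produce some $j$ with $S_j = \{2,3\}$ and then the claw $(v_4, x_0, v_5, v_j)$ to force that $j$ to be $5$; and the exclusion of $S_j = \{2,3\}$ for $4 < j < m$ in Case A is not a consequence of claw-freeness at $v_4$ but of the heavy triple $(v_4, v_j, v_m)$, after which $S_j = \emptyset$ and the changemaker structure give $|v_j| = 2$. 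These are repairs of stated reasons rather than missing ideas; the case decomposition, the use of the connector, and the cascade after $v_m$ in Subcase A2 (which you correctly flag as the delicate step) all match the paper's proof.
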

\begin{proof}
If $v_2$ is not tight and $(\sigma_0,\dots,\sigma_{k_3})\not = (1,1,1,2,3)$, using Lemma~\ref{prop2}, it follows that $(\sigma_0, \cdots, \sigma_{k_3})$ is $(1,1,1,3)$. Note that, using Lemmas~\ref{Lem:X0Odd} and \ref{gappy3}, 
\begin{equation}\label{eq:Siwheni>4}
S_i=\emptyset, \{1,2\}, \{2,3\}, \text{ or } \{0,1,2,3\},\quad\text{when }i\ge4.
\end{equation}
Using Lemma~\ref{lem:j-1}, we get that $S_4$ is either $\{ 2,3\}$ or $\{ 0,1,2,3\}$, that is, $\sigma_4$ is either $4$ or $6$. 

When $\sigma_4=6$, $v_4=e_0 + e_1 + e_2 + e_3 - e_4$.  Since $\braket{v_4}{x_0}=2$, using Corollary~\ref{x0pairing} and (\ref{eq:Siwheni>4}), 
\begin{equation}\label{eq:NewSiwheni>4}
S_i=\emptyset \text{ or }\{2,3\} \quad\text{ when }i>4.
\end{equation}
Since the intersection graph must be connected, there will be some index $j$ for which $S_j=\{2, 3\}$. 
Additionally, using Corollary~\ref{unbreakablepairing}, we get that $4\in \supp^+ v_j$, as otherwise $\braket{v_j}{v_4}=2$. It turns out that there is only one such $j$. In fact, if there were two such indices $j_1,j_2$, then $\{2,3,4\}\subset S_{j_1}\cap S_{j_2}$, we would have $\braket{v_{j_1}}{v_{j_2}}\ge2$, a contradiction.
We claim that $j=5$. If $j\not = 5$, then $S_5 = \emptyset$ by (\ref{eq:NewSiwheni>4}). Therefore, $|v_5|=2$, so, by Lemma~\ref{gappy3}, $4$ cannot be a gappy index for $v_j$. This will give us a claw $(v_4, x_0, v_5, v_j)$. This justifies the claim; in particular, $\sigma_5=10$. If the index $6$ existed, by (\ref{eq:NewSiwheni>4}) we must have $S_6=\emptyset$. Thus, $v_6$ is either $e_4 + e_5 - e_6$ or $e_5 - e_6$. In the first case, there will be a claw $(v_4, v_5, v_6, x_0)$ and in the second case there will be a claw $(v_5, v_4, v_6, v_2)$. So the index $6$ does not exist, and we get the third possibility listed in the proposition.

Now, suppose that $\sigma_4=4$. If $\sigma_5\not = 4, 6$, by Lemma~\ref{lem:j-1} and (\ref{eq:Siwheni>4}), $S_5$ is either $\{ 0, 1, 2, 3\}$ or $\{ 2,3\}$. If $S_i=\{ 0, 1, 2, 3\}$ or $\{ 2,3\}$ for some $i>5$, we will get a heavy triple $(v_4, v_5, v_i)$.
So $S_i=\emptyset$ or $\{1,2\}$ when $i>5$.

If $S_5=\{ 2,3\}$, then $e_5=e_2+e_3+e_4-e_5$. Since the pairing graph is connected, there exists an index $i>5$ such that
$S_i=\{1,2\}$. Using the path $v_i\sim v_1\sim v_2$, we will get a heavy triple $(v_4, v_5, v_i)$. 

If $S_5=\{0,1,2,3\}$, $\sigma_5=10$. If the index $6$ does exist, using Corollary~\ref{x0pairing}, $S_6=\emptyset$. We will have a claw $(v_4,v_2,v_5,v_6)$ or $(v_5,x_0,v_4,v_6)$, depending on whether or not $4\in \supp^+(v_6)$. So we get the second possibility listed in the proposition.

If $\sigma_5=6$, since $\braket{v_5}{x_0}=2$, by Corollary~\ref{x0pairing} and (\ref{eq:Siwheni>4}) we have $S_i=\emptyset$ or $\{2,3\}$ when $i>5$.
Assume that there exists $i>5$ such that $S_i=\{2,3\}$. Since $\braket{v_i}{v_4}\le1$, $4\in\supp(v_i)$. Since $\braket{v_i}{v_5}\le1$, $5\in\supp(v_i)$. We would then have a heavy triple $(v_4,v_5,v_i)$. So $S_i=\emptyset$ whenever $i>5$.
If $|v_6|=2$, there will be a claw $(v_5,v_1,x_0,v_6)$. So $v_6=e_4+e_5-e_6$.
Since $\braket{v_5}{x_0}=2$, $x_{z_5}=x_1$. Since $v_5$ is connected to $v_4$ by a path of norm 2 vectors, $x_{z_4}$ is the leftmost high weight vertex to the right of $x_{z_5}$. Since $v_4\sim v_6$, by Corollary~\ref{Cor:ZjDistinct}, 
$\braket{v_i}{v_5}=\braket{v_i}{v_4}=0$, whenever $i>6$.
We  then conclude that $\min\supp(v_i)\ge6$ when $i>6$. Using Lemma~\ref{lem:AllNorm2}, we get $|v_i|=2$ when $i>6$. This gives us the case $t=0$ in the first possibility listed in the proposition.

If $\sigma_5=4$, 
since the pairing graph is connected, there must be a unique index $j>4$ for which $\braket{v_j}{x_0}=2$. Then $\sigma_j>4$, and $S_j$ is either $\{ 0,1,2,3\}$ or $\{ 1,2\}$ by (\ref{eq:Siwheni>4}). Let $t+5$ be the index such that $\sigma_{t+4}=4<\sigma_{t+5}$.

If $S_j=\{ 0,1,2,3\}$, then in order to avoid $\braket{v_j}{v_4}=2$  (which contradicts Corollary~\ref{unbreakablepairing}) we must have $4\in \supp^+(v_j)$. Moreover, using Lemma~\ref{gappy3}, neither of $4,5,\dots, t+3$ can be a gappy index for $v_j$. Hence we get a claw $(v_4, v_2, v_j, v_5)$ as $j>t+4\ge5$. That is, we must have $S_j=\{ 1,2 \}$. 

We claim that $j=t+5$. Suppose for contradiction that $j \ne t+5$. Then, using Corollary~\ref{x0pairing} and (\ref{eq:Siwheni>4}), $S_{t+5}$ is either $\emptyset$ or $\{ 2, 3\}$. If $S_{t+5} = \{ 2,3\}$, then there will be a heavy triple $(v_4, v_{t+5}, v_j)$, where the paths connecting the three high norm vertices are through $v_1$ and/or $v_2$. If $S_{t+5}=\emptyset$, set $i=\min\supp(v_{t+5})$. Using Lemma~\ref{gappy3}, none of $4, \cdots, t+3$ can be a gappy index for $v_{t+5}$. Then there will be a claw on either $(v_i, v_{i-1}, v_{t+5}, v_{i+1})$ or $(v_4, v_2, v_{t+5}, v_5)$, depending on whether $4<i<t+4$ or $i=4$. (Note that $i\not = t+4$ since $\sigma_{t+5}>4$.) This finishes the proof of the claim, that is, $j=t+5$ and $S_{t+5}=\{ 1,2\}$. 

To avoid a cycle $v_{t+5}\sim v_4\sim v_2\sim v_1\sim v_{t+5}$ of length bigger than $3$ (which violates Corollary~\ref{cycles}), we must have $4\in \supp^+(v_{t+5})$. Furthermore, using Lemmas~\ref{gappy3}~and~\ref{lem:j-1}, all the indices $5, \cdots, t+4 \in \supp(v_{t+5})$, so $\sigma_{t+5}=4t+6$. For $i>t+5$, using Corollary~\ref{x0pairing} and (\ref{eq:Siwheni>4}), the set $S_i$ is either $\emptyset$ or $\{2, 3\}$. If $S_i=\{2,3\}$, we will get a heavy triple $(v_i, v_4, v_{t+5})$. This proves that $S_i=\emptyset$ whenever $i>t+5$. 
Set $\ell=\min \supp(v_{i})$. If $\ell=t+5$, there will be a claw $(v_{t+5}, x_0, v_{i}, v_1)$. If $4<\ell<t+4$, there will be a claw $(v_{\ell}, v_{\ell-1}, v_{i}, v_{\ell+1})$, and if $\ell=4$ the claw will be on $v_4, v_2, v_{i}, v_5$. Therefore $\ell=t+4$ or $\ell\ge t+6$. In particular, $e_{t+6}=e_{t+4}+e_{t+5}-e_{t+6}$ and $\sigma_{t+6}= 4t+10$. 
When $i>t+6$, if $\ell=t+4$, we get a heavy triple $(v_i,v_4,v_{t+6})$. So $\ell\ge t+6$ when $i>t+6$. Now we can apply Lemma~\ref{lem:AllNorm2} to conclude that $|v_i|=2$ whenever $i>t+6$, and we will get the first possibility listed in the proposition.  
\end{proof}

\begin{prop}\label{prop:k1=1,k2=2,v2justright2}
If $(\sigma_0,\dots,\sigma_{k_3}) = (1,1,1,2,3)$, $v_5 = e_2 + e_3 + e_4 - e_5$, and $|v_j| = 2$ for $j > 5$. In this case, $\sigma = (1,1,1,2,3,6^{[t]})$, $t\ge1$.
\end{prop}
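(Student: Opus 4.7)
The plan is to first determine the initial standard basis vectors via the changemaker maximality criterion, then pin down $v_5$ using pairing-graph constraints, and finally apply Lemma~\ref{lem:AllNorm2} to force $|v_j| = 2$ for all $j > 5$. Applying Definition~\ref{stbasis} to the segment $(1,1,1,2,3)$ yields $v_1 = e_0 - e_1$, $v_2 = e_1 - e_2$, $v_3 = e_1 + e_2 - e_3$, and $v_4 = e_2 + e_3 - e_4$ (for $\sigma_4 = 3$ the maximizing choice is $A = \{2,3\}$). Since $x_0 = e_0 + e_1 + e_2 - e_4 = v_1 + v_2 + v_3 + v_4$ and $k_3 = 4$, a direct computation shows $v_3$ is the only $v_j$ with $j < k_3$ satisfying $\braket{v_j}{x_0} \ne 0$ (with value $2$); hence by Corollary~\ref{x0pairing}, every later unbreakable $v_j$ is $x_0$-orthogonal.

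To determine $v_5$, I use that $(v_5)_4 = 1$ by Lemma~\ref{lem:j-1}, that orthogonality with $x_0$ gives $(v_5)_0 + (v_5)_1 + (v_5)_2 = 1$, and that Lemma~\ref{gappy3} applied to $|v_2| = 2$ forces $(v_5)_1 = 0$ (otherwise $(v_5)_2 = 1$ also, making the sum exceed $1$). The maximality criterion of Definition~\ref{stbasis} then leaves only two candidates: $v_5 = e_2 + e_4 - e_5$ (with $\sigma_5 = 4$) or $v_5 = e_2 + e_3 + e_4 - e_5$ (with $\sigma_5 = 6$). The first is ruled out because $v_3$ would be adjacent in $\bar{S'}$ to the three mutually non-adjacent vertices $v_1, x_0, v_5$, producing a claw forbidden by Lemma~\ref{claw}. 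Hence $v_5 = e_2 + e_3 + e_4 - e_5$ and $\sigma_5 = 6$.

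The main technical step, and the principal obstacle, is to show $|v_j| = 2$ for all $j > 5$, which I will deduce from Lemma~\ref{lem:AllNorm2} with $m = 5$ (the required neighboring condition holds because $\braket{v_5}{v_2} = -1$) after verifying $\min \supp v_i \ge 5$ for every $i > 5$. I argue by contradiction on $t = \min \supp v_i < 5$: in cases $t = 0, 1$, iterated applications of Lemma~\ref{gappy3} to $|v_1| = |v_2| = 2$ force $(v_i)_2 = 1$, making $\braket{v_i}{x_0} > 0$ and thus violating Corollary~\ref{x0pairing}; in case $t = 2$, the subcase $(v_i)_3 = 0$ creates the claw $(v_3; v_1, x_0, v_i)$, while $(v_i)_3 = 1$ yields $\braket{v_i}{v_5} \ge 2$, contradicting Corollary~\ref{unbreakablepairing}; case $t = 3$ (after forcing $(v_i)_4 = 0$ via orthogonality with $x_0$) again produces the claw at $v_3$; case $t = 4$ gives $\braket{v_i}{x_0} = -1$, contradicting Lemma~\ref{Lem:X0Odd}. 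Finally, the bound $t \ge 1$ holds because if $v_5$ did not exist then $\sigma = (1,1,1,2,3)$ would correspond to a C-type lattice with vertex basis $\{v_1+v_2+v_3+v_4,\ -v_4,\ -v_2,\ -v_1\}$ and norms $(4,3,2,2)$, giving $[3,2,2]^- = 7/3$ and $(p,q) = (1,4)$, contradicting $p > 1$.
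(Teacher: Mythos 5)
Your proof is correct and matches the paper's overall structure: pin down $v_5$, show $\min\supp v_i \ge 5$ for $i > 5$, apply Lemma~\ref{lem:AllNorm2}, and finally rule out $t = 0$ by computing $(p,q)$. The sub-steps differ: to force $\sigma_5 = 6$ the paper applies Corollary~\ref{cycles} to the length-$4$ cycle $(v_3, v_1, v_2, v_5)$, whereas you exhibit the claw $(v_3; v_1, x_0, v_5)$; and to get $\min\supp v_i \ge 5$ the paper observes that $v_i$ neighboring $v_1$ or $v_2$ would produce the heavy triple $(v_i, v_3, v_5)$ — yielding $S'_i = \emptyset$ in one stroke and a single claw to rule out $3 \in S_i$ — while you run explicit casework on $\min\supp v_i \in \{0,1,2,3,4\}$. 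Both are valid; the heavy-triple route is a bit more economical, and in your casework a couple of sub-subcases (e.g., $\min\supp v_i = 2$ with $(v_i)_4 = 0$) are left implicit, but they are immediately dispatched by Lemma~\ref{Lem:X0Odd}. Your direct computation ruling out $t = 0$ via the vertex basis and $(p,q) = (1,4)$ is a nice explicit version of the paper's pointer to Section~\ref{pq}.
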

\begin{proof}
Since $4\in S'_5$, $S'_5 = \{ 2, 4\}$ by Lemma~\ref{Lem:X0Odd} and Corollary~\ref{x0pairing}, so the set $S_5$ is equal to either $\{ 2, 4\}$ or $\{ 2, 3, 4\}$. If $S_5=\{ 2, 4\}$, then there will be a cycle of length $4$ on $(v_3, v_1, v_2, v_5)$. Therefore, $S_5 = \{ 2, 3, 4\}$, and so, $\sigma_5=6$. There is a path $v_3\sim v_1\sim v_2\sim v_5$. 
For any $i>5$, to avoid a heavy triple $(v_i,v_3,v_5)$, $v_i$ cannot neighbor $v_1$ or $v_2$. Combined with Lemmas~\ref{gappy3} and~\ref{Lem:X0Odd} and Corollary~\ref{x0pairing}, we must have $S'_i=\emptyset$. If $3\in S_i$, we would have a claw $(v_3,v_i,v_1,x_0)$. So $S_i=\emptyset$. By Lemma~\ref{lem:AllNorm2}, we have $|v_i|=2$ whenever $i>5$.

Now  $\sigma = (1,1,1,2,3,6^{[t]})$, $t\ge0$. If $t=0$, then $p=1$, (see Section~\ref{pq}.) So we must have $t\ge1$.
\end{proof}


\section{$k_1 > 1$}\label{sec:Case1}
In the present section we classify all the changemaker C-type lattices that have 
\[
x_0=e_0 \pm e_{k_1} \pm e_{k_2} \pm e_{k_3},
\]
where $k_1>1$. Using Lemma~\ref{Lem:X0Odd}, we know that
\begin{equation}\label{eq:v1}
v_1=2e_0 - e_1,
\end{equation}
and therefore, $\sigma_1 =2$ and $|v_1|=5$. We remind the reader that, by Lemma~\ref{lem:tightvector}, $v_1$ is the only tight vector in the C-type lattices that concern us in this section. We also note that 
\begin{equation}\label{eq:0inVk1}
0\in\supp(v_{k_1})
\end{equation}
 by Lemma~\ref{Lem:X0Odd}.
Compared to Sections~\ref{sec:Case2}~and~\ref{sec:k1k21}, it will take longer to determine the initial segment $(\sigma_0, \cdots, \sigma_{k_3})$ of $\sigma$. We start by specifying the positive integer $k_1$.

\begin{lemma}\label{lem:k1>1,v2+t}
		The segment $(\sigma_0, \cdots \sigma_{k_1})$ is either $(1,2,3)$ or $(1,2,2,3)$. In particular, $k_1=2$ or $3$, and $\sigma_{k_1}=3$. 
\end{lemma}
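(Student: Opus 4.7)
The plan is to extract $(\sigma_0,\dots,\sigma_{k_1})$ by combining the structural data \eqref{eq:v1} and \eqref{eq:0inVk1} with the parity constraint (Lemma~\ref{Lem:X0Odd}) and the unbreakable/claw/cycle machinery of Section~\ref{sec:ChangemakerLattices}. The indecomposability of $C(p,q)$ (Corollary~\ref{indecomposable}) forces $\sigma_0=1$, and \eqref{eq:v1} gives $\sigma_1=2$. Since $k_1>1$, Lemma~\ref{lem:tightvector} makes $v_1$ the unique tight standard basis vector, so every $v_j$ with $j\neq 1$ is non-tight and hence unbreakable by Lemma~\ref{lem:BrIsTight}. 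Because $\braket{v_1}{x_0}=2$ and $v_1\neq v_{k_3}$, Corollary~\ref{x0pairing} forces $\braket{v_j}{x_0}=0$ for every other unbreakable $v_j$ with $j\neq k_3$.

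For each $j$ with $1<j<k_1$, I will then show $v_j=e_{j-1}-e_j$ and $\sigma_j=2$. Since $\supp(v_j)\subset\{0,\dots,j\}$ misses $\{k_1,k_2,k_3\}$, the pairing $\braket{v_j}{x_0}=0$ reduces to vanishing of the $e_0$-coefficient of $v_j$, so $v_j=\sum_{i\in A}e_i-e_j$ with $A\subset\{1,\dots,j-1\}$. I will induct on $j$: the base case $j=2$ forces $A=\{1\}$ since the only non-empty subset of $\{1\}$ yielding $\sigma_2\geq\sigma_1$ is $\{1\}$. For the inductive step, the hypothesis $\sigma_i=2$ for $1\leq i<j$ makes $\sigma_j=2|A|$, and I rule out $|A|\geq 2$ by arguing that the resulting pairings of $v_j$ against $v_1,\dots,v_{j-1}$ produce a claw (Lemma~\ref{claw}), a heavy triple (Lemma~\ref{heavytriple}), or a cycle of length $>3$ (Corollary~\ref{cycles}) in $G(\bar{S'})$. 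Maximality in Definition~\ref{stbasis} then selects $A=\{j-1\}$.

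For $v_{k_1}$: by \eqref{eq:0inVk1} and non-tightness, $v_{k_1}=e_0+\sum_{i\in A'}e_i-e_{k_1}$ with $A'\subset\{1,\dots,k_1-1\}$, whence $\sigma_{k_1}=1+2|A'|$ and $|A'|\geq 1$ from $\sigma_{k_1}\geq\sigma_{k_1-1}$. The same intersection-graph analysis as above forbids $|A'|\geq 2$, yielding $\sigma_{k_1}=3$ and (by maximality) $A'=\{k_1-1\}$ when $k_1>2$, so $v_{k_1}=e_0+e_{k_1-1}-e_{k_1}$. To bound $k_1\leq 3$, compute $\braket{v_{k_1}}{v_1}=2-[1\in A']$; this is $1$ when $k_1=2$ and equals $2$ when $k_1\geq 3$. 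If $k_1\geq 3$ and $v_1$ were unbreakable, this would violate Corollary~\ref{unbreakablepairing}, so $v_1$ must be breakable, meaning $[v_1]$ contains at least two high-weight vertices in $C(p,q)$. When $k_1\geq 4$, the $k_1-2$ norm-$2$ intervals $[v_2],\dots,[v_{k_1-1}]$ form a chain interpolating between $[v_1]$ and $[v_{k_1}]$; applying Lemma~\ref{intervalproduct} to the constraint $|\braket{v_{k_1}}{v_1}|=2$ shows $[v_{k_1}]$ cannot lie at distance $\geq 2$ norm-$2$ vertices from $[v_1]$, contradicting $k_1-2\geq 2$.

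The principal obstacle is the graph-theoretic case analysis underlying the inductive step of Stage 2 and the rigidity argument of Stage 3. In Stage 2, ruling out large $|A|$ requires enumerating the gap patterns in $A\subset\{1,\dots,j-1\}$ and checking that each pattern produces either a claw centered at one of $v_1,v_2,v_{j-1}$ or a subgraph cycle that violates Corollary~\ref{cycles}. The bound $k_1\leq 3$ is the most delicate: when $v_1$ is breakable, the cycle argument in $G(\bar{S'})$ omits $v_1$, so the contradiction must come from the geometric impossibility of $[v_1]$ and $[v_{k_1}]$ realizing $\braket{v_{k_1}}{v_1}=2$ while being separated by the norm-$2$ chain $[v_2],\dots,[v_{k_1-1}]$ of length $k_1-2\geq 2$.
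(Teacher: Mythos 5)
Your proof has a genuine gap in the step that determines $v_{k_1}$, and this gap propagates into the final bound on $k_1$.

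You assert that ``the same intersection-graph analysis as above forbids $|A'|\geq 2$,'' yielding $\sigma_{k_1}=3$ and $v_{k_1}=e_0+e_{k_1-1}-e_{k_1}$, and only then deduce $k_1\leq 3$. This is backwards. By Lemma~\ref{gappy3} applied to $v_{k_1}$, the set $A'=A\setminus\{0\}$ must be a consecutive range $\{i,\dots,k_1-1\}$. The paper's proof shows that in fact $i=2$, so $|A'|=k_1-2$; for $k_1\geq 4$ this is $\geq 2$, and the claw/heavy-triple/cycle machinery does \emph{not} eliminate it. Concretely, with $v_{k_1}=e_0+e_2+\cdots+e_{k_1-1}-e_{k_1}$ one has $\braket{v_{k_1}}{v_1}=2\neq 0$, so in the candidate claw $(v_2;\,v_1,v_3,v_{k_1})$ the outer vertices $v_1$ and $v_{k_1}$ may abut and the claw disappears. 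Moreover, you yourself observe that $v_1$ must be breakable once $k_1\geq 3$, and Lemma~\ref{heavytriple} and Corollary~\ref{cycles} are stated for $G(\bar{S'})$, which excludes breakable vectors; so neither can be invoked with $v_1$ as a vertex. The paper handles this case by splitting on whether $[v_1]$ abuts $[v_{k_1}]$: if not, the claw exists; if so, Lemma~\ref{intervalproduct} forces $|v_{k_1}|=3$, contradicting $k_1\geq 4$. That norm computation is an essential extra ingredient, not a consequence of ``the same intersection-graph analysis.'' So your derivation of $\sigma_{k_1}=3$ is circular — it is only true \emph{after} $k_1\leq 3$ has been established.

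A second, related issue appears in Stage~2 (and again in Stage~3): the case $A=\{1,2,\dots,j-1\}$, i.e.\ $\min\supp(v_j)=1$. Here $\braket{v_j}{v_1}=-1$, $\braket{v_j}{v_2}=0$, $\braket{v_j}{x_0}=0$, and the only candidate claw is centered at $v_1$, which is the one potentially breakable vector. The paper disposes of this case by passing to the $\pitchfork$ alternative and then invoking the irreducibility of $v_1+v_j$ from Lemma~\ref{lem:SumIrr}; a pure claw/cycle/heavy-triple argument does not suffice. Finally, a small point: the appeal to Corollary~\ref{x0pairing} to force $\braket{v_j}{x_0}=0$ for the remaining unbreakable $v_j$ is not correct as stated, since if $v_1$ is breakable the corollary does not single out $v_1$ as the one allowed exception; for $j<k_1$ the conclusion does hold, but the correct reason is Lemma~\ref{Lem:X0Odd} (the $e_0$-coefficient of a non-tight $v_j$ is $0$ or $1$, and the pairing with $x_0$ must be even).
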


\begin{proof}
Using Lemma~\ref{lem:tightvector}, we get that $v_2$ is either $e_0+e_1-e_2$ or $e_1-e_2$. In the former case, using Lemma~\ref{Lem:X0Odd}, we get that $k_1=2$, and so $\sigma_{k_1}=3$. 

Now suppose that $v_2=e_1-e_2$. More generally, suppose that there exists $t\ge 1$ such that $(\sigma_0, \sigma_1, \cdots, \sigma_{t+1})=(1,2,2^{[t]})$, and that $|v_{t+2}|>2$. We will show that $t=1$, $k_1=3$, and that $\sigma_{t+2}$ (or simply $\sigma_3$) is $3$. 

Set $j=\min\supp(v_{t+2})$. We argue that $j=0$. (Note that, by Lemma~\ref{gappy3}, none of $1,2, \cdots, t$ is a gappy index for $v_{t+2}$.) If $1<j<t+1$, there will be a claw on $v_j, v_{j-1}, v_{t+2}, v_{j+1}$. If $j=1$, then $\braket{v_{t+2}}{v_1}=-1$ and $v_{t+2}$ will be orthogonal to $v_2$. Then $k_1>t+2$ by \eqref{eq:0inVk1}.
There will be a claw on $v_1, x_0, v_{t+2}, v_2$, unless $[v_{t+2}]\pitchfork [v_1]$, $|[v_1\cap v_{t+2}]|=|v_{t+2}|=3$, and $\epsilon_{t+2}=-\epsilon_1$. Thus $v_1+v_{t+2}$ is the sum of two distant intervals and so is reducible. Since $|v_{t+2}|=3$, $v_{t+2}=e_1+e_{t+1}-e_{t+2}$, and so $v_1+v_{t+2}$ is irreducible by Lemma~\ref{lem:SumIrr}, a contradiction. That is, $j=0$, and that, 
\begin{equation}\label{vt+2}
v_{t+2}=e_0+e_{i}+e_{i+1} + \cdots + e_{t+1}-e_{t+2},
\end{equation}
with $i\ge 1$. 

Since $0\in \supp^+(v_{t+2})$, using Lemma~\ref{Lem:X0Odd}, we get that $k_1=t+2$. Furthermore, we claim that 
$x_0=e_0+e_{t+2}+e_{k_2}-e_{k_3}$. See Proposition~\ref{x0}. If $x_0=e_0-e_{t+2}-e_{k_2}+e_{k_3}$, then $\braket{v_{t+2}}{x_0}=2$. 
Observe that $\braket{v_{t+2}}{v_1}=1$ or $2$ depending on whether or not $i=1$ in~\eqref{vt+2}; in particular, $\braket{v_{t+2}}{v_1}>0$. Since $|v_{t+2}\cap v_1|=|v_{t+2}|\ge 3$ and $\delta([v_1],[v_{t+2}])\le 3$, using Lemma~\ref{intervalproduct}, it must be that $\epsilon_1 = \epsilon_{t+2}$. Since $\braket{v_1}{x_0}=\braket{v_{t+2}}{x_0}=2$, $[v_1]$ and $[v_{t+2}]$ share their left endpoint, and $\delta([v_{t+2}], [v_1])=1$. Moreover, we must have $|v_{t+2}|=3$ (as otherwise $\braket{v_{t+2}}{v_1}>2$). That is, $v_{t+2}=e_0+e_{t+1}-e_{t+2}$. We have $\braket{v_2}{v_1}=-1$ and $\braket{v_2}{x_0}=0$, so $[v_2]$ abuts the right end of $[v_1]$.
Since also $v_{t+2}\sim v_{t+1}$, $|v_i|=2$ for $i\in \{ 2, \cdots, t+1\}$,
\[
v_2\sim v_3\sim\cdots\sim v_{t+1},
\]
 the interval $[v_1]$ is a subset of the union of the $[v_j]$ for $j\in \{ 2, \cdots, t+2\}$, which in turn implies that $|v_1|=|v_{t+2}|=3$, a contradiction. This shows that 
\[
x_0=e_0+e_{t+2}+e_{k_2}-e_{k_3}.
\]

We now argue that $1\not \in \supp(v_{t+2})$. Suppose for contradiction that $1\in \supp(v_{t+2})$. Using (\ref{vt+2}), we get
 that $|v_{t+2}|\ge 4$, $\braket{v_1}{v_{t+2}}=1$ and $\braket{v_2}{v_{t+2}}=0$. To avoid a claw on $v_1, x_0, v_{t+2}, v_2$, we must have $[v_{t+2}]\pitchfork [v_1]$. This implies that $\delta([v_1], [v_{t+2}])=2$. Using Lemma~\ref{intervalproduct} and that $|v_{t+2}|\ge 4$, we see that $|\braket{v_1}{v_{t+2}}|\ge 2$, a contradiction. That is, in~\eqref{vt+2}, we must have $i>1$. 

We claim that $i=2$. If $2<i<t+1$, there will be a claw on $v_i, v_{i-1}, v_{t+2}, v_{i+1}$. If $i=t+1$ (and $i>2$), to avoid a claw on $v_1, x_0, v_{t+2}, v_2$, it must be that $[v_{t+2}]\pitchfork [v_1]$, and so $\delta([v_{t+2}], [v_1])=2$. To get $\braket{v_{t+2}}{v_1}=2$, however, it must be $|v_{t+2}|=4$ which contradicts $i=t+1$. Therefore, in~\eqref{vt+2}, we have $i=2$. In particular, $v_2\sim v_{t+2}$.

Finally, we argue that $t=1$. If $t>1$, we must have $v_1\sim v_{t+2}$ as otherwise we get a claw $(v_2, v_1, v_{t+2}, v_3)$. That is, $[v_{t+2}]$ abuts $[v_1]$. Therefore, to fulfill $\braket{v_{t+2}}{v_1}=2$, $[v_{t+2}]\prec[v_1]$, and that $|v_{t+2}|=3$, which contradicts $t>1$ and (\ref{vt+2}). So $t=1$ as desired.
\end{proof}

As part of the proof of Lemma~\ref{lem:k1>1,v2+t}, we showed that $x_0 = e_0+e_{k_1}+e_{k_2}-e_{k_3}$ when $k_1=3$. Indeed, this is the case also when $k_1=2$.

\begin{lemma}\label{lem:k1>1, chi=2}
		Let $k_1>1$. Then $x_0 = e_0+e_{k_1}+e_{k_2}-e_{k_3}$.
\end{lemma}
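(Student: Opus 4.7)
By Lemma~\ref{lem:k1>1,v2+t} we have $k_1\in\{2,3\}$. The case $k_1=3$ is already dispensed with inside the proof of Lemma~\ref{lem:k1>1,v2+t}: the passage there that ends ``\emph{This shows that $x_0=e_0+e_{t+2}+e_{k_2}-e_{k_3}$}'' (applied with $t=1$) explicitly rules out the negative form. So the real content of the present lemma is the case $k_1=2$, on which I now focus.

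For $k_1=2$, Lemma~\ref{lem:k1>1,v2+t} gives $v_1=2e_0-e_1$ and $v_2=e_0+e_1-e_2$, with $|v_1|=5$, $|v_2|=3$, and $\braket{v_1}{v_2}=1$ by direct computation in $\Z^{n+2}$. The plan is to argue by contradiction: assume $x_0=e_0-e_2-e_{k_2}+e_{k_3}$, and show that the geometric side of the inner product $\braket{v_1}{v_2}$, computed via the interval representations of $v_1,v_2$, can never equal $1$. Direct computation first yields $\braket{v_1}{x_0}=\braket{v_2}{x_0}=2$. By Lemma~\ref{lem:irred} and Proposition~\ref{prop:IntervalsIrreducible}, write $v_i=\epsilon_i[I_i]$; Lemma~\ref{intervalproduct} applied to the constraint $\braket{v_i}{x_0}=2$ then pins each $[I_i]$ down to one of two shapes: either $\epsilon_i=+1$ with $I_i\supseteq\{x_0,x_1\}$, or $\epsilon_i=-1$ with $I_i$ having left endpoint $1$ and not containing $x_0$.

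The computation of $\braket{v_1}{v_2}$ now splits into cases. When $\epsilon_1=\epsilon_2$, the intervals $[I_1]$ and $[I_2]$ share a common left endpoint, so they are nested; since the right endpoints differ ($|v_1|\ne|v_2|$), Lemma~\ref{intervalproduct} gives a single dangling edge at the far right end of the smaller interval and $\braket{v_1}{v_2}=|v_2|-1=2$. When $\epsilon_1\ne\epsilon_2$, the double edge $\{x_0,x_1\}$ automatically contributes $2$ to $\delta([I_1],[I_2])$. Here Lemma~\ref{Lem:IntervalNorm} is the crucial device: adjoining $x_0$ to an interval does not change its norm, so the sub-case in which the right endpoints of $I_1$ and $I_2$ agree would force $|I_1|=|I_2|$, contradicting $5\ne 3$. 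Hence the right endpoints differ, producing one more dangling edge and $\delta=3$; evaluating $|I_1\cap I_2|$ in the two remaining sub-cases yields $\braket{v_1}{v_2}\in\{-2,0\}$. Since $\braket{v_1}{v_2}\in\{2,-2,0\}$ in every case, we cannot have $\braket{v_1}{v_2}=1$, the required contradiction.

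The main obstacle is the mixed-sign case; the invariance of norm under adjoining or removing $x_0$ (Lemma~\ref{Lem:IntervalNorm}) is exactly what excludes the would-be equal-right-endpoint sub-case and forces $\delta=3$, making the argument go through cleanly.
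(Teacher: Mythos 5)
Your proof is correct and takes essentially the same route as the paper: assume $x_0=e_0-e_2-e_{k_2}+e_{k_3}$, note $\braket{v_1}{v_2}=1$ and $\braket{v_1}{x_0}=\braket{v_2}{x_0}=2$, and derive a contradiction via Lemma~\ref{intervalproduct} and the left-endpoint constraints that the $x_0$-pairing imposes on the intervals $[v_1],[v_2]$. The paper's version is slightly tighter — from $|[v_1\cap v_2]|=3$ and $\braket{v_1}{v_2}=1$ it deduces $\epsilon_1=\epsilon_2$ and $\delta([v_1],[v_2])=2$ at once, then observes that equal pairings with $x_0$ force a shared left endpoint, which forbids $\delta=2$ — whereas you enumerate the sign cases exhaustively (invoking Lemma~\ref{Lem:IntervalNorm} in the mixed-sign branch), but the underlying computation is the same.
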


\begin{proof}
We only need to show this for $k_1=2$. Suppose for contradiction $x_0 = e_0-e_{2}-e_{k_2}+e_{k_3}$ (see Proposition~\ref{x0}). Note that $v_2=e_0+e_1-e_2$, and therefore, $\braket{v_2}{x_0}=2=\braket{v_1}{x_0}$, and $\braket{v_2}{v_1}=1$. Since $|v_2|=3$, using Lemma~\ref{intervalproduct}, we see that $\epsilon_1=\epsilon_2$ and $\delta([v_1],[v_2])=2$. Since $\braket{[v_2]}{x_0}=\braket{[v_1]}{x_0}=\pm2$, $[v_1],[v_2]$ share their left end point, so we cannot have $\delta([v_1],[v_2])=2$, a contradiction.
\end{proof}

Now we proceed to determine the changemaker vectors. As in Section~\ref{sec:Case2}, we use the notation of~\eqref{Eq:Sj}~and~\eqref{Eq:S'j}. Also, we use the basis $S'$, defined in~\eqref{Eq:S'}, where $v_{k_3}$ is replaced by $x_0$.

\subsection{{\boldmath $k_1=2$}}\label{k1=2}

This subsection is devoted to classifying the changemaker C-type lattices with 
\begin{equation}\label{x0k1=2}
x_0=e_0+e_2+e_{k_2} - e_{k_3}.
\end{equation} 
Recall that the changemaker starts with $(1,2,3)$. We have
\begin{equation}\label{eq:v2parings}
\braket{v_1}{v_2}=1,\quad \braket{v_2}{x_0}=0.
\end{equation}

\begin{lemma}\label{lem:k1>1 intervals}
		The intervals $[v_2]$ and $[v_1]$ are consecutive with $\epsilon_2 = -\epsilon_1$. 
\end{lemma}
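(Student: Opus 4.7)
The plan is to use the irreducibility of $v_2-v_1$, which follows from Lemma~\ref{lem:SumIrr}(2) applied to the tight vector $v_1$ (with $t=1$, since $v_2=e_0+e_1-e_2$ has precisely the required form). By Proposition~\ref{prop:IntervalsIrreducible}, $v_2-v_1$ must then equal $\pm[J]$ for some interval $[J]$ of norm $|v_2-v_1|=6$. Writing $v_i=\epsilon_i[I_i]$, it remains to determine the signs and the relative position of $[I_1]$ and $[I_2]$.

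A direct computation from \eqref{eq:v1}, $v_2=e_0+e_1-e_2$, and \eqref{x0k1=2} gives $\braket{v_1}{x_0}=2$ and $\braket{v_2}{x_0}=0$. Using Lemma~\ref{Lem:IntervalNorm} together with $a_1\ge 3$, any interval of norm $3$ containing $x_0$ or $x_1$ has nonzero pairing with $x_0$; combined with $|v_2|=3$, this forces the left endpoint of $[I_2]$ to be at least $2$. Since $|v_1|=5$ and $|\braket{v_1}{x_0}|=2$, the interval $[I_1]$ must contain $x_1$, so its left endpoint is $0$ or $1$.

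I then split on the sign $\epsilon_1\epsilon_2$. If $\epsilon_1=\epsilon_2$, then $v_2-v_1=\pm([I_2]-[I_1])$, and tracking coefficients shows that $[I_2]-[I_1]=\pm[J]$ forces one of $I_1,I_2$ to contain the other while sharing an endpoint. The endpoint bounds rule out $I_1\subseteq I_2$ and also $I_2\prec I_1$ sharing the left endpoint, leaving only $I_2\prec I_1$ sharing the right endpoint. Then $\delta([I_1],[I_2])=1$ and $|[I_1\cap I_2]|=|[I_2]|=3$, so Lemma~\ref{intervalproduct} gives $\braket{v_1}{v_2}=2$, contradicting \eqref{eq:v2parings}. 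Hence $\epsilon_1=-\epsilon_2$.

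In the remaining case, $v_2-v_1=\pm([I_1]+[I_2])$. Since $[I_1]+[I_2]$ has coefficient $2$ on $I_1\cap I_2$ and coefficient $1$ on $I_1\triangle I_2$, equality with $\pm[J]$ forces $I_1\cap I_2=\emptyset$, and then $I_1\cup I_2$ being the support of a single interval means the two intervals form adjacent blocks, i.e., they are consecutive. The main technical point throughout is the coefficient-level characterization of when $\pm[I_1]\pm[I_2]$ is itself an interval; once this is in hand, the left-endpoint constraints cleanly separate the two cases and yield the claim.
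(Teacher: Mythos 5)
Your proof is correct and rests on the same key inputs as the paper's: Lemma~\ref{lem:SumIrr}(2) to obtain irreducibility of $v_2-v_1$, the pairing conditions $\braket{v_1}{v_2}=1$ and $\braket{v_2}{x_0}=0$, and Lemma~\ref{intervalproduct}. The paper runs the case analysis in the opposite order — it first uses the pairing equation and $\braket{v_2}{x_0}=0$ to narrow the interval configuration to either $[v_2]\pitchfork[v_1]$ with $\epsilon_1=\epsilon_2$ or $[v_2]\dagger[v_1]$ with $\epsilon_1=-\epsilon_2$, and then kills the first case by noting that $v_2-v_1$ would then be a sum of distant intervals, hence reducible, contradicting Lemma~\ref{lem:SumIrr}(2) — whereas you first invoke irreducibility to force $v_2-v_1=\pm[J]$ and then read off the configuration by tracking the vertex-basis coefficients of $[I_1]\pm[I_2]$, with the pairing computation eliminating the $\epsilon_1=\epsilon_2$ branch at the end. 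This is the same argument reorganized; your coefficient-level reading of when $\pm[I_1]\pm[I_2]$ is a single interval is a clean way to recover the paper's ``sum of distant intervals'' observation and also automatically excludes the distant and shared-end possibilities that the paper treats implicitly. One small stylistic note: the invocation of Lemma~\ref{Lem:IntervalNorm} and $a_1\ge3$ to argue the left endpoint of $[I_2]$ is at least $2$ is unnecessary — $\braket{v_2}{x_0}=0$ already forces $[I_2]$ to contain neither $x_0$ nor $x_1$ regardless of its norm.
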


\begin{proof}
Using (\ref{eq:v2parings}) and Lemma~\ref{intervalproduct}, either $[v_2]\pitchfork [v_1]$, $|[v_2]\cap[v_1]|=|[v_2]|=3$, $\delta([v_2], [v_1])=2$, and $\epsilon_2=\epsilon_1$, or $[v_2]\dagger [v_1]$, and $\epsilon_2 = -\epsilon_1$. In the former case, $v_2-v_1$ is the sum of two distant intervals, and so is reducible. However, we have $v_2=e_0+e_1-e_2$, and so $v_2-v_1$ is irreducible by Lemma~\ref{lem:SumIrr}~(2).
\end{proof}

\begin{lemma}\label{lem:NoSingle1}
There does not exist an index $j>3$, $j\ne k_3$, such that $\supp(v_j)\cap\{0,1,2\}=\{1\}$.
\end{lemma}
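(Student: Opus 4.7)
Suppose for contradiction that such a $v_j$ exists. Since $v_j$ is a standard basis vector and $\supp(v_j)\cap\{0,1,2\}=\{1\}$, one computes directly, using $v_1=2e_0-e_1$ and $v_2=e_0+e_1-e_2$, that
\[
\braket{v_j}{v_1}=-1,\qquad \braket{v_j}{v_2}=1.
\]
Next, using $x_0=e_0+e_2+e_{k_2}-e_{k_3}$ (Lemma~\ref{lem:k1>1, chi=2}), we have $\braket{v_j}{x_0}=[k_2\in\supp(v_j)]-[k_3\in\supp(v_j)]$ unless $j=k_2$, in which case it equals $-1$. Since this inner product must be even (Lemma~\ref{Lem:X0Odd}), the case $j=k_2$ is impossible and in the remaining case $\braket{v_j}{x_0}=0$. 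Moreover, $v_j\ne e_1-e_j$: that would force $\sigma_j=\sigma_1=2<3=\sigma_2\le\sigma_j$, a contradiction. So $|v_j|^2\ge 3$. Finally, $v_j$ is unbreakable: by Lemma~\ref{lem:tightvector} only $v_1$ is tight in this section, and by Lemma~\ref{lem:BrIsTight} breakable standard basis vectors are tight.

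I then realize $v_1, v_2, v_j$ as (signed) intervals in $C(p,q)$ via Proposition~\ref{prop:IntervalsIrreducible}. Since $\braket{v_1}{x_0}=2$, the interval $[v_1]$ either contains both $x_0$ and $x_1$ (then $\epsilon_1=1$), or contains $x_1$ but not $x_0$ (then $\epsilon_1=-1$); in either case $[v_1]$ has the form $[x_{\alpha},\dots,x_c]$ with $\alpha\in\{0,1\}$. By Lemma~\ref{lem:k1>1 intervals}, $[v_2]$ is consecutive to $[v_1]$ with $\epsilon_2=-\epsilon_1$, forcing $[v_2]=[x_{c+1},\dots,x_d]$ (the only viable side). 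From $\braket{v_j}{x_0}=0$ and the pairing formulas of Lemma~\ref{intervalproduct}, $[v_j]=[x_a,\dots,x_b]$ with $a\ge 2$.

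The bulk of the argument is a case split on how $[v_j]$ sits relative to $[v_1]$. If $[v_j]$ is distant from $[v_1]$, then $\braket{v_j}{v_1}=0\ne-1$. If $[v_j]\dagger[v_1]$ so that $a=c+1$, then $\braket{v_j}{v_1}=-\epsilon_1\epsilon_j$ forces $\epsilon_j=\epsilon_1$, but then $\braket{v_j}{v_2}=-(|[v_j\cap v_2]|-1)\le 0$, contradicting $\braket{v_j}{v_2}=1$. If $[v_j]\prec[v_1]$, then $\braket{v_j}{v_1}=\epsilon_1\epsilon_j(|v_j|-1)$ forces $|v_j|\in\{0,2\}$, contradicting $|v_j|^2\ge 3$. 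The crux is $[v_j]\pitchfork[v_1]$, where $\delta=2$ (since $\braket{v_j}{x_0}=0$) and $\braket{v_j}{v_1}=-1$ forces $\epsilon_1\epsilon_j=-1$ and $|[x_a,\dots,x_c]|=3$; by Lemma~\ref{Lem:IntervalNorm} the segment $[x_a,\dots,x_c]$ therefore contains exactly one high-weight vertex (of norm $3$). I then split on the position of $b$ relative to $d$. When $b=d$ one computes $\braket{v_j}{v_2}=\epsilon_j\epsilon_2(|v_2|-1)=2\ne1$; when $b<d$ the equation $\braket{v_j}{v_2}=1$ forces $|[x_{c+1},\dots,x_b]|=3$, producing a \emph{second} high-weight vertex of $[v_j]$ disjoint from the first; when $b>d$, $[v_2]$ is strictly contained in $[v_j]$ and again contributes a second high-weight vertex. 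In both of the latter subcases, $[v_j]$ contains two high-weight vertices, which by the Proposition preceding Lemma~\ref{lem:TwoIndBr} makes $v_j$ breakable, contradicting Step~1.

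The main obstacle is purely bookkeeping: keeping the signs $\epsilon_1,\epsilon_2,\epsilon_j$ coherent across the two possibilities for $[v_1]$ (with or without $x_0$) while running the four cases and three subcases of the interval analysis. The key observation that smooths this is that, after the pairing equations force the signs, the relevant products $\epsilon_1\epsilon_j$ and $\epsilon_j\epsilon_2$ take the same values in both configurations for $[v_1]$, so the final high-weight-vertex count argument applies uniformly.
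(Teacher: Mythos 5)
Your proof is correct, and it follows the same overall strategy as the paper's: compute $\braket{v_j}{v_1}=-1$, $\braket{v_j}{v_2}=1$, $\braket{v_j}{x_0}=0$, transfer to intervals, and derive a contradiction from the geometry and the relation $\epsilon_2=-\epsilon_1$ from Lemma~\ref{lem:k1>1 intervals}. The difference is one of completeness. The paper's proof is extremely terse: it cites Lemma~\ref{lem:k1>1 intervals} to assert directly that $[v_j]$ and $[v_1]$ share their right endpoint, and then computes $|\braket{v_1}{v_j}|=|v_j|-1>1$, which is exactly your $\prec$ case. However, the paper does not explicitly rule out the case $[v_j]\pitchfork[v_1]$ (which is not immediately excluded, since $v_1$ is tight and may be breakable, so Corollary~\ref{unbreakablepairing} is unavailable). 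This is the gap you fill: in the $\pitchfork$ case you force $|[x_a,\dots,x_c]|=3$ from the pairing with $v_1$, then use the pairing with $v_2$ and the three subcases $b=d$, $b<d$, $b>d$ to manufacture a second high-weight vertex disjoint from the first inside $[v_j]$, and then invoke Lemma~\ref{lem:TwoIndBr} plus the fact that $v_j$ is not tight (hence unbreakable by Lemma~\ref{lem:BrIsTight}). This breakability argument is the genuine technical content not spelled out in the paper. Your dispatching of the distant, consecutive, and $\prec$ cases, and your derivation of $|v_j|\ge 3$ (there is a harmless typo, $|v_j|^2\ge 3$, where $|v_j|\ge 3$ is meant) are all correct. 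One small thing worth stating explicitly (you use it implicitly): from $\braket{v_j}{v_2}\ne 0$ one gets $b\ge c$, which is what forces $[v_j\cap v_1]=[x_a,\dots,x_c]$ in the $\pitchfork$ case rather than $[v_j]$ dropping entirely inside $[v_1]$.
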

\begin{proof}
Otherwise, we will have $\braket{v_{j}}{v_1}=-\braket{v_{j}}{v_2}=-1$. We also have $\braket{v_{j}}{x_0}=0$ by Lemma~\ref{Lem:X0Odd}. By Lemma~\ref{lem:k1>1 intervals}, $[v_{j}]$ and $[v_1]$ share their right endpoint, so $\delta([v_j],[v_1])=1$. By  Lemma~\ref{intervalproduct}, $|\braket{v_1}{v_j}|=|v_{j}|-1>1$, a contradiction. 
\end{proof}

\begin{lemma}\label{lem:k1=2 sigma3}
		$\sigma_3 \in \{3,4\}$. Furthermore, if $\sigma_3 = 4$ then $[v_3]$ and $[v_1]$ share their left endpoint, and that $\epsilon_3 = \epsilon_1$.  
\end{lemma}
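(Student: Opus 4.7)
My plan is to bracket $\sigma_3$ using the changemaker bound, exclude the large values through a mix of parity (Lemma~\ref{Lem:X0Odd}) and interval geometry, and then deduce the shared-endpoint assertion by a four-way sign case analysis of $(\epsilon_1,\epsilon_3)$.

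Since $(\sigma_0,\sigma_1,\sigma_2)=(1,2,3)$, the changemaker condition gives $\sigma_3\le 7$ and monotonicity gives $\sigma_3\ge 3$, so $\sigma_3\in\{3,4,5,6,7\}$. The value $7$ would make $v_3$ tight, contradicting Lemma~\ref{lem:tightvector}. For $\sigma_3\in\{4,5,6\}$, Definition~\ref{stbasis} identifies $v_3$ uniquely as $e_0+e_2-e_3$, $e_1+e_2-e_3$, or $e_0+e_1+e_2-e_3$ respectively. Using $x_0=e_0+e_2+e_{k_2}-e_{k_3}$ from Lemma~\ref{lem:k1>1, chi=2}, I plan to do a short parity check of $\braket{v_3}{x_0}$ branching on whether $k_2=3$ or $k_2\ge 4$; the pairing turns out to be odd precisely when $(\sigma_3,k_2)\in\{(4,3),(5,\ge 4),(6,3)\}$, each of which is killed by Lemma~\ref{Lem:X0Odd}. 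The surviving bad configurations are $(\sigma_3,k_2)=(5,3)$ and $(6,\ge 4)$.

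To eliminate these I will switch to interval geometry. Since $v_3$ has norm at most $4$ and is not tight, Lemma~\ref{lem:BrIsTight} makes it unbreakable, so Proposition~\ref{prop:IntervalsIrreducible} gives $v_3=\epsilon_3[v_3]$. The constraint $\braket{v_1}{x_0}=2$ forces $[v_1]$ to either contain $\{x_0,x_1\}$ (with $\epsilon_1=+1$) or have $x_1$ as left endpoint (with $\epsilon_1=-1$), and the same dichotomy applies to $[v_3]$ whenever $\braket{v_3}{x_0}\ne 0$. I would then run through the four combinations of $(\epsilon_1,\epsilon_3)$, pin down the relative right endpoints by a norm-sum monotonicity argument, and use Lemmas~\ref{intervalproduct} and~\ref{Lem:IntervalNorm} to compute $\braket{v_1}{v_3}$ exactly. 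For $\sigma_3=6$ the possible values come out in $\{3,-1\}$, never equal to the required target $+1$. For $\sigma_3=5$ with $k_2=3$ (target $-1$; here $[v_3]$ has left endpoint at least $2$ since $\braket{v_3}{x_0}=0$), the configurations that match the target either force $[v_3]$ to abut $[v_2]$ (violating $\braket{v_3}{v_2}=0$, or collapsing $[v_3]=[v_2]$ which is impossible as $v_3\ne\pm v_2$ in $\mathbb Z^{n+2}$), or place $[v_3]$ inside or across $[v_1]$ so that $v_1+v_3$ has two-piece support with mixed signs in the vertex basis. The latter contradicts Lemma~\ref{lem:SumIrr}(1), applicable because $v_1$ is tight and $v_3=e_1+e_2-e_3$ has the form $e_t+e_{j-1}-e_j$ with $(t,j)=(1,3)$; combined with Proposition~\ref{prop:IntervalsIrreducible}, this forces $v_1+v_3=\pm[I]$ for some interval $I$.

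For the second assertion I run the same four-case analysis with $\sigma_3=4$: here parity forces $k_2\ge 4$, giving the targets $\braket{v_1}{v_3}=2$ and $\braket{v_3}{x_0}=2$. The norm-sum comparison in Cases~1 and~2 (both with $\epsilon_1=\epsilon_3$) yields $[v_3]\subsetneq[v_1]$ with a common left endpoint at $x_0$ or $x_1$ respectively, reproducing $\braket{v_1}{v_3}=2$; Cases~3 and~4 (mixed signs) produce $\delta([v_1],[v_3])=3$ from the $x_0$-$x_1$ edge of multiplicity two together with the right-hand dangling edge, giving $\braket{v_1}{v_3}=0\ne 2$. The main obstacle in the whole argument is the $\sigma_3=6$ case: Corollary~\ref{x0pairing} is tempting but not directly usable, because it requires $v_1$ to be unbreakable, whereas $v_1$ is a tight norm-$5$ vector that could in principle be breakable (two high-weight vertices in $[v_1]$ would decompose it via Lemma~\ref{lem:TwoIndBr}). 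One therefore has to bypass that shortcut and carry out the interval arithmetic by hand, keeping careful track of the multiplicity-two $x_0$-$x_1$ edge, which contributes $2$ to $\delta([v_1],[v_3])$ precisely when exactly one of $[v_1]$ and $[v_3]$ contains $x_0$.
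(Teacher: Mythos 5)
Your proof is correct and follows the same overall strategy as the paper: bracket $\sigma_3$ via the changemaker constraint and the uniqueness of the tight vector, identify the three candidate forms of $v_3$, use the parity of $\braket{v_3}{x_0}$ from Lemma~\ref{Lem:X0Odd} to narrow the branches (the case split on $k_2=3$ versus $k_2\ge 4$ matches the paper, which for $\sigma_3=5$ deduces $k_2=3$ from parity), and then settle the surviving configurations with interval arithmetic. Two details diverge from the paper's proof, both harmlessly. For $\sigma_3=6$, the paper avoids the four-way sign enumeration: it notes $|[v_1\cap v_3]|\ge 4$ (both intervals contain $x_1$ since both pair $2$ with $x_0$), so $\braket{v_1}{v_3}=1$ forces $\epsilon_1=\epsilon_3$ and $\delta([v_1],[v_3])=3$, and then Lemma~\ref{lem:delta} contradicts the equal-sign pairing with $x_0$. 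Your direct computation of the four pairings $\{3,-1\}$ reaches the same conclusion but is a bit longer. For $\sigma_3=5$, the paper invokes the no-claw Lemma~\ref{claw} on $(v_1,x_0,v_3,v_2)$, whereas you unroll the same geometry by noting that $[v_2]$ and $[v_3]$ would both have to start at the vertex immediately right of $[v_1]$, hence abut, contradicting $\braket{v_2}{v_3}=0$. Both routes then land on the $\pitchfork$ case and kill it via Lemma~\ref{lem:SumIrr}. Your observation that Corollary~\ref{x0pairing} is not applicable because the tight vector $v_1$ may be breakable is correct and is precisely why the paper reaches for Lemma~\ref{lem:delta} instead; this is a genuine subtlety worth flagging. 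One small imprecision: in the $\pitchfork$ case for $\sigma_3=5$, when $[v_3]\subsetneq[v_1]$ strictly, the decomposition $v_1+v_3=\epsilon_1([A]+[B])$ has same-sign coefficients on two distant intervals (not mixed signs); it is still reducible because $\braket{[A]}{[B]}=0\ge 0$, so the conclusion stands, but the phrase ``two-piece support with mixed signs'' only describes the overlapping-cross subcase.
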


\begin{proof}
All the possibilities for $\sigma_3$ lie in $\{ 3,4,5,6\}$. If $\sigma_3=5$, we get that $v_3=e_1+e_2-e_3$. So $\braket{v_3}{v_1}=-1$ and $v_3$ is orthogonal to $v_2$. By Lemma~\ref{Lem:X0Odd}, $k_2=3$ and $\braket{v_3}{x_0}=0$.
Using Lemma~\ref{lem:k1>1 intervals}, we know that $[v_2]$ abuts $[v_1]$, and therefore, there will be a claw on $v_1, x_0, v_3, v_2$, unless $[v_3]\pitchfork [v_1]$, $|[v_1]\cap [v_3]|=|v_3|=3$, and $\epsilon_3=-\epsilon_1$. Thus $v_1+v_3$ is the sum of two distant intervals and so is reducible. However, $v_3+v_1$ is irreducible by Lemma~\ref{lem:SumIrr}, a contradiction. If $\sigma_3=6$, we see that $v_3=e_0+e_1+e_2-e_3$ (and, in particular, $|v_3|=4$). This implies that $\braket{v_3}{x_0}=2$ and $\braket{v_1}{v_3}=1$. The latter will only be possible if both $\delta([v_1], [v_3])=3$ and $\epsilon_1 = \epsilon_3$, a contradiction to Lemma~\ref{lem:delta}. 

If $\sigma_3=4$, we have $v_3=e_0+e_2-e_3$. Using Lemma~\ref{intervalproduct}, the second statement of the lemma is immediate because $\braket{v_3}{v_1}=\braket{v_3}{x_0}=\braket{v_1}{x_0}=2$ and $|v_3|=3$.
\end{proof}

\begin{lemma}\label{lem:0No2}
If $0\in\supp(v_j)$ and $2\notin\supp(v_j)$ for some $j>3$ and $j\ne k_3$, then $[v_j],[v_1]$ share their right endpoint, and $v_j=e_0+e_{j-1}-e_j$. Moreover, there exists at most one such $j$.
\end{lemma}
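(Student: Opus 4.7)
The approach mirrors that of Lemma~\ref{lem:NoSingle1}, combining support analysis in $\Z^{n+2}$ with interval geometry in $C(p,q)$. First, using Lemma~\ref{lem:NoSingle1} together with the hypothesis $0\in\supp(v_j)$ and $2\notin\supp(v_j)$, I would pin down $\supp(v_j)\cap\{0,1,2\}=\{0\}$: the only alternative compatible with the hypothesis is $\{0,1\}$, but this would give $\braket{v_j}{v_2}=2$, contradicting Corollary~\ref{unbreakablepairing}. Here both $v_2$ and $v_j$ are unbreakable: $v_2$ because $|v_2|=3$, and $v_j$ because $v_1$ is the unique tight vector by Lemma~\ref{lem:tightvector} and breakable implies tight by Lemma~\ref{lem:BrIsTight}. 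Direct computation then yields $\braket{v_j}{v_1}=2$ and $\braket{v_j}{v_2}=1$.

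By Corollary~\ref{unbreakablepairing}, $\braket{v_j}{v_2}=1$ forces $[v_j]$ to abut $[v_2]$. Combining this with Lemma~\ref{lem:k1>1 intervals}, which places $[v_1]$ and $[v_2]$ consecutively with $\epsilon_2=-\epsilon_1$, and enumerating the possible positions of $[v_j]$ relative to $[v_1]$ and $[v_2]$ via Lemma~\ref{intervalproduct} and Lemma~\ref{lem:delta}, I would show that the only configuration producing $\braket{v_j}{v_1}=2$ is $[v_j]\prec[v_1]$ sharing the right endpoint of $[v_1]$ (that is, the endpoint adjacent to $[v_2]$), with $|v_j|=3$ and $\epsilon_j=\epsilon_1$. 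The configurations in which $[v_j]$ is merely consecutive with $[v_1]$ or transverse to $[v_1]$ are ruled out using $\braket{v_j}{x_0}\in\{0,2\}$ (by Lemmas~\ref{Lem:X0Odd} and~\ref{positivity}) together with $\braket{v_1}{x_0}=2$, which via Lemma~\ref{lem:delta} exclude $\delta([v_j],[v_1])=3$ and force $\delta=1$ whenever the two intervals abut.

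Next, with $|v_j|=3$ and $\supp(v_j)\cap\{0,1,2\}=\{0\}$, necessarily $v_j=e_0+e_k-e_j$ for some $k\in\{3,\dots,j-1\}$, giving $\sigma_j=1+\sigma_k$. Since $\sigma$ is non-decreasing, either $\sigma_{j-1}=\sigma_j$ or $\sigma_{j-1}=\sigma_k$. In the first case, the maximality criterion of Definition~\ref{stbasis} would force $\supp^+(v_j)=\{j-1\}$ (since $2^{j-1}$ strictly exceeds $\sum 2^i$ over any other subset of $\{0,\dots,j-1\}$ summing to $\sigma_j$), contradicting $0\in\supp(v_j)$. Hence $\sigma_{j-1}=\sigma_k$, and since $\{0,j-1\}$ also sums to $\sigma_j$ with $2^0+2^{j-1}>2^0+2^k$ whenever $k<j-1$, the maximality criterion forces $k=j-1$; thus $v_j=e_0+e_{j-1}-e_j$.

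For uniqueness, suppose two indices $j<j'$ both satisfy the hypothesis. Then $[v_j]$ and $[v_{j'}]$ are both norm-$3$ sub-intervals of $[v_1]$ sharing the right endpoint of $[v_1]$, so one contains the other. By Lemma~\ref{Lem:IntervalNorm}, any additional non-$x_0$ vertices in the larger interval have norm $2$, so the unique high-weight vertex of each interval coincides, yielding $z_j=z_{j'}$ and contradicting Corollary~\ref{Cor:ZjDistinct}. The principal technical hurdle is the interval analysis in the second paragraph, where careful use of the $x_0$-pairing parity and positivity constraints is needed to eliminate the transverse and purely-consecutive configurations consistent with $\braket{v_j}{v_1}=2$.
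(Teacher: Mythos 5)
Your plan is essentially correct and follows the same overall route as the paper's proof: rule out $1\in\supp(v_j)$ via Corollary~\ref{unbreakablepairing}, compute the pairings $\braket{v_j}{v_1}=2$ and $\braket{v_j}{v_2}=1$, locate $[v_j]$ relative to $[v_1]$ and $[v_2]$ using Lemma~\ref{lem:k1>1 intervals}, deduce $|v_j|=3$ with the claimed support, and finish uniqueness via Corollary~\ref{Cor:ZjDistinct}. Two points where your route diverges are worth noting. First, in your third paragraph you extract $v_j=e_0+e_{j-1}-e_j$ from $|v_j|=3$ by a careful appeal to the lexicographic maximality criterion of Definition~\ref{stbasis}; the paper gets this in one stroke by invoking Lemma~\ref{lem:j-1} (so $\{0,j-1,j\}\subset\supp(v_j)$, and $|v_j|=3$ then forces equality). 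Your argument is valid but is a detour. Second, in your second paragraph you propose to enumerate abutting configurations arithmetically. The paper instead observes that $\braket{v_j}{v_2}=1$ together with Corollary~\ref{Cor:ZjDistinct} forces $[v_j]$ and $[v_2]$ to be consecutive (not share a common end, since then $z_j=z_2$), which given Lemma~\ref{lem:k1>1 intervals} immediately pins $[v_j]$ to the right end of $[v_1]$. Your enumeration can be closed — for instance the configuration $[v_2]\prec[v_j]$ with $[v_j]\pitchfork[v_1]$, which does satisfy the arithmetic constraint $\braket{v_j}{v_1}=2$ at first glance, is ultimately ruled out because it would force two high-weight vertices into the unbreakable $[v_j]$ — but the case analysis is noticeably longer, and invoking Corollary~\ref{Cor:ZjDistinct} early is the efficient move.
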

\begin{proof}
We have $1\not\in \supp(v_j)$, otherwise $\braket{v_2}{v_j}=2$, a contradiction to Corollary~\ref{unbreakablepairing}. So $\braket{v_1}{v_j}=2$. Since $\braket{v_{j}}{v_2}=1$, $[v_j]$ and $[v_2]$ are consecutive by Corollary~\ref{Cor:ZjDistinct}. It follows from Lemma~\ref{lem:k1>1 intervals} that $[v_{j}]$ and $[v_1]$ share their right endpoint, and so $\delta([v_{j}], [v_1])=1$. Then, to get $\braket{v_1}{v_{j}}=2$, we must have $|v_{j}|=3$ and $v_{j}=e_0+e_{j-1}-e_{j}$. Lastly, there exists at most one such $j$ by Corollary~\ref{Cor:ZjDistinct}.
\end{proof}

\begin{prop}\label{prop6.1}
If $\sigma_3=3$, the initial segment $(\sigma_0, \cdots, \sigma_{k_3})$ of $\sigma$ is $(1,2,3,3,7)$.
\end{prop}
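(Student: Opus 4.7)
Proof plan. The standard-basis algorithm, with $\sigma_3 = 3$, yields $v_3 = e_2 - e_3$: among subsets of $\{0,1,2\}$ whose $\sigma$-sum is $3$, the choice $A = \{2\}$ has $\sum_{i\in A} 2^i = 4$, beating $A = \{0,1\}$ with $\sum_{i\in A} 2^i = 3$. Since $x_0 = e_0 + e_2 + e_{k_2} - e_{k_3}$ by Lemma~\ref{lem:k1>1, chi=2}, the pairing $\braket{v_3}{x_0}$ equals $1$ whenever $k_2 > 3$, violating Lemma~\ref{Lem:X0Odd}. Hence $k_2 = 3$, and $\braket{\sigma}{x_0} = 0$ then forces $\sigma_{k_3} = \sigma_0 + \sigma_{k_1} + \sigma_{k_2} = 1 + 3 + 3 = 7$.

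It remains to show $k_3 = 4$, i.e.\ $\sigma_4 = 7$. Since $3 = \sigma_3 \le \sigma_4 \le \sigma_{k_3} = 7$, the candidates are $\sigma_4 \in \{3, 4, 5, 6, 7\}$, and the standard-basis algorithm determines $v_4$ uniquely in each case. For $\sigma_4 = 3$ (giving $v_4 = e_3 - e_4$) and $\sigma_4 = 5$ (giving $v_4 = e_1 + e_3 - e_4$), the assumption $k_3 > 4$ yields $\braket{v_4}{x_0} = 1$, contradicting Lemma~\ref{Lem:X0Odd}. For $\sigma_4 = 7$, the unique vector is $v_4 = e_0 + e_2 + e_3 - e_4$; if $k_3 > 4$ then $\braket{v_4}{x_0} = 3$ is odd, so in fact $k_3 = 4$, delivering the proposition.

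The crux is ruling out $\sigma_4 \in \{4, 6\}$. For $\sigma_4 = 4$, one obtains $v_4 = e_0 + e_3 - e_4$; since $0 \in \supp(v_4)$ and $2 \notin \supp(v_4)$, Lemma~\ref{lem:0No2} applies and forces $[v_4]$ to share its right endpoint with $[v_1]$. By Lemma~\ref{lem:k1>1 intervals}, $[v_2]$ lies immediately to the right of that shared endpoint, while $\braket{v_3}{v_1} = 0$ and $\braket{v_3}{v_2} = -1$ push $[v_3]$ strictly past $[v_2]$. Consequently $[v_3]$ and $[v_4]$ are separated by $[v_2]$ and cannot abut, contradicting $\braket{v_4}{v_3} = -1$ via Corollary~\ref{unbreakablepairing}.

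For $\sigma_4 = 6$, one obtains $v_4 = e_2 + e_3 - e_4$, and the obstruction is more delicate. Both $\braket{v_1}{x_0} = 2$ and $\braket{v_4}{x_0} = 2$ force (via Lemma~\ref{Lem:IntervalNorm}) each of $[v_1]$ and $[v_4]$ to contain $x_1$, so $[v_1 \cap v_4] \ni x_1$ and hence $|[v_1 \cap v_4]| \ge a_1 \ge 3$. On the other hand $\braket{v_1}{v_4} = 0$ combined with Lemma~\ref{intervalproduct} gives $|[v_1 \cap v_4]| = \delta([v_1],[v_4])$. A short case analysis, splitting on whether $[v_1]$ and $[v_4]$ have matching or opposite signs of $x_0$-pairing and invoking the further constraints $\braket{v_4}{v_2} = -1$ and $\braket{v_4}{v_3} = 0$, shows that no admissible value of $\delta \le 3$ can match $|[v_1 \cap v_4]| \ge 3$. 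This contradiction completes the proof; the main technical obstacle is precisely this geometric compatibility check in the $\sigma_4 \in \{4,6\}$ cases.
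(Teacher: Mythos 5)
Your proof follows the same overall strategy as the paper: deduce $k_2 = 3$ and $\sigma_{k_3}=7$ from the parity constraint of Lemma~\ref{Lem:X0Odd} applied to $v_3 = e_2 - e_3$, then show $k_3=4$ by eliminating intermediate values of $\sigma_4$. You spell out explicitly that Lemma~\ref{Lem:X0Odd} rules out $\sigma_4 \in \{3,5,7\}$, which the paper compresses into the bare statement that $\sigma_4\in\{4,6\}$. Where you genuinely diverge is in the two remaining cases. For $\sigma_4 = 4$, the paper notes $\braket{v_4}{x_0}=2$ and $\braket{v_4}{v_2}=1$, deduces $[v_1]\subset[v_4]\cup\{x_0\}$ and gets the norm contradiction $|v_1|=3$; you instead invoke Lemma~\ref{lem:0No2} to pin $[v_4]$ against $[v_1]$, then show $[v_3]$ must lie beyond $[v_2]$ so that $[v_3]$ and $[v_4]$ are distant, contradicting $\braket{v_3}{v_4}=-1$. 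Both are correct, though your phrase ``strictly past $[v_2]$'' silently skips the subcase $[v_3]\prec[v_2]$ (abutting allows sharing an end), which still gives distance but deserves a word. For $\sigma_4 = 6$, the paper is much more economical: it produces the claw $(v_2,v_1,v_4,v_3)$ and invokes Lemma~\ref{claw} in one line, since $v_2$ pairs nontrivially with all of $v_1,v_3,v_4$ while those three pair trivially in pairs. Your $\delta$-accounting is sound in outline, but the asserted conclusion ``no admissible $\delta\le 3$ can match $|[v_1\cap v_4]|\ge 3$'' is imprecise as written: when $\epsilon_1=-\epsilon_4$ the equality $\delta=3=|[v_1\cap v_4]|$ is a priori possible, and one needs the constraint $\braket{v_4}{v_2}=-1$ to force $[v_4]$ to share its right endpoint with $[v_1]$, whence $\delta\le 2$ and the argument closes; that step is buried inside ``invoking the further constraints'' and should be made explicit. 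In short your route works, but the claw argument is the sharper tool for the $\sigma_4=6$ case, and your write-up of that case needs the abut-versus-$\pitchfork$ dichotomy spelled out rather than asserted.
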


\begin{proof}
Suppose that $\sigma_3=3$ (see Lemma~\ref{lem:k1=2 sigma3}). This implies that $k_2=3$ (Lemma~\ref{Lem:X0Odd}). Using Equation~\eqref{x0k1=2}, we see that $\sigma_{k_3}=7$. We claim that $k_3=k_2+1=4$. If $k_3\neq 4$, by Lemma~\ref{Lem:X0Odd}, $\sigma_4\in \{ 4, 6\}$. Suppose $\sigma_4=4$, or equivalently, $v_4=e_0+e_3-e_4$. This gives us that $\braket{v_4}{x_0}=2$ and $\braket{v_4}{v_2}=1$. By Lemma~\ref{lem:k1>1 intervals}, the interval $[v_1]$ will be a subset of $[v_4]\cup\{x_0\}$, which implies that $|v_1|= 3$, a contradiction. Suppose $\sigma_4=6$, or equivalently, $v_4=e_2+e_3-e_4$. Then there will be a claw $(v_2, v_1, v_4, v_3)$. This justifies the claim, that is, $\sigma_4=7$ and $k_3=4$. 
\end{proof}

\begin{prop}\label{lem:k1>1,2t+1}
If $\sigma_3=4$, the initial segment $(\sigma_0, \cdots, \sigma_{k_3})$ of $\sigma$ is either $(1,2,3,4,5,9)$ or $(1,2,3,4^{[s]}, 4s+3,4s+7)$, $s\ge1$.
\end{prop}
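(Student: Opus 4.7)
The plan is to extend the analysis of $v_3$ from Lemma~\ref{lem:k1=2 sigma3} along the standard basis. Since $\sigma_3 = 4$, we have $v_3 = e_0 + e_2 - e_3$ with $\epsilon_3 = \epsilon_1$ and $[v_3] \prec [v_1]$. Let $s \ge 1$ be the largest integer such that $\sigma_3 = \sigma_4 = \cdots = \sigma_{s+2} = 4$. For $4 \le i \le s+2$, among the subsets $A \subset \{0,\dots,i-1\}$ with $\sum_{j \in A}\sigma_j = 4$, the singleton $A = \{i-1\}$ strictly maximizes $\sum_{j \in A} 2^j$, forcing $v_i = e_{i-1} - e_i$. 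These norm-$2$ vectors form a chain $v_3 \sim v_4 \sim \cdots \sim v_{s+2}$ in the pairing graph, attached to $v_1$ via $v_3$.

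Setting $\ell = s+3$, the central step is to pin down $v_\ell$. By Lemma~\ref{lem:tightvector} $v_\ell$ is not tight, so $v_\ell$ is either gappy or just right. Since $|v_{i+1}| = 2$ for $3 \le i \le s+1$, Lemma~\ref{gappy3} prevents any of these indices from being gappy for $v_\ell$, and together with Lemma~\ref{lem:j-1} this shows that the high-index part of $\supp(v_\ell)$ is a suffix of $\{3,\dots,s+2\}$ containing $s+2$. To constrain the low-index part $T := \supp(v_\ell) \cap \{0,1,2\}$, I will combine Lemma~\ref{Lem:X0Odd} (parity of $\braket{v_\ell}{x_0}$), Lemma~\ref{lem:NoSingle1} ($T \ne \{1\}$ when $\ell \ne k_3$), Corollary~\ref{x0pairing} together with $\braket{v_1}{x_0} = 2$ and unbreakability of $v_1$ (which forces $\braket{v_\ell}{x_0} = 0$ when $\ell \ne k_3$ and $v_\ell$ is unbreakable), and the claw and heavy-triple exclusions of Lemmas~\ref{claw}~and~\ref{heavytriple}. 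A case analysis over the possibilities for $T$ and the suffix length will show that only two configurations for $v_\ell$ survive: $v_\ell = e_0 + e_3 - e_4$ (gappy, forcing $s = 1$, the first family), and $v_\ell = e_2 + e_3 + \cdots + e_{s+2} - e_{\ell}$ with $\sigma_\ell = 4s+3$ (just right, the second family).

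With $v_\ell$ fixed, the identity $\sigma_{k_3} = \sigma_0 + \sigma_{k_1} + \sigma_{k_2}$ from Proposition~\ref{x0} pins down the final component: $\sigma_{k_3} = 9$ in the first case (with $k_2 = 4$) and $\sigma_{k_3} = 4s+7$ in the second (with $k_2 = s+3$). To close the proof I will show $k_3 = s+4$, i.e., no intermediate index exists. Any standard basis vector $v_j$ with $\ell < j < k_3$ must satisfy the same parity, gappy, and pairing constraints; pinning its form down via Lemma~\ref{gappy3} and Corollary~\ref{x0pairing} and checking that it avoids claws centered at $v_\ell$ with spokes among $v_{\ell-1}$, $x_0$, and $v_j$, as well as heavy triples $(v_3, v_\ell, v_j)$, rules out the existence of such a $v_j$ and forces $k_3 = s+4$.

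The main obstacle will be the subcase analysis of $T$ and the suffix length in the second paragraph: each candidate support for $v_\ell$ must be addressed individually using a different combination of the structural lemmas, and the bifurcation between $s = 1$ (where both families occur) and $s \ge 2$ (where only the second survives) requires careful treatment of the boundary case when the chain $v_3 \sim v_4 \sim \cdots \sim v_{s+2}$ collapses. In particular, certain intermediate configurations are eliminated only after invoking Lemma~\ref{lem:SumIrr} to rule out a spurious reducibility of sums like $v_\ell + v_1$ or $v_\ell - v_3$, rather than purely by the geometric intersection arguments of Section~\ref{sec:CLattices}.
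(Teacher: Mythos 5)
Your approach is essentially the same as the paper's: the paper also studies $v_{s+3}$ (through the equivalent lens of $j=\min\supp(v_{s+3})$ rather than $T=\supp(v_{s+3})\cap\{0,1,2\}$), applies Lemma~\ref{gappy3} and Lemma~\ref{lem:j-1} to pin down the suffix, uses Lemma~\ref{lem:NoSingle1}, Lemma~\ref{lem:0No2}, Corollary~\ref{x0pairing}, and the no-claw/no-heavy-triple constraints to whittle down to the same two shapes for $v_{s+3}$, and then deduces $\sigma_{k_3}$ from $\braket{\sigma}{x_0}=0$. So the plan is on the right track, but I want to flag two errors in the reasoning you describe.

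First, to force $\braket{v_\ell}{x_0}=0$ for $\ell\ne k_3$ you appeal to Corollary~\ref{x0pairing} ``together with $\braket{v_1}{x_0}=2$ and unbreakability of $v_1$.'' This is the one place where you cite the wrong vector. By Lemma~\ref{lem:tightvector}, $v_1$ is the unique tight vector here, and by Lemma~\ref{lem:BrIsTight} only a tight vector can be breakable; so $v_1$ is exactly the standard basis vector whose unbreakability you cannot assume (and it typically is breakable once $[v_1]$ contains two high-weight vertices). Corollary~\ref{x0pairing} only controls the \emph{unbreakable} $v_j$ with $j\ne k_3$, so it must be triggered by $v_3=e_0+e_2-e_3$, which is not tight (hence unbreakable) and satisfies $\braket{v_3}{x_0}=2$. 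The paper indeed uses $v_3$ for this purpose. The conclusion you reach is correct, but the justification as written is a genuine gap because it leans on a hypothesis of Corollary~\ref{x0pairing} that $v_1$ need not satisfy.

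Second, for $k_3=s+4$ you propose a claw ``centered at $v_\ell$ with spokes among $v_{\ell-1}$, $x_0$, and $v_j$.'' But once $\ell=k_2$ one computes $\braket{v_\ell}{x_0}=0$, so $v_\ell$ does not neighbor $x_0$ in the pairing graph and that claw cannot form. The paper's route is both simpler and correct: since $\ell=k_2\in\supp(v_{\ell+1})$ by Lemma~\ref{lem:j-1}, the pairing $\braket{v_{\ell+1}}{x_0}$ is positive, so Corollary~\ref{x0pairing} (again via $v_3$) forces $\ell+1=k_3$. I'd replace your claw/heavy-triple plan for this last step with that direct observation.
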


\begin{proof}
All the possibilities for $\sigma_4$ lie in $\{ 4,5,6,7,8,9,10\}$. We first argue that $\sigma_4\not\in\{6, 8, 9, 10\}$. Suppose $\sigma_4=6$, then $v_4=e_1+e_3-e_4$, contradicting Lemma~\ref{lem:NoSingle1}. If $\sigma_4=10$, then $v_4$ will have nonzero inner product with $v_2$ and $v_3$.  Using Lemmas~\ref{lem:k1=2 sigma3}~and~\ref{lem:k1>1 intervals}, the interval $[v_1]$ equals the union of $[v_3]$ and $[v_4]$, that is, $|v_1|=6$, a contradiction. If $\sigma_4=8$, then both the unbreakable vectors $v_3$ and $v_4$ will have nonzero inner product with $x_0$, contradicting Corollary~\ref{x0pairing}. When $\sigma_4=9$, $v_4=e_1+e_2+e_3-e_4$. Notice that $\braket{v_4}{v_1}=-1$ while $v_4$ is orthogonal to $x_0$. The latter gives us that $\delta([v_4], [v_1])\le 2$. Therefore, given that $|v_4|=4$, we must have $[v_4]$ and $[v_2]$ share their left endpoint by Lemma~\ref{lem:k1>1 intervals}, a contradiction to Corollary~\ref{Cor:ZjDistinct}. Therefore $\sigma_4\in \{4, 5, 7\}$.

Suppose that $\sigma_4=5$, that is, $v_4=e_0+e_3-e_4$. Using Lemma~\ref{Lem:X0Odd}, $k_2=4$, and so $\sigma_{k_3}=9$ by Equation~\eqref{x0k1=2}. Since $\braket{v_3}{x_0}=2$, $\braket{v_5}{x_0}=0$ by Corollary~\ref{x0pairing}, unless $k_3=5$. Since $4\in \supp(v_5)$, we get that $k_3=5$.

Let $s\ge 1$ be the integer satisfying that $\sigma_3=\cdots =\sigma_{s+2}=4$, and that $\sigma_{s+3}>4$. By Lemma~\ref{Lem:X0Odd}, $k_2\ge s+3$. Set $j=\text{min }\supp(v_{s+3})<s+2$. If $3<j<s+2$, there will be a claw $(v_j, v_{j-1}, v_{s+3}, v_{j+1})$, and if $j=3$, the claw will be $(v_3, x_0, v_{s+3}, v_4)$. If $j=1$, then $2\in\supp(v_{s+3})$ by Lemma~\ref{lem:NoSingle1}. Thus $|v_{s+3}|\ge 4$. Since $\braket{v_{s+3}}{x_0}=0$, $\delta([v_{s+3}], [v_1])\le 2$. Then 
\[
|\braket{v_{s+3}}{v_1}|\ge4-2\ge 2,
\] 
a contradiction. Lastly, suppose $j=0$. By Corollary~\ref{x0pairing}, $\braket{v_{s+3}}{x_0}=0$, it must be the case that $2\not \in\supp(v_{s+3})$. By Lemma~\ref{lem:0No2}, $v_{s+3}=e_0+e_{s+2}-e_{s+3}$. If $s=1$ and $\sigma_4=5$, this case was discussed in the previous paragraph. However, if $s>1$, then $\braket{v_{s+3}}{v_3}=1$, and so $[v_1]$ will be the union of $[v_3]$ and $[v_{s+3}]$ by Lemma~\ref{lem:k1=2 sigma3} and Lemma~\ref{lem:0No2}. Since $|v_3|=3$, to get $|v_1|=5$, it must be that $|v_{s+3}|=4$, a contradiction. So we are left with the case $j=2$. 

Note that $\braket{v_{s+3}}{v_2}=-1$, and $v_{s+3}$ is orthogonal to $v_1$ and $x_0$, so $[v_{s+3}]$ is distant from $[v_1]$ by Lemma~\ref{lem:k1>1 intervals}. Using Lemma~\ref{lem:k1=2 sigma3}, we get that $v_{s+3}$ is orthogonal to $v_{3}$, and so $3\in \supp(v_{s+3})$. By Lemma~\ref{gappy3}, we get that $4,\cdots, s+1 \in \supp(v_{s+3})$. That is, $\sigma_{s+3}=4s+3$, and that $k_2=s+3$. Using Equation~\eqref{x0k1=2}, we get that $\sigma_{k_3}=4s+7$. With the same argument as in the case $\sigma_4=5$, we get that $k_3=k_2+1=s+4$. This recovers the case $\sigma_4=7$ when $s=1$. 
\end{proof}

\begin{prop}\label{1k2=2}
If $(\sigma_0, \cdots, \sigma_{k_3})=(1,2,3,4,5,9)$, then $n+1=k_3$ (i.e. $v_{k_3}$ is the last standard basis vector).
\end{prop}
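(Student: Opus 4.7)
The plan is to suppose for contradiction that $v_6$ exists, enumerate the finitely many possibilities, and rule each out.

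By Lemmas~\ref{lem:BrIsTight} and~\ref{lem:tightvector} (using $k_1=2>1$), $v_6$ is unbreakable. Since $v_3$ is already an unbreakable standard basis vector with $3\ne k_3=5$ and $\braket{v_3}{x_0}=2\ne 0$, Corollary~\ref{x0pairing} forces $\braket{v_6}{x_0}=0$. Writing $v_6=-e_6+\sum_{i\in A}e_i$ for the lex-maximal $A\subseteq\{0,\dots,5\}$ (in $\sum_{i\in A}2^i$), Lemma~\ref{lem:j-1} gives $5\in A$, while Lemma~\ref{Lem:X0Odd} combined with $\braket{v_6}{x_0}=0$ forces exactly one of $\{0,2,4\}$ to lie in $A$. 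Scanning $\sigma_6\in\{9,\dots,24\}$ subject to these constraints and lex-maximality leaves exactly six candidates:
\begin{enumerate}
\item[(i)] $\sigma_6=10$, $v_6=e_0+e_5-e_6$;
\item[(ii)] $\sigma_6=12$, $v_6=e_2+e_5-e_6$;
\item[(iii)] $\sigma_6=14$, $v_6=e_4+e_5-e_6$;
\item[(iv)] $\sigma_6=16$, $v_6=e_1+e_4+e_5-e_6$;
\item[(v)] $\sigma_6=18$, $v_6=e_3+e_4+e_5-e_6$;
\item[(vi)] $\sigma_6=20$, $v_6=e_1+e_3+e_4+e_5-e_6$.
\end{enumerate}

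Cases (i) and (iv) admit an immediate heavy-triple argument: direct computation gives $\braket{v_2}{v_6}$, $\braket{v_4}{v_6}$, $\braket{v_2}{v_4}$ each equal to $\pm 1$, so by Corollary~\ref{unbreakablepairing} the intervals $[v_2],[v_4],[v_6]$ pairwise abut. Since each of $v_2,v_4,v_6$ is unbreakable of norm $\ge 3$ and none equals $\pm x_0$, the triple $(v_2,v_4,v_6)$ is a heavy triple in $G(\bar{S'})$, contradicting Lemma~\ref{heavytriple}. In cases (ii) and (vi), $[v_6]$ must abut both $[v_3]$ (inside $[v_1]$ at its left endpoint, by Lemma~\ref{lem:k1=2 sigma3}) and $[v_2]$ (consecutive to the right of $[v_1]$, by Lemma~\ref{lem:k1>1 intervals}); using Corollary~\ref{Cor:ZjDistinct} (to rule out $z_6\in\{z_2,z_3\}$) and the vanishing pairings with $v_1,v_4,x_0$ (to rule out placements extending past $[v_1]$ or containing the high-weight of $[v_2]$), the only viable position is $[v_6]=[v_1]\setminus[v_3]$. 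Lemma~\ref{Lem:IntervalNorm} then gives $|v_1|=|v_3|+|v_6|-2$, which equals $4$ in case (ii) and $6$ in case (vi), each contradicting $|v_1|=5$.

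The main obstacle is cases (iii) and (v), in which $v_6$ pairs nontrivially with only one vector of $\bar{S'}$ (namely $v_4$ in case (iii) and $v_3$ in case (v)), so no heavy triple is available. For these I would argue by interval analysis: $v_6\ne\pm x_0$; $[v_6]$ cannot share an endpoint with the unique neighbor without forcing $z_6\in\{z_3,z_4\}$, against Corollary~\ref{Cor:ZjDistinct}; and $[v_6]$ cannot reach $[v_2]$, $\{x_0\}$, or $[v_5]$ without violating the remaining orthogonalities (including $\braket{v_6}{v_5}=0$ in case (iii)). The only remaining configuration places $[v_6]$ strictly in the interior of $[v_1]$, so $[v_6]\pitchfork[v_1]$, and then Lemma~\ref{intervalproduct} combined with Lemma~\ref{Lem:IntervalNorm} gives $|\braket{v_6}{v_1}|=|v_6|-2\ge 1$, contradicting $\braket{v_6}{v_1}=0$. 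The delicate part is carrying out the geometric case analysis carefully enough to eliminate every placement of $[v_6]$ when the pairing information is minimal.
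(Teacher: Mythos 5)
Your enumeration of the six candidate values $\sigma_6\in\{10,12,14,16,18,20\}$ is correct, and the heavy-triple argument for cases (i) and (iv) is sound. However, there are two genuine problems.

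First, in case (vi) you assert a ``vanishing pairing with $v_1$,'' but a direct computation with $v_1=2e_0-e_1$ and $v_6=e_1+e_3+e_4+e_5-e_6$ gives $\braket{v_6}{v_1}=-1\neq 0$. This breaks the argument you give: the claimed ``only viable position'' $[v_6]=[v_1]\setminus[v_3]$ would force $\braket{v_6}{v_1}=\pm(|v_6|-1)=\pm 4$ (it shares the right endpoint of $[v_1]$, so $\delta=1$), inconsistent with $-1$; it also forces $[v_6]$ and $[v_4]$ to share their common right endpoint with $[v_1]$, hence $z_6=z_4$, already contradicting Corollary~\ref{Cor:ZjDistinct}. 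The same remark applies to case (ii): the position $[v_6]=[v_1]\setminus[v_3]$ is itself ruled out by the vanishing pairing with $v_1$ (and by Corollary~\ref{Cor:ZjDistinct} through $v_4$), so the norm contradiction you compute afterwards is not derived from a consistently-determined placement. A cleaner fix for both (ii) and (vi) is the same heavy-triple you already used: one checks $\braket{v_2}{v_3},\braket{v_2}{v_6},\braket{v_3}{v_6}$ are all $\pm 1$, giving a heavy triple $(v_2,v_3,v_6)$.

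Second, cases (iii) and (v) --- where $v_6$ has a single neighbor in $\bar{S'}$ --- are only sketched, and you explicitly flag that the full placement analysis remains to be done. This is exactly where the work is, and leaving it open leaves the proof incomplete. The paper sidesteps the $\sigma_6$-by-$\sigma_6$ enumeration entirely: it splits on $S'_6\in\{\{0,k_3\},\{k_1,k_3\},\{k_2,k_3\}\}$, eliminates $\{0,k_3\}$ via Lemma~\ref{lem:0No2}, and then argues uniformly for the remaining two that $[v_6]\subset[v_1]$ with $\delta([v_6],[v_1])\le 2$ (using Lemma~\ref{lem:k1=2 sigma3}, the fact that $[v_4]$ shares $[v_1]$'s right endpoint, and non-consecutiveness of $[v_6]$ with $[v_1]$), so Lemma~\ref{intervalproduct} forces $\braket{v_6}{v_1}\neq 0$, hence $1\in\supp(v_6)$ and $|v_6|\ge 4$, whence $[v_1]$ contains three distinct high-weight vertices contributing total norm $\ge 6>5$. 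That single geometric argument disposes of all your cases (ii)--(vi) at once, including the delicate (iii) and (v).
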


\begin{proof}
We claim that the index $6$ does not exist. Suppose for contradiction that it exists. Since $5\in S'_6$, then $S'_6$ must be one of $\{4, 5\}$, $\{2, 5\}$, or $\{0, 5\}$ (Lemma~\ref{Lem:X0Odd} and Corollary~\ref{x0pairing}). 

By Lemma~\ref{lem:0No2}, the intervals $[v_4]$ and $[v_1]$ share their right endpoint, and $S'_6\ne\{ 0,5\}$. 

Suppose that $S'_6=\{ 4, 5\}$ or $\{ 2, 5\}$, then $\braket{v_6}{x_0}=0$. We have that one of $\braket{v_6}{v_4}$ and $\braket{v_6}{v_3}$ is zero and the other one is nonzero, depending on whether or not $3\in S_6$. 
By Lemma~\ref{lem:k1>1 intervals} and Corollary~\ref{Cor:ZjDistinct}, $[v_6]$ and $[v_1]$ are not consecutive. 
Using Lemma~\ref{lem:k1=2 sigma3} and the fact that $[v_4]$ and $[v_1]$ share their right endpoint, we conclude that $[v_6]\subset[v_1]$ and $\delta([v_6], [v_1])\le 2$. Since $|v_6|\ge 3$, we must have $\braket{v_6}{v_1}\not = 0$. That is, $1\in \supp(v_6)$, and so $|v_6|\ge 4$. Using Lemmas~\ref{lem:k1>1 intervals}, \ref{lem:k1=2 sigma3} and Corollary~\ref{Cor:ZjDistinct}, $[v_1]$ will have all the high weight vertices of $[v_3]$, $[v_6]$, and $[v_4]$, and so, $|v_1|\ge 6$, a contradiction. 
 This proves the claim.
\end{proof}

\begin{prop}\label{lem:k1=2,2t+3}
When $(\sigma_0, \cdots, \sigma_{k_3})=(1,2,3,3,7)$, there exists $s\ge0$, such that $v_{s+5}=e_{3}+\cdots +e_{s+4}-e_{s+5}$, $v_5 = e_0 + e_4-e_5$ if $s>0$, and $|v_j|=2$ for $5<j<s+5$ and $j>s+5$. In this case, $\sigma =  (1,2,3,3,7,8^{[s]}, 8s+10^{[t]})$ ($s,t\geq 0$).
\end{prop}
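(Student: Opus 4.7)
The plan is to determine $\sigma_5, \sigma_6,\ldots$ inductively, using the same toolbox exploited in Propositions~\ref{1k2=2} and~\ref{lem:k1>1,2t+1}: the maximality rule of Definition~\ref{stbasis}, the parity constraint $\braket{v_j}{x_0}\in 2\Z$ (Lemma~\ref{Lem:X0Odd}), Corollary~\ref{x0pairing} (which, since $\braket{v_1}{x_0}=2$ already occupies the unique slot, forces $\braket{v_j}{x_0}=0$ for every unbreakable $v_j$ with $j>4$), the bound $|\braket{v_i}{v_j}|\le 1$ for unbreakable $v_i,v_j$ with $i,j\ne k_3=4$ (Corollary~\ref{unbreakablepairing}), and the forbidden-subgraph tools (no claws by Lemma~\ref{claw}, no heavy triples by Lemma~\ref{heavytriple}, no cycles of length $>3$ by Corollary~\ref{cycles}), together with Lemma~\ref{gappy3} constraining gappy indices. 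Throughout, $x_0=e_0+e_2+e_3-e_4$ by Lemma~\ref{lem:k1>1, chi=2}.

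\textbf{First step: determining $\sigma_5$.} I would enumerate all $\sigma_5\ge 7$ by running through subsets of $\{\sigma_0,\ldots,\sigma_4\}=\{1,2,3,3,7\}$ and selecting the lex-maximal one. The parity constraint with $x_0$ eliminates a first batch (e.g.\ $\sigma_5\in\{7,9,11\}$, where the forced $v_5$ has odd $\braket{v_5}{x_0}$). The candidate $\sigma_5=8$ yields $v_5=e_0+e_4-e_5$, which satisfies $\braket{v_5}{x_0}=0$ but has $\braket{v_5}{v_1}=2$, and hence survives only if $v_1$ is breakable (by Corollary~\ref{unbreakablepairing}); since $v_1$ is tight this is permitted by Lemma~\ref{lem:BrIsTight}. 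The candidate $\sigma_5=10$ yields $v_5=e_3+e_4-e_5$, unbreakable with $\braket{v_5}{v_3}=-1$ and all other relevant pairings zero. For each $\sigma_5\ge 12$, the forced $v_5$ has support meeting $\{0,1,2,3\}$ in a way that either produces $|\braket{v_5}{v_j}|\ge 2$ for some unbreakable $v_j$ with $j\le 3$, or creates a heavy triple (e.g.\ $(v_1,v_2,v_5)$ or $(v_2,v_3,v_5)$ once the support is rich enough), or a claw among $\{x_0,v_1,v_2,v_3,v_5\}$; I would eliminate each in turn, concluding $\sigma_5\in\{8,10\}$.

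\textbf{Second step: the case $\sigma_5=10$ (so $s=0$).} Here $v_5=e_3+e_4-e_5$ is unbreakable and neighbors the unbreakable $v_3$. For $i>5$, I would show $\min\supp(v_i)\ge 5$ by letting $\ell=\min\supp(v_i)$ and analyzing $\ell\in\{0,1,2,3,4\}$: Lemma~\ref{gappy3} restricts the shape of $v_i$, and each resulting shape produces either a claw (for example $\ell=3$ gives a claw $(v_3,v_5,x_0,v_i)$), a pairing violating Corollary~\ref{unbreakablepairing}, or a heavy triple involving $v_5$. Once $\min\supp(v_i)\ge 5$ is established for all $i>5$, Lemma~\ref{lem:AllNorm2} applied with $m=5$ gives $|v_i|=2$ and $\sigma_i=10$ for every $i>5$, yielding $\sigma=(1,2,3,3,7,10^{[t]})$.

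\textbf{Third step: the case $\sigma_5=8$ (so $s\ge 1$).} Here $v_5=e_0+e_4-e_5$, $v_1$ is breakable, and $v_5$ neighbors the unbreakable $v_2$. Proceeding by induction on $j\ge 6$, as long as $\sigma_j=8$ the maximality rule forces $v_j=e_{j-1}-e_j$ (other subset sums of value $8$ either violate parity with $x_0$ or produce heavy triples with $(v_2,v_5,v_j)$). Let $s\ge 1$ be such that $\sigma_5=\cdots=\sigma_{s+4}=8<\sigma_{s+5}$; if no such index exists we are in the sub-case $t=0$ and there is nothing more to prove. The same toolbox (maximality, parity, no-claw, no-heavy-triple, and Lemma~\ref{gappy3} combined with Lemma~\ref{lem:j-1}, which forces $s+4\in\supp(v_{s+5})$) isolates the unique admissible shape
\[
v_{s+5}=e_3+e_4+e_5+\cdots+e_{s+4}-e_{s+5},
\]
giving $\sigma_{s+5}=\sigma_3+\sigma_4+8s=8s+10$. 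Finally for $i>s+5$, a parallel argument to the second step --- now invoking Lemma~\ref{lem:AllNorm2} with $m=s+5$ and using that $v_{s+5}$ neighbors the unbreakable $v_3$ --- yields $|v_i|=2$ and $\sigma_i=8s+10$, producing $\sigma=(1,2,3,3,7,8^{[s]},(8s+10)^{[t]})$.

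The main obstacle is the enumeration in the first step: ruling out all candidate values of $\sigma_5$ other than $8$ and $10$ requires a patient case analysis over moderately many lex-maximal subset sums and the subsequent checks that the forced $v_5$ creates a forbidden pairing or subgraph. A subtler technical point is the third-step identification of $v_{s+5}$: one must rule out all competing subset sums of $\{\sigma_0,\ldots,\sigma_{s+4}\}$ producing a valid changemaker entry and check that only the interval $\{3,4,\ldots,s+4\}$ is compatible with the claw-, heavy-triple- and long-cycle-avoidance conditions in the presence of the breakable $v_1$.
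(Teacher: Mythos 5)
Your high-level plan — determine $\sigma_5\in\{8,10\}$, then isolate $v_{s+5}$ and apply Lemma~\ref{lem:AllNorm2} — is the same architecture the paper uses. But several of the specific mechanisms you invoke to rule out intermediate cases do not work, and these are exactly the places where the paper leans on its section-specific machinery (Lemmas~\ref{lem:NoSingle1}, \ref{lem:0No2}, \ref{lem:k1>1 intervals}) and direct $\delta$-arguments rather than generic forbidden-subgraph reasoning.

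Concretely: you assert that Corollary~\ref{x0pairing} forces $\braket{v_j}{x_0}=0$ for every unbreakable $v_j$ with $j>4$ because ``$\braket{v_1}{x_0}=2$ already occupies the unique slot.'' But Corollary~\ref{x0pairing} only counts \emph{unbreakable} vectors, and in the branch $\sigma_5=8$ you yourself observe that $v_1$ must be breakable (since $\braket{v_1}{v_5}=2$ while $v_5$ is unbreakable). A breakable $v_1$ occupies no slot, so Corollary~\ref{x0pairing} yields nothing here; the paper has to rule out $S'_j=\{2,3\}$ and $\{0,2,3,4\}$ by separate $\delta$-arguments (Lemma~\ref{intervalproduct} plus Lemma~\ref{lem:delta}), which your outline omits. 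Similarly, your proposed heavy triples $(v_1,v_2,v_5)$ and $(v_2,v_3,v_5)$ are both illegitimate: $v_1$, being breakable in the relevant branch, is not in $\bar{S'}$ and cannot appear in a heavy triple, and $|v_3|=2<3$ disqualifies $v_3$. Your proposed claw $(v_3,v_5,x_0,v_i)$ is also not a claw, since $\braket{v_3}{x_0}=\braket{e_2-e_3}{e_0+e_2+e_3-e_4}=0$, so the center $v_3$ does not neighbor $x_0$. Finally, ruling out $S'_5=\{0,2,3,4\}$ (the cases $\sigma_5\in\{14,16\}$) genuinely requires the $\delta$-argument the paper gives — none of your listed tools dispose of it, precisely because the offending large pairing is with the possibly-breakable $v_1$. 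So while the plan is correct in outline, the case eliminations as written would fail, and you would need to reintroduce the paper's $\delta$-analysis and the lemmas tailored to the $k_1=2$ configuration to make the proof go through.
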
	
\begin{proof}
First suppose that $\sigma_5\neq 10$.
Since $k_3=4\in S'_5$, the set $S'_5$ is either $\{0,4\}$, $\{3,4\}$, or $\{0,2,3,4\}$ (Lemmas~\ref{Lem:X0Odd}~and~\ref{gappy3}). 
If $S'_5=\{3,4\}$, as $\sigma_5\neq 10$, we must have $1\in S_5$, a contradiction to Lemma~\ref{lem:NoSingle1}. If $S'_5=\{0,2,3,4\}$, then $\braket{v_1}{v_5}> 0$. Since $|v_5|\ge 5$ and $\delta([v_1], [v_5])\le 3$, we have $\epsilon_1=\epsilon_5$. Since $\braket{v_1}{v_5}\le2$, and that $|v_5|\ge 5$, we must have $\delta([v_5],[v_1])=3$. Since $\epsilon_1=\epsilon_5$, by Lemma~\ref{lem:delta}, $\braket{v_5}{x_0}=-\braket{v_1}{x_0}=\pm2$, which is not true. Therefore, $S'_5=\{ 0, 4\}$ and $v_5=e_0-e_4+e_5$ by Lemma~\ref{lem:0No2}.



We claim that if $S_j\ne\emptyset$ for some $j>5$, then $S_j=\{3,4\}$. Assume that $S_j\ne\emptyset$.
By Lemmas~\ref{Lem:X0Odd} and~\ref{gappy3}, $S_j'$ is one of $\emptyset$, $\{0,3\}$, $\{ 0,4\}$, $\{2, 3\}$, $\{ 3,4\}$, and $\{ 0,2, 3,4\}$. If $S_j'=\emptyset$, then $S_j=\{1\}$, contradicting Lemma~\ref{lem:NoSingle1}. Since $S'_5=\{ 0, 4\}$, $S_j'\ne\{0,3\}$ or $\{0,4\}$ by Lemma~\ref{lem:0No2}.
If $S'_j=\{ 2,3\}$, then $\braket{v_j}{x_0}=2$. Since $\delta([v_j], [v_1])\le 3, |v_j|\ge 4$, we have $\braket{v_j}{v_1}\ne0$. Since
$0\notin \supp(v_j)$, we must have $1\in \supp(v_j)$, and so $|v_j|\ge 5$. Using Lemma~\ref{intervalproduct}, we get $|\braket{v_j}{v_1}|>1$, contradicting the fact that $\braket{v_1}{v_j}=-1$. If $S'_{j} = \{0,2,3,4\}$, then we have $\braket{v_{j}}{x_0}=2$ and $|v_{j}|\ge 6$. Thus $x_1=x_{z_j}$ is contained in $[v_1]$. However, $|v_1|=5<6=|v_j|$, a contradiction. So $S'_j=\{3,4\}$. Using Lemma~\ref{lem:NoSingle1}, we conclude that $1\notin S_j$. So $S_j=\{3,4\}$. 

If $S_j=\emptyset$ for all $j>5$, it follows from Lemma~\ref{lem:AllNorm2} that $|v_j|=2$ whenever $j>5$. Now assume that $S_j\ne\emptyset$ for some $j>5$.
Let $s+5$ be the smallest such $j$. Then $S_{s+5}=\{3,4\}$ by the earlier discussion. We also know that $|v_i|=2$ for any $5<i<s+5$ by Lemma~\ref{lem:AllNorm2}. If $5\not \in \supp(v_{s+5})$, then $\braket{v_{s+5}}{v_5}\neq 0$ and $\braket{v_{s+5}}{v_3}\neq 0$, and so there will be a cycle $(v_{s+5}, v_3, v_2, v_5)$ of length bigger than $3$: see Figure~\ref{fig:case1 123 G(S)}. Thus $5 \in \supp(v_{s+5})$, and as a result $6, \cdots, s+4\in \supp(v_{s+5})$ by Lemma~\ref{gappy3}. Therefore, $\sigma_{s+5}=8s+10$.

Note that, $S_j=\emptyset$ when $j>s+5$. Otherwise, by the earlier discussion, $S_j=\{3,4\}$, and
we would have a heavy triple $(v_{j}, v_{s+5}, v_2)$. 
Given $j>s+5$, let $\ell=\min\supp(v_j)\ge5$. Note that
\[
v_5\sim v_6\sim \cdots \sim v_{s+4},
\]
$[v_5]$ and $[v_1]$ share their right endpoint, $\braket{v_i}{v_1}=0$ and $|v_i|=2$ when $5<i<s+5$, so $[v_{i}]\subset [v_1]$ when $5\le i<s+5$. If $\ell\le s+4$, then $\braket{v_j}{v_{\ell}}\ne0$.
Thus $[v_{j}]\cap [v_1]\neq \emptyset$. Note also that $\delta([v_{j}], [v_1])\le 2$ since $v_{j}$ is orthogonal to $x_0$. Since $|v_{j}|\ge 3$, we get that $|\braket{v_{j}}{v_1}|>0$, a contradiction. Thus we have proved that $\min\supp(v_j)\ge s+5$ when $j>s+5$. It follows from Lemma~\ref{lem:AllNorm2} that $|v_j|=2$ when $j>s+5$.

Finally suppose that $\sigma_5=10$. Assume that there exists $\ell>5$ such that 
$S_{\ell}\ne\emptyset$. By Lemmas~\ref{Lem:X0Odd}~and~\ref{gappy3}, $S'_\ell$ is one of $\emptyset$, $\{0,3\}$, $\{0,4\}$, $\{2, 3\}$, $\{ 3, 4\}$, and $\{0,2,3,4\}$. 
If $S'_\ell=\emptyset$, then $S_{\ell}=\{1\}$, contradicting Lemma~\ref{lem:NoSingle1}. By Lemma~\ref{lem:0No2}, $S'_\ell\ne\{0,3\}$ or $\{0,4\}$. Suppose $S'_\ell = \{2,3\}$. If $1\not \in \supp(v_\ell)$, there will be a claw $(v_2, v_1, v_\ell, v_3)$. If $1\in \supp(v_\ell)$, then $|\braket{v_\ell}{v_1}|=1$. By Lemma~\ref{lem:k1>1 intervals}, $[v_{\ell}]$ and $[v_1]$ are not consecutive. Since $\delta([v_\ell], [v_1])\le 3$ and $|v_\ell|\ge 5$, we get $|\braket{v_\ell}{v_1}|\ge2$, a contradiction. 
If $S'_\ell=\{3,4\}$, there will be a heavy triple $(v_5,v_\ell, v_2)$. If $S'_\ell=\{0,2,3,4\}$, then $|v_\ell|\ge 6$ and $\braket{v_{\ell}}{x_0}=2$, so $x_{z_{\ell}}=x_1$. Thus $|[v_1]|\ge |[v_\ell]|\ge6$, a contradiction. So we proved that $S_\ell=\emptyset$ whenever $\ell>5$. It follows from Lemma~\ref{lem:AllNorm2} that $|v_j|=2$ when $j>5$.
\end{proof}

\begin{figure}
	\begin{align*}
	\xymatrix@R1.5em@C1.5em{
		v_{3}^{(3)}\ar@{=}[r]^{+}\ar@{=}[dr]^{+} & x_0^{(4)} \ar@{=}[d]^{+} \\
		v_4^{(3)} \ar@{=}[r]^{+} \ar@{-}[dr]^{+} & v_1^{(5)} \ar@{-}[d]^{+} \\
		& v_{2}^{(3)} \\
	}
	&&
	\xymatrix@R1.5em@C1.5em{
		v_3^{(3)} \ar@{-}[d] \ar@{=}[r]^{+} \ar@{=}[dr]^{+} & x_0^{(4)} \ar@{=}[d]^{+} \\
		v_{4}^{(2)}\ar@{-}[d] & v_1^{(5)} \ar@{-}[d]^{+} \\
		\vdots & v_{2}^{(3)} \ar@{-}[d] \\
		v_{s+2}^{(2)}\ar@{-}[u] & v_{s+3}^{(s+2)}  
	}
	&&
	\xymatrix@R1.5em@C1.5em{
		& x_0^{(4)} \ar@{=}[d]^{+} \\
                     & v_1^{(5)} \ar@{-}[d]^{+} \\
		 & v_{2}^{(3)} \ar@{-}[d] \\
		& v_{3}^{(2)} \\
	}
	\end{align*}
	\caption{Pairing graphs when $(\sigma_0, \cdots, \sigma_{k_3})$ is $(1,2,3,4,5,9)$ (left), $(1,2,3,4^{[s]}, 4s+3, 4s+7)$ (center), or $(1,2,3,3,7)$ (right).}
	\label{fig:case1 123 G(S)}
\end{figure}
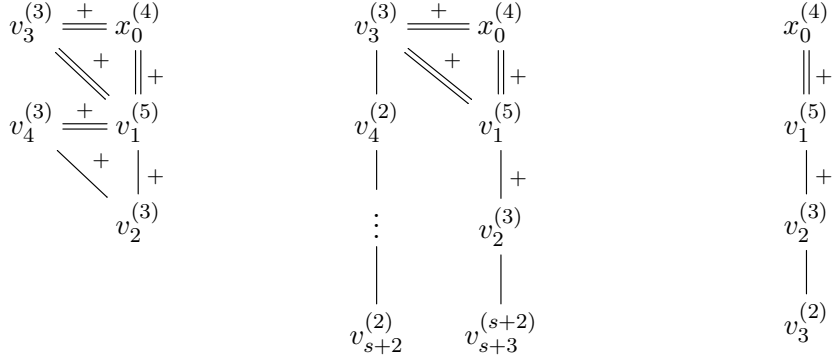

\begin{prop}\label{lem:k1>1,2t+3}
		If $(\sigma_0, \cdots \sigma_{k_3})=(1,2,3,4^{[s]}, 4s+3,4s+7)$, $s>0$, then $v_{s+5}=e_{s+3}+e_{s+4}-e_{s+5}$, and $\abs{v_j} = 2$ for $j>s+5$. In this case, $\sigma =  (1,2,3,4^{[s]}, 4s+3,4s+7, (8s+10)^{[t]})$ ($s>0,t \geq 0$). 
\end{prop}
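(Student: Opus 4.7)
The plan is to mirror the structure of the proof of Proposition~\ref{lem:k1=2,2t+3}: first determine $v_{s+5}$, then show $|v_j| = 2$ for all $j > s+5$, and finally read off $\sigma$.

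I begin by restricting the support $S_j' = \supp v_j \cap \{0, 2, k_2, k_3\}$ for $j > k_3 = s+4$. Since $v_3 = e_0 + e_2 - e_3$ satisfies $\braket{v_3}{x_0} = 2 \neq 0$, Corollary~\ref{x0pairing} forces $\braket{v_j}{x_0} = 0$ for every other unbreakable $v_j$ with $j \ne k_3$. Combined with Lemmas~\ref{Lem:X0Odd} and~\ref{gappy3}, this restricts $S_j'$ to one of $\emptyset$, $\{0, k_3\}$, $\{2, k_3\}$, or $\{k_2, k_3\}$. For $j = s+5$, Lemma~\ref{lem:j-1} gives $k_3 \in \supp v_{s+5}$, so $S_{s+5}'$ is one of the last three.

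Next, I rule out $S_{s+5}' = \{0, k_3\}$ by Lemma~\ref{lem:0No2}, which forces $v_{s+5} = e_0 + e_{s+4} - e_{s+5}$; one then checks that $(v_3, v_{s+3}, v_{s+5})$ is a heavy triple --- the path $v_3 \sim x_0 \sim v_1 \sim v_2 \sim v_{s+3}$ connects $v_3$ to $v_{s+3}$ avoiding $v_{s+5}$, the direct edge $v_3 \sim v_{s+5}$ avoids $v_{s+3}$, and the path $v_{s+3} \sim v_2 \sim v_1 \sim v_{s+5}$ avoids $v_3$ --- contradicting Lemma~\ref{heavytriple}. To rule out $S_{s+5}' = \{2, k_3\}$ I run a subcase analysis on the remaining elements of $\supp v_{s+5}$: Lemma~\ref{gappy3} combined with $|v_i| = 2$ for $i \in \{4, \ldots, s+2\}$ forces any index of $\{3, \ldots, s+2\}$ lying in $\supp v_{s+5}$ to form a suffix $\{k, k+1, \ldots, s+2\}$; each such suffix either yields $|\braket{v_{s+5}}{v_{s+3}}| \ge 2$ (violating Corollary~\ref{unbreakablepairing}) or produces the same heavy triple $(v_3, v_{s+3}, v_{s+5})$ after checking the residual pairings with $v_1, v_2, v_3$. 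Hence $S_{s+5}' = \{k_2, k_3\}$, and the same gappy/pairing bound eliminates any additional indices of $\supp v_{s+5}$, yielding $v_{s+5} = e_{s+3} + e_{s+4} - e_{s+5}$ and $\sigma_{s+5} = \sigma_{s+3} + \sigma_{s+4} = 8s+10$.

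For $j > s+5$ the same enumeration applies to $S_j'$. Any nonempty choice creates a heavy triple drawn from $(v_3, v_{s+3}, v_j)$, $(v_3, v_{s+5}, v_j)$, or $(v_{s+3}, v_{s+5}, v_j)$, or violates Corollary~\ref{unbreakablepairing}, via arguments directly analogous to the $j = s+5$ case, so $S_j = \emptyset$ for $j > s+5$. A short claw-avoidance argument (any $\min \supp v_j \in \{3, 4, \ldots, s+2\}$ creates a claw centered at $v_{\min \supp v_j}$, while $\min \supp v_j = 1$ is excluded by Lemma~\ref{lem:NoSingle1}) then shows $\min \supp v_j \ge s+5$, and Lemma~\ref{lem:AllNorm2} gives $|v_j| = 2$. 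Assembling these values produces the announced changemaker $\sigma = (1, 2, 3, 4^{[s]}, 4s+3, 4s+7, (8s+10)^{[t]})$. The main technical obstacle will be the subcase analysis for $S_{s+5}' = \{2, k_3\}$: each potential gap suffix $\{k, \ldots, s+2\}$ in $\supp v_{s+5}$ and each possible inclusion of the index $1$ needs a separate verification, with the boundary cases $s \in \{1, 2\}$ requiring slightly different handling than the generic $s \ge 3$.
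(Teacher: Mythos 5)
Your proposal has a genuine gap at its core: you repeatedly invoke the heavy triple $(v_3, v_{s+3}, v_{s+5})$, and to connect $v_3$ to $v_{s+3}$ (and $v_{s+5}$ to $v_{s+3}$) you route paths through $v_1$. But Lemma~\ref{heavytriple} is a statement about $G(\bar{S'})$, the intersection graph of the \emph{unbreakable} vectors, and $v_1$ is in fact breakable here. Indeed, $v_1$ is tight with $|v_1|=5$, and by Lemma~\ref{lem:k1=2 sigma3} the interval $[v_3]$ (of norm $3$, containing the single high-weight vertex $x_{z_3}$ of norm $3$) shares its left endpoint with $[v_1]$, so $[v_1]\supsetneq[v_3]$; by Lemma~\ref{Lem:IntervalNorm} the extra weight $\sum_{x_i\in[v_1]\setminus\{x_0\}}(|x_i|-2)=3$ cannot be accounted for by $x_{z_3}$ alone, so $[v_1]$ contains at least two high-weight vertices and is breakable. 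Hence $v_1\notin\bar{S'}$, and there is no path from $v_3$ to $v_{s+3}$ in $G(\bar{S'})$ avoiding $v_{s+5}$ (once $v_1$ is removed, $\{v_3,x_0,v_4,\dots,v_{s+2}\}$ and $\{v_2,v_{s+3}\}$ lie in different components of $G(\bar{S'}\setminus\{v_{s+5}\})$), so the heavy triple you want does not exist. Note also that in the $S'_{s+5}=\{0,k_3\}$ sub-case you implicitly use an edge $v_1\sim v_{s+5}$, where $\braket{v_1}{v_{s+5}}=2$; this is only consistent precisely because $v_1$ is breakable (otherwise Corollary~\ref{unbreakablepairing} would already forbid it), which underscores that $v_1$ cannot be placed on heavy-triple paths.

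The paper sidesteps all of this. The case $S'_\ell=\{0,k_3\}$ is ruled out by pure interval geometry: Lemma~\ref{lem:0No2} forces $v_\ell=e_0+e_{s+4}-e_{s+5}$ with $[v_\ell],[v_1]$ sharing a right endpoint, and since $[v_3],[v_1]$ share a left endpoint (Lemma~\ref{lem:k1=2 sigma3}) and $[v_3]\dagger[v_\ell]$, one gets $[v_1]=[v_3]\cup[v_\ell]$ of norm $4\ne5$. The case $S'_\ell=\{2,k_3\}$ is handled with the \emph{triangle} heavy triple $(v_\ell,v_{s+3},v_2)$ --- all three pairwise adjacent, so no long paths, and $v_1$ never appears --- when $1\notin\supp v_\ell$, and a claw/pitchfork analysis centred at $v_1$ (valid since Lemma~\ref{claw} applies to all of $G(T)$, not just $G(\bar{S'})$) when $1\in\supp v_\ell$. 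Finally, your claw ``centred at $v_{\min\supp v_j}$'' for $\min\supp v_j\in\{3,\dots,s+2\}$ does not close at $i=s+2$: since $\braket{v_{s+2}}{v_{s+3}}=0$, the vertex $v_{s+2}$ only has the two neighbours $v_{s+1}$ and $v_j$, so no claw forms. The paper instead shows $x_{z_\ell}\in[v_1]$ via the chain $v_3\sim\cdots\sim v_i\sim v_\ell$ and Lemmas~\ref{lem:k1=2 sigma3} and~\ref{lem:k1>1 intervals}, forcing $\braket{v_1}{v_\ell}\ne0$, a contradiction that is uniform over $3\le i\le s+2$. You would need to replace your heavy-triple and claw steps with these interval-geometry arguments to close the proof.
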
	

\begin{proof}
Suppose that $\ell>s+4$ is an index such that $S_{\ell}\ne\emptyset$. We will prove that $\ell=s+5$ and $v_{s+5}=e_{s+3}+e_{s+4}-e_{s+5}$. Our conclusion then follows from Lemma~\ref{lem:AllNorm2}.

\noindent{\bf Step 1}. $S'_{\ell}$ must be either $\emptyset$ or $\{ s+3, s+4\}$.

Using Lemma~\ref{gappy3} and Corollary~\ref{x0pairing}, $S'_\ell$ is either $\emptyset$, $\{ 0, s+4\}$, $\{ 2, s+4\}$, or $\{ s+3, s+4\}$. Suppose $S'_{\ell}=\{ 0, s+4\}$, by Lemma~\ref{lem:0No2}, $v_{\ell}=e_0+e_{s+4}-e_{s+5}$, $[v_{\ell}]$ and $[v_1]$ share their right endpoint. As $\braket{v_{\ell}}{v_3}\neq 0$, $[v_1]$ equals the union of $[v_3]$ and $[v_{\ell}]$ by Lemma~\ref{lem:k1=2 sigma3}, i.e. $|v_1|=4$, a contradiction. Suppose $S'_{\ell}=\{ 2, s+4\}$. If $1\not \in S_{\ell}$, as $s+3\not\in S_{\ell}$, $\braket{v_{ell}}{v_{s+3}}\ne0$, there will be a heavy triple $(v_{\ell}, v_{s+3}, v_2)$. If $1\in S_{\ell}$ (and consequently, $|v_{\ell}|\ge 4$), then there will be a claw $(v_1, x_0, v_{\ell}, v_2)$, unless $[v_{\ell}]\pitchfork [v_1]$. If $[v_{\ell}]\pitchfork [v_1]$, however, we get $\delta([v_{\ell}],[v_1])=2$, and so $|\braket{v_{\ell}}{v_1}|\ge 2$, a contradiction to the fact that $\braket{v_{\ell}}{v_1}=-1$.

\noindent{\bf Step 2}. If $S'_{\ell}=\emptyset$ or $\{ s+3, s+4\}$, then $S_{\ell}=S'_{\ell}$. In particular, $S_{s+5}=\{ s+3, s+4\}$.

Suppose that $S'_{\ell}=\emptyset$ or $\{ s+3, s+4\}$. Let $i=\min\supp(v_{\ell})$.  By Lemma~\ref{lem:NoSingle1}, $i\ne1$. That is, $\braket{v_\ell}{v_1}=0$. Also, note that $v_\ell$ is orthogonal to $x_0$, and so $\delta([v_\ell], [v_1])\le 2$. If $3\le i\le s+2$, since $v_3\sim v_4\sim  \cdots\sim v_{i}\sim v_{\ell}$, using Lemmas~\ref{lem:k1=2 sigma3}~and~\ref{lem:k1>1 intervals}, $x_{z_{\ell}}\in [v_1]$. Therefore, $\braket{v_1}{v_\ell}\not = 0$, a contradiction. So $i\ge s+3$ and hence $S_{\ell}=S'_{\ell}$. Clearly, $S_{s+5}=\{ s+3, s+4\}$ by Lemma~\ref{lem:j-1}.


\noindent{\bf Step 3}. If $S_{\ell}=\{ s+3, s+4\}$, then $\ell=s+5$. 

Assume that $\ell>s+5$ and $S_{\ell}=\{ s+3, s+4\}$, then we have a heavy triple $(v_{s+3},v_{s+5},v_{\ell})$.
\end{proof}

\subsection{{\boldmath $k_1=3$}}\label{k1=3}

In this subsection we focus on the changemaker C-type lattices with 
\begin{equation}\label{x0k1=3}
x_0=e_0+e_3+e_{k_2} - e_{k_3}.
\end{equation}  
Recall that the changemaker starts with $(1,2,2,3)$.

\begin{lemma}\label{lem:k1=3 intervals}
		The intervals $[v_3]$ and $[v_1]$ share their right endpoint and $\epsilon_3 = \epsilon_1$. Moreover, $[v_2]$ abuts the right endpoint of $[v_1]$ and $[v_3]$.
\end{lemma}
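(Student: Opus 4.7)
The proof is a direct application of Lemma \ref{intervalproduct} to the known pairings of $v_1, v_2, v_3, x_0$. From $(\sigma_0,\sigma_1,\sigma_2,\sigma_3) = (1,2,2,3)$ and the maximization rule in Definition \ref{stbasis}, one has $v_1 = 2e_0 - e_1$, $v_2 = e_1 - e_2$, and $v_3 = e_0 + e_2 - e_3$. Combined with $x_0 = e_0 + e_3 + e_{k_2} - e_{k_3}$ (Lemma \ref{lem:k1>1, chi=2}), direct computation in $\mathbb{Z}^{n+2}$ yields $\braket{v_1}{x_0} = 2$, $\braket{v_2}{x_0} = \braket{v_3}{x_0} = 0$, $\braket{v_1}{v_3} = 2$, and $\braket{v_1}{v_2} = \braket{v_2}{v_3} = -1$.

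For the first assertion, I would first note that $v_3$ is unbreakable since $|v_3| = 3$ is below the threshold $4$ forced by a breakable decomposition. Writing $v_3 = \epsilon_3 [v_3]$, Lemma \ref{Lem:IntervalNorm} yields a unique high-weight vertex $x_{z_3}$ in $[v_3]$ with $|x_{z_3}| = 3$, and $\braket{v_3}{x_0} = 0$ forces $[v_3]$ to neither contain nor abut $x_0$, so its left endpoint $a$ is at least $2$. I would then show that $|[v_1 \cap v_3]| = 3$: the possible norms of $[v_1 \cap v_3]$ are $0$, $2$, or $3$ (the last precisely when $x_{z_3}\in [v_1 \cap v_3]$), and in the first two cases the admissible values of $\delta \in \{0,1,2\}$ (Lemma \ref{lem:delta}, using $\braket{v_3}{x_0} = 0$) force $|\braket{v_1}{v_3}| \le 1$, contradicting $\braket{v_1}{v_3} = 2$. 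Consequently $\braket{v_1}{v_3} = \epsilon_1 \epsilon_3 (3 - \delta) = 2$ forces $\delta = 1$ and $\epsilon_1 = \epsilon_3$. Since $\delta = 1$ with nonempty intersection means a shared common endpoint and the left endpoints cannot agree (they satisfy $\ell_1 \le 1 < 2 \le a$), the right endpoints must coincide.

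For the second assertion I follow the same template applied to $v_2$. Since $|v_2| = 2$, the interval $[v_2]$ consists entirely of norm-$2$ vertices (so it excludes $x_0$ and $x_1$, as $|x_1| = a_1 \ge 3$), with left endpoint at least $2$. The equation $\braket{v_2}{v_1} = -1$ admits exactly two configurations: $[v_2]$ consecutive to $[v_1]$ on the right with $\epsilon_2 = \epsilon_1$, or $[v_2] \subset [v_1]$ sharing the right endpoint of $[v_1]$ with $\epsilon_2 = -\epsilon_1$; the other possibilities (distant, transverse, consecutive on the left, or sharing the left endpoint) are ruled out by the bounds on $\delta$, the restricted intersection norms in $\{0, 2\}$, and the lower bound on the left endpoint of $[v_2]$. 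The parallel analysis of $\braket{v_2}{v_3} = -1$ yields analogous options with $[v_3]$ in place of $[v_1]$. Using the coincidence of right endpoints and $\epsilon_3 = \epsilon_1$ from the first assertion, the only jointly consistent configurations are $[v_2]$ consecutive to both intervals on the right (with $\epsilon_2 = \epsilon_1$), or $[v_2] \subset [v_3] \subset [v_1]$ all sharing the common right endpoint (with $\epsilon_2 = -\epsilon_1$); in either case $[v_2]$ abuts the right endpoint of $[v_1]$ and $[v_3]$. The hardest part is the systematic case analysis of interval configurations with its sign bookkeeping; a subtle implicit point is that $v_1$ is necessarily breakable, since $[v_1]$ will contain both $x_1$ and $x_{z_3}$ as high-weight vertices, but this follows automatically from the norm formula and requires no separate argument.
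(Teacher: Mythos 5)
Your proof is correct and follows essentially the same route as the paper: compute $v_1=2e_0-e_1$, $v_2=e_1-e_2$, $v_3=e_0+e_2-e_3$, pair against $x_0=e_0+e_3+e_{k_2}-e_{k_3}$, and feed the resulting inner products into Lemma~\ref{intervalproduct} together with the $\delta$-bounds from Lemma~\ref{lem:delta}. The paper's version is considerably terser (its entire argument for the first claim is that $|v_3|=3$ and $\braket{v_1}{v_3}=2$ force $\epsilon_1=\epsilon_3$ and $\delta=1$, and then orthogonality to $x_0$ pins down which end is shared; the second claim is dispatched in one sentence from $\braket{v_2}{v_1}\ne0$, $\braket{v_2}{x_0}=0$), but the cases you spell out — ruling out intersections of norm $0$ or $2$, locating the left endpoints of $[v_2],[v_3]$ to the right of $x_1$, and eliminating the transverse/interior configurations — are exactly the steps the paper is leaving implicit. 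Your parenthetical that the analysis of $\braket{v_2}{v_3}=-1$ is ``parallel'' glosses over the fact that, in isolation, it would also permit $[v_2]$ to abut $[v_3]$ on the left (both intervals can start at index $\ge 2$); this is harmless because you immediately intersect with the two configurations forced by the $[v_1]$-analysis, which already constrain $[v_2]$'s position, but it is worth stating that the $[v_1]$-constraint is doing the real work there. The remark about $v_1$ being breakable is a true but unnecessary aside, as you say.
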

\begin{proof}
Since $|v_3|=3$ and $\braket{v_1}{v_3}=2$, from Lemma~\ref{intervalproduct}, it must be the case that $\epsilon_1=\epsilon_3$ and $\delta([v_1], [v_3])=1$. The first statement of the lemma is now immediate because $v_3$ is orthogonal to $x_0$. Since $\braket{v_2}{v_1}\ne0$ and $\braket{v_2}{x_0}=0$, $[v_2]$ abuts the right endpoint of $[v_1]$.
\end{proof}

\begin{cor}\label{cor:k1=3 intervals}
		Suppose that there exists a vector $v_j$ such that $j>3$, $j\neq k_3$, and $\braket{v_j}{v_1} = 2$. Then $j=4$, and that $v_4 = e_0 + e_3-e_4$. 
\end{cor}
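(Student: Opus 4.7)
The plan is to extract explicit coefficient constraints from $\braket{v_j}{v_1}=2$ and then pin down $v_j$ via its pairings with $v_2$, $v_3$, and $x_0$, combined with interval geometry from Lemma~\ref{lem:k1=3 intervals}.

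First, since $v_1=2e_0-e_1$ and $v_1$ is the only tight vector by Lemma~\ref{lem:tightvector}, for any $j>1$ the coefficients $v_j\cdot e_0$ and $v_j\cdot e_1$ both lie in $\{0,1\}$. Then $\braket{v_j}{v_1}=2(v_j\cdot e_0)-(v_j\cdot e_1)=2$ forces $v_j\cdot e_0=1$ and $v_j\cdot e_1=0$, so $v_j = e_0 + \sum_{i\in A'} e_i - e_j$ with $A'\subset\{2,3,\dots,j-1\}$, and $j-1\in A'$ by Lemma~\ref{lem:j-1}.

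Second, the changemaker maximality at $\sigma_3=3$ forces $v_3=e_0+e_2-e_3$, giving $\braket{v_j}{v_3} = 1 + \mathbf{1}[2\in A'] - \mathbf{1}[3\in A']$. Since $\braket{v_j}{v_1}>0$ and $\braket{v_3}{v_1}>0$, Lemma~\ref{intervalproduct} together with Lemma~\ref{lem:k1=3 intervals} gives $\epsilon_j=\epsilon_1=\epsilon_3$. I would rule out the subcase $\{2\in A',\,3\notin A'\}$ (which gives $\braket{v_j}{v_3}=2$) by interval analysis: Lemma~\ref{positivity} and Lemma~\ref{lem:delta} force $\delta([v_j],[v_3])\le 2$, hence $[v_3]\prec [v_j]$ with $|v_j|=3$; then since $|v_j|=|v_3|=3$ and both intervals share an endpoint with $[v_1]$, uniqueness of the norm-$3$ interval inside $[v_1]$ ending at a given vertex forces $[v_j]=[v_3]$, contradicting $z_j\ne z_3$ from Corollary~\ref{Cor:ZjDistinct}. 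The subcase $\{2,3\}\subset A'$ is ruled out by analogous interval analysis, using both $\braket{v_j}{v_1}=2$ and $\braket{v_j}{v_3}=1$: tracking $\delta$-values and intersection norms against $[v_1]$ and $[v_3]$ produces incompatible placements of $[v_j]$ relative to the ends of $[v_1]$.

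Third, with $A'$ satisfying $2\notin A'$, the parity of $\braket{v_j}{x_0}$ (even by Lemma~\ref{Lem:X0Odd}), together with $\braket{v_j}{x_0} = 1 + \mathbf{1}[3\in A'] + \mathbf{1}[k_2\in\supp v_j] - \mathbf{1}[k_3\in\supp v_j]$, forces $3\in A'$ (handling the boundary case $j=k_2$ separately via Corollary~\ref{x0pairing}). Any additional index $i\in A'$ with $i>3$ and $i\ne j-1$ produces either a claw (Lemma~\ref{claw}) or a cycle of length greater than three (Corollary~\ref{cycles}) in the intersection graph restricted to unbreakable vertices, involving $v_1$, $v_3$, $v_i$, and $v_j$. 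Hence $A'=\{3\}$, and combined with $j-1\in A'$ this gives $j=4$ and $v_4=e_0+e_3-e_4$.

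The main obstacle will be the interval analysis for the subcases $\{2\in A',3\notin A'\}$ and $\{2,3\}\subset A'$: in this setting $v_1$ itself may well be breakable (indeed, Corollary~\ref{x0pairing} together with $\braket{v_1}{x_0}=\braket{v_4}{x_0}=2$ will force $v_1$ to be breakable once $v_4=e_0+e_3-e_4$ is established), so Corollary~\ref{unbreakablepairing} cannot be used to bound $|\braket{v_j}{v_1}|$ directly. The ruling-out must instead proceed through Lemma~\ref{intervalproduct} with careful bookkeeping of $\delta$-values and intersection norms, leveraging Lemma~\ref{lem:k1=3 intervals} to place $[v_3]$ at the right endpoint of $[v_1]$.
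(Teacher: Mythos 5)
The paper's proof proceeds by first establishing $\braket{v_j}{x_0}\ne 0$ (a contradiction argument in which $\braket{v_j}{x_0}=0$ forces $[v_1]$ to contain three distinct high weight vertices $x_1, x_{z_j}, x_{z_3}$ of norm $3$ each, hence $|\braket{v_j}{v_1}|=1$), then deduces $\epsilon_j=\epsilon_1$, $\delta([v_j],[v_1])=1$, and crucially $|v_j|=3$, so $v_j=e_0+e_{j-1}-e_j$. The final step uses $\braket{v_j}{v_3}=1$ (if $j>4$) together with Lemma~\ref{lem:k1=3 intervals} to force $[v_1]=[v_j]\cup[v_3]$, hence $|v_1|\le4$, a contradiction. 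Your proposal is a genuinely different, coordinate-based plan, but it has gaps that I do not see how to close as written.

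First, the key fact $|v_j|=3$ never appears in your outline, and without it the set $A'$ is not nearly as constrained as you claim. Your "Third" step rules out ``any additional index $i\in A'$ with $i>3$ and $i\ne j-1$'', which at best gives $A'\subset\{3,j-1\}$. Since $j-1\in A'$ by Lemma~\ref{lem:j-1}, this leaves open $A'=\{3,j-1\}$ with $j-1>3$, i.e.\ $v_j=e_0+e_3+e_{j-1}-e_j$ with $j>4$, which is exactly what the statement is supposed to exclude. The inference ``Hence $A'=\{3\}$'' is not supported.

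Second, the parity argument claimed to force $3\in A'$ is not correct in general. You write $\braket{v_j}{x_0}=1+\mathbf{1}[3\in A']+\mathbf{1}[k_2\in\supp v_j]-\mathbf{1}[k_3\in\supp v_j]$, but this conflates the two signs that $k_2$ can contribute: when $j=k_2$ the term $\braket{-e_j}{e_{k_2}}=-1$, not $+1$, so the correct expression is $1+\mathbf{1}[3\in A']+\mathbf{1}[k_2\in A']-\mathbf{1}[k_3\in A']-\mathbf{1}[j=k_2]$. For $j=k_2$ this becomes $\mathbf{1}[3\in A']$ (since $k_2,k_3\notin A'$), and parity then forces $3\notin A'$, the opposite of what you want. ``Handling the boundary case $j=k_2$ separately via Corollary~\ref{x0pairing}'' is not a proof; that corollary limits the number of unbreakable basis vectors pairing nontrivially with $x_0$, and invoking it here to eliminate $j=k_2$ presupposes that $v_4=e_0+e_3-e_4$, which is the thing you are trying to prove. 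The cases $k_2<j<k_3$ and $j>k_3$ are not addressed at all.

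Third, the two pieces of interval analysis you flag as the ``main obstacle'' are indeed where the argument is thinnest. The subcase $\{2\in A',\,3\notin A'\}$ is actually trivial: it gives $\braket{v_j}{v_3}=2$, and both $v_j$ and $v_3$ are unbreakable (only $v_1$ is tight, hence only $v_1$ can be breakable by Lemma~\ref{lem:BrIsTight}), $j,3\ne k_3$, so Corollary~\ref{unbreakablepairing} already forbids this; the multi-step interval argument you sketch is both overcomplicated and not clearly correct (you jump from $\delta\le 2$ to $[v_3]\prec[v_j]$ and $|v_j|=3$, which does not follow). The subcase $\{2,3\}\subset A'$, by contrast, is genuinely delicate—here $\braket{v_j}{v_2}=-1$, $\braket{v_j}{v_3}=1$, $\braket{v_j}{v_1}=2$ are all permissible by Corollary~\ref{unbreakablepairing}—and ``analogous interval analysis'' is just an assertion.

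In short, the coordinate-by-coordinate plan would require separate treatment of $A'=\{3,j-1\}$, of $\{2,3\}\subset A'$, and of the positions of $j$ relative to $k_2$ and $k_3$, none of which are carried out. The paper's route avoids all of this by first proving $\braket{v_j}{x_0}=2$ and $|v_j|=3$; those two facts immediately collapse $A'$ to a singleton, and the rest is one line. If you want to salvage the coordinate approach, you would at least need to import the paper's argument that $\delta([v_j],[v_1])=1$ (hence $|v_j|=3$), at which point you are essentially reproducing the paper's proof.
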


\begin{proof}
Suppose that $j$ is such an index. Therefore, $0\in \supp^+(v_j)$ and $1\not \in \supp^+(v_j)$. (This, in particular, implies that $|v_j|\ge3$). We claim that $\braket{v_j}{x_0}\neq 0$. Otherwise, assume $\braket{v_j}{x_0}= 0$. Since $\braket{v_j}{v_1}=2$, $x_{z_j}\in[v_1]$. Using Lemma~\ref{lem:k1=3 intervals} and Corollary~\ref{Cor:ZjDistinct}, $[v_1]$ contains at least $3$ high weight vertices $x_1,x_{z_j},x_{z_3}$, and $\delta([v_j],[v_1])=2$. Since $|v_1|=5$, we have $|x_{z_j}|=3$, so by Lemma~\ref{intervalproduct} we have $|\braket{v_j}{v_1}|=1$, a contradiction.
This justifies the claim, and therefore, $\braket{v_j}{x_0}=2$. Since $|v_j|\ge 3$ and $\delta([v_1], [v_j])\le 3$, to get $\braket{v_j}{v_1}=2$, we must have $\epsilon_1=\epsilon_j$. Thus, $\delta([v_j], [v_1])=1$ and $|v_j|=3$. That is, $v_j=e_0+e_{j-1}-e_{j}$. We now argue that $j=4$. Suppose for contradiction that $j>4$. Thus $\braket{v_j}{v_3}=1$. Using Lemma~\ref{lem:k1=3 intervals}, we get that the interval $[v_1]$ equals the union of $[v_j]$ and $[v_3]$. Since $|v_j|=|v_3|=3$, we get that $|v_1|=4$, which is a contradiction. 
\end{proof}

\begin{lemma}\label{lem:v1Orth}
Let $v_j$ be a vector such that $j>3$, $j\neq k_3$. Then $\braket{v_j}{v_1}\in\{0,2\}$. As a result, $\min\supp(v_j)\ge2$ unless $j=4$ and $v_4 = e_0 + e_3-e_4$.
\end{lemma}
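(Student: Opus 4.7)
Since $v_1=2e_0-e_1$ and each standard basis vector $v_j$ for $j>3$ has coefficients in $\{0,1\}$ on $e_0, e_1$, the pairing $\braket{v_j}{v_1}$ takes values in $\{-1,0,1,2\}$. Corollary~\ref{cor:k1=3 intervals} handles $\braket{v_j}{v_1}=2$: it forces $j=4$ and $v_4=e_0+e_3-e_4$, which is precisely the exceptional vector of the lemma. Moreover, $\min\supp(v_j)\ge 2$ is equivalent to $\braket{v_j}{v_1}=0$ (since $j>3$ implies $j\notin\{0,1\}$, so $\min\supp(v_j)$ depends only on $A_j\cap\{0,1\}$). Therefore the lemma reduces to ruling out $\braket{v_j}{v_1}\in\{-1,1\}$, i.e., ruling out $1\in A_j$.

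Suppose $1\in A_j$. The initial segment $(\sigma_0,\sigma_1,\sigma_2,\sigma_3)=(1,2,2,3)$ from Lemma~\ref{lem:k1>1,v2+t} satisfies the swap identities $\sigma_0+\sigma_3=\sigma_1+\sigma_2=4$ and $\sigma_0+\sigma_1=\sigma_0+\sigma_2=\sigma_3=3$. By the maximality of $A_j$ in $\sum_{i\in A_j}2^i$ from Definition~\ref{stbasis}, replacing $A_j\cap\{0,1,2,3\}$ by any subset of $\{0,1,2,3\}$ with the same $\sigma$-sum but strictly larger $\sum 2^i$ would contradict the definition of $v_j$. A direct enumeration of subsets of $\{0,1,2,3\}$ shows that the only lex-maximal (in $\sum 2^i$) subsets containing the index $1$ are $\{1,2,3\}$ (sum $7$) and $\{0,1,2,3\}$ (sum $8$); all other subsets containing $1$ fail maximality through one of the swap identities above. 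Hence $A_j\cap\{0,1,2,3\}$ is either $\{1,2,3\}$ (giving $\braket{v_j}{v_1}=-1$) or $\{0,1,2,3\}$ (giving $\braket{v_j}{v_1}=1$); in either surviving case $\{2,3\}\subseteq A_j$.

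In both surviving sub-cases, direct computation yields $\braket{v_j}{v_2}=0$, while $\braket{v_j}{v_3}$ equals $0$ when $\braket{v_j}{v_1}=-1$ and $1$ when $\braket{v_j}{v_1}=1$. Since $v_j$ is unbreakable (not tight, by Lemmas~\ref{lem:tightvector} and \ref{lem:BrIsTight}) and $v_3$ is unbreakable (being of norm $3$), Corollary~\ref{unbreakablepairing} applies to $(v_j, v_3)$: their intervals are either distant or meet transversely in the first sub-case, and abut in the second. Combining this with Lemma~\ref{lem:k1=3 intervals} --- which places $[v_3]\prec[v_1]$ sharing their right endpoint, with $[v_2]$ abutting that endpoint on the right --- together with the interval product formula (Lemma~\ref{intervalproduct}) and the parity constraint $\braket{v_j}{x_0}\in 2\mathbb Z$ (Lemma~\ref{Lem:X0Odd}), a case analysis of the placement of $[v_j]$ produces a contradiction in each configuration: an interval abutting $[v_3]$ on the right must overlap $[v_2]$ and hence give $\braket{v_j}{v_2}\ne 0$, while any other placement forces the value of $\braket{v_j}{v_1}$ computed via Lemma~\ref{intervalproduct} (in terms of $|v_1|=5$, $|v_3|=3$, and the possible values of $\delta$) to differ from $\pm 1$. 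The main obstacle in carrying out this analysis is that $v_1$ is itself breakable --- the pairing $\braket{v_1}{v_3}=2$ exceeds the Corollary~\ref{unbreakablepairing} bound while $v_3$ cannot be broken --- so the direct unbreakable-pair bound is unavailable for the pair $(v_j, v_1)$, and geometric information about the overlap $[v_j]\cap[v_1]$ must be extracted indirectly through the subinterval $[v_3]\subset[v_1]$ and the adjacent norm-$2$ interval $[v_2]$.
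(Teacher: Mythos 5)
Your first two paragraphs are sound and in fact slightly tighten the paper's approach. Reducing the pairing $\braket{v_j}{v_1}$ to $\{-1,0,1,2\}$ (using that $v_j$ is not tight, so its $e_0$- and $e_1$-coordinates lie in $\{0,1\}$), dispatching the value $2$ via Corollary~\ref{cor:k1=3 intervals}, and then using changemaker maximality with the swap $\{1,2\}\leftrightarrow\{0,3\}$ (valid since $\sigma_0+\sigma_3=\sigma_1+\sigma_2=4$ while $2^0+2^3>2^1+2^2$) to force $A_j\cap\{0,1,2,3\}\in\{\{1,2,3\},\{0,1,2,3\}\}$ is all correct. The paper, by contrast, handles $\supp(v_j)\cap\{0,1,2,3\}=\{1,2\}$ as a separate geometric case in its middle paragraph, which your maximality argument shows is vacuous; that is a genuine simplification. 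Your computations $\braket{v_j}{v_2}=0$, $\braket{v_j}{v_3}\in\{0,1\}$, and your observation that $v_1$ must be breakable (since $\braket{v_1}{v_3}=2$ and $v_3$ is unbreakable) are also correct.

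The third paragraph is where the proof stops short. You name the right toolkit (Lemma~\ref{lem:k1=3 intervals}, Lemma~\ref{intervalproduct}, Lemma~\ref{Lem:X0Odd}, Corollary~\ref{unbreakablepairing}, Corollary~\ref{Cor:ZjDistinct}), but you do not carry out the case analysis --- you only assert that it ``produces a contradiction in each configuration.'' Moreover, the one-sentence description you give of how the contradiction arises does not match the actual argument. In the sub-case $A_j\cap\{0,1,2,3\}=\{0,1,2,3\}$, the contradiction does not come from computing $\braket{v_j}{v_1}$ via Lemma~\ref{intervalproduct}; rather, one observes that $\braket{v_j}{x_0}=2$ (by parity) forces $x_{z_j}=x_1$, that $|v_j|\ge 5$ then forces $|x_1|\ge 5$ by Lemma~\ref{Lem:IntervalNorm}, and that together with the distinct high-weight vertex $x_{z_3}\in[v_3]\subset[v_1]$ (Corollary~\ref{Cor:ZjDistinct}) this overcounts $|v_1|=5$. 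In the sub-case $\{1,2,3\}$, one must first establish $[v_j]\subset[v_1]$ (which requires the $\braket{v_j}{v_2}=0$ constraint to exclude $[v_j]$ crossing the right end of $[v_1]$), then use high-weight vertex counting to force $|v_j|\le 4$, invoke Lemma~\ref{lem:j-1} to get $j=4$, and finally show $z_4=1$ so that $\braket{v_4}{x_0}\ne 0$ contradicts the coordinate computation $\braket{v_4}{x_0}=0$. None of this is in your proposal; the sentence ``any other placement forces the value of $\braket{v_j}{v_1}$ \dots to differ from $\pm 1$'' describes neither chain of reasoning. Correctly diagnosing that $v_1$ being breakable blocks the direct unbreakable-pair bound for $(v_j,v_1)$ is a useful observation, but an identified obstacle is not an overcome one. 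As written, the heart of the lemma is left unproved.
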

\begin{proof}
Assume that $\braket{v_j}{v_1}\notin\{0,2\}$, then $\supp(v_j)\cap\{0,1\}=\{1\}$ or $\{0,1\}$. By Lemma~\ref{gappy3}, $2\in \supp(v_j)$. 
If $0\in\supp(v_j)$, since $|\braket{v_j}{v_3}|\le1$ by Corollary~\ref{unbreakablepairing}, we have $3\in\supp(v_j)$. Thus $|v_j|\ge5$. Since $\braket{x_0}{v_j}=2$,  $x_{z_j}=x_1$. By Corollary~\ref{Cor:ZjDistinct} and Lemma~\ref{lem:k1=3 intervals}, $x_{z_j}\ne x_{z_3}$. So \[5=|v_1|\ge|x_{z_j}|+|x_{z_3}|-2\ge5+1,\]
a contradiction.

We have shown that $0\notin\supp(v_j)$. If $3\notin\supp(v_j)$, then $j>4$ and $|v_j|\ge4$. As $\braket{v_j}{v_3}=1$, using Corollary~\ref{Cor:ZjDistinct}, $[v_j]$ and $[v_3]$ are consecutive. By Lemma~\ref{lem:k1=3 intervals} and the fact that  $\braket{v_j}{v_2}=0$ we conclude that $[v_j]\subset[v_1]$. Since $\braket{v_j}{x_0}=0$, $[v_1]$ contains at least three high weight vertices: $x_1, x_{z_j}, x_{z_3}$. This is impossible as $|v_1|=5$ and $|v_j|\ge4$.

Now we have $\supp(v_j)\cap\{0,1,2,3\}=\{1,2,3\}$, so $\braket{v_j}{v_3}=0$. By  Lemma~\ref{lem:j-1}, $|v_j|\ge5$ unless $j=4$. By Lemma~\ref{lem:k1=3 intervals} and the fact that  $\braket{v_j}{v_1}\ne0$ we conclude that $[v_j]\subset[v_1]$. So $[v_1]$ contains at least two high weight vertices: $x_{z_j}, x_{z_3}$. It follows that $|v_j|\le4$. So $j=4$ and $|v_4| = e_1+e_2 + e_3-e_4$. Since $|v_4|=4$, $[v_1]$ contains exactly two high weight vertices, so $x_1$ must be $x_{z_4}$. So $\braket{v_4}{x_0}\ne0$, which is not possible. This shows that $\braket{v_j}{v_1}\in\{0,2\}$.

If $\min\supp(v_j)<2$, then $\braket{v_j}{v_1}\ne0$. We must have $\braket{v_j}{v_1}=2$, so $j=4$ and $v_4 = e_0 + e_3-e_4$ by Corollary~\ref{cor:k1=3 intervals}.
\end{proof}

\begin{lemma}\label{lem:NoSingle}
Let $v_j$ be a vector such that $j>4$, $j\neq k_3$. Then $\supp(v_j)\cap\{0,1,2,3\}\ne\{2\}$ or $\{3\}$.
\end{lemma}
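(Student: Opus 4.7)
The plan is to argue by contradiction in each of the two cases. Suppose such a $v_j$ exists; by Lemma~\ref{lem:tightvector} only $v_1$ is tight, so Lemma~\ref{lem:BrIsTight} forces $v_j$ to be unbreakable. A direct computation with $v_1=2e_0-e_1$, $v_2=e_1-e_2$, and $v_3=e_0+e_2-e_3$ yields $(\braket{v_j}{v_1},\braket{v_j}{v_2},\braket{v_j}{v_3})=(0,-1,1)$ when $\supp(v_j)\cap\{0,1,2,3\}=\{2\}$ (Case A), and $(0,0,-1)$ when $\supp(v_j)\cap\{0,1,2,3\}=\{3\}$ (Case B).

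Before analyzing $[v_j]$, I would pin down the geometry of $v_1,v_2,v_3$. Since $v_3$ is unbreakable with $|v_3|=3$ and $\braket{v_3}{x_0}=0$, Lemma~\ref{Lem:IntervalNorm} forces $[v_3]=\{x_h\}$ for a unique high-weight vertex with $|x_h|=3$ (the alternative $\{x_0,x_1\}$ would give $\braket{v_3}{x_0}=\pm2$). Analogously $[v_2]=\{x_d\}$ with $|x_d|=2$, and Lemma~\ref{lem:k1=3 intervals} then pins the right endpoint of $[v_1]$ at $h$ and forces $d=h+1$. Corollary~\ref{Cor:ZjDistinct} gives $z_j\ne h$ when $|v_j|\ge3$, so $x_h\notin[v_j]$ in every case.

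In Case A, Corollary~\ref{unbreakablepairing} forces $[v_j]$ to abut both $[v_3]$ and $[v_2]$; together with $x_h\notin[v_j]$, the only configuration compatible with both is $[v_j]=\{x_{h+1},\ldots,x_m\}$. The subcase $m=h+1$ would give $v_j=\pm v_2$, impossible, so $m>h+1$; but then $[v_j]$ is consecutive with $[v_1]$, and Lemma~\ref{intervalproduct} yields $\braket{v_j}{v_1}=-\epsilon_j\epsilon_1\ne0$, a contradiction. In Case B, $[v_j]$ abuts $[v_3]$; if $[v_j]$ started at $h+1$ then $[v_2]=\{x_{h+1}\}$ would abut $[v_j]$ contradicting $\braket{v_j}{v_2}=0$, so $[v_j]$ must end at $h-1$, giving $[v_j]=\{x_c,\ldots,x_{h-1}\}$ for some $c$.

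The main obstacle is Case B, which I would split on $|v_j|$. If $|v_j|=2$ then $v_j=e_3-e_j$ requires $\sigma_j=\sigma_3=3$; by monotonicity and $j>4$, $\sigma_4=\cdots=\sigma_{j-1}=3$, and the maximality clause of Definition~\ref{stbasis} selects $A=\{j-1\}$ (since $2^{j-1}>2^3$), giving $v_j=e_{j-1}-e_j$ instead, contradicting $\supp(v_j)\cap\{0,1,2,3\}=\{3\}$. If $|v_j|\ge3$, I would translate $\braket{v_j}{v_1}=0$ into $|[v_1\cap v_j]|=\delta([v_1],[v_j])$ via Lemma~\ref{intervalproduct}, write $[v_1]=\{x_a,\ldots,x_h\}$ with $a\in\{0,1\}$ (forced by $\braket{v_1}{x_0}=2$ together with $|v_1|=5$), and branch on $c$ versus $a$: the cases $a=c$ force $|[v_1\cap v_j]|=1$, impossible; the cases $\{a,c\}=\{0,1\}$ give $\delta=3$ via Lemma~\ref{lem:delta} and, by Lemma~\ref{Lem:IntervalNorm}, force $|x_1|=3$ with $|x_2|=\cdots=|x_{h-1}|=2$, producing $|v_1|=4\ne5$; and the case $c\ge2$ forces $[v_j]$ to consist of norm-$2$ vertices only, collapsing $|v_j|$ to $2$ and again contradicting $|v_j|\ge3$.
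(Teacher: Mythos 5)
Your opening setup is correct (the pairings $(\braket{v_j}{v_1},\braket{v_j}{v_2},\braket{v_j}{v_3})$ in Cases~A and~B check out, and $v_1,v_2,v_3$ are as you state), but there is a genuine gap in the geometric step that everything else rests on: the claim that $[v_3]=\{x_h\}$ is a singleton, and likewise that $[v_2]$ is the single vertex $x_{h+1}$. By Lemma~\ref{Lem:IntervalNorm}, $|[I]|=2+\sum_{x_i\in I\setminus\{x_0\}}(|x_i|-2)$, so adjoining norm-$2$ vertices to an interval does not change its norm. Consequently, an interval of norm $3$ disjoint from $\{x_0,x_1\}$ is \emph{any} interval $\{x_c,\dots,x_h\}$ with $c\ge 2$ containing exactly one norm-$3$ vertex and arbitrarily many norm-$2$ vertices; the ``alternative $\{x_0,x_1\}$'' you rule out is just one of infinitely many shapes. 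Similarly, an interval of norm $2$ is any run of norm-$2$ vertices, not necessarily a single one. Lemma~\ref{lem:k1=3 intervals} only gives that $[v_3]\subset[v_1]$ with a shared right endpoint $h$ and that $[v_2]$ abuts that right endpoint; it does not pin $z_3=h$, nor does it force $[v_2]=\{x_{h+1}\}$ (the alternative $[v_2]\prec[v_1]$ with $[v_2]$ ending at $x_h$, $|x_h|=2$, $z_3<h$, is not excluded).

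This error propagates into both halves of your argument. In Case~A the conclusion ``the only configuration compatible with both is $[v_j]=\{x_{h+1},\dots,x_m\}$'' uses $x_h\notin[v_j]$ \emph{together with} the assumed positions of $[v_2]$ and $[v_3]$; once $[v_3]$ is allowed to stretch leftwards and $[v_2]$ to sit at $x_h$, other configurations reappear and must be ruled out. In Case~B the step ``if $[v_j]$ started at $h+1$ then $[v_2]=\{x_{h+1}\}$ would abut $[v_j]$'' again invokes the unjustified placement of $[v_2]$, and the subsequent branching on $c$ versus $a$ is built on the (now unproved) identity $[v_j]=\{x_c,\dots,x_{h-1}\}$. (A smaller point: the $|v_j|=2$ subcase of Case~B is vacuous, since in both cases $\{k,j-1,j\}\subset\supp(v_j)$ for $k\in\{2,3\}$ and $j>4$ already forces $|v_j|\ge 3$.)

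The paper's argument avoids all of this: it never identifies $[v_3]$ as a singleton. Instead it shows directly that $[v_j]$ abuts the \emph{left} endpoint of $[v_3]$ (abutting the right endpoint would make $[v_j]$ abut $[v_1]$, hence $\braket{v_j}{v_1}\neq 0$ by Lemma~\ref{lem:PairingNonzero}, contradicting $\braket{v_j}{v_1}=0$), deduces $[v_j]\subset[v_1]$, and then uses Lemma~\ref{intervalproduct} together with $|v_j|\ge 3$, $\delta\le 3$ to force $|v_j|=\delta([v_j],[v_1])=3$; finally, $\delta=3$ makes $[v_1]$ the union of $[v_j]$, $[v_3]$, and $\{x_0\}$, giving $|v_1|=4$ and contradicting $|v_1|=5$. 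To repair your proof you would need to drop the singleton claims and instead argue with the high-weight vertex $x_{z_3}$ (which is what $z_j\ne z_3$ actually controls) rather than with the right endpoint $x_h$ of $[v_3]$.
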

\begin{proof}
Assume that $\supp(v_j)\cap\{0,1,2,3\}$ contains only one element which is $2$ or $3$.
Then $|v_j|\ge3$, $\braket{v_{j}}{v_3}\ne0$ while $\braket{v_{j}}{v_1}=0$. By Lemma~\ref{lem:k1=3 intervals}, $[v_{j}]$ abuts the left endpoint of $[v_3]$, so $[v_j]\subset[v_1]$. Since $|v_j|\ge3$ and $\delta([v_j],[v_1])\le3$, using Lemma~\ref{intervalproduct}, we get that $\braket{v_{j}}{v_1}\ne 0$ unless $|v_j|=\delta([v_j],[v_1])=3$. However, if $\delta([v_j],[v_1])=3$, $[v_1]$ is contained in the union of $[v_j],[v_3]$ and $\{x_0\}$. Since $|v_j|=|v_3|=3$, we have $|v_1|=4$, a contradiction.
\end{proof}

\begin{lemma}\label{lem:k1=3 sigma3}
		$\sigma_4 \in \{3,4,5\}$. Furthermore, if $\sigma_4 = 3$ then $[v_4]$ abuts the left endpoint of $[v_3]$. If $\sigma_4 = 4$ then $[v_4]$ and $[v_1]$ share their left endpoint.
\end{lemma}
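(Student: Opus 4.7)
The plan is to determine $\sigma_4$ by combining the changemaker bound with the structural lemmas from Section~\ref{sec:Case1}, and then to extract the geometric data from inner product calculations. Since $\sigma$ is a changemaker and $\sigma_3\le\sigma_4$, one has $\sigma_4\in\{3,4,\dots,9\}$. For each candidate value, the maximality criterion of Definition~\ref{stbasis} yields a unique $v_4$: working these out using $(\sigma_0,\sigma_1,\sigma_2,\sigma_3)=(1,2,2,3)$ produces $v_4=e_3-e_4$, $e_0+e_3-e_4$, $e_2+e_3-e_4$, $e_0+e_2+e_3-e_4$, $e_1+e_2+e_3-e_4$, $e_0+e_1+e_2+e_3-e_4$, and $2e_0+e_1+e_2+e_3-e_4$ respectively.

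To rule out $\sigma_4\in\{6,7,8,9\}$ I argue as follows. For $\sigma_4=6$ the set $A=\{0,2,3\}$ makes $1$ a gappy index for $v_4$, which is forbidden by Lemma~\ref{gappy3} because $|v_2|=2$. For $\sigma_4=7$ and $\sigma_4=8$, direct computation against $v_1=2e_0-e_1$ gives $\braket{v_4}{v_1}\in\{-1,1\}$, contradicting the dichotomy $\braket{v_4}{v_1}\in\{0,2\}$ guaranteed by Lemma~\ref{lem:v1Orth}. Finally $\sigma_4=9$ would make $v_4$ tight, but Lemma~\ref{lem:tightvector} says that $v_1$ is the only tight standard basis vector once $k_1>1$. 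This leaves exactly $\sigma_4\in\{3,4,5\}$.

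For the geometric claims, consider first $\sigma_4=3$. Then $|v_4|=2$, $\braket{v_4}{v_3}=-1$ and $\braket{v_4}{v_i}=0$ for $i=1,2$; moreover Lemma~\ref{Lem:X0Odd} forces $\braket{v_4}{x_0}=0$ (which in particular gives $k_2>4$). Hence $[v_4]$ abuts $[v_3]$. Since $[v_2]$ abuts the right endpoint of $[v_3]$ by Lemma~\ref{lem:k1=3 intervals}, if $[v_4]$ abutted the right endpoint of $[v_3]$ it would also abut $[v_2]$, forcing $\braket{v_4}{v_2}\ne 0$ by Lemma~\ref{lem:PairingNonzero}, a contradiction. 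So $[v_4]$ must abut the left endpoint of $[v_3]$. For $\sigma_4=4$, a direct computation gives $\braket{v_4}{v_1}=2$ and $\braket{v_4}{v_2}=0$. By Corollary~\ref{cor:k1=3 intervals} together with the reasoning of its proof, $\epsilon_4=\epsilon_1$, $[v_4]\subset[v_1]$, and $\delta([v_4],[v_1])=1$, so the two intervals share exactly one endpoint. If this common endpoint were the right one, then by Lemma~\ref{lem:k1=3 intervals} the interval $[v_4]$ would abut $[v_2]$, forcing $\braket{v_4}{v_2}\ne 0$ via Lemma~\ref{lem:PairingNonzero}, a contradiction. Hence $[v_4]$ and $[v_1]$ share the left endpoint. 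The main bookkeeping hurdle is the case enumeration in the first step; the endpoint identification in both geometric statements is then resolved uniformly by the vanishing of $\braket{v_4}{v_2}$.
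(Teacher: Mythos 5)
Your overall strategy (enumerate $\sigma_4\in\{3,\dots,9\}$, compute the corresponding $v_4$, and eliminate the large values) is sound, and most cases are handled correctly: $\sigma_4\in\{7,8\}$ is ruled out by Lemma~\ref{lem:v1Orth} since $\braket{v_4}{v_1}\in\{-1,1\}$, $\sigma_4=9$ by Lemma~\ref{lem:tightvector}, and the two geometric claims via $\braket{v_4}{v_2}=0$ and Lemma~\ref{lem:k1=3 intervals} give a valid (and slightly different) alternative to the paper's argument, which for $\sigma_4=3$ uses $\braket{v_4}{v_1}=0$ and the shared right endpoint of $[v_3]$ and $[v_1]$ instead.

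However, the treatment of $\sigma_4=6$ is wrong. With $A=\{0,2,3\}$ and $v_4=e_0+e_2+e_3-e_4$, the gappy index (the index \emph{in} $A$ that is followed by a gap) is $0$, not $1$. Lemma~\ref{gappy3} would forbid $0$ from being a gappy index only if $|v_1|=2$, but here $v_1=2e_0-e_1$ has $|v_1|=5$, so the lemma does not apply and your cited contradiction does not exist. To close the gap, note instead that $\min\supp(v_4)=0<2$ while $v_4\neq e_0+e_3-e_4$, contradicting the second assertion of Lemma~\ref{lem:v1Orth}; equivalently, $\braket{v_4}{v_1}=2$ forces $v_4=e_0+e_3-e_4$ by Corollary~\ref{cor:k1=3 intervals}, which it is not. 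This is essentially how the paper proceeds: rather than enumerating all seven candidate values of $\sigma_4$, it splits on $\min\supp(v_4)<2$ versus $\min\supp(v_4)\ge 2$, letting Lemma~\ref{lem:v1Orth} handle the first case and Lemma~\ref{lem:j-1} the second, which avoids the case-by-case bookkeeping where your slip occurred.
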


\begin{proof}
If  $\min\supp(v_4)<2$,  using Lemma~\ref{lem:v1Orth},  $\sigma_4=4$. By Lemma~\ref{lem:j-1}, if $\min\supp(v_4)\ge2$, $v_4=e_2+e_3-e_4$ or $e_3-e_4$. So $\sigma_4=5$ or $3$.

When $\sigma_4=3$, $[v_4]$ abuts $[v_3]$ and $\braket{v_4}{v_1}=0$. By Lemma~\ref{lem:k1=3 intervals}, $[v_4]$ abuts the left endpoint of $[v_3]$.
When $\sigma_4 = 4$, $\braket{v_4}{v_1}=2=\braket{v_4}{x_0}$. So $\delta([v_4],[v_1])=1$ by Lemma~\ref{intervalproduct}. Thus  $[v_4]$ and $[v_1]$ share their left endpoint by Lemma~\ref{lem:k1=3 intervals}.
\end{proof}

\begin{prop}\label{prop1:k1=3}
If $\sigma_4=3$, the initial segment $(\sigma_0, \cdots, \sigma_{k_3})$ of $\sigma$ is $(1,2,2,3,3,7)$. 
\end{prop}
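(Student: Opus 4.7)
The plan is to determine the initial segment step by step, starting by pinning down $v_4$. From the proof of Lemma~\ref{lem:k1=3 sigma3}, the hypothesis $\sigma_4 = 3$ combined with the definition of the standard basis forces $v_4 = e_3 - e_4$. Applying Lemma~\ref{Lem:X0Odd} together with the description $x_0 = e_0 + e_3 + e_{k_2} - e_{k_3}$ in Equation~\eqref{x0k1=3}, we have $\braket{x_0}{v_4} = 1 - \braket{x_0}{e_4}$, which must be even. So $4 \in \supp(x_0)$, and since $k_1 = 3 < k_2 < k_3$ this forces $k_2 = 4$. The orthogonality $\braket{\sigma}{x_0} = 0$ then yields $\sigma_{k_3} = \sigma_0 + \sigma_3 + \sigma_4 = 7$. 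It remains to prove that $k_3 = 5$, i.e.\ that no intermediate $\sigma_5$ exists.

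Assuming for contradiction $k_3 > 5$, the value $\sigma_5$ lies in $\{3, 4, 5, 6\}$, and in each case Definition~\ref{stbasis} (selecting the subset $A \subset \{0,1,2,3,4\}$ that both sums to $\sigma_5$ and maximizes $\sum_{i \in A} 2^i$) determines $v_5$ uniquely. If $\sigma_5 = 3$, then $v_5 = e_4 - e_5$ and $\braket{x_0}{v_5} = 1$ is odd, contradicting Lemma~\ref{Lem:X0Odd}. If $\sigma_5 = 4$, then $v_5 = e_0 + e_4 - e_5$ and $\braket{v_5}{v_1} = 2$, contradicting Corollary~\ref{cor:k1=3 intervals}, which would force the index to be $4$. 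If $\sigma_5 = 5$, then $v_5 = e_2 + e_4 - e_5$ and again $\braket{x_0}{v_5} = 1$ is odd. So all three of these cases fall to a direct parity or pairing contradiction.

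The main obstacle is the remaining case $\sigma_5 = 6$, where $v_5 = e_3 + e_4 - e_5$ has $\braket{x_0}{v_5} = 2$, a parity-legal value that is not immediately excluded. To rule it out I plan to exhibit a claw in the intersection graph. Direct computation of the pairings among $\{v_1, v_3, v_4, v_5\}$ gives $\braket{v_3}{v_1} = 2$, $\braket{v_3}{v_4} = -1$, $\braket{v_3}{v_5} = -1$, and $\braket{v_1}{v_4} = \braket{v_1}{v_5} = \braket{v_4}{v_5} = 0$. The intervals $[v_1]$ and $[v_3]$ share their right endpoint by Lemma~\ref{lem:k1=3 intervals}, so they abut. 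The vectors $v_3, v_4, v_5$ are unbreakable of norm at most $3$, so Corollary~\ref{unbreakablepairing} promotes the nonzero pairings $\braket{v_3}{v_4}$ and $\braket{v_3}{v_5}$ into abutments of $[v_3]$ with $[v_4]$ and $[v_5]$. Conversely, Lemma~\ref{lem:PairingNonzero} together with the three vanishing pairings precludes any abutment among $[v_1], [v_4], [v_5]$. Thus $(v_3; v_1, v_4, v_5)$ is a claw in $G(\{v_1, v_3, v_4, v_5\})$, contradicting Lemma~\ref{claw}. Hence $k_3 = 5$ and $\sigma_5 = \sigma_{k_3} = 7$, giving the initial segment $(1, 2, 2, 3, 3, 7)$ as claimed.
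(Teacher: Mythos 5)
Your argument is correct in overall structure and, for the cases you do consider, in every detail, but it departs from the paper's route: the paper first pins down the support of a hypothetical $v_5$ using the structural lemmas established specifically for the $k_1=3$ case (Lemma~\ref{lem:v1Orth} gives $\min\supp(v_5)\ge 2$, Lemma~\ref{lem:NoSingle} then forces $2\in\supp(v_5)$, and parity forces $3\in\supp(v_5)$), and then derives a contradiction from the interval containment $[v_1]\subset [v_5]\cup [v_2]\cup\{x_0\}$, which would force $|v_1|=4$. Your alternative is a direct enumeration of $\sigma_5$, which is a perfectly reasonable and arguably more transparent plan; the claw argument $(v_3; v_1, v_4, v_5)$ in the $\sigma_5 = 6$ case is a nice substitute for the paper's norm count.

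There is one genuine gap: your enumeration omits $\sigma_5 = 7$. Since the changemaker is merely non-decreasing, the constraint is $\sigma_4 = 3 \le \sigma_5 \le \sigma_{k_3} = 7$, so $\sigma_5\in\{3,4,5,6,7\}$, not $\{3,4,5,6\}$. Fortunately the missing case falls to the same parity argument you already use for $\sigma_5\in\{3,5\}$: for $\sigma_5=7$ the maximizing subset of $\{0,\dots,4\}$ is $A=\{0,3,4\}$, giving $v_5 = e_0 + e_3 + e_4 - e_5$, and then $\braket{x_0}{v_5} = 3$ is odd, violating Lemma~\ref{Lem:X0Odd}. (Alternatively, $\sigma_5 = \sigma_{k_3}$ would produce the norm-two vector $e_5 - e_{k_3}\in(\sigma)^\perp$ pairing nontrivially with $x_0$, contradicting Lemma~\ref{Lem:NoV2}.) With that case added, the proof is complete.
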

\begin{proof}
Suppose that $\sigma_4=3$ (see Lemma~\ref{lem:k1=3 sigma3}). This implies that $k_2=4$ (Lemma~\ref{Lem:X0Odd}). Using Equation~\eqref{x0k1=3}, we get that $\sigma_{k_3}=7$. If $k_3\not = 5$, using Lemma~\ref{Lem:X0Odd} and  Lemma~\ref{lem:v1Orth}, we must have $S_5\supset\{3,4\}$. By Lemma~\ref{lem:NoSingle}, we have $2\in S_5$, so $\braket{v_5}{x_0}=2$ and $v_5\sim v_2$. By Lemma~\ref{lem:k1=3 intervals}, $[v_1]$ is contained in the union of $x_0,[v_5],[v_2]$. So $|v_1|=|v_5|=4$, which is not possible.
\end{proof}

\begin{prop}\label{prop2:k1=3}
If $\sigma_4\neq 3$, the initial segment $(\sigma_0, \cdots, \sigma_{k_3})$ of $\sigma$ is $(1,2,2,3,4^{[s]},4s+5, 4s+9)$, $s\ge0$. 
\end{prop}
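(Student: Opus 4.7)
The plan splits into the two sub-cases allowed by Lemma~\ref{lem:k1=3 sigma3}: $\sigma_4 = 5$ corresponds to $s = 0$ in the target formula, while $\sigma_4 = 4$ will correspond to $s \ge 1$.

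For $\sigma_4 = 5$ we have $v_4 = e_2+e_3-e_4$. The parity obstruction from Lemma~\ref{Lem:X0Odd} applied to $\braket{v_4}{x_0}$ forces $k_2 = 4$ (any $k_2 > 4$ would give $\braket{v_4}{x_0}=1$), and then the identity $\sigma_{k_3} = \sigma_0+\sigma_{k_1}+\sigma_{k_2}$ gives $\sigma_{k_3} = 9$. To conclude $k_3 = 5$, I will rule out each intermediate value $\sigma_5 \in \{5,6,7,8\}$: the values $5$ and $7$ make $\braket{v_5}{x_0}$ odd; $\sigma_5 = 6$ forces $v_5 = e_0+e_4-e_5$ by the max-$\sum 2^i$ rule of Definition~\ref{stbasis}, giving $\braket{v_5}{v_1}=2$ and so violating Corollary~\ref{cor:k1=3 intervals}; and $\sigma_5 = 8$ produces $v_5 = e_3+e_4-e_5$, an unbreakable norm-$3$ vector with $\braket{v_5}{x_0}=2$, contradicting Corollary~\ref{x0pairing} since $v_4$ is also unbreakable with $\braket{v_4}{x_0}=2$.

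For $\sigma_4 = 4$, the standard basis vector is $v_4 = e_0+e_3-e_4$. Let $s \ge 1$ be maximal with $\sigma_4 = \cdots = \sigma_{s+3} = 4$; the max-$\sum 2^i$ convention then gives $v_j = e_{j-1}-e_j$ for $5 \le j \le s+3$. To pin down $\sigma_{s+4}$, I decompose $\supp^+(v_{s+4}) = A_0 \sqcup A_1$ with $A_0 \subset \{0,1,2,3\}$ and $A_1 \subset \{4,\ldots,s+3\}$; since $|v_j| = 2$ for $5 \le j \le s+3$, Lemma~\ref{gappy3} forces $A_1$ to be a tail $\{i,\ldots,s+3\}$ or empty. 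Combining (i) Lemma~\ref{lem:v1Orth} with Corollary~\ref{cor:k1=3 intervals} (which together force $A_0 \cap \{0,1\} = \emptyset$, since $s+4 > 4$ rules out $\braket{v_{s+4}}{v_1}=2$), (ii) Lemma~\ref{lem:NoSingle} eliminating $A_0 \in \{\{2\},\{3\}\}$, and (iii) the parity of $\braket{v_{s+4}}{x_0}$ requiring exactly one of $\{0,3\}$ to lie in $A_0$, the only surviving possibility is $A_0 = \{2,3\}$. A direct comparison of $\sum 2^i$ among decompositions of $\sigma_{s+4}$ of the shape $5+4k$ then shows that the maximal $A$ would be $\{0\}$ together with a longer tail whenever $k < s$, contradicting the exclusion of $0$ from $A$; hence $k=s$, giving $A_1 = \{4,\ldots,s+3\}$, $v_{s+4} = e_2+e_3+\cdots+e_{s+3}-e_{s+4}$, $\sigma_{s+4} = 4s+5$, and $k_2 = s+4$, so $\sigma_{k_3} = 4s+9$.

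It remains to show $k_3 = s+5$, which I will do by ruling out each value $\sigma_{s+5} \in \{4s+6, 4s+7, 4s+8\}$ in parallel with the $\sigma_4 = 5$ analysis: $\sigma_{s+5} = 4s+6$ gives $v_{s+5} = e_0+e_{s+4}-e_{s+5}$ and so $\braket{v_{s+5}}{v_1}=2$, contradicting Corollary~\ref{cor:k1=3 intervals}; $\sigma_{s+5} = 4s+7$ makes $\braket{v_{s+5}}{x_0}$ odd; and $\sigma_{s+5} = 4s+8$ produces $v_{s+5} = e_3+e_{s+4}-e_{s+5}$, a second unbreakable norm-$3$ standard basis vector with $\braket{v_{s+5}}{x_0}=2$, again contradicting Corollary~\ref{x0pairing}. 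The main obstacle throughout is the bookkeeping around the max-$\sum 2^i$ selection in Definition~\ref{stbasis}: for each hypothetical value of $\sigma_{s+4}$ or $\sigma_{s+5}$, one has to identify the actual support of the resulting standard basis vector in order to see whether the parity, pairing, or combinatorial obstructions engage.
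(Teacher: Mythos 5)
Your overall strategy --- splitting by $\sigma_4 \in \{4,5\}$ and controlling the support of $v_{s+4}$ and $v_{s+5}$ via parity and the max-$\sum 2^i$ rule instead of the paper's claw arguments --- is a reasonable alternative, but it has two genuine gaps.

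\textbf{Gap 1} (the $\sigma_4 = 5$ case, ruling out $\sigma_5 = 8$). You claim $v_4$ is unbreakable with $\braket{v_4}{x_0} = 2$, so that $v_5 = e_3 + e_4 - e_5$ contradicts Corollary~\ref{x0pairing}. But here $v_4 = e_2+e_3-e_4$, $k_2 = 4$, and $x_0 = e_0+e_3+e_4-e_{k_3}$, which gives $\braket{v_4}{x_0} = 1 - 1 = 0$. In fact the only standard basis element with nonzero pairing with $x_0$ in this subcase is the tight vector $v_1$, and $v_1$ is breakable (by Lemma~\ref{lem:k1=3 intervals}, $[v_3] \subset [v_1]$ with $z_3 > 1$, so $[v_1]$ contains both $x_1$ and $x_{z_3}$). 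Thus Corollary~\ref{x0pairing} gives nothing. The obstruction you actually want is Lemma~\ref{lem:NoSingle}: one has $\supp(v_5) \cap \{0,1,2,3\} = \{3\}$, which is forbidden for $j = 5 > 4$, $j \neq k_3$.

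\textbf{Gap 2} (the $\sigma_4 = 4$ case, determining $A_0$). Step (iii) is circular. The full parity constraint from Lemma~\ref{Lem:X0Odd} reads $[0 \in A_0] + [3 \in A_0] \equiv [k_2 = s+4] \pmod 2$, and at this point only $k_2 \geq s+4$ is known --- the equality $k_2 = s+4$ is part of what is being proved. Consequently the case $A_0 = \emptyset$ together with $k_2 > s+4$ is parity-consistent, and (for $s \geq 2$) it also survives your max-$\sum 2^i$ argument: when $\sigma_{s+4} = 4m$ with $2 \leq m \leq s$, the lexicographically maximal decomposition is indeed a pure tail $\{s+4-m,\dots,s+3\}$, with nothing in $\{0,1,2,3\}$. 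This case must instead be eliminated the way the paper does, via claws: a claw $(v_i, v_{i-1}, v_{s+4}, v_{i+1})$ when $4 < i = \min\supp(v_{s+4}) < s+3$, and a claw $(v_4, x_0, v_{s+4}, v_5)$ when $i = 4$. Only after $A_0 = \{2,3\}$ and $v_{s+4}$ are pinned down can one read off $k_2 = s+4$ from the parity of $\braket{v_{s+4}}{x_0}$, as you do at the end of that paragraph.

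Your remaining steps --- the enumeration for $\sigma_5 \in \{5,6,7\}$, the use of Lemma~\ref{lem:v1Orth} with Corollary~\ref{cor:k1=3 intervals} to exclude $0,1$ from the support, Lemma~\ref{lem:NoSingle} to exclude $A_0 \in \{\{2\},\{3\}\}$, the $\sum 2^i$ comparison showing $k = s$ once $A_0 = \{2,3\}$ is known, and the elimination of $\sigma_{s+5} \in \{4s+6, 4s+7, 4s+8\}$ --- are all correct, and the $\sigma_{s+5} = 4s+8$ case does work via Corollary~\ref{x0pairing} when $s \geq 1$ because then $\braket{v_4}{x_0} = 2$ genuinely holds.
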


\begin{proof}
Suppose that $\sigma_4\neq 3$ (see Lemma~\ref{lem:k1=3 sigma3}). Furthermore, let $s\ge 0$ satisfy that $\sigma_i=4$ for any $4\le i < s+4$, and that $\sigma_{s+4}>4$. We have $k_2\ge s+4$ by Lemma~\ref{Lem:X0Odd}.
 Set $j=\min \supp(v_{s+4})<s+3$. Then $j\ge2$ by Lemma~\ref{lem:v1Orth}.  Also, $j \ne 3$ by Lemma~\ref{lem:NoSingle}.
If $4<j<s+3$, we will get a claw $(v_j, v_{j-1}, v_{s+4}, v_{j+1})$, and if $j=4$, the claw will be on $v_4, x_0, v_{s+4}, v_5$. This proves that $j=2$. By Lemma~\ref{lem:NoSingle}, $3\in \supp(v_{s+4})$. 

We will show that $\sigma_{s+4}=4s+5$. If $s=0$, $v_4=e_2+e_3-e_4$, and we are done. If $s>0$, since $2,3\in\supp(v_{s+4})$,
$|v_{s+4}|\ge 4$. Also, $v_{s+4}$ must be orthogonal to $v_4$, as otherwise, using Lemmas~\ref{lem:k1=3 sigma3}~and~\ref{lem:k1=3 intervals}, all the three intervals $[v_4], [v_{s+4}]$, and $[v_3]$ will be subsets of $[v_1]$, which implies that $|v_1|\ge 6$, a contradiction. That is, $4\in \supp(v_{s+4})$. Using Lemma~\ref{gappy3}, $v_{s+4}$ is just right and $\sigma_{s+4}=4s+5$. 

Using Lemma~\ref{Lem:X0Odd}, we see that $k_2=s+4$. By Equation~\eqref{x0k1=3}, we have $\sigma_{k_3}=4s+9$. Note that $k_2\in \supp(v_{k_2+1})$. Since the unbreakable vector $v_4$ has nonzero inner product with $x_0$, using Corollary~\ref{x0pairing}, we get that $k_3=k_2+1$.  
\end{proof}

\begin{prop}\label{1k1=3}
If $(\sigma_0, \cdots, \sigma_{k_3})=(1,2,2,3,3,7)$, then $n+1=k_3$ (i.e. $v_{k_3}$ is the last standard basis vector).
\end{prop}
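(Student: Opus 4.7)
The plan is to argue by contradiction, following the template of Propositions~\ref{1k2=2}~and~\ref{prop1:k1=3}. Assume $v_6$ exists. Since only $v_1$ can be tight (Lemma~\ref{lem:tightvector}), $v_6$ is unbreakable by Lemma~\ref{lem:BrIsTight}. Because $v_1$ already realizes the only nonzero pairing $\braket{v_1}{x_0}=2$ among the unbreakable standard basis vectors, Corollary~\ref{x0pairing} forces $\braket{v_6}{x_0}=0$. Combining $5\in\supp(v_6)$ from Lemma~\ref{lem:j-1} with the parity constraint of Lemma~\ref{Lem:X0Odd} then restricts $S_6':=\supp(v_6)\cap\{0,3,4,5\}$ to one of $\{0,5\}$, $\{3,5\}$, or $\{4,5\}$.

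The first two cases can be dispatched immediately. $S_6'=\{0,5\}$ contradicts Lemma~\ref{lem:v1Orth}, which forces $\min\supp(v_6)\ge 2$ for $j=6\ne 4$. $S_6'=\{3,5\}$ contradicts Lemma~\ref{gappy3}: since $|v_4|=2$, the index $3$ cannot be gappy for any $v_j$, so $3\in\supp(v_6)$ would force $4\in\supp(v_6)$. For $S_6'=\{4,5\}$, Lemma~\ref{lem:v1Orth} together with Lemma~\ref{gappy3} (using $|v_2|=2$ to forbid $1$ from being gappy) yields $\supp(v_6)\subset\{2,4,5,6\}$; the lex-max criterion of Definition~\ref{stbasis} then leaves only two possibilities, $v_6=e_4+e_5-e_6$ (so $\sigma_6=10$) or $v_6=e_2+e_4+e_5-e_6$ (so $\sigma_6=12$).

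To eliminate these two remaining cases I plan to invoke interval geometry. By Lemma~\ref{lem:k1=3 intervals} and Lemma~\ref{lem:k1=3 sigma3}, writing $[v_1]=\{x_0,\ldots,x_r\}$, the interval $[v_3]\subset[v_1]$ shares the right endpoint of $[v_1]$, $[v_2]=\{x_{r+1}\}$ is consecutive on the right, and $[v_4]$ abuts the left endpoint of $[v_3]$. For $v_6=e_4+e_5-e_6$, the only nonzero pairing of $v_6$ with a standard basis vector is $\braket{v_6}{v_4}=-1$; checking $[v_6]$'s abutment/distance against each of $[v_1],[v_2],[v_3],[x_0]$, combined with the dangling-edge count of Lemma~\ref{intervalproduct} needed to enforce $\braket{v_6}{v_1}=0$, pins $[v_6]=\{x_1,\ldots,x_{a-2}\}$, where $x_{a-1}$ is the position of $[v_4]$. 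Lemma~\ref{Lem:IntervalNorm} then forces $a_1=3$ together with $\sum_{i=1}^r(|x_i|-2)=2$, so $|v_1|=4\ne 5$, a contradiction.

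For $v_6=e_2+e_4+e_5-e_6$, the simultaneous nonzero pairings $\braket{v_6}{v_2}=\braket{v_6}{v_4}=-1$ and $\braket{v_6}{v_3}=1$ force the contiguous interval $[v_6]$ to span from the vicinity of $[v_4]$ all the way to $x_{r+1}$; any such $[v_6]$ then contains all of $[v_3]=\{x_a,\ldots,x_r\}$ and in particular the high-weight vertex $x_{z_3}$. Since $v_6$ is unbreakable, $[v_6]$ admits a unique high-weight vertex $x_{z_6}$, and Corollary~\ref{Cor:ZjDistinct} gives $z_6\ne z_3$; having both $x_{z_3}$ and $x_{z_6}$ inside $[v_6]$ is the desired contradiction. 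This last case is the main obstacle: the natural heavy-triple argument with $(v_1,v_3,v_6)$ and Lemma~\ref{heavytriple} only applies when $v_1$ is unbreakable, so one must instead push through the interval-geometry computation just sketched, which is robust to the breakability status of $v_1$.
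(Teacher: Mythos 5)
Your overall strategy matches the paper's: assume $v_6$ exists, constrain $\supp(v_6)$, and derive a contradiction via interval geometry. However, there are two genuine gaps.

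\textbf{First gap: the appeal to Corollary~\ref{x0pairing}.} You write that ``$v_1$ already realizes the only nonzero pairing $\braket{v_1}{x_0}=2$ among the unbreakable standard basis vectors,'' but $v_1$ is in fact \emph{breakable} here. Indeed, $[v_1]$ contains $x_1$ (since $\braket{v_1}{x_0}=2$ forces $x_1\in[v_1]$), and $[v_3]\subset[v_1]$ by Lemma~\ref{lem:k1=3 intervals} with $x_{z_3}\ne x_1$ (as $\braket{v_3}{x_0}=0$). So $[v_1]$ contains two high-weight vertices and Lemma~\ref{lem:TwoIndBr} makes it breakable. You even note this at the end when explaining why the heavy-triple route fails, but the same observation undermines the Corollary~\ref{x0pairing} step: the corollary places no restriction on $v_6$ because $v_1$ does not ``use up'' the slot. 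The conclusion $\braket{v_6}{x_0}=0$ is still true, but it must be derived as the paper does — combining the parity condition of Lemma~\ref{Lem:X0Odd}, $5\in\supp(v_6)$ from Lemma~\ref{lem:j-1}, the gappy constraints of Lemma~\ref{gappy3} (ruling out $3$ as a gappy index), and Lemma~\ref{lem:v1Orth} (which excludes $0$ and $1$ from $\supp(v_6)$, and so in particular rules out the even-pairing possibility $\{0,3,4,5\}$ that your argument never confronts).

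\textbf{Second gap: the interval for $v_6=e_4+e_5-e_6$.} You ``pin'' $[v_6]=\{x_1,\ldots,x_{a-2}\}$, but this is impossible: an interval containing $x_1$ but not $x_0$ has $\braket{[v_6]}{x_0}=-2\ne 0$, contradicting the orthogonality you already established. So the configuration whose norm count you compute cannot occur, and the ``$a_1=3$ hence $|v_1|=4$'' contradiction rests on a false premise. What the geometry actually forces — and what the paper argues directly without splitting on $\sigma_6$ — is $[v_6]\subset[v_1]$: since $[v_4]\subset[v_1]$ (it cannot protrude past the left end of $[v_1]$ without picking up a nonzero pairing with $x_0$), and $[v_6]$ abuts $[v_4]$, any escape of $[v_6]$ to the left gives $\braket{v_6}{x_0}\ne 0$, while an escape to the right forces $[v_6]$ to share the left endpoint with $[v_3]$, giving $|\braket{v_6}{v_3}|\ge 2$. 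Once $[v_6]\subset[v_1]$ with $|v_6|\ge 3$ and $\delta([v_6],[v_1])\le 2$, Lemma~\ref{intervalproduct} gives $\braket{v_6}{v_1}\ne 0$, contradicting Lemma~\ref{lem:v1Orth}. This handles both $\sigma_6$ values uniformly, making the case split unnecessary. Your $\Cref{Cor:ZjDistinct}$ argument for the case $v_6=e_2+e_4+e_5-e_6$ is sound, but it is extra work the paper avoids.
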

\begin{proof}
We claim that the index $k_3+1$ (that is, $6$) does not exist. Using Lemmas~\ref{Lem:X0Odd}, \ref{lem:j-1}, \ref{gappy3}, and~\ref{lem:v1Orth}, $S'_6=\{4, 5\}$. Then $\braket{v_6}{v_4}\not = 0$, and also $v_6$ is orthogonal to $x_0$. Using Lemmas~\ref{lem:k1=3 sigma3}~and~\ref{lem:k1=3 intervals}, we must have $[v_6]\subset [v_1]$ which implies that $\braket{v_6}{v_1}\ne 0$ since $|v_6|\ge 3$. This contradicts Lemma~\ref{lem:v1Orth}.
\end{proof}

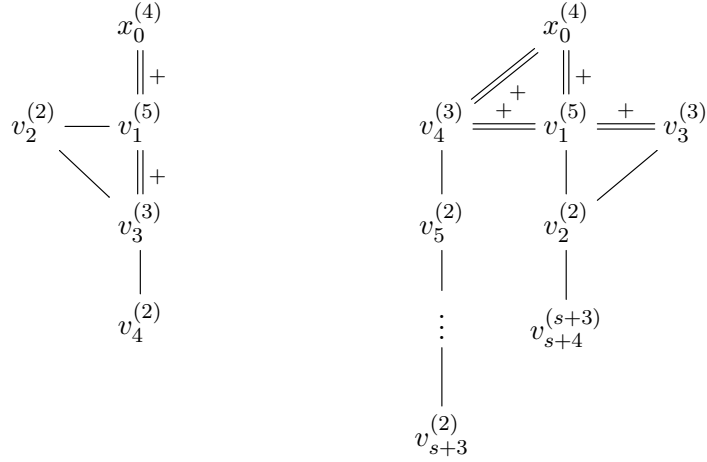
\begin{figure}
	\begin{align*}
	\xymatrix@R1.5em@C1.5em{
		& x_0^{(4)} \ar@{=}[d]^{+} \\
		v_2^{(2)} \ar@{-}[r] \ar@{-}[dr] & v_1^{(5)} \ar@{=}[d]^{+} \\
		& v_{3}^{(3)}\ar@{-}[d]\\
                & v_4^{(2)}\\
	}
	&&
	\xymatrix@R1.5em@C1.5em{
		& x_0^{(4)} \ar@{=}[d]^{+} \ar@{=}[dl]^{+} & \\
		v_4^{(3)}\ar@{=}[r]^{+}\ar@{-}[d] & v_1^{(5)} \ar@{=}[r]^{+} \ar@{-}[d] & v_3^{(3)} \ar@{-}[dl] \\
		v_5^{(2)}\ar@{-}[d] & v_{2}^{(2)} \ar@{-}[d] &  \\
		\vdots & v_{s+4}^{(s+3)} & \\
		v_{s+3}^{(2)}\ar@{-}[u] 
	}
	\end{align*}
	\caption{Pairing graphs when $(\sigma_0, \cdots, \sigma_{k_3})$ is $(1,2,2,3,3,7)$ (left) or $(1,2,2,3,4^{[s]},4s+5, 4s+9)$, $s>0$ (right).}
	\label{fig:case1 1223 G(S)}
\end{figure}

\begin{prop}\label{2k1=3}
		If $(\sigma_0, \cdots, \sigma_{k_3})=(1,2,2,3,4^{[s]},4s+5, 4s+9)$, $s\ge0$, then $v_{s+6}=e_{s+4}+e_{s+5}-e_{s+6}$ if it exists, and $|v_i|=2$ for $i>s+6$. In this case, $\sigma = (1,2,2,3,4^{[s]},4s+5, 4s+9, (8s+14)^{[t]})$, $t\ge0$.
\end{prop}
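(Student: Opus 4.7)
The plan is to adapt the arguments in the proof of Proposition~\ref{lem:k1>1,2t+3}. Suppose $\ell>s+5$ with $S_\ell\ne\emptyset$; the goal is to prove that $\ell=s+6$ and $v_{s+6}=e_{s+4}+e_{s+5}-e_{s+6}$. Once this is done, Lemma~\ref{lem:AllNorm2} applied with $m=s+6$ (valid because $\braket{v_{s+6}}{v_{s+4}}=-1$ makes $v_{s+6}$ neighbor the unbreakable $v_{s+4}$) forces $|v_i|=2$ for all $i>s+6$; the relations $\sigma_{s+6}=\sigma_{s+4}+\sigma_{s+5}=8s+14$ and $\sigma_i=\sigma_{i-1}$ for $i>s+6$ then produce the stated form of $\sigma$.

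First I would pin down $S'_\ell=\supp(v_\ell)\cap\{0,1,s+4,s+5\}$. Every $v_\ell$ with $\ell>1$ is unbreakable by Lemma~\ref{lem:BrIsTight} (since $v_1$ is the only tight vector), and because $\braket{v_4}{x_0}=2$, Corollary~\ref{x0pairing} gives $\braket{v_\ell}{x_0}=0$, which with $x_0=e_0+e_3+e_{s+4}-e_{s+5}$ reads $v_\ell^0+v_\ell^3+v_\ell^{s+4}-v_\ell^{s+5}=0$, where $v_\ell^i$ denotes the $e_i$-coefficient. Lemma~\ref{lem:v1Orth} forces $v_\ell^0=v_\ell^1=0$ (as $\ell\ne 4$), and Lemma~\ref{lem:NoSingle} restricts $\supp(v_\ell)\cap\{0,1,2,3\}$ to $\emptyset$ or $\{2,3\}$. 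Combining these, the candidate tuples $(v_\ell^2,v_\ell^3,v_\ell^{s+4},v_\ell^{s+5})$ reduce to $(0,0,0,0)$ (Case~A), $(1,1,0,1)$ (Case~B), and $(0,0,1,1)$ (Case~C). Case~B is ruled out by $\braket{v_\ell}{v_{s+4}}\ge 2$, contradicting Corollary~\ref{unbreakablepairing}, so $S'_\ell\in\{\emptyset,\{s+4,s+5\}\}$.

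Next I would show $S_\ell=S'_\ell$ by ruling out $m:=\min\supp(v_\ell)<s+4$. Lemma~\ref{gappy3} applied to the norm-$2$ block $v_5,\dots,v_{s+3}$ forces $\{m,m+1,\dots,s+3\}\subset\supp(v_\ell)$ whenever $m\in\{4,\dots,s+3\}$, and the pairing bound $|\braket{v_\ell}{v_{s+4}}|\le 1$ then restricts $m$ to $\{s+2,s+3\}$. In Case~A with $m=s+3$ and Case~C with $m=s+2$, one checks $\braket{v_\ell}{v_{s+4}}=1$, so the triple $(v_4,v_{s+4},v_\ell)$ is heavy in $G(\bar{S'})$ via the pairwise-disjoint paths $v_4\sim v_1\sim v_2\sim v_{s+4}$, the norm-$2$ chain $v_4\sim v_5\sim\cdots\sim v_{s+3}\sim v_\ell$ (or $\cdots\sim v_{s+2}\sim v_\ell$, respectively), and the direct edge $v_{s+4}\sim v_\ell$; this contradicts Lemma~\ref{heavytriple}. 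The remaining Case~C with $m=s+3$ is the delicate one: here $\braket{v_\ell}{v_{s+4}}=0$ and $v_\ell$'s only neighbor in the initial segment of $G(\bar{S'})$ is $v_{s+3}$, so no heavy triple is directly available. Following the interval-propagation strategy of Proposition~\ref{lem:k1>1,2t+3}, Lemmas~\ref{lem:k1=3 sigma3} and~\ref{lem:k1=3 intervals} place $[v_4]$ and $[v_3]$ inside $[v_1]$ sharing its left and right endpoints respectively, and the consecutive norm-$2$ intervals $[v_5],\dots,[v_{s+3}]$ lie immediately to the right of $[v_4]$; since $v_\ell$ abuts $[v_{s+3}]$ but is orthogonal to $x_0$, tracing the position of the high-weight vertex $x_{z_\ell}$ of $[v_\ell]$ through this chain forces $x_{z_\ell}\in[v_1]$, and hence $\braket{v_1}{v_\ell}\ne 0$, contradicting Lemma~\ref{lem:v1Orth}.

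Finally, by Lemma~\ref{lem:j-1} the vector $v_{s+6}$, if it exists, has $s+5\in\supp(v_{s+6})$, so $S_{s+6}=\{s+4,s+5\}$ and $v_{s+6}=e_{s+4}+e_{s+5}-e_{s+6}$; any $\ell>s+6$ with $S_\ell=\{s+4,s+5\}$ would give two distinct unbreakable standard basis vectors of norm $\ge 3$ whose intervals share the same high-weight vertex, contradicting Corollary~\ref{Cor:ZjDistinct}. I expect the main obstacle to be the interval-propagation argument in Case~C with $m=s+3$: unlike the other subcases, neither a cycle nor a claw nor a heavy triple is directly available in $G(\bar{S'})$, so the contradiction must be extracted from the geometry of intervals in $C(p,q)$ rather than from adjacencies in the pairing graph alone.
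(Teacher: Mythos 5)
Your overall skeleton matches the paper's: reduce $S'_\ell$ via the pairing and support constraints, then show $S_\ell=S'_\ell$ by controlling $\min\supp(v_\ell)$, locate $v_{s+6}$, and apply Lemma~\ref{lem:AllNorm2}. However, there are two genuine gaps.

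\textbf{The $s=0$ sub-case is dropped.} You assert ``$\braket{v_4}{x_0}=2$'' so that Corollary~\ref{x0pairing} forces $\braket{v_\ell}{x_0}=0$. This is true only for $s>0$, where $v_4=e_0+e_3-e_4$. For $s=0$ one has $\sigma_4=5$, so $v_4=e_2+e_3-e_4$ and $\braket{v_4}{x_0}=0$ (with $x_0=e_0+e_3+e_4-e_5$). In fact for $s=0$ none of $v_2,v_3,v_4$ pairs nontrivially with $x_0$, so Corollary~\ref{x0pairing} does not preclude $\braket{v_\ell}{x_0}=2$. The candidate tuple $(v_\ell^2,v_\ell^3,v_\ell^{s+4},v_\ell^{s+5})=(1,1,1,0)$ is therefore possible when $s=0$ and must be ruled out by a separate argument (the paper does this via $x_1\in[v_\ell]$ and Lemma~\ref{intervalproduct}); your case analysis omits it entirely.

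\textbf{The heavy-triple argument is invalid because $v_1$ is breakable.} For Case~A with $m=s+3$ (and Case~C with $m=s+2$) you build the heavy triple $(v_4,v_{s+4},v_\ell)$ with the path $v_4\sim v_1\sim v_2\sim v_{s+4}$ in $G(\bar{S'})$. But $v_1$ is \emph{breakable} in this family: by Lemma~\ref{lem:k1=3 sigma3} and Lemma~\ref{lem:k1=3 intervals}, $[v_4]$ shares $[v_1]$'s left endpoint and $[v_3]$ shares its right endpoint, so $[v_1]$ contains the two distinct high-weight vertices $x_{z_4}=x_1$ and $x_{z_3}$ (distinct by Corollary~\ref{Cor:ZjDistinct}), hence is breakable by Lemma~\ref{lem:TwoIndBr}. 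Thus $v_1\notin\bar{S'}$, and once $v_1$ is removed, the only route in the pairing graph from $v_4$ to $v_{s+4}$ passes through $v_\ell$ (since $\braket{v_{s+3}}{v_{s+4}}=0$). So the required path disjoint from $v_\ell$ does not exist, and Lemma~\ref{heavytriple} gives no contradiction. The paper avoids this by using the interval-propagation argument for all $j=s+3$ cases, not just your Case~C.

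\textbf{Minor.} Your final invocation of Corollary~\ref{Cor:ZjDistinct} is misstated. If $\ell>s+6$ with $S_\ell=\{s+4,s+5\}$, then $[v_{s+6}]$ and $[v_\ell]$ do \emph{not} share a high-weight vertex --- that is precisely what Corollary~\ref{Cor:ZjDistinct} forbids, so no contradiction arises from your claim. The correct observation is that $v_{s+4},v_{s+6},v_\ell$ are pairwise adjacent, unbreakable, of norm $\ge 3$, and not $\pm x_0$, yielding a heavy triple $(v_{s+4},v_{s+6},v_\ell)$, which is what the paper uses.
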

	
\begin{proof}
Suppose that $\ell>k_3=s+5$ is an index such that $S_{\ell}\ne\emptyset$. We will prove that $\ell=s+6$ and $S_{\ell}=\{s+4,s+5\}$. This, together with Lemma~\ref{lem:AllNorm2}, will imply our desired result.

By Lemmas~\ref{Lem:X0Odd},~\ref{lem:v1Orth}, and Corollary~\ref{x0pairing},  $S'_{\ell}$ is one of $\emptyset,\{3,4\},\{3,5\}$ and $\{4,5\}$ if $s=0$, and
one of $\emptyset$, $\{ 3, s+5\}$, and $\{s+4,s+5\}$ if $s>0$. Let $j=\min\supp(v_{\ell})$, then $j\ge2$ by Lemma~\ref{lem:v1Orth}.  Also, $j \ne 3$ by Lemma~\ref{lem:NoSingle}.

If $s=0$ and $S'_{\ell}=\{3,4\}$, we have $\braket{v_{\ell}}{x_0}=2$ and $|v_{\ell}|\ge4$, so $x_1\in[v_{\ell}]$. Using Lemma~\ref{intervalproduct}, we get $\braket{v_{\ell}}{v_1}\ne0$, a contradiction.

If $S'_{\ell}=\{ 3, s+5\}$, to avoid $\braket{v_{\ell}}{v_{s+4}}>1$, $2\notin\supp(v_{\ell})$. Thus, $j=3$, which is impossible.

Having proved $S'_{\ell}=\emptyset$ or $\{s+4, s+5\}$, we claim that $S_{\ell}=S_{\ell}'$. First, $j\ne2$ by Lemma~\ref{lem:NoSingle}. So our claim holds when $s=0$. When $s>0$,
if $4\le j<s+3$, we have a claw $(v_j,v_{j-1},v_{j+1},v_{\ell})$. If $j=s+3$, $\braket{v_{\ell}}{v_{s+3}}\neq 0$. By Lemma~\ref{lem:k1=3 sigma3}, $[v_4]$ and $[v_1]$ share their left endpoint. Since $|v_5|=\cdots=|v_{s+3}|=2$ and $v_4\sim v_5 \sim \cdots \sim v_{s+3}$, we have $[v_{\ell}]\subset[v_1]$ by Lemma~\ref{lem:k1=3 intervals}. Thus $\braket{v_{\ell}}{v_1}\ne 0$ by Lemma~\ref{intervalproduct}, a contradiction. So our claim is proved.

Now by Lemma~\ref{lem:j-1}, $s+5\in S_{s+6}$. So $S_{s+6}=\{s+4,s+5\}$ by the results in the previous two paragraphs. If there was $\ell>s+6$ satisfying $S_{\ell}=\{s+4,s+5\}$, we would have a heavy triple $(v_{s+4},v_{s+6},v_{\ell})$. Thus  $S_{\ell}=\emptyset$ whenever $\ell>s+6$.
\end{proof}

\section{Determining $p$ and $q$}\label{pq}
\begin{table}[htb]\centering
\ra{1.2}
\caption{$\mathcal{P}^{+}_{q>p}$, table of $P(p,q)$ that are realizable, $q>p$}\label{table:Types}
   \begin{tabular}{@{}lll@{}} \toprule

    Type & \begin{tabular}{l}$P(p,q)$\end{tabular} &\begin{tabular}{l}Range of parameters  \\
    ($p$ and $r$ are always odd, $p>1$)\end{tabular}\\
    \midrule
    \\
  {\bf 1A}   &\begin{tabular}{l}$P\left(p, \frac{1}{2}(p^2 + 3p + 4)\right)$\end{tabular} & \bigskip  \\ \\ 
  {\bf 1B} &\begin{tabular}{l}$P\left(p, \frac{1}{22}(p^2 + 3p + 4)\right)$\end{tabular} & \begin{tabular}{l}$p\equiv5$ or $3 \pmod{22}$ \\  $p\ne 3, 5$\end{tabular}\\\bigskip \\   
  {\bf 2} &\begin{tabular}{l}$P\left(p, \frac{1}{|4r+2|}(r^2p - 1)\right)$\end{tabular} &\begin{tabular}{l}$r \equiv -1\pmod4$\\
$p \equiv -2r+3\pmod{4r+2}$ \\ $r\ne -5, -1, 3$\end{tabular} \bigskip  \\ \\   
  {\bf 3A} &\begin{tabular}{l}
  $P\left(p, \frac{1}{2r}(p-1)(p-4)\right)$\end{tabular} &\begin{tabular}{l}$p\equiv 1\pmod{2r}$ \\ $p\ne 2r+1$\\ $r  \ge 5$ \end{tabular}\bigskip\\ \\  
  {\bf 3B} &\begin{tabular}{l}$P\left(p, \frac{1}{2r}(p-1)(p-4)\right)$\end{tabular} &\begin{tabular}{l}$p\equiv r+4\pmod{2r}$ \\ $p> r+4$ \\ $r \ge 1$ \end{tabular} \bigskip\\ \\  \bigskip
  {\bf 4} &\begin{tabular}{l}$P\left(p, \frac{1}{2r^2}\left((2r+1)^2p - 1\right)\right)$\end{tabular} &\begin{tabular}{l}$p \equiv -4r+1\pmod{2r^2}$ \\ $r\ne 1,-1$ \end{tabular}\\ \\ \bigskip
  {\bf 5} &\begin{tabular}{l}$P\left(p, \frac{1}{r^2 - 2r - 1}(r^2p - 1)\right)$\end{tabular} &\begin{tabular}{l}$r >  1$\\ $p \equiv -2r + 5\pmod{r^2 - 2r - 1}$ \end{tabular}\\ \\ \bigskip
  {\bf Sporadic} &\begin{tabular}{l}$P(11,19)$, $P(13, 34)$\end{tabular}&\\
  \bottomrule
\end{tabular}

\end{table}

In Sections~\ref{sec:Case2},~\ref{sec:k1k21},~and~\ref{sec:Case1}, we have classfied all the $(n+1)$--dimensional C-type lattices that are isomorphic to changemaker lattices. In the present section, we list all the corresponding prism manifolds $P(p,q)$. To do so, we start with the refined basis $S'=\{v_1, \cdots, v_{n+1}\}\setminus \{v_{k_3}\}\cup \{x_0\}$ as defined in~\eqref{Eq:S'}. The first step is changing the basis into the vertex basis $\{x_0, x_1, \cdots, x_{n}\}$. We then recover the $a_i$ from the norms of vertex basis elements. By using Equation~\eqref{eq:ContFrac}, we obtain $p$ and $q$. 


\begin{ex}
We present an example that clarifies how $(p,q)$ is computed in Proposition~\ref{example}. The changemaker is
\[
(1,1,2^{[s]}, 2s-1,2s+1), s=n-2\ge 2.
\]
Let $S'$ denote the modified standard basis for the changemaker lattice $L= (\sigma)^{\perp}$. It is straightforward to check that 
\[
\{x_0\}\cup\{-v_2, \cdots, -v_{s+1}, v_3+\cdots+v_{s+2}, v_1\}
\]
forms the vertex basis $S^*$. Also, the vertex norms are 
\[
\{3, 2^{[s-1]},s+1, 2\}.
\]
Using Lemma~\ref{greene9.5} together with Equation~\eqref{eq:ContFrac}, we have
\[
\displaystyle \frac{2q-p}{q-p}=[3, 2^{[s-1]},s+1, 2]=\frac{4s^2+3}{2s^2-s+2}.
\]
In particular, $p=2s-1$ and $q=2s^2+s+1$. We see that $q=\frac{1}{2}(p^2+3p+4)$, $p\ge3$.
\end{ex}

Similar computations give prism manifolds $P(p,q)$, with $q>p$, so that each falls into one of the families in Table~\ref{table:Types}. We denote the set of such prism manifolds $\mathcal{P}^+_{q>p}$. Here we divide the families so that each changemaker vector corresponds to a unique family. In some cases there are prism manifolds that correspond to more than one family in Table~\ref{table:Types}. For instance, it is straightforward to check that $P(5,22)$ belongs to both Families~5~and~1A. 
The detailed correspondence between the changemaker vectors and $P(p,q)$ can be found in Table~\ref{BigSummary}. Note that the positive integer $p$ is always odd.

\section{Prism manifolds realizable by surgery on knots in $S^3$}\label{realizable}
Table~\ref{table:Types} gives a list of all prism manifolds $P(p,q)$, with $q>p$, that can possibly be realized by surgery on knots in $S^3$. In~\cite[Table~2]{Prism2016}, a list of realizable prism manifolds $P(p,q)$ with $q>0$ is provided. It is straightforward to verify that the manifolds in Table~\ref{table:Types} coincide with those of \cite[Table~2]{Prism2016} with $q>p$. That is, Table~\ref{table:Types} is a complete list of prism manifolds $P(p,q)$, with $q>p$, arising from surgery on knots in $S^3$. 

\subsection{Prism manifolds corresponding to more than one changemaker vector}

As we pointed out in Section~\ref{pq}, some of the prism manifolds in Table~\ref{table:Types} correspond to distinct changemaker vectors. In this subsection, we address this by providing distinct knots corresponding to such prism manifolds. Our strategy is as follows: let $\sigma$ be a changemaker vector whose orthogonal complement is isomorphic to $C(p,q)$ for some $p$ and $q$. Let $\sigma$ correspond to a knot $K$ in $S^3$ on which surgery results in $P(p,q)$. Using Lemma~\ref{lem:AlexanderComputation}, we compute the Alexander polynomial $\Delta_K(T)$. Then we exhibit a P/SF knot $K_\sigma$ that admits a surgery to $P(p,q)$. By directly computing $\Delta_{K_\sigma}(T)$ we show that the two Alexander polynomials coincide. That is, $K_\sigma$ matches with $\sigma$. See \cite[Section~13.2]{Prism2016}. The parameters beneath the P/SF knots in Table~\ref{Overlap} are explained in~\cite{Prism2016}.    

\subsection{Proof of the main results}

\begin{proof}[Proof of Theorem~\ref{thm:lattice}]
If $C(p,q)$ is isomorphic to a changemaker lattice $L$, then it belongs to one of the families enumerated in Sections~\ref{sec:Case2},~\ref{sec:k1k21},~and~\ref{sec:Case1}. Following Section~\ref{pq}, we can find a pair $(p',q')$ such that $L$ is isomorphic to $C(p', q')$, and $P(p',q')\in \mathcal P^+_{q>p}$. Now, Proposition~\ref{pp} finishes the proof.
\end{proof}

\begin{proof}[Proof of Theorem~\ref{thm:Realization}]
Suppose $P(p,q)\cong S^3_{4q}(K)$, it follows from Theorem~\ref{changemakerlatticeembedding} and Theorem~\ref{thm:lattice} that $P(p,q)$ belongs to one of the six families in Table~\ref{table:Types} and $P(p,q)\cong S^3_{4q}(K_0)$ for some Berge--Kang knot $K_0$. To get the result about $\widehat{HFK}$, we note that $K$ and $K_0$ correspond to the same changemaker vector. Using Lemma~\ref{lem:AlexanderComputation}, we know that $\Delta_K=\Delta_{K_0}$, so $\widehat{HFK}(K)\cong \widehat{HFK}(K_0)$ by \cite[Theorem~1.2]{OSzLens}.
\end{proof}

\begin{table}\centering
\caption{Prism manifolds $P(p,q)$ corresponding to more than one changemaker}
\resizebox{\textwidth}{!} {%
\ra{1.3}

   \begin{tabular}{@{}lllll@{}} \toprule
	
	Prism manifold	& Type & Changemaker & P/SF knot  & Braid word \\ \midrule
		& 
		{\small \begin{tabular}{l}{\bf 4}\end{tabular}} & {\small $(1,2,3,3,7,8^{[s]})$} &\begin{tabular}{l} 
		   {\small {\bf KIST IV}, $s>0$}  \\
		   {\small $(2,-3,-1,0,s+2)$}\\ \\
                    {\small {\bf KIST I}, $s=0$}  \\
		   {\small $(1,3,4,-2,-3)$}
		          \end{tabular} & {\small $(\sigma_7\cdots \sigma_{1})^{8s+23}(\sigma_{13} \cdots \sigma_1)^{-8}$}  \\  {\small \begin{tabular}{l}$P(8s+13,16s+18)$\end{tabular}} &&&& \\ 
	&	 {\small \begin{tabular}{l}{\bf 3A}, $s>0$\\ \\ {\bf 3B}, $s=0$\end{tabular}} & {\small $(1,1,3,5,6,8^{[s]})$} & \begin{tabular}{l}
	
		    {\small {\bf OPT II}} \\
		    {\small $(2,3,0,1,s+1)$}
		    \end{tabular} & \begin{tabular}{l}
		  {\small $(\sigma_7 \cdots \sigma_1)^{8s+11}(\sigma_1 \cdots \sigma_7)^{-2}$}
		    
		    \end{tabular}  \\        
&&&& \\  
		         
&{\small \begin{tabular}{l}{\bf 5}, $s=3$\end{tabular}} & {\small $(1,1,1,3,4,6,10,10)$} & \begin{tabular}{l}{\small {\bf KIST IV}}\\{\small $(2,1,1,-3,2)$}\end{tabular} & {\small $(\sigma_1 \cdots \sigma_{25})^{10}\sigma_{3}\sigma_2\sigma_1$}\\
\bottomrule 

         & {\small \begin{tabular}{l}{\bf 5}\end{tabular}}& {\small $(1,1,1,2,3,6,6)$}
	&\begin{tabular}{l} {\small {\bf KIST IV}} \\ {\small $(2,1,1,-3,1)$} \end{tabular}& {\small $(29,3)$--cable of $T(5,2)$} \\ {\small \begin{tabular}{l}$P(5,22)$\end{tabular}}&&&& \\
& {\small \begin{tabular}{l}{\bf 1A}\end{tabular}} & {\small $(1,1,2,2,2,5,7)$} 
		&\begin{tabular}{l}{\small {\bf TKM II}}\\{\small $(1,2,-1,2,2)$}\end{tabular} &{\small $(\sigma_1 \cdots \sigma_{11})^{7}\sigma_1^2$}\\ \bottomrule

 & {\small \begin{tabular}{l}{\bf 3B}\end{tabular}}& {\small $(1,1,3,5,6,6,6)$}
	&\begin{tabular}{l}{\small {\bf OPT III}}\\{\small $(2,3,0,1,2)$}\end{tabular} &{\small $(\sigma_1 \cdots \sigma_{22})^{6}\sigma_2\sigma_3\sigma_4\sigma_1\sigma_2\sigma_3$} \\ {\small \begin{tabular}{l}$P(25,36)$\end{tabular}}&&&& \\
& {\small \begin{tabular}{l}{\bf 5}\end{tabular}} & {\small $(1,1,1,3,4,4,10)$} 
	&\begin{tabular}{l}{\small {\bf KIST IV}}\\{\small $(2,1,1,-1,3)$}\end{tabular} &{\small $(\sigma_1 \cdots \sigma_{13})^{10}\sigma_1\sigma_2\sigma_3$}\\ \bottomrule

 & {\small \begin{tabular}{l}{\bf 3A}\end{tabular}}& {\small $(1,1,2,5,7,10,12,12)$}
	&\begin{tabular}{l}{\small {\bf OPT II}}\\{\small $(2,5,0,1,3)$}\end{tabular} &{\small $(\sigma_1 \cdots \sigma_{40})^{12}\left(\sigma_{1}\cdots\sigma_{11}\right)^{-2}$}\\ {\small \begin{tabular}{l}$P(43,117)$\end{tabular}}&&&& \\
& {\small \begin{tabular}{l}{\bf 4}\end{tabular}} & {\small $(1,1,2,3,5,6,14,14)$} 
		& \begin{tabular}{l}{\small {\bf KIST IV}}\\{\small $(2,-3,1,-3,1)$}\end{tabular} &{\small $(\sigma_1 \cdots \sigma_{33})^{14}\left(\sigma_{7}\cdots\sigma_1\right)^{-1}$}\\ \bottomrule
\end{tabular}%
}
\label{Overlap}

\end{table}

\clearpage

\begin{table}\centering
\caption{C--type changemakers and the corresponding prism manifolds, Part I}
\resizebox{\textwidth}{!} {%
\ra{.9}

   \begin{tabular}{@{}lll@{}} \toprule

\multicolumn{1}{l}{{\small Prop.}} &
\multicolumn{1}{l}{{\small Changemaker vector}} & \multicolumn{1}{l}{{\small Vertex basis (with $x_0$ omitted) $\{ x_1,\cdots,x_n \}$}}  \\ 

\midrule

 \\
{\small \ref{example} }	&	{\small \begin{tabular}{l}$(1,1, 2^{[s]},2s-1,2s+1)$\\ $s\ge2$\end{tabular}}&
		  $\{-v_2, \cdots, -v_{s+1}, v_{[3,s+2]}, v_1\}$  \\ \\

\bottomrule \\

&		  	{\small \begin{tabular}{l}$(1,1,2^{[s]},2s+1,2s+3, 4s+4, 8s+10)$\\$s\ge 1$\end{tabular}}
	     & $\{-v_2,\cdots, -v_{s+1},-v_{s+5}, v_{s+4}, v_{s+2}, v_1\}$   \\ \\

{\small \ref{1k2>2}}	& {\small \begin{tabular}{l}$(1,1,2^{[s]},2s+1,2s+3, 4s+6, 8s+10)$\\$s\ge 1$\end{tabular}}
			 & $\{-v_2,\cdots, -v_{s+1},-v_{s+4}, v_{s+5}, v_{s+2}, v_1\}$   \\ \\

	& 
   	{\small \begin{tabular}{l}$(1,1,2, 3, 5, 8^{[s]},8s+6, (8s+14)^{[t]})$\\$s\ge 1$\end{tabular}}
	     & $\{-v_2, v_{s+5}, v_1, -v_3-v_1,-v_5,\cdots,-v_{s+4},-v_{s+6},\cdots,-v_{s+t+5}\}$   \\ \\

& 
   	{\small \begin{tabular}{l}$(1,1,2, 3, 5, 6, 14^{[t]})$\end{tabular}}
	     & $\{-v_2, v_1+v_5, -v_1, -v_3, -v_6, \cdots, -v_{t+5}\}$   \\ \\


\bottomrule
\\

	 &  {\small \begin{tabular}{l}$(1,1,2^{[s]}, 2s+3,2s+5, (4s+6)^{[t]})$\\$s, t\ge 1$\end{tabular}}
	     & $\{-v_2, \cdots, -v_{s+1}, v_{[1,s+1]}+v_{[s+4,s+t+3]}-v_{s+2}, -v_{s+t+3}, \cdots, -v_{s+4}, -v_1\}$   \\ \\

{\small \ref{2k2>2}} &  {\small \begin{tabular}{l}$(1,1,2^{[s]}, 2s+3,2s+5)$\\$s\ge 1$\end{tabular}}
	     & $\{-v_2, \cdots, -v_{s+1}, v_{[1,s+1]}-v_{s+2}, -v_1\}$   \\ \\

&	   	{\small \begin{tabular}{l}$(1,1,2^{[s]}, 2s+3, 2s+5, 4s+6, (4s+8)^{[t]})$\\$s,t\ge 1$\end{tabular}}
	     &  $\{-v_2,\dots,-v_{s+1},-v_{s+5},\dots,-v_{s+t+4},v_{[1,s+1]}+v_{[s+4,s+t+4]}-v_{s+2},-v_{s+4},-v_1\}$
		     \\ \\


\bottomrule \\

&{\small \begin{tabular}{l}$(1,1,3,5,6^{[t]})$\\$t\ge 1$\end{tabular}}
	     & $\{v_1+v_{[4,t+3]}-v_2, -v_{t+3},\dots, -v_4,-v_1\}$ 
		 \\ \\
	
{\small \ref{lem:k1=1,k2=2,v2tight}} &{\small \begin{tabular}{l}$(1,1,3,5)$\end{tabular}}
	     & $\{-v_2, v_1\}$ 
		 \\ \\
	   
	    &	{\small \begin{tabular}{l}$(1,1,3,5,6,8^{[t+1]})$\end{tabular}}
	     & $\{-v_5,\cdots,-v_{t+5}, v_1+v_{[4,t+5]}-v_2,-v_4, -v_1\}$ 
		    \\ \\ 

\bottomrule	    
 \\

	    &	{\small \begin{tabular}{l}$(1,1,1,3,4, 4^{[t]},4t+6,(4t+10)^{[s]})$\end{tabular}} 
	    	&  $\{-v_{t+5}, -v_1,-v_2,-v_4,\cdots,-v_{t+4}, -v_{t+6},\cdots,-v_{t+s+5}\}$  \\ \\ 

{\small \ref{prop:k1=1,k2=2,v2justright1}} 	& {\small \begin{tabular}{l}$(1,1,1,3,4,10)$ \end{tabular}} 
	    & $\{-v_5,v_4,v_2,v_1\}$ \\ \\

& {\small \begin{tabular}{l}$(1,1,1,3,6,10)$\end{tabular}} 
	    & $\{-v_4,v_5,v_2,v_1\}$
	    \\ \\
\bottomrule
\\
 {\small \ref{prop:k1=1,k2=2,v2justright2}}  &	{\small \begin{tabular}{l}$(1,1,1,2,3,6^{[t]})$\\
  $t\ge1$
   \end{tabular}}
	    	 & $\{-v_3,-v_1,-v_2,-v_5,\cdots,-v_{t+4} \}$ 
	   \\  \\
\bottomrule
	    \\
	{\small \ref{1k2=2}}  & {\small \begin{tabular}{l} $(1,2,3,4,5,9)$\end{tabular}} 
	   	 & {\small $\{-v_3,v_{[3,4]}-v_1,-v_4,v_2\}$} \\ \\

\bottomrule
\\

{\small \ref{lem:k1=2,2t+3}} & {\small \begin{tabular}{l}$(1,2,3,3,7,8^{[s]},(8s+10)^{[t]})$\\ $s\ge 1$\end{tabular}}
	   	 & $\{v_{[5,s+4]}-v_1,-v_{s+4},\cdots,-v_5,v_2,v_3,v_{s+5},\cdots,v_{s+t+4} \}$  \\ \\

& {\small \begin{tabular}{l}$(1,2,3,3,7,10^{[t]})$\end{tabular}}
	   	 & $\{-v_1,v_2,v_3,v_{5},\cdots,v_{t+4} \}$  \\ \\

\bottomrule
\\

{\small \ref{lem:k1>1,2t+3}} & {\small \begin{tabular}{l}$(1,2,3,4^{[s]},4s+3,4s+7,(8s+10)^{[t]})$\\$s\ge 1$\end{tabular}}
	   	 & $\{-v_3,\cdots,-v_{s+2},v_{[3,s+2]}-v_1,v_2,v_{s+3}, v_{s+5}, \cdots,v_{s+t+4} \}$  \\ \\

\bottomrule 
\\
	
{\small \ref{1k1=3}}	  &  \begin{tabular}{l}$(1,2,2,3,3,7)$
\end{tabular}  & $\{v_{[3,4]}-v_1, -v_4,-v_3,-v_2\}$ \\  \\
\bottomrule 
\\

 {\small \ref{2k1=3}}& 	{\small \begin{tabular}{l}$(1,2,2,3,4^{[s]},4s+5,4s+9,(8s+14)^{[t]})$\\$s\ge 1$\end{tabular}} 
& $\{-v_4,\cdots,-v_{s+3},v_{[3,s+3]}-v_1, -v_3,-v_2,-v_{s+4},-v_{s+6},\cdots, -v_{s+t+5}\}$  \\ \\
	
& 	{\small \begin{tabular}{l}$(1,2,2,3,5,9,14^{[t]})$\end{tabular}} 
& $\{v_{3}-v_1, -v_3,-v_2,-v_{4},-v_{6},\cdots, -v_{t+5}\}$  \\ \\   	
	 	   
\bottomrule
\end{tabular}%
}

\end{table}

\clearpage

\begin{table}
\addtocounter{table}{-1}
\caption{C--type changemakers and the corresponding prism manifolds, Part II}
{\small
\centering
\begin{adjustbox}{max width=\textwidth}
\ra{1.4}

   \begin{tabular}{@{}llll@{}} \toprule

\multicolumn{1}{l}{Prop.} &
\multicolumn{1}{l}{Vertex norms $\{a_1,\dots,a_n\}$} & \multicolumn{1}{l}{Prism manifold parameters} &\multicolumn{1}{l}{$\mathcal P^+_{q>p}$ type}  \\
\midrule

	\ref{example}	&	$\{3,2^{[s-1]},s+1,2\}$ &
		
		\begin{tabular}{l}$p=2s-1$\\ $q=2s^2+s+1$	 \end{tabular}
			 & {\bf 1A} \\  \bottomrule

		& $\{3,2^{[s-1]},5,3,s+2,2\}$ & 
	    \begin{tabular}{l}$p=22s+25$\\$q=22s^2+53s+32$ \end{tabular}
        & {\bf 1B} \\ \cline{2-4}

	\ref{1k2>2}   & $\{3,2^{[s-1]},4,4,s+2,2\}$ & 
	\begin{tabular}{l}$p=22s+27$	\\ $q=22s^2+57s+37$	 \end{tabular}
			 & {\bf 1B} \\  \cline{2-4}  

	 &    $\{3,s+3,2,3,3,2^{[s-1]}, 3, 2^{[t-1]}\}$ & 
	    \begin{tabular}{l}$r=2s+3$\\$p=2r^2(t+1)-4r+1$\\$q=(2r+1)^2(t+1)-8r-6$ \end{tabular} & {\bf 4}\\ \cline{2-4}

&    $\{3,3,2,3,4,2^{[t-1]}\}$ & 
	    \begin{tabular}{l}$r=3$\\$p=18t+7$ \\ $q=49t+19$\end{tabular} & {\bf 4}\\ 

	    
 \bottomrule 

	    & $\{3,2^{[s-1]}, 4, 2^{[t-1]},s+3,2\}$ &
	    	\begin{tabular}{l} $r=2t+1$\\$p=2r(s+1)+r+4$\\$q=\frac{1}{2}(2rs+3(r+1))(2s+3)$ \end{tabular}
	     &  {\bf 3B} \\ \cline{2-4}

 \ref{2k2>2}& $\{3,2^{[s-1]}, s+5, 2\}$ &
	    	\begin{tabular}{l} $r=1$\\$p=2s+7$\\$q=(s+3)(2s+3)$ \end{tabular}
	     &  {\bf 3B} \\ \cline{2-4}

 & $\{3,2^{[s-1]},3,2^{[t-1]},3,s+3,2\}$ &
	    
	    \begin{tabular}{l} $r=2t+3$\\$p=2r(s+2)+1$\\$q=(s+2)(2r(s+2)-3)$ \end{tabular}
	     &  {\bf 3A} \\ 

%

 \bottomrule 
	 
	    & $\{5, 2^{[t-1]},3,2\}$ &
	   \begin{tabular}{l}
$r=2t+1$\\
$p=6t+7$\\
$q=9t+9$
	   \end{tabular}  & 
	    {\bf 3B} \\ \cline{2-4}

 \ref{lem:k1=1,k2=2,v2tight} & $\{6,2\}$ &
	   \begin{tabular}{l}
$r=1$\\
$p=7$\\
$q=9$
	   \end{tabular}  & 
	    {\bf 3B} \\ \cline{2-4}
	  
	    &   $\{4, 2^{[t]},3,3,2\}$ &
	    
	 \begin{tabular}{l}  $r=2t+5$\\$p=8t+21$\\$q=16t+34$ 	 \end{tabular}
	    	 &   {\bf 3A}
	   \\ 

\bottomrule

%

\end{tabular}%
\end{adjustbox}
}

\end{table}	

\clearpage

\begin{table}
\addtocounter{table}{-1}
\caption{C--type changemakers and the corresponding prism manifolds, Part III}\label{BigSummary}

{\small
\centering
\begin{adjustbox}{max width=\textwidth}
\ra{1.4}

   \begin{tabular}{@{}llll@{}} \toprule

\multicolumn{1}{l}{Prop.} &
\multicolumn{1}{l}{Vertex norms $\{a_1,\dots,a_n\}$} & \multicolumn{1}{l}{Prism manifold parameters} &\multicolumn{1}{l}{$\mathcal P^+_{q>p}$ type}  \\ 
\midrule

&  	   $\{t+4,2,2,3,2^{[t]},3,2^{[s-1]}\}$&
	   
	   \begin{tabular}{l}	
	   $r=2t+5$\\$p=(r^2-2r-1)(s+1)-2r+5$\\$q=r^2(s+1)-2r+1$
	   \end{tabular}
     & {\bf 5} \\ \cline{2-4}

\ref{prop:k1=1,k2=2,v2justright1} &   	$\{6,3,2,2\}$ &
	   
	  \begin{tabular}{l} 	
	 $p=25$\\$q=32$
	  \end{tabular} 	 & {\bf 1B} \\  \cline{2-4}

&   	$\{5,4,2,2\}$ &
	   \begin{tabular}{l}
	   	$p=27$\\$q=37$              
       \end{tabular}
       & {\bf 1B} \\ 

\bottomrule
\ref{prop:k1=1,k2=2,v2justright2} &    $\{3,2,2,4,2^{[t-1]}\}$ &
	    \begin{tabular}{l}
	   $r=3$\\
           $p=2t+1$\\
           $q=9t+4$
\end{tabular}& {\bf 5}
\\
\bottomrule
\ref{1k2=2} &   $\{3,3,3,3\}$ &
	 
	\begin{tabular}{l}
	  $p=13$\\
          $q=34$
	\end{tabular}   	
	   	 & {\bf Sporadic}  \\ 
\bottomrule

	\ref{lem:k1=2,2t+3}   	&   	$\{4, 2^{[s-1]},3,3,2,s+3,2^{[t-1]}\}$ &
	 \begin{tabular}{l}
	 $r=-3-2s$\\$p=2r^2t-4r+1$\\$q=t(2r+1)^2-8r-6$
	 \end{tabular}
	 & {\bf 4} \\ \cline{2-4}

&   	$\{5,3,2,3,2^{[t-1]}\}$ &
	 \begin{tabular}{l}
	 $r=-3$\\
         $p=18t+13$\\
         $q=25t+18$
	 \end{tabular}
	 & {\bf 4} \\ 
\bottomrule

\ref{lem:k1>1,2t+3}	&   	$\{3,2^{[s-1]},4,3,s+2,3,2^{[t-1]}\}$ &
	  \begin{tabular}{l} 
	  $r=-5-4s$\\
          $p=(-4r-2)t-2r+3$\\
          $q=r^2t+\frac{1}{2}(r^2-2r+1)$
	  \end{tabular}
	  & {\bf 2} \\ 

\bottomrule

\ref{1k1=3}	 &  	$\{4,2,3,2\}$ &\begin{tabular}{l}
	   $p=11$\\
            $q=19$     
	 \end{tabular}
	 & {\bf Sporadic} \\
   \bottomrule
         
\ref{2k1=3}	&   $\{3, 2^{[s-1]},3,3,2,s+3,3,2^{[t-1]}\}$ &
	  \begin{tabular}{l}
	     $r=7+4s$\\
             $p=(4r+2)t+2r+5$\\
             $q=r^2t+\frac{1}{2}(r^2+2r-1)$
	  \end{tabular} 
	   	
	   	 &  {\bf 2} \\ \cline{2-4}

&   $\{4, 3, 2,3,3,2^{[t-1]}\}$ &
	  \begin{tabular}{l}
	     $r=7$\\
             $p=30t+19$\\
             $q=49t+31$
	  \end{tabular} 
	   	
	   	 &  {\bf 2} \\ 
		
\bottomrule
\end{tabular}%

\end{adjustbox}
}
\caption*{\small In this table, $v_{[a,b]}$ means $v_a+v_{a+1}+\cdots+v_{b}$ for $a<b$. All vertex bases are presented in the form $\{x_1,\cdots,x_n \}$. The parameters $s,t\ge0$ unless otherwise stated.
A superscript $^{[-1]}$ at an element in the sequence of vertex norms means that the sequence is truncated at this element and the element preceding it. For example, the sequence $\{3,2^{[s-1]},4,3,s+2,3,2^{[t-1]}\}$ becomes $\{3,2^{[s-1]},4,3,s+2\}$ when $t=0$.}
\end{table}	
\clearpage

\bibliographystyle{amsalpha2}
\bibliography{Reference}

\end{document}